\numberwithin{equation}{section}
\theoremstyle{plain}
\newtheorem{theorem}[equation]{Theorem}
\newtheorem*{theorem*}{Theorem}
\newtheorem{lemma}[equation]{Lemma}
\newtheorem{proposition}[equation]{Proposition}
\newtheorem{corollary}[equation]{Corollary}
\theoremstyle{definition}
\newtheorem{definition}[equation]{Definition}
\newtheorem{example}[equation]{Example}
\newtheorem{remark}[equation]{Remark}
\newtheorem{notation}[equation]{Notation}
\setlist[enumerate]{label=(\arabic*), leftmargin=*}
\setlist[itemize]{label=$\vcenter{\hbox{\footnotesize$\bullet$}}$, leftmargin=*}
\newcommand{\mb}[1]{\mathbf{#1}}
\newcommand{\mm}[1]{\mathrm{#1}}
\newcommand{\ul}[1]{\underline{#1}}
\newcommand{\cat}[1]{
\StrLen{#1}[\mystrlen]
\ifnum\mystrlen=1 \mathscr{#1}
\else \mathrm{#1}
\fi}
\newcommand{\scat}[1]{\mb{#1}}
\newcommand{\Hom}[0]{\mm{Hom}}
\newcommand{\op}[0]{\mm{op}}
\newcommand{\gr}[0]{\mm{gr}}
\newcommand{\cl}[0]{\mm{cl}}
\newcommand{\cC}{\mathcal{C}}
\newcommand{\bPerf}{\mathbf{Perf}}
\newcommand{\Perf}{\mathrm{Perf}}
\newcommand{\cG}{\mathcal G}
\newcommand{\cA}{\mathcal A}
\newcommand{\cB}{\mathcal B}
\newcommand{\bk}{\mathbf{k}}
\newcommand{\bT}{\mathbf{T}}
\newcommand{\bcrit}{\mathbf{crit}}
\newcommand{\bbT}{\mathbb{T}}
\newcommand{\bbL}{\mathbb{L}}
\DeclareMathOperator\Rep{Rep}
\DeclareMathOperator\GL{GL}
\DeclareMathOperator\PGL{PGL}
\DeclareMathOperator\End{End}
\DeclareMathOperator\Isom{Isom}
\DeclareMathOperator\Ho{Ho}
\DeclareMathOperator\BGL{BGL}
\DeclareMathOperator\Mat{Mat}
\newcommand{\pullbackcorner}[1][dl]{\save*!/#1-1pc/#1:(-1,1)@^{|-}\restore}
\title{Relative critical loci and quiver moduli\\\textit{\Large Lieux critiques relatifs et espaces de modules de carquois}}
\author[]{
	Tristan Bozec\thanks{IMAG, Univ Montpellier, CNRS, Montpellier, France \\
											\href{mailto:tristan.bozec@umontpellier.fr}{tristan.bozec@umontpellier.fr}}, 
	Damien Calaque\thanks{IMAG, Univ Montpellier, CNRS, Montpellier, France \\
											\href{mailto:damien.calaque@umontpellier.fr}{damien.calaque@umontpellier.fr}}, 
	Sarah Scherotzke\thanks{Mathematical Institute, University of Luxembourg, Luxembourg \\
											\href{mailto:sarah.scherotzke@uni.lu}{sarah.scherotzke@uni.lu}}}
\date{}
\begin{document}

\maketitle

{\selectlanguage{english}
\begin{abstract}
In this paper we identify the cotangent stack to the derived stack of representations of a quiver $Q$ with the derived moduli stack 
of modules over the Ginzburg dg-algebra associated with $Q$. More generally, we extend this result to finite type dg-categories, 
to a relative setting as well, and to deformations of these. It allows us to recover and generalize some results of Yeung, and 
leads us to the discovery of seemingly new lagrangian subvarieties in the Hilbert scheme of points in the plane. 
\end{abstract}}

{
\selectlanguage{french}
\begin{abstract}
Dans cet article nous identifions le champ cotangent du champ d\'eriv\'e des repr\'esentations d'un carquois $Q$ avec le champ  d\'eriv\'e des modules sur la dg-alg\`ebre de Ginzburg associ\'ee \`a $Q$. Nous \'etendons ce r\'esultat aux dg-cat\'egories de type fini, ainsi qu'\`a un cadre relatif, et \`a des d\'eformations de ce qui pr\'ec\`ede. Cela nous permet de retrouver et g\'en\'eraliser des r\'esultats de Yeung, et nous m\`ene \`a la d\'ecouverte de nouvelles sous-vari\'et\'es lagrangiennes du sch\'ema de Hilbert de points dans le plan.
\end{abstract}}

\selectlanguage{english}

\setcounter{tocdepth}{1}
\tableofcontents

%%%%%%%%%%%%% The core of the paper starts here %%%%%%%%%%%%

\section{Introduction}

This paper is concerned with Calabi--Yau structures on smooth dg-categories and functors thereof, as well as with symplectic 
and lagrangian structures on derived Artin stacks and morphisms between these. Its main goal can be summarized 
as giving a precise meaning (and a proof) to the claim that deformations of Calabi--Yau completions of dg-categories 
(resp.~relative Calabi--Yau completions) are a non-commutative analog of deformations of cotangent stacks (resp.~conormal stacks). 

\medskip

A guiding idea of noncommutative (algebraic) geometry is that a noncommutative analog of a standard 
geometric notion $T$ is some algebraic notion $T^\mathrm{nc}$ that induces functorially a structure of type $T$ on schemes 
of representations (see e.g.~\cite{Gi0} and references therein). 
It has been advocated by several authors that the noncommutative analog of the cotangent bundle should be obtained as a 
free construction, and that, in the case of quivers, if one applies a noncommutative version of hamiltonian reduction, 
then one recovers the preprojective algebra (see e.g.~\cite{CBEG}). We would like to amend this approach in several ways: 
\begin{itemize}
\item First of all, we would like to replace representation schemes by moduli stacks of representations, so that the 
preprojective algebra itself becomes the correct noncommutative cotangent. 
\item Then, in order for most functors to remain well-behaved, we work within a homotopy invariant framework, 
and so we will in fact consider the \textit{derived} moduli of representations. The preprojective algebra will then be 
replaced by its differential graded variant, introduced by Ginzburg~\cite{Gi}. 
\item Finally, we will consider the whole derived moduli of perfect dg-modules, also called \textit{moduli of objects}, 
after To\"en--Vaqui\'e~\cite{ToVa}. This also has the advantage that it extends to dg-categories. 
\end{itemize}
In 2011, Keller~\cite{Keller} introduced a broad generalization of Ginzburg dg-algebras: the so-called 
Calabi--Yau completions of smooth dg-categories (which, surprisingly enough, are indeed given as a free construction), 
and their deformations. Recently, Yeung~\cite{Ye} advocated that Calabi--Yau completions should be viewed as noncommutative 
cotangent bundles. 
It is actually known, after To\"en~\cite{ToEMS,ToCY} and Brav--Dyckerhoff~\cite{BD2}, that the moduli of objects functor 
$\mathbf{Perf}$ sends finite type $n$-Calabi--Yau dg-categories to $(2-n)$-shifted symplectic stacks in the sense of 
Pantev--To\"en--Vaqui\'e--Vezzosi~\cite{PTVV}. In other words, Calabi--Yau structures on dg-categories are the non-commutative (shifted) symplectic structures. 
In this paper, we prove the following.

\begin{theorem*}[Theorem~\ref{theo: compare}]
For a finite type dg-category $\cat{A}$, there is an equivalence of exact $(2-n)$-shifted symplectic stacks between the shifted 
cotangent stack $\mathbf{T}^*[2-n]\mathbf{Perf}_{\cat{A}}$ and the perfect moduli $\mathbf{Perf}_{\mathcal{G}_n(\cat{A})}$ 
of the $n$-Calabi--Yau completion $\mathcal{G}_n(\cat{A})$ of $\cat{A}$. 
\end{theorem*}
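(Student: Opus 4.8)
The plan is to establish the equivalence in two stages: first an equivalence of the underlying derived stacks over $\mathbf{Perf}_{\cat{A}}$, and then an upgrade of it to an equivalence of exact $(2-n)$-shifted symplectic stacks.

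\emph{Underlying stacks.} Recall Keller's presentation of the Calabi--Yau completion as a tensor dg-algebra, $\mathcal{G}_n(\cat{A}) = T_{\cat{A}}\bigl(\Theta_{\cat{A}}[n-1]\bigr)$, where $\Theta_{\cat{A}} = \mathrm{RHom}_{\cat{A}^e}(\cat{A},\cat{A}^e)$ is the inverse dualizing bimodule; since $\cat{A}$ is of finite type it is smooth, hence $\Theta_{\cat{A}}$ is a perfect $\cat{A}$-bimodule and $\mathcal{G}_n(\cat{A})$ is again of finite type. The first input is the general principle that $\mathbf{Perf}$ sends a tensor dg-algebra $T_{\cat{A}}(M)$ over $\cat{A}$ on a perfect bimodule $M$ to the linear stack $\mathbb{V}_{\mathbf{Perf}_{\cat{A}}}(\mathcal{F}_M) \to \mathbf{Perf}_{\cat{A}}$, where $\mathcal{F}_M := \mathrm{RHom}_{\cat{A}}(M\otimes_{\cat{A}}\mathcal{E},\mathcal{E})$ is the perfect complex on $\mathbf{Perf}_{\cat{A}}$ built from $M$ and the universal object $\mathcal{E}$. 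At the level of functors of points this is the statement that a $T_{\cat{A}}(M)$-module is an $\cat{A}$-module $E$ together with an \emph{unconstrained} action $M\otimes_{\cat{A}}E\to E$; the finite-type hypotheses are what make the two perfectness conditions match and the resulting stack locally geometric and locally of finite presentation.

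\emph{Identifying the complex.} It then remains to identify $\mathcal{F}_{\Theta_{\cat{A}}[n-1]}$ with $\mathbb{L}_{\mathbf{Perf}_{\cat{A}}}[2-n]$ as perfect complexes on $\mathbf{Perf}_{\cat{A}}$. This combines two facts: (i) the tangent complex of the moduli of objects, $\mathbb{T}_{\mathbf{Perf}_{\cat{A}}} \simeq \mathrm{RHom}_{\cat{A}}(\mathcal{E},\mathcal{E})[1]$ (To\"en--Vaqui\'e), so that $\mathbb{L}_{\mathbf{Perf}_{\cat{A}}} \simeq \mathrm{RHom}_{\cat{A}}(\mathcal{E},\mathcal{E})^\vee[-1]$; and (ii) the defining property of the inverse dualizing bimodule together with smoothness of $\cat{A}$, which supplies a natural equivalence $\mathrm{RHom}_{\cat{A}}(\Theta_{\cat{A}}\otimes_{\cat{A}}\mathcal{E},\mathcal{E})\simeq\mathrm{RHom}_{\cat{A}}(\mathcal{E},\mathcal{E})^\vee$. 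Combining these, $\mathcal{F}_{\Theta_{\cat{A}}[n-1]} \simeq \mathrm{RHom}_{\cat{A}}(\mathcal{E},\mathcal{E})^\vee[1-n] \simeq \mathbb{L}_{\mathbf{Perf}_{\cat{A}}}[2-n]$, and passing to total spaces yields the equivalence $\mathbf{Perf}_{\mathcal{G}_n(\cat{A})} \simeq \mathbf{T}^*[2-n]\mathbf{Perf}_{\cat{A}}$ of derived stacks over $\mathbf{Perf}_{\cat{A}}$.

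\emph{Symplectic structures, and the main obstacle.} The shifted cotangent $\mathbf{T}^*[2-n]\mathbf{Perf}_{\cat{A}}$ carries its canonical exact $(2-n)$-shifted symplectic structure, the Liouville form $\omega = d\lambda_{\mathrm{taut}}$; the stack $\mathbf{Perf}_{\mathcal{G}_n(\cat{A})}$ carries the exact $(2-n)$-shifted symplectic structure obtained by transporting, along the functoriality of $\mathbf{Perf}$ with respect to (relative) Calabi--Yau structures (To\"en, Brav--Dyckerhoff), the canonical $n$-Calabi--Yau structure that Keller places on a Calabi--Yau completion. The theorem amounts to the assertion that, under the equivalence just produced, these two exact structures agree. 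The plan here is to exploit the fact that Keller's Calabi--Yau class on $T_{\cat{A}}(\Theta_{\cat{A}}[n-1])$ is itself tautological --- it is induced by the evaluation/coevaluation pairing between $\Theta_{\cat{A}}$ and $\cat{A}$ in Hochschild homology --- and to check that its transport through $\mathbf{Perf}$ is compatible with the tautological $1$-form on the shifted cotangent; alternatively, one can invoke a rigidity statement for exact $(2-n)$-shifted symplectic structures on a shifted cotangent compatible with the projection, reducing the comparison to the fibrewise-linear part of the primitive, which on either side is the identity of $\mathbb{L}_{\mathbf{Perf}_{\cat{A}}}[2-n]$. I expect this last stage to be the crux: the delicate point is not the underlying $2$-form but the full \emph{closed} datum, i.e. matching the negative cyclic class defining Keller's Calabi--Yau structure with the de Rham primitive of the Liouville form together with all higher coherences, and this is where the relative and functorial machinery developed above must be brought to bear.
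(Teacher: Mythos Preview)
Your treatment of the underlying stacks is essentially the paper's: Proposition~\ref{prop:pasdenom} is exactly your ``general principle'' that $\bPerf_{T_{\cA}(M)}$ is the linear stack on $\bPerf_{\cA}$ associated with $\mathcal{F}_M$, and Corollary~\ref{theorem: compareGinzburg} is your identification $\mathcal{F}_{\Theta_{\cA}[n-1]}\simeq\mathbb{L}_{\bPerf_{\cA}}[2-n]$ via the To\"en--Vaqui\'e tangent formula and the duality for $\cA^\vee$.

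Where you diverge from the paper is in your assessment of the ``main obstacle''. You worry about matching the \emph{full closed datum} and its higher coherences; but this is precisely what exactness is designed to short-circuit. Both structures are exact by construction: the cotangent side has $\omega=d_{dR}\lambda_{\bPerf_{\cA}}$, and on the Calabi--Yau side Keller's class is a genuine Hochschild class $c\in|\cat{HH}(\mathcal{G}_n(\cA))[1-n]|$ whose induced negative cyclic class is $\delta c$. By the compatibility of $ch$ with $\delta$ and $d_{dR}$ (the commuting square in \S\ref{sss: CYLag}), the induced closed $2$-form is $\tilde{ch}^{(2)}(\delta c)=d_{dR}\,ch^{(1)}(c)$. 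So the entire comparison of exact symplectic structures reduces to a comparison of \emph{$1$-forms}: one must show $ch^{(1)}(c)\simeq\lambda_{\bPerf_{\cA}}$ in $\mathcal{A}^1(\bPerf_{\mathcal{G}_n(\cA)},2-n)$. There are no higher coherences left to check.

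The paper executes this $1$-form comparison by a single commuting square. Using the explicit description of $ch^{(1)}$ from Example~\ref{ex:BDcompute}, both $\lambda_{\bPerf_{\cA}}$ and $ch^{(1)}(c)$ are exhibited as the image of $\mathrm{id}_{\cA^\vee[n-1]}\in\ul{\cat{Map}}_{\cA^e}(\cA^\vee[n-1],\cA^\vee[n-1])$ under two composites that agree: one first extends scalars along $\cA\to\mathcal{G}_n(\cA)$ and then applies $-\otimes\mathrm{id}$ (giving $ch^{(1)}(c)$ via \eqref{exact nCY}), the other first applies $-\otimes\mathrm{id}$ to land in $\Gamma(\pi^*\mathbb{L}_{\bPerf_{\cA}}[2-n])$ and then pushes forward (giving the tautological section, cf.~Remark~\ref{remark: 1-form}). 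Your first suggested route---tracking the tautological nature of Keller's class through $\bPerf$---is exactly this, and once you observe the exactness reduction it becomes a short diagram chase rather than the crux you anticipate. Your alternative ``rigidity'' route is not needed and is not what the paper does.
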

We also prove a deformed version of the above, identifying the moduli of objects of deformed Calabi--Yau completions 
with twisted cotangent stacks, as shifted symplectic stacks. 

\medskip

Our major geometric motivation lies in the construction of lagrangian (rather than just symplectic) structures, 
and their connections with critical loci or Calabi--Yau geometry. To be more specific, inspired by classical symplectic 
geometry~\cite{RS}, we are interested in deformations of conormal stacks that appear as \emph{relative critical loci} 
(see~\cite{CalSurv,AC}). In certain cases, these relative critical loci can be obtained as moduli of representations of 
relative versions of Ginzburg dg-algebras. 
If we use the formalism of derived symplectic geometry, introduced by Pantev--To\"en--Vaqui\'e--Vezzosi~\cite{PTVV}, we will 
show that our constructions yield very down-to-earth examples, such as lagrangian subvarieties of some quiver 
moduli spaces. Moduli of quiver representations form indeed a large and fruitful class of applications in this area, and 
will serve as an ongoing example in the present study. More generally, moduli of objects of finite type dg-categories, as defined 
by To\"en--Vaqui\'e~\cite{ToVa}, encompass quiver representations and provide a lot more examples to play with. In fact, as we have 
already seen, it is essential in the statement itself of the results that achieve our main goal. 

\medskip

Elaborating on an idea of To\"en~\cite{ToEMS}, Brav--Dyckerhoff~\cite{BD1} have introduced a very useful notion of relative 
(left) Calabi--Yau structure on a dg-functor, and have shown~\cite{BD2} (see also~\cite{ToCY}) that these relative Calabi--Yau structures are the 
correct noncommutative analog of lagrangian structures on morphisms between derived stacks: the moduli of objects functor 
$\mathbf{Perf}$ sends functors (between finite type dg-categories) equipped with a relative Calabi--Yau structure to morphisms 
(between derived Artin stacks) equipped with a lagrangian structure. Soon after, Yeung~\cite{Ye0} introduced a relative 
analog of Keller's Calabi--Yau completions along functors between finite cellular dg-categories, and showed that they indeed 
carry a relative Calabi--Yau structure in the sense Brav--Dyckerhoff. 

\medskip

In this paper, with every dg-functor $f:\cat{A}\to\cat{B}$ between smooth dg-categories we associate a cospan 
\[
\mathcal G_n(\cat{A})\longrightarrow\mathcal G_n(f)\longleftarrow\mathcal G_n(\cat{B})
\]
and we prove the following.

\begin{theorem*}[Theorem \ref{thm:Gn-functoriality} \& Theorem \ref{theo: comparerel}]
\begin{enumerate}
\item The above cospan carries an exact $n$-Calabi--Yau structure. 
\item For finite type dg-categories, the moduli of objects functor sends this exact $n$-Calabi--Yau cospan to the 
exact lagrangian correspondence 
\[
\mathbf{T}^*[2-n]\mathbf{Perf}_{\cat{A}}\longleftarrow \varphi^*\mathbf{T}^*[2-n]\mathbf{Perf}_{\cat{A}}
\longrightarrow \mathbf{T}^*[2-n]\mathbf{Perf}_{\cat{B}}\,,
\]
where $\varphi=f^*:\mathbf{Perf}_{\cat{B}}\to\mathbf{Perf}_{\cat{A}}$. 
\end{enumerate}
\end{theorem*}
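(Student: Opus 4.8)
The plan is to prove the two parts in sequence, with Part (1) being largely homological algebra on bimodule resolutions and Part (2) following by transporting the structure through $\mathbf{Perf}$.

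\medskip

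\textbf{Part (1): the exact $n$-Calabi--Yau structure on the cospan.} First I would recall the explicit model for $\mathcal G_n(f)$: for a dg-functor $f:\cat A\to\cat B$ between smooth dg-categories, the relative Calabi--Yau completion should be built from the $\cat B$-bimodule $\cat B \oplus \cat B\otimes_{\cat A}\cat A^![n-1]\otimes_{\cat A}\cat B$ (or the appropriate cone thereof), where $\cat A^!$ denotes the inverse dualizing bimodule; smoothness guarantees $\cat A^!$ is perfect so all the relevant tensor products are well-behaved. An exact $n$-Calabi--Yau structure on a cospan $\cat A\to \cat C\leftarrow \cat B$ is, by the Brav--Dyckerhoff formalism, a class in (negative cyclic homology, relative version) $HC^-_{n-1}$ of the cone of $HH(\cat A)\oplus HH(\cat B)\to HH(\cat C)$ whose image in Hochschild homology is nondegenerate, i.e.~induces the Van den Bergh-type duality $\cat C^!\simeq \cat C[-n]$ relative to the two legs. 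The strategy is: (a) write down the canonical negative cyclic class using the tautological ``unit'' classes that come with any Calabi--Yau completion (this is the feature Keller and Yeung exploit — the completion is a \emph{free} construction, so the cyclic class is essentially formal); (b) check nondegeneracy by a direct computation with the bimodule resolution, reducing to the known absolute statement that $\mathcal G_n(\cat A)$ and $\mathcal G_n(\cat B)$ are $n$-Calabi--Yau together with a long-exact-sequence / cone argument on the inverse dualizing complexes. I expect this nondegeneracy compatibility — matching the relative duality with the two absolute ones coherently — to be the main technical obstacle, since it requires keeping track of the signs and connecting maps in the defining triangle of $\mathcal G_n(f)$.

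\medskip

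\textbf{Part (2): applying $\mathbf{Perf}$.} Given Part (1), I would invoke the Brav--Dyckerhoff theorem (cited in the excerpt) that $\mathbf{Perf}$ sends an exact relative $n$-Calabi--Yau structure on a cospan of finite type dg-categories to an exact lagrangian structure on the corresponding span of derived stacks. Concretely, $\mathbf{Perf}$ turns the cospan $\mathcal G_n(\cat A)\to\mathcal G_n(f)\leftarrow\mathcal G_n(\cat B)$ into a span $\mathbf{Perf}_{\mathcal G_n(\cat A)}\leftarrow\mathbf{Perf}_{\mathcal G_n(f)}\to\mathbf{Perf}_{\mathcal G_n(\cat B)}$ carrying a lagrangian structure over the product of the two $(2-n)$-shifted symplectic stacks. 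It then remains to identify this span, as an exact lagrangian correspondence, with
\[
\mathbf T^*[2-n]\mathbf{Perf}_{\cat A}\longleftarrow \varphi^*\mathbf T^*[2-n]\mathbf{Perf}_{\cat A}\longrightarrow \mathbf T^*[2-n]\mathbf{Perf}_{\cat B}.
\]
The two outer identifications $\mathbf{Perf}_{\mathcal G_n(\cat A)}\simeq\mathbf T^*[2-n]\mathbf{Perf}_{\cat A}$ and likewise for $\cat B$ are exactly the absolute comparison Theorem~\ref{theo: compare}, \emph{as exact shifted symplectic stacks}, so those come for free. For the middle term, I would compute $\mathbf{Perf}_{\mathcal G_n(f)}$ directly: a module over the relative completion is a module over $\cat B$ (equivalently a point of $\mathbf{Perf}_{\cat B}$, mapping via $\varphi=f^*$ to $\mathbf{Perf}_{\cat A}$) together with the ``extra'' cotangent-type data dictated by the bimodule $\cat B\otimes_{\cat A}\cat A^![n-1]\otimes_{\cat A}\cat B$, which unwinds to a section of $\varphi^*\mathbf T^*[2-n]\mathbf{Perf}_{\cat A}$; this is the relative analog of the fiberwise-linear description used in the absolute case.

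\medskip

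\textbf{Where the work concentrates.} The genuinely new content beyond Part~(1) is checking that the equivalence of spans is compatible with the \emph{exact} lagrangian structures, not merely the underlying lagrangian structures — i.e.~that the chosen isotropic trivializations of the pulled-back symplectic forms match on the nose. I would handle this by naturality: both sides arise by applying $\mathbf{Perf}$ to explicit negative-cyclic data, so it suffices to match those cyclic classes, which reduces to the bimodule-level computation already done in Part~(1). The main obstacle remains the sign/coherence bookkeeping in Part~(1); everything downstream is formal transport of structure, modulo carefully invoking functoriality of $\mathbf{Perf}$ on cospans and of the exactness data under $\mathbf{Perf}$.
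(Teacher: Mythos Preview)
Your overall architecture matches the paper's, and Part~(1) is essentially right: the paper likewise writes down the class as the common image of the tautological classes $c_{\cat A},c_{\cat B}$ (tracking $\mathrm{id}_{\cat A},\mathrm{id}_{\cat B},f$ through $\ul{\cat{Map}}_{\cat A^e}(\cat A,\cat B)$), and checks nondegeneracy by explicit small resolutions of $\mathcal G_n(f)$, $\mathcal G_n(f)^\vee[n]$ and the two induced bimodules---a direct four-term computation rather than a cone/long-exact-sequence reduction to the absolute case, but your alternative route is plausible. For the middle stack in Part~(2) the paper uses $\mathcal G_n(f)\simeq\mathcal G_n(\cat A)\amalg_{\cat A}\cat B$, which is equivalent to your module-theoretic unwinding.

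The genuine gap is in your last paragraph. The claim ``both sides arise by applying $\mathbf{Perf}$ to explicit negative-cyclic data, so it suffices to match those cyclic classes, which reduces to Part~(1)'' is not correct: the canonical exact lagrangian structure on the cotangent correspondence is \emph{not} obtained by applying $\mathbf{Perf}$ to any cyclic class---it is defined intrinsically from the Liouville $1$-forms $\lambda_{\mathbf{Perf}_{\cat A}},\lambda_{\mathbf{Perf}_{\cat B}}$ and the identity $p^*\lambda=q^*\lambda$. What Part~(1) gives you, after $ch^{(1)}$, is a homotopy $\alpha^*ch^{(1)}(c_{\cat A})\sim\beta^*ch^{(1)}(c_{\cat B})$ passing through $ch^{(1)}(c_f)$; what you need is to identify this with the homotopy built from $p^*\lambda=q^*\lambda$ together with the two absolute homotopies $\lambda\sim ch^{(1)}(c)$ of Theorem~\ref{theo: compare}. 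Matching endpoints is not enough; the homotopies themselves must agree. The paper does this by constructing, for each of $\cat A$, $\cat B$ and $f$, a commuting square $\mathcal Q_n(-)$ whose upper route produces the Liouville data and whose lower route produces $ch^{(1)}$, and then exhibiting a map of these squares over $k\to\mathcal Q_n(\cat A)\to\mathcal Q_n(f)\leftarrow\mathcal Q_n(\cat B)$. That diagram is precisely the ``naturality'' you gesture at, but it does not reduce to the bimodule computation of Part~(1); it is a separate (if parallel) computation relating the tautological section $s$ of $\mathcal M_f$ to $ch^{(1)}$.
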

As a consequence of the first point, we recover Yeung's deformed relative Calabi--Yau completion (without the finite 
cellular assumption) thanks to a simple use of the composition of Calabi--Yau cospans. 
As a consequence of the second point, we obtain that the moduli of objects of certain deformed relative Calabi--Yau completions 
can be identified with relative critical loci. We understand very concrete examples of these relative critical loci in the case 
of quiver representations and exhibit in particular some new (to our knowledge) lagrangian subvarieties of the Hilbert schemes 
of points on the plane. 

\medskip

We now summarize the new contributions of this paper: 
\begin{itemize}
\item We extend several results of Yeung on the existence of (relative) Calabi--Yau structures from finite cellular 
dg-categories to smooth dg-categories, simplifying the proofs. 
\item The simplification is made possible thanks to a new exact Calabi--Yau cospan associated with every dg-functor 
$f:\cat{A}\to\cat{B}$, that we also use to identify the Calabi--Yau completion with a noncommutative hamiltonian reduction. 
\item We identify the moduli stack of objects of a (relative) deformed Calabi--Yau completion with the (relative) critical locus. 
\item In the case of the inclusion of the one loop quiver in its tripled version, we discover new lagrangian subvarieties in 
the Hilbert scheme of $\mathbb{C}^2$. 
\end{itemize}

%Our study relies on rather independent but intricated works of Brav--Dyckerhoff~\cite{BD1}, Keller~\cite{Keller} and 
%Yeung~\cite{Ye0}, all realized in the past few years. On the one hand, Keller defines some deformations of Calabi--Yau completions 
%of smooth dg-categories, inspired by the classic and analogous moduli problems of quiver representations and sheaves over smooth 
%varieties. We describe applications of our work in the former area, where deformations are induced by potentials on quivers, 
%following the pioneering work of Ginzburg~\cite{Gi} on 3-Calabi--Yau dg-algebras. On the other hand, Yeung constructs 
%\emph{relative} Calabi--Yau structures, introduced by Brav and Dyckerhoff, on functors between Keller's deformations. 
\subsection*{Outline of the paper}

\begin{tikzpicture}
\node[draw,rectangle,rounded corners=3pt, text justified] (A) at (-5,0) 
{Section~\ref{sec:motiv}} ;
\node[draw,rectangle,rounded corners=3pt, text justified] (B) at (0,0) 
{Section~\ref{sec:examples}} ;
\node[draw,rectangle,rounded corners=3pt, text justified] (C) at (0,2) 
{Section~\ref{sec:recap}} ;
\node[draw,rectangle,rounded corners=3pt, text justified] (D) at (6,0) 
{Section~\ref{sec:CY}} ;
\node[draw,rectangle,rounded corners=3pt, text justified] (E) at (3,0) 
{Section~\ref{sec:comparison}} ;
\draw [>=latex,->] (A) to[bend left=5] node[midway,above]{motivation} (B) ;
\draw [>=latex,->] (B) to[bend left=5] node[midway,below]{example} (A) ;
\draw [>=latex,->] (C) to node[midway,left]{examples} (B) ;
\draw [>=latex,->] (B) to  (E) ;
\draw [>=latex,->] (D) to (E) ;
\node (G) at (3,.6) {comparison} ;
\end{tikzpicture}

\begin{comment}
\begin{tikzpicture}
\node[draw,rectangle,rounded corners=3pt, text justified] (A) at (-5,0) 
{Section~\ref{sec:motiv}} ;
\node[draw,rectangle,rounded corners=3pt, text justified] (B) at (0,0) 
{Section~\ref{sec:examples}} ;
\node[draw,rectangle,rounded corners=3pt, text justified] (C) at (0,2) 
{Section~\ref{sec:recap}} ;
\node[draw,rectangle,rounded corners=3pt, text justified] (D) at (6,0) 
{Section~\ref{sec:CY}} ;
\node[draw,rectangle,rounded corners=3pt, text justified] (E) at (3,-2) 
{Section~\ref{sec:comparison}} ;
\draw [>=latex,->] (A) to[bend left=5] node[midway,above]{motivation} (B) ;
\draw [>=latex,->] (B) to[bend left=5] node[midway,below]{example} (A) ;
\draw [>=latex,->] (C) to node[midway,right]{examples} (B) ;
\draw [>=latex,->] (B) to[bend right=15] (E) ;
\draw [>=latex,->] (D) to[bend left=15] (E) ;
\node (G) at (3,-1) {comparison} ;
\end{tikzpicture}
\end{comment}

\medskip

In section~\ref{sec:motiv} we start with what motivated us in the first place, which is the study of some subvarieties defined 
as constrained critical loci in the cotangent space of $\mathfrak{gl}_n(\mathbb C)$. The general idea goes back to the work~\cite{RS}, 
which gives criteria, in the differentiable setting, for these submanifolds to be lagrangian. We apply their idea to a 
particular algebro-geometric example associated with the inclusion of the one loop quiver in its tripled version, we perform 
a thorough description of the irreducible components of the subvarieties we define, and we prove that they are all 
indeed lagrangian. This section serves as a motivation for the rest of the paper, but it can easily been skipped as all 
other sections are independant from it. On the contrary, readers who are only interested in the new lagrangian in the Hilbert 
scheme may skip the rest of the paper. 

\medskip

In section~\ref{sec:recap} we give a recollection on shifted symplectic and lagrangian structures, according to~\cite{PTVV}. 
We introduce in particular mixed realization functors that go from graded mixed complexes to mixed complexes. It is an essential 
tool that allows to associate closed forms with negative cyclic classes, on commutative differential graded algebras. 
It is used later on to construct shifted symplectic (and lagrangian) structures on moduli of objects of Calabi--Yau categories 
(and functors). We insist on exact structures, as they sometimes make the proof simpler. 

\medskip

We give numerous examples of shifted symplectic and lagrangian structures in section~\ref{sec:examples}, starting with shifted 
cotangent and conormal stacks (after~\cite{CalCot}). 
This leads to a functor $\mathbf{T}^*[n]$ going from the homotopy category of derived stacks and morphisms thereof 
to the homotopy category of exact $n$-shifted symplectic stacks and lagrangian correspondences. As an application, we get 
that the cotangent stack to the representation stack of a quiver $Q$ is the derived representation stack of Ginzburg $2$-Calabi--Yau 
algebra with no potential. Using composition of lagrangian correspondences one can then define critical loci (or twisted 
cotangent stacks) and relative (or constrained) critical loci, which are deformations of cotangent and conormal stacks. 
We then study examples associated with inclusions of quivers, and again identify them with derived representation stacks 
of more general Ginzburg dg-algebras~\cite{Gi}. 
We end the section with a result that provides a systematic way of building genuine lagrangian subvarieties from the derived 
setting, and retrieve our motivating examples from section~\ref{sec:motiv}. 

\medskip

The reader who is only interested in Calabi--Yau categories may directly go to section~\ref{sec:CY}: 
we investigate the dg-categories at stake in our geometric considerations, and study the counterpart of the 
lagrangian correpondences of the previous section using the so-called Calabi--Yau cospans introduced in~\cite{BD1}. 
We introduce exact Calabi--Yau cospans, as they help to simplify some proofs (in this section, but above all in the next one), 
and we provide examples of these. As already mentioned, we extend some results of Yeung, and prove that the exact Calabi--Yau 
structure on a Calabi--Yau completion can be obtained \textit{via} a composition of Calabi--Yau cospans, that can be understood 
as a noncommutative hamiltonian reduction procedure. 

\medskip

In the last section~\ref{sec:comparison}, we consider the moduli of objects of (deformed) Calabi--Yau completions. We identify 
these with cotangent stacks or (relative) critical loci. We show that the symplectic/lagrangian structures induced by the 
Calabi--Yau structures on these moduli of objects (thanks to~\cite{BD2,ToCY}) coincide with the classical ones on shifted cotangent 
stacks and relative critical loci. 

\subsection*{Acknowledgements}
We are grateful to Mathieu Anel for many conversations about relative critical loci, to Benjamin Hennion who suggested a strategy 
to get genuine lagrangian subvarieties from derived lagrangians, to Bernhard Keller for several discussions about Calabi--Yau 
and dg-categories, and to Wai-Kit Yeung for his comments on a previous version.  
The first and second author have received funding from the European Research Council (ERC) under the
European Union's Horizon 2020 research and innovation programme (Grant Agreement No.\ 768679).

\section{Motivation: new lagrangian subvarieties}\label{sec:motiv}

Lagrangian subvarieties are particularly important in moduli problems as they correspond to singular supports (or 
characteristic varieties) of sheaves on given moduli spaces.  
%They also play a crucial role in Kontsevich's homological mirror symmety conjecture 
%as the main players of the Fukaya categories (see \textit{e.g.}~\cite{KS}). 
Our idea here is to construct new lagrangian subvarieties rather than working on the support of given sheaves. We work in the 
fruitful framework of quiver moduli which yields a fundamental class of examples of $3$-Calabi--Yau algebraic structures 
(see~\cite{Gi}). Lagrangian subvarieties have already proved being of great interest in this particular context as they 
were used by Lusztig~\cite{Lusztig} and Kashiwara--Saito~\cite{KaSa} to define canonical bases on quantum groups.

We have two rather distinct inspirations here, one coming from traditional symplectic geometry, more precisely from the 
work~\cite{RS} of Robbin and Salamon. They define lagrangian manifolds attached to a variational family, \textit{i.e.}\ the 
datum of a function $\phi:X\to\mathbb R$ and a submersion $\pi:X\to B$, as the critical locus of $\phi$ in $T^*B$ constrained 
by $\pi$. Our second inspiration comes from the recent definition of relative Calabi--Yau structures by Brav and 
Dyckerhoff~\cite{BD1} and their study in the context of quivers by Yeung~\cite{Ye0}. More precisely he carries out therein 
computations with respect to a pair of nested quivers $(Q_1\subseteq Q_2)$ (see \S3.4 in \textit{loc.\ cit.}). It turns out 
that the combination of these works already gives a nontrivial example of lagrangian subvarieties in the classical situation 
of the one loop quiver in its tripled version
\[
S_1=\!\!\!\raisebox{-0.46\height}{
\begin{adjustbox}{max totalsize=4cm}
\begin{tikzpicture}
\node(1)at(0,0){$\bullet$};
\draw[-latex](1)edge[out=-40,in=40,looseness=12,right]node{$x$}(1);
\end{tikzpicture}
\end{adjustbox}}
~~~~\subset~~~~
S_3=\!\!\!\!\!\!\!\!\!\!\!\!\!\raisebox{-0.46\height}{
\begin{adjustbox}{max totalsize=4cm}
\begin{tikzpicture}
\node(1)at(0,0){$\bullet$};
\draw[-latex](1)edge[out=-40,in=40,looseness=12,right]node{$x$}(1);
\draw[-latex](1)edge[out=80,in=160,looseness=12,above ]node{$y$}(1);
\draw[-latex](1)edge[out=200,in=280,looseness=12,below]node{$z$}(1);
\end{tikzpicture}
\end{adjustbox}}
\]
the latter being endowed with the potential $W=[x,y]z$ defined as the product of the moment map of the doubled quiver 
with the third loop. The aim of this section is to study the geometry of the algebraic varieties we obtain following~\cite{RS} 
in this particular case as a motivation to a deeper work in the framework of derived symplectic geometry. 

\subsection{A lagrangian subvariety in the commuting variety}\label{lagcomm}

We will work with the linear submersion $\pi:{\Rep}_{\mathbb C}(S_3,n)\rightarrow {\Rep}_{\mathbb C}(S_1,n)$ of representations of 
dimension $n$ over $\mathbb C$ given by the projection $(x,y,z)\mapsto x$, endowed with the map 
$\phi=\mathrm{tr}(W):{\Rep}_{\mathbb C}(S_3,n)\rightarrow\mathbb C$, $(x,y,z)\mapsto \mathrm{tr}([x,y]z)$.
The variety defined in~\cite{RS} (relative critical locus of $\phi$ constrained by $\pi$, or Lagrange multiplier space) is given by 
\[
\Lambda_n=\left\{(x,x^*)\in T^*{\Rep}_{\mathbb C}(S_1,n)\middle|
\begin{aligned}&\exists (y,z)\text{ s.t.\ }(x,y,z)\in {\Rep}_{\mathbb C}(S_3,n)\\
&\forall \gamma=(\gamma_1,\gamma_2,\gamma_3)\in {\Rep}_{\mathbb C}(S_3,n):\\
&\mathrm{tr}(\gamma_1x^*)=d\phi_{x,y,z}(\gamma)\end{aligned}\right\}.
\]
Since $d\phi_{x,y,z}(\gamma)=\mathrm{tr}(\gamma_1[y,z]+\gamma_2[z,x]+\gamma_3[x,y])$, we get
\[
\Lambda_n=\big\{(x,[y,z])\mid[x,y]=[x,z]=0\big\}\subset T^*{\Rep}_{\mathbb C}(S_1,n)\,.
\]
Our aim is to prove that $\Lambda_n$ is purely lagrangian, \textit{i.e.}\ that all its irreducible components are isotropic 
of maximal dimension $n^2$.

\begin{remark}~

\begin{enumerate}[label=(\roman*)]
\item
Via the trace map, one can identify $T^*{\Rep}_{\mathbb C}(S_1,n)$ with the representation space ${\Rep}_{\mathbb C}(S_2,n)$ 
of the doubled quiver. Then, the moment map associated with the action of $\GL_n(\mathbb C)$ on ${\Rep}_{\mathbb C}(S_1,n)$ 
by conjugation reads
\[
\mu_n:~{\Rep}_{\mathbb C}(S_2,n)\to\mathfrak{gl}_n(\mathbb C)~,~(x,x^*)\mapsto[x,x^*]\,.
\]						
Hence $\Lambda_n$ is included in the \emph{commuting variety} $\mu^{-1}_n(0)$. 
\item Denote by $\mathcal N \subset \mathfrak{gl}_n(\mathbb C)$ the nilpotent cone. We want to emphasize that $\Lambda_n$ 
is distinct from $T^*_\mathcal N\mathfrak{gl}_n(\mathbb C)\subset\mu^{-1}_n(0)$, which is lagrangian in 
$T^* \mathfrak{gl}_n(\mathbb C)$. Its irreducible components are given by the closures of 
$T^*_{\mathcal O_\lambda}\mathfrak{gl}_n(\mathbb C)$, where $\mathcal O_\lambda$ denotes the nilpotent orbit associated 
with a partition $\lambda\vdash n$.
\end{enumerate}\end{remark}

\begin{proposition}
The subvariety $\Lambda_n$ is isotropic in $T^*{\Rep}_{\mathbb C}(S_1,n)$.
\end{proposition}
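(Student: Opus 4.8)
The plan is to argue directly from the explicit description of $\Lambda_n$. Identify $T^*Rep_{\mathbb C}(S_1,n)$ with $\mathfrak{gl}_n(\mathbb C)\times\mathfrak{gl}_n(\mathbb C)$, so that the tautological $1$-form is $\lambda=\mathrm{tr}(x^*\,dx)$ and the symplectic form is $\omega=d\lambda$. The key observation is that $\Lambda_n$ is precisely the (constructible) image of the closed subvariety
\[
Z=\bigl\{(x,y,z)\in Rep_{\mathbb C}(S_3,n)\mid [x,y]=[x,z]=0\bigr\}
\]
under the morphism $F\colon Z\to T^*Rep_{\mathbb C}(S_1,n)$, $(x,y,z)\mapsto(x,[y,z])$. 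It then suffices to show that $F$ pulls $\omega$ back to zero on the tangent spaces of $Z$: at a generic smooth point of an irreducible component $C$ of $\overline{\Lambda_n}$, namely a point $F(p)$ with $p$ a smooth point of a component of $Z$ dominating $C$ at which $F$ has maximal rank, one has $T_{F(p)}C=dF_p(T_pZ)$, while $T_pZ$ is contained in the kernel $\{(a,b,c)\mid [a,y]+[x,b]=0,\ [a,z]+[x,c]=0\}$ of the linearization of the equations defining $Z$; so $\omega|_{T_{F(p)}C}=0$ will follow from a pointwise identity.

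For the computation, extend $F$ to $\widetilde F\colon\mathfrak{gl}_n(\mathbb C)^3\to T^*Rep_{\mathbb C}(S_1,n)$, $(x,y,z)\mapsto(x,[y,z])$; then $\widetilde F^*\lambda=\mathrm{tr}([y,z]\,dx)$ and $\widetilde F^*\omega=d\,\mathrm{tr}([y,z]\,dx)$. I would evaluate this $2$-form on two constant vector fields $u=(a,b,c)$, $v=(a',b',c')$ subject to the four linearized constraints above, using $\widetilde F^*\omega(u,v)=u\bigl(\widetilde F^*\lambda(v)\bigr)-v\bigl(\widetilde F^*\lambda(u)\bigr)$. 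Each resulting term is the trace of a triple product containing a single commutator; repeated use of the cyclicity of the trace and of $\mathrm{tr}([P,Q]R)=\mathrm{tr}(P[Q,R])$, together with the substitutions $[x,b]=-[a,y]$, $[a',z]=-[x,c']$ and their companions, rewrites every term in the shape $\mathrm{tr}(x\,[\cdot,\cdot])$ and shows that $u(\widetilde F^*\lambda(v))$ and $v(\widetilde F^*\lambda(u))$ coincide (both equal $\mathrm{tr}\bigl(x([c',b]+[c,b'])\bigr)$), so that $\widetilde F^*\omega(u,v)=0$. Equivalently, $F^*\lambda$ is a closed $1$-form on $Z$.

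The only genuinely delicate point is not this elementary trace manipulation but the passage from pointwise vanishing of $\widetilde F^*\omega$ on the linearized incidence conditions to the isotropy of each irreducible component of $\Lambda_n$, since $\Lambda_n$ is a priori only constructible as the image of $Z$: this needs the standard generic-smoothness and fibre-dimension argument sketched above, together with the fact that the components of $Z$ have tangent spaces inside the linearized vanishing locus of $[x,y]$ and $[x,z]$. I expect this to be routine. Note that this yields only the isotropy half of the eventual claim that $\Lambda_n$ is purely lagrangian; that every component has the maximal dimension $n^2$ is a separate matter, handled through the description of the components announced right after this proposition.
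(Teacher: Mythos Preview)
Your argument is correct. The computation that $\widetilde F^*\omega$ vanishes on vectors satisfying the linearized constraints is right (both $u(\widetilde F^*\lambda(v))$ and $v(\widetilde F^*\lambda(u))$ reduce to $\mathrm{tr}\bigl(x([c',b]+[c,b'])\bigr)$, as you say), and the generic-smoothness passage from $F^*\omega=0$ on $Z$ to isotropy of each component of $\overline{\Lambda_n}$ is standard and correctly sketched.

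The paper's proof is organized differently but rests on the same map. Writing $X=Rep_{\mathbb C}(S_3,n)$, $B=Rep_{\mathbb C}(S_1,n)$, the paper forms the fibre product $L=X\times_{T^*X}\pi^*T^*B$ of the two lagrangian morphisms $d\phi\colon X\to T^*X$ and $\pi^*T^*B\to T^*X$; unwinding the definitions, $L$ is exactly your $Z$ and the projection $L\to T^*B$ is exactly your $F$. Rather than computing $F^*\omega$ by hand, the paper invokes the general principle that a composition of lagrangian correspondences is isotropic, so the image of $L$ in $T^*B$ is isotropic. Thus the two proofs share the same underlying geometry; yours replaces the abstract ``composition of correspondences'' step by an explicit trace identity. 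Your route is more self-contained and makes the vanishing completely transparent; the paper's route explains \emph{why} the vanishing holds without calculation and foreshadows the derived picture developed later (relative critical loci as lagrangian correspondences). The paper also flags, as you do, that $L$ need not be smooth and that this is handled by standard arguments (or rendered moot in the derived setting).
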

\begin{proof}
Write $X={\Rep}_{\mathbb C}(S_3,n)$ and $B={\Rep}_{\mathbb C}(S_1,n)$, and form the following composition of lagrangian 
correspondences
\[
  \xymatrix{& & L \ar[rd] \ar[dl] && \\
&X \ar[rd]^-{d\phi}\ar[ld] && \pi^* T^* B  \ar[ld]\ar[rd]& \\
~~\mathrm{pt}~~&&T^* X &&T^*B.}
\]
Then one can check that $\Lambda_n$ is in the image of $L$ in $T^*{\Rep}_{\mathbb C}(S_1,n)$, and is therefore isotropic.
\end{proof}
\begin{remark}
One sees that $L$ itself is neither necessarily a subspace, nor necessarily smooth. Both issues will be irrelevant when we will be working in the derived setting. 
\end{remark}

Using Lie notations, one can denote by $\mathfrak g_x$ the centraliser of $x\in\mathfrak g=\mathfrak{gl}_n(\mathbb C)$ 
and hence write
\[
\Lambda_n=\{(x,t)\in\mathfrak g^2\mid t\in[\mathfrak g_x,\mathfrak g_x]\}\subset T^*\mathfrak g\]
as it turns out that the subvariety $\{[y,z]\mid(y,z)\in\mathfrak g_x^2 \}\subset\mathfrak g_x$ is actually a vector space, hence coincides with $[\mathfrak g_x,\mathfrak g_x]$ defined as a Lie algebra. Indeed we have the following proposition:

\begin{proposition}\label{propAbel} Decompose $x=\oplus_k(\alpha_k+n_k)$ with respect to generalized eigenspaces, where the $\alpha_k's$ are pairwise distinct complex numbers and $n_k$ is nilpotent of type $\lambda^{(k)}=(\lambda^{(k)}_1\ge\lambda^{(k)}_2\ge\dots)$, a partition of $\dim \ker(x-\alpha_k)^n$. Then $\{[y,z]\mid(y,z)\in\mathfrak g_x^2 \}$ is a vector space of codimension $\sum_k\lambda^{(k)}_1$ in $\mathfrak g_x$.\end{proposition}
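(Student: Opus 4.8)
The plan is to reduce to a single nilpotent operator and then treat separately the two assertions in the statement -- that the set of commutators is a linear subspace, and that its codimension is as claimed. Each generalized eigenspace $V_k=\ker\big((x-\alpha_k)^n\big)$ is cut out by a polynomial in $x$, hence preserved by every element of $\mathfrak g_x$; restricting to the $V_k$ yields an isomorphism of Lie algebras $\mathfrak g_x\cong\bigoplus_k\mathfrak z_k$, where $\mathfrak z_k:=Z_{\mathfrak{gl}(V_k)}(n_k)$ is the centralizer of the nilpotent $n_k$ (of Jordan type $\lambda^{(k)}$). For a direct sum of Lie algebras the set of commutators is the product of the corresponding sets for the summands, and codimensions add; so it suffices to prove: for a nilpotent $n$ on $\mathbb C^m$ of Jordan type $\mu=(\mu_1\ge\mu_2\ge\cdots)$, the set $C(n):=\{[y,z]\mid y,z\in Z(n)\}$ is a linear subspace of $Z(n)$ of codimension $\mu_1$. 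From now on we use the identification $Z(n)\cong\mathrm{End}_{\mathbb C[t]}(V)$ with $V=\bigoplus_i\mathbb C[t]/(t^{\mu_i})$ and $t$ acting as $n$; write $m_1>m_2>\cdots>m_u$ for the distinct parts of $\mu$ (so $m_1=\mu_1$) and $p_s$ for the multiplicity of $m_s$.

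\emph{The codimension.} The codimension of the linear span $[Z(n),Z(n)]$ of all commutators equals $\dim HH_0(Z(n))$, the dimension of the space of trace functionals $f$ on $Z(n)$ (those with $f(ab)=f(ba)$). Since $HH_0$ is Morita invariant and $\mathrm{End}_{\mathbb C[t]}(V)$ is Morita equivalent to its basic algebra $\Lambda:=\mathrm{End}_{\mathbb C[t]}\big(\bigoplus_{s=1}^u\mathbb C[t]/(t^{m_s})\big)$ (the two modules have the same indecomposable summands), it is enough to compute $\dim HH_0(\Lambda)$. If $\epsilon_1,\dots,\epsilon_u$ are the idempotents projecting onto the summands, then $[\epsilon_a,g]=g$ for $g$ in an off-diagonal block $(a,b)$, so every trace functional vanishes on all off-diagonal blocks and hence has the form $g\mapsto\sum_s\langle\epsilon_sg\epsilon_s,\psi_s\rangle_s$, where $\langle-,\psi_s\rangle_s$ is the residue pairing on $\epsilon_s\Lambda\epsilon_s=\mathbb C[t]/(t^{m_s})$ determined by $\psi_s\in\mathbb C[t]/(t^{m_s})$. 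Imposing $f(gg')=f(g'g)$ on pairs supported in opposite off-diagonal blocks $(a,b)$, $(b,a)$ becomes, after expanding the compositions, the family of relations $\psi_s\equiv t^{\,m_s-m_{s+1}}\psi_{s+1}\pmod{t^{m_{s+1}}}$ for $s=1,\dots,u-1$, which imply those between non-adjacent indices. Solving from the bottom -- $\psi_u$ free ($m_u$ parameters), each further $\psi_s$ adding $m_s-m_{s+1}$ parameters -- gives $\dim HH_0(\Lambda)=m_u+\sum_{s<u}(m_s-m_{s+1})=m_1=\mu_1$, so $[Z(n),Z(n)]$ has codimension $\mu_1$.

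\emph{Linearity.} It remains to see that $C(n)=[Z(n),Z(n)]$, i.e.\ that every element of the derived subalgebra is a single commutator; this finishes the proof, since then $C(n)$ is the linear subspace $[Z(n),Z(n)]$ whose codimension has just been computed, and unwinding the reduction gives that $\{[y,z]\mid y,z\in\mathfrak g_x\}$ is linear of codimension $\sum_k\lambda^{(k)}_1$. I would argue using the grading $Z(n)=\bigoplus_{d\ge0}Z(n)_d$ from a Dynkin cocharacter of an $\mathfrak{sl}_2$-triple through $n$, for which $Z(n)_0=\bigoplus_s\mathfrak{gl}_{p_s}$ is the reductive part and $[Z(n),Z(n)]$ is graded. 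Given $v=\sum_d v_d$ in $[Z(n),Z(n)]$, one looks for $y=\sum_d y_d$, $z=\sum_d z_d$ solving $\sum_{d+e=f}[y_d,z_e]=v_f$ by increasing $f$: in degree $0$, $v_0\in[Z(n)_0,Z(n)_0]=\bigoplus_s\mathfrak{sl}_{p_s}$ is a commutator $[y_0,z_0]$ with $y_0,z_0\in Z(n)_0$ by the classical theorem that traceless matrices are commutators; in each degree $f>0$ one must, having fixed the lower-degree components, realize the remaining element of $[Z(n),Z(n)]_f$ through the unknowns of degree $f$, keeping enough freedom in the already-chosen components (since $[Z(n),Z(n)]_f$ is in general larger than $[Z(n)_0,Z(n)_f]$, as the example $\mu=(3,2)$ shows). \textbf{The step I expect to be the main obstacle} is precisely this solvability, i.e.\ proving that the commutator map $Z(n)\times Z(n)\to[Z(n),Z(n)]$ is surjective and not merely dominant: this is a Shoda-type statement that must exploit the specific structure of $Z(n)$, since for a general algebraic Lie algebra the commutator set fails to be linear (e.g.\ for a free $2$-step nilpotent Lie algebra on $\ge4$ generators). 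A more pedestrian alternative is to settle first the equal-parts case $\mu=(m,\dots,m)$, where $Z(n)=M_p\big(\mathbb C[t]/(t^m)\big)$ and linearity reduces to a Shoda theorem over the local ring $\mathbb C[t]/(t^m)$, and then to induct on the number $u$ of distinct part sizes, peeling off the Jordan blocks of smallest size.
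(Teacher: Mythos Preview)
Your reduction to a single nilpotent block matches the paper's opening move, and your computation of the codimension of the linear span $[Z(n),Z(n)]$ via $HH_0$ and Morita invariance is correct and more conceptual than what the paper does. But there is a genuine gap, and you name it yourself: you do not prove that every element of $[Z(n),Z(n)]$ is a \emph{single} commutator. The graded argument through an $\mathfrak{sl}_2$-triple is only outlined, the obstruction you raise (that $[Z(n),Z(n)]_f$ can strictly contain $[Z(n)_0,Z(n)_f]$) is real and not dealt with, and neither of your two alternatives is carried out. Since the proposition asserts that the \emph{set} $\{[y,z]\}$ is itself a vector space --- not its span --- this is the core of the statement; what you have actually established is only $\dim\big(Z(n)/[Z(n),Z(n)]\big)=\mu_1$.

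The paper does not separate the two questions. It defers the nilpotent case to an appendix lemma and works by bare-hands linear algebra in the Jordan-block basis: elements of $Z(n)$ are block matrices $(C_{i,j})_{1\le i,j\le r}$ with each block determined by a polynomial $P_{i,j}$ of degree $<\min(\lambda_i,\lambda_j)$, and $[C,C']$ is computed block by block. For off-diagonal positions one exhibits explicit pairs $(C,C')$ showing the $(i,j)$-block is unconstrained; the tuple of diagonal blocks $([C,C']_{i,i})_i$ is identified with the image of an explicit linear map
\[
f:\bigoplus_{i<j}\mathbb C_{\lambda_j-1}[X]\longrightarrow\prod_i\mathrm{Mat}_{\lambda_i}(\mathbb C)
\]
evaluated at the inputs $F_{i,j}=P_{i,j}P'_{j,i}-P'_{i,j}P_{j,i}$, which can be prescribed freely. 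The codimension then comes from an elementary computation of $\ker f$, giving $\dim\mathrm{Im}(f)=\sum_{j\ge2}\lambda_j$. So in the paper's setup linearity and codimension emerge from one and the same explicit description of which block-patterns occur as commutators, rather than from first computing the span abstractly and then invoking a separate Shoda-type surjectivity statement as you propose.
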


\begin{proof}
It is enough to prove this result when $x$ is nilpotent of type $\lambda$. 
Denote by $\lambda'$ the conjugate partition, so that in a basis adapted to iterated kernels of $x$ one has 
\[
x=  \left(\!\!\!\!\!\!
     \raisebox{0.5\depth}{
       \xymatrixcolsep{1ex}
       \xymatrixrowsep{1ex}
       \xymatrix{
        0 \ar@{.}[ddd]\ar @{.}[dddrrr] &          I_{1,2}\ar @{.}[ddrr] &0\ar @{.}[dr] \ar @{.}[r]&      0 \ar@{.}[d]\\
      &&&0\\
         &&& I_{s-1,s}\\
         0 \ar@{.}[rrr]  & &  & 0
       }
     }
   \right)
\]
where $I_{k,k+1}$ is the matrix of size $\lambda'_k\times\lambda'_{k+1}$ with coefficients $\delta_{i,j}$, and 
$s=l(\lambda')=\lambda_1$. Then a quick computation shows that $\mathfrak g_x$ consists in matrices $T=(T_{i,j})_{i\le j}$ 
triangular by blocks of size $\lambda'_i\times\lambda'_j$ satisfying
\[
T_{i,j-1}I_{j-1,j}-I_{i,i+1}T_{i+1,j}=0
\]
for every $i\le j-1$. From there, thanks to Lemma~\ref{vsdim}, we see that the variety of commutators of such matrices 
consists in matrices of the same type with the extra conditions $\mathrm{tr}(T_{i,i})=0$ for every $1\le i\le s$ 
(\textit{i.e.} diagonal blocks are commutators themselves). Indeed, we get a vector space of the correct dimension.
\end{proof}

Consider a partition $\mu\vdash n$ and denote by $\Lambda_\mu$ the subvariety of $\Lambda_n$ consisting of pairs such that $x$ is diagonalizable of type $\mu$. We write $l(\mu)$ for $\mu'_1$, the number of parts of $\mu$.

\begin{theorem} The irreducible components of $\Lambda_n$ are the closures of the $\Lambda_\mu$'s, which are all of dimension $n^2$.\end{theorem}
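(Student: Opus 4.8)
The plan is to show that the $\overline{\Lambda_\mu}$, $\mu\vdash n$, form a finite family of pairwise incomparable irreducible closed subsets of $\Lambda_n$ whose union is all of $\Lambda_n$; this identifies them with the irreducible components. Concretely I would establish three facts: (a) each $\Lambda_\mu$ is irreducible of dimension exactly $n^2$; (b) the subsets $\overline{\Lambda_\mu}$ are pairwise distinct; (c) $\Lambda_n=\bigcup_{\mu\vdash n}\overline{\Lambda_\mu}$. Given these, since the $\overline{\Lambda_\mu}$ are irreducible, pairwise distinct and all of the same dimension $n^2$, none can be contained in another, so every one of them is an irreducible component and there are no others. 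In particular $\Lambda_n$ is equidimensional of dimension $n^2$, which together with the preceding Proposition shows that all of its components are lagrangian.

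For (a) and (b): the locus $\mathcal X_\mu\subseteq\mathfrak g$ of diagonalizable matrices whose eigenvalue multiplicities form $\mu$ is irreducible of dimension $n^2-\sum_k\mu_k^2+l(\mu)$, being fibred over configurations of $l(\mu)$ distinct eigenvalues by the homogeneous space $GL_n/\prod_k GL_{\mu_k}$ of decompositions into eigenspaces. Over $\mathcal X_\mu$, the subset $\Lambda_\mu$ is the total space of the $GL_n$-equivariant family of linear subspaces $x\mapsto[\mathfrak g_x,\mathfrak g_x]$, which by Proposition~\ref{propAbel} has codimension $l(\mu)$ in $\mathfrak g_x$ (all nilpotent parts here vanish); hence $\Lambda_\mu$ is irreducible and $\dim\Lambda_\mu=(n^2-\sum_k\mu_k^2+l(\mu))+(\sum_k\mu_k^2-l(\mu))=n^2$. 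They are pairwise distinct because a generic point of $\overline{\Lambda_\mu}$ belongs to $\Lambda_\mu$, and its first coordinate is diagonalizable with multiplicity partition exactly $\mu$.

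The heart of the argument is (c). For $x$ diagonalizable there is nothing to prove, and by decomposing along generalized eigenspaces $V=\bigoplus_\alpha V_\alpha$, $x|_{V_\alpha}=\alpha\,\mathrm{id}+e_\alpha$ with $e_\alpha$ nilpotent, one has $\mathfrak g_x=\prod_\alpha\mathfrak g_{e_\alpha}$ and $[\mathfrak g_x,\mathfrak g_x]=\prod_\alpha[\mathfrak g_{e_\alpha},\mathfrak g_{e_\alpha}]$; it therefore suffices to treat a single nilpotent block, i.e.\ to assume $x=e$ is nilpotent of Jordan type $\lambda$. Fixing a Jordan basis $\{v^i_k\}$ of $e$ with $ev^i_k=v^i_{k-1}$, I would use the ``staircase'' deformation $e_s$ defined by $e_s v^i_k=sk\,v^i_k+v^i_{k-1}$. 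For $s\neq 0$ each Jordan block of $e$ becomes an upper triangular matrix with pairwise distinct diagonal entries $0,s,2s,\dots$, so $e_s$ is diagonalizable; the eigenvalue $ks$ occurs with multiplicity $\lambda'_{k+1}$, so $e_s$ has type the conjugate partition $\lambda'$, and $e_s\to e$ as $s\to 0$. The decisive point is the equality $\dim\mathfrak g_{e_s}=\sum_k(\lambda'_k)^2=\dim\mathfrak g_e$: this dimension being constant for $s\neq0$, properness of the Grassmannian makes $s\mapsto\mathfrak g_{e_s}$ extend to a morphism $\mathbb A^1\to\mathrm{Gr}$, and its value at $0$, being a subspace of the right dimension contained in $\mathfrak g_e$ (because $[e_s,a_s]=0$ passes to the limit), must equal $\mathfrak g_e$. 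Likewise $\dim[\mathfrak g_{e_s},\mathfrak g_{e_s}]=\dim\mathfrak g_{e_s}-l(\lambda')$ is constant for $s\neq0$, so $s\mapsto[\mathfrak g_{e_s},\mathfrak g_{e_s}]$ extends to a morphism to a Grassmannian with some value $\mathfrak h_0$ at $0$; since $\mathfrak g_{e_s}\to\mathfrak g_e$ and the Lie bracket is continuous, $[\mathfrak g_e,\mathfrak g_e]\subseteq\mathfrak h_0$. Hence $\{e\}\times[\mathfrak g_e,\mathfrak g_e]$ is contained in the limit as $s\to0$ of $\{e_s\}\times[\mathfrak g_{e_s},\mathfrak g_{e_s}]$, and therefore in the closure of $\{(x,t)\in\Lambda_n:\ x\ \text{diagonalizable}\}=\bigcup_\mu\Lambda_\mu$. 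Reassembling the blocks (for $x_s=\bigoplus_\alpha(\alpha\,\mathrm{id}+e_\alpha^s)$ one has $\prod_\alpha[\mathfrak g_{e_\alpha^s},\mathfrak g_{e_\alpha^s}]\subseteq[\mathfrak g_{x_s},\mathfrak g_{x_s}]$ and $x_s$ is diagonalizable) yields $(x,t)\in\overline{\bigcup_\mu\Lambda_\mu}=\bigcup_\mu\overline{\Lambda_\mu}$, which is (c).

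The main obstacle is exactly the construction in step (c): one must produce, for each non-diagonalizable $x$, a family of diagonalizable matrices degenerating to $x$ along which the fibres $[\mathfrak g_\bullet,\mathfrak g_\bullet]$ do not collapse. The naive choice — unfolding $x$ into a regular semisimple matrix — is useless, since then the centralizer becomes abelian and only the point $(x,0)$ is reached; the deformation must retain the Jordan structure. The staircase family achieves this precisely because of the numerical coincidence that a nilpotent of type $\lambda$ and a diagonalizable matrix of type $\lambda'$ have centralizers of the same dimension and, by Proposition~\ref{propAbel}, derived subalgebras of the same codimension. Everything else — the limit-of-subspaces arguments, to be made rigorous via the valuative criterion for Grassmannians together with constancy of dimensions away from $s=0$, and the dimension count in (a) — is routine.
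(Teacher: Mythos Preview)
Your proof is correct and follows the same overall architecture as the paper's: a dimension count showing each $\Lambda_\mu$ is irreducible of dimension $n^2$, and then a degeneration argument showing every pair $(x,t)$ with $x$ non-diagonalizable lies in some $\overline{\Lambda_\mu}$. You even hit upon the very same one-parameter family: in the paper's basis adapted to iterated kernels your ``staircase'' $e_s$ is exactly their $x(\epsilon)$ (adding $k\epsilon$ on the $k$-th block of size $\lambda'_k$), so in both proofs the nilpotent of type $\lambda$ is approached by diagonalisable matrices of type $\lambda'$.

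Where the two proofs genuinely differ is in how the fibre $[\mathfrak g_e,\mathfrak g_e]$ is reached in the limit. The paper writes out $\mathfrak g_{x(\epsilon)}$ and $[\mathfrak g_{x(\epsilon)},\mathfrak g_{x(\epsilon)}]$ block by block, and for a given $y\in[\mathfrak g_x,\mathfrak g_x]$ solves an explicit recursion in $\epsilon$ to produce $T(\epsilon)\in[\mathfrak g_{x(\epsilon)},\mathfrak g_{x(\epsilon)}]$ with $T(\epsilon)\to y$. You instead argue abstractly: the equalities $\dim\mathfrak g_{e_s}=\sum_k(\lambda'_k)^2=\dim\mathfrak g_e$ and, via Proposition~\ref{propAbel}, $\mathrm{codim}\,[\mathfrak g_{e_s},\mathfrak g_{e_s}]=l(\lambda')=\lambda_1=\mathrm{codim}\,[\mathfrak g_e,\mathfrak g_e]$ force both families $s\mapsto\mathfrak g_{e_s}$ and $s\mapsto[\mathfrak g_{e_s},\mathfrak g_{e_s}]$ to extend across $s=0$ as morphisms to Grassmannians, with limits $\mathfrak g_e$ and some $\mathfrak h_0\supseteq[\mathfrak g_e,\mathfrak g_e]$; this already suffices. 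Your route is more conceptual and isolates precisely why the deformation works---the numerical coincidence between centralizer dimensions for types $\lambda$ (nilpotent) and $\lambda'$ (semisimple)---while the paper's explicit construction has the merit of being completely self-contained and not appealing to properness of the Grassmannian.
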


\begin{proof} Fix $x\in \Lambda_\mu$ and denote by $L$ the Levi group associated to the decomposition $E_1\oplus\dots\oplus E_r$ of $\mathbb C^n$ in eigenspaces. We have\[
\Lambda_\mu=\GL_n\times_L\{(\oplus_k z_k \mathrm{id}_{E_k},y)\mid y\in [\mathfrak l,\mathfrak l]\}\]
where the $z_k$'s are pairwise distinct given complex numbers and $\mathfrak l=Lie(L)$. 
The map $g.(\oplus_k z_k,y)\mapsto (g.E_k)_k$ turns $\Lambda_\mu$ into an open subvariety of a vector bundle $V$ over an open subvariety of $\prod_k \mathrm{Grass}(\mu_k,n)$. Hence it is smooth of dimension\[
\dim V+\sum_k\dim \mathrm{Grass}(\mu_k,n)=l(\mu)+\dim [\mathfrak l,\mathfrak l]+\sum_k\mu_k(n-\mu_k)=n^2.\]
Indeed, since $[\mathfrak{gl}_n,\mathfrak{gl}_n]=\mathrm{tr}^{-1}(0)$, we have $\dim  [\mathfrak l,\mathfrak l]=\sum_k(\mu_k^2-1)=\sum_k\mu_k^2-l(\mu)$.

It remains to show that any $(x,y)$ for which $x$ is not diagonalizable lies in the closure of ${\Lambda_\mu}$ for some $\mu$. Once again it is enough to show it for $x$ nilpotent, say of type $\lambda$. We write\[
x=  \left(\!\!\!\!\!\!
     \raisebox{0.5\depth}{
       \xymatrixcolsep{1ex}
       \xymatrixrowsep{1ex}
       \xymatrix{
        0 \ar@{.}[ddd]\ar @{.}[dddrrr] &          I_{1,2}\ar @{.}[ddrr] &0\ar @{.}[dr] \ar @{.}[r]&      0 \ar@{.}[d]\\
      &&&0\\
         &&& I_{s-1,s}\\
         0 \ar@{.}[rrr]  & &  & 0
       }
     }
   \right)\]
as in the proof of Proposition~\ref{propAbel}, and approximate it by\[
x(\varepsilon)=  \left(\!\!\!\!\!\!
     \raisebox{0.5\depth}{
       \xymatrixcolsep{1ex}
       \xymatrixrowsep{1ex}
       \xymatrix{
        0  &          I_{1,2}\ar @{.}[ddrr] &0\ar @{.}[dr] \ar @{.}[r]&      0 \ar@{.}[d]\\
   0\ar@{.}[dd]\ar @{.}[ddrr]   &\varepsilon\ar @{.}[ddrr]&&0\\
         &&& I_{s-1,s}\\
         0 \ar@{.}[rr]  & &0 &(s-1)\varepsilon 
       }
     }
   \right).\]
We have $x(\varepsilon)\exp(x/\varepsilon)=\exp(x/\varepsilon)(x(\varepsilon)-x)$, hence $x(\varepsilon)$ is diagonalisable. As in the proof of~\ref{propAbel}, one can prove that the variety of commutators of elements in $\mathfrak g_{x(\varepsilon)}$ consists in matrices $T(\varepsilon)=(T_{i,j}(\varepsilon))_{i\le j}$ triangular by blocks of size $\lambda'_i\times\lambda'_j$ satisfying\begin{equation}\label{approx}
   T(\varepsilon)_{i,j-1}I_{j-1,j}-I_{i,i+1}T(\varepsilon)_{i+1,j}=\varepsilon(i-j) T(\varepsilon)_{i,j}\end{equation}
   for every $i\le j-1$ and $\mathrm{tr}(T(\varepsilon)_{i,i})=0$ for every $1\le i\le s$.
   Now take $T(\varepsilon)=\sum_kT^{(k)}\varepsilon^k$ such that $T^{(0)}=y$ and $\deg T(\varepsilon)_{i,j}=s-(j-i)$, so that~\eqref{approx} becomes\[
   T^{(k)}_{i,j-1}I_{j-1,j}-I_{i,i+1}T^{(k)}_{i+1,j}=(i-j) T^{(k-1)}_{i,j}
   \] for every $k=0...s$, where $T^{(-1)}=0$.
   Picking \[T^{\langle k\rangle}=(T_{i,j}^{(k-(j-i))})_{i\le j}\in[\mathfrak g_{x(1)},\mathfrak g_{x(1)}]\] concludes the proof as $x(\varepsilon)$ is diagonalisable and $(x(\varepsilon),y(\varepsilon):=T(\varepsilon))\to(x,y)$ as $\varepsilon\to0$. 
\end{proof}

As a consequence, we get that $\Lambda_n$ is a lagrangian subvariety of $T^*\mathfrak{gl}_n(\mathbb C)$ as expected.

\subsection{New lagrangian subvarieties of the Hilbert scheme of $\mathbb{C}^2$}

\subsubsection{Recollection on Nakajima quiver varieties}

Recall that the Hilbert scheme $(\mathbb C^2)^{[n]}$ of $n$ points on $\mathbb C^2$ can be seen as a Nakajima quiver variety associated to the Jordan quiver $S_1$. Let us be a bit more precise. First introduce the \emph{framed} version $S_g^+$ of the quiver $S_g$ with one vertex and $g$ loops: 

\vspace{-1cm} 

\[
S_g^+=\!\!\!\raisebox{-0.48\height}{
\begin{adjustbox}{max totalsize=10cm}
\begin{tikzpicture}
\node(2)at(-1,0){$\bullet$};
\node(1)at(0,0){$\bullet$};
\draw[-latex](2)edge(1);
\draw[-latex](1)edge[out=-30,in=30,looseness=10,right]node{$\dots$}(1);
\draw[-latex](1)edge[out=-50,in=50,looseness=17,right]node{$g.$}(1);
\end{tikzpicture}
\end{adjustbox}}
\] 
\vspace{-1cm}

\noindent We will always consider $1$-dimensional vector spaces on the framing vertex, hence will still abusively consider $n\in\mathbb N$-dimensional representations of $S_g^+$. Consider the doubled quiver $\overline{S_g^+}$ where each arrow is replaced by a pair of opposite ones. Write

\vspace{-1cm}

\[
{\Rep}_\mathbb C\Big(\overline{S_1^+},n\Big)=\Bigg\{\raisebox{-0.48\height}{
\begin{adjustbox}{max totalsize=10cm}
\begin{tikzpicture}
\node(2)at(-1,0){$\mathbb C$};
\node(1)at(0.5,0){$\mathbb C^n$};
\draw[-latex,transform canvas={yshift=-0.08cm}](2)edge[below]node{$v$}(1);
\draw[-latex,transform canvas={yshift=0.08cm}](1)edge[above]node{$v^*$}(2);
\draw[-latex](1)edge[out=30,in=-30,looseness=10,left]node{$x$}(1);
\draw[-latex](1)edge[out=-50,in=50,looseness=14,right]node{$x^*$}(1);
\end{tikzpicture}
\end{adjustbox}}\Bigg\}
\]
\vspace{-1cm}

\noindent where we identify $v$ with its image.

\begin{remark}The action of $\GL_n(\mathbb C)$ on ${\Rep}_\mathbb C(S_1^+,n)$ given by $g.(x,v)=(gxg^{-1},gv)$ induces a moment map given by\[
\mu_n^+:~{\Rep}_{\mathbb C}\Big(\overline{S_1^+},n\Big)\to\mathfrak{gl}_n(\mathbb C)~,~(x,x^*,v,v^*)\mapsto[x,x^*]+vv^*.\]
\end{remark}

 Recall:

\begin{definition}
We say that $(x,x^*,v,v^*)\in {\Rep}_\mathbb C\Big(\overline{S_1^+},n\Big)$ is \emph{stable} if $\mathbb C\langle x,x^*\rangle .v=\mathbb C^n$, and denote by ${\Rep}_\mathbb C\Big(\overline{S_1^+},n\Big)^\text{\!st}$ the open stable locus.
\end{definition}

The following is proved in~\cite{Nakajima}.

\begin{theorem}\label{nakrecap}
We have the following:
\begin{enumerate}[label=(\roman*)]
\item the action of $\GL_n(\mathbb C)$ on ${\Rep}_\mathbb C\Big(\overline{S_1^+},n\Big)^\textup{\!st}$ given by\[
g.(x,x^*,v,v^*)=(gxg^{-1},gx^*g^{-1},gv,v^* g^{-1})\] is free;
\item stable representations $(x,x^*,v,v^*)$ of $(\mu_n^+)^{-1}(0)$ satisfy $v^*=0$;
\item the Hilbert scheme of points on $\mathbb C^2$ is given by the following \emph{symplectic reduction}\[
(\mathbb C^2)^{[n]}=\big((\mu_n^+)^{-1}(0)\big)^\textup{st}/\GL_n(\mathbb C),\]
the quotient being free. As such, it is smooth and symplectic of dimension $2n$.\end{enumerate}
\end{theorem}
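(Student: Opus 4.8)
Since Theorem~\ref{nakrecap} is a recollection, the plan is simply to reconstruct Nakajima's argument~\cite{Nakajima}. For \textup{(i)}, I would show that both the stabilizer in $GL_n(\mathbb C)$ and its Lie algebra are trivial along the stable locus: if $g\in GL_n(\mathbb C)$ fixes a stable representation $(x,x^*,v,v^*)$ then $g$ commutes with $x$ and $x^*$ and satisfies $gv=v$, hence $g$ acts as the identity on $\mathbb C\langle x,x^*\rangle\cdot v=\mathbb C^n$, so $g=\mathrm{id}$; likewise any $\xi\in\mathfrak{gl}_n(\mathbb C)$ commuting with $x,x^*$ and killing $v$ kills $\mathbb C\langle x,x^*\rangle\cdot v=\mathbb C^n$, so $\xi=0$. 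Freeness of the action in the scheme-theoretic sense follows, and the geometric quotient exists by the GIT description of the stable locus recalled in~\cite{Nakajima}.

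For \textup{(ii)}, the first and easy step is to take the trace of the moment map equation $[x,x^*]+vv^*=0$, which gives $v^*v=\mathrm{tr}(vv^*)=-\mathrm{tr}([x,x^*])=0$. The substantive point is to upgrade this to $v^*=0$. I would introduce the largest $(x,x^*)$-invariant subspace contained in $\ker v^*$,
\[
S=\bigcap_{u}\ker\big(v^*\circ u\big)\subseteq\mathbb C^n,
\]
the intersection running over all words $u$ in $x$ and $x^*$ (that $S$ is $x$- and $x^*$-invariant is immediate from the definition), and show that $\mathrm{im}(v)\subseteq S$; stability then forces $S=\mathbb C^n$, i.e.\ $v^*=0$. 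The inclusion $\mathrm{im}(v)\subseteq S$ means $v^*(u\cdot v)=0$ for every word $u$, which one proves by induction on the length of $u$ using the identity $v^*Mv=\mathrm{tr}\big(x^*[x,M]\big)$ — valid for any matrix $M$ as a direct consequence of the moment map equation and cyclicity of the trace — together with the Leibniz rule for the derivation $[x,-]$, whose only nonzero value on generators is $[x,x^*]=-vv^*$; the $vv^*$ factors produced this way, combined with $v^*v=0$, let one close the induction (as in~\cite{Nakajima}).

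For \textup{(iii)}, combining \textup{(ii)} with the moment map equation shows that a stable point of $(\mu_n^+)^{-1}(0)$ has the form $(x,x^*,v,0)$ with $[x,x^*]=0$ and $\mathbb C\langle x,x^*\rangle\cdot v=\mathbb C^n$. Since $x$ and $x^*$ then commute, this is exactly the datum of a $\mathbb C[s,t]$-module structure on $\mathbb C^n$ together with a cyclic vector, equivalently a surjection $\mathbb C[s,t]\twoheadrightarrow M$ with $\dim_{\mathbb C}M=n$; quotienting by $GL_n(\mathbb C)$ (change of basis on $\mathbb C^n$) identifies $\big((\mu_n^+)^{-1}(0)\big)^{\mathrm{st}}/GL_n(\mathbb C)$ with the set of colength-$n$ ideals of $\mathbb C[s,t]$, that is, with $(\mathbb C^2)^{[n]}$. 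Smoothness, the symplectic form, and the dimension $2n$ then follow from Marsden--Weinstein reduction at the regular value $0$: by \textup{(i)} the action is free on the stable locus and $d\mu_n^+$ is surjective there, and $\dim Rep_{\mathbb C}(\overline{S_1^+},n)-2\dim GL_n(\mathbb C)=(2n^2+2n)-2n^2=2n$.

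The main obstacle is the vanishing $v^*=0$ in \textup{(ii)}: the trace computation only delivers $v^*v=0$, and passing from there to $v^*=0$ genuinely requires the subspace argument and the inductive identity above. By contrast, the stabilizer computation in \textup{(i)}, the cyclic-module description of the Hilbert scheme, and the symplectic reduction in \textup{(iii)} are all standard once $v^*=0$ is in hand.
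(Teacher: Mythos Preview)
The paper does not supply its own proof of this theorem; it is a recollection from~\cite{Nakajima}, so there is no argument in the paper to compare against. Your reconstruction of (i) and (iii) is correct and follows Nakajima's treatment.

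For (ii) your ingredients are the right ones, but the induction as you describe it does not close. Expanding $[x,u]$ via Leibniz and using $[x,x^*]=-vv^*$ in the identity $v^*Mv=\mathrm{tr}(x^*[x,M])$ gives, for $u=a_1\cdots a_\ell$,
\[
v^*uv\;=\;-\sum_{i:\,a_i=x^*}v^*\big(a_{i+1}\cdots a_\ell\, x^*\, a_1\cdots a_{i-1}\big)v,
\]
and each word appearing on the right has the \emph{same} length $\ell$ as $u$, so the hypothesis on strictly shorter words is never invoked and ``$v^*v=0$'' alone does not help. What actually makes the induction go through is a separate commutation step coming directly from the moment map equation: from $xx^*=x^*x-vv^*$ one obtains
\[
v^*(\cdots xx^*\cdots)v\;=\;v^*(\cdots x^*x\cdots)v\;-\;(v^*\cdots v)(v^*\cdots v),
\]
where the two factors in the correction term involve words of combined length $|u|-2$, hence each strictly shorter than $u$, and therefore vanish by strong induction. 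This lets you reduce every $v^*uv$ to the normal form $v^*x^a(x^*)^bv$; feeding that back into your trace identity and reordering once more yields $(1+b)\,v^*x^a(x^*)^bv=0$, hence $v^*uv=0$ for all words. With this commutation step inserted, your plan for (ii) is complete.
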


\begin{remark}\label{remsthilb} Written shortly\[
(\mathbb C^2)^{[n]}=\left\{(x,x^*,v)\in {\Rep}_{\mathbb C}(S_2,n)\times\mathbb C^n\middle|\begin{aligned}& [x,x^*]=0\\&\mathbb C[x,x^*].v=\mathbb C^n\end{aligned}\right\}/\GL_n(\mathbb C).\]
Note that thanks to Theorem~\ref{nakrecap}(ii), one can also write\[
(\mathbb C^2)^{[n]}=\big(\mu_{n,1}(0)^{-1}\big)^\text{\!st}/\PGL_{n,1}(\mathbb C)\]
where $\mu_{n,1}$ is the moment map associated to the action of $\GL_{n,1}(\mathbb C):=\GL_n(\mathbb C)\times\mathbb C^*$ on ${\Rep}_{\mathbb C}(S_1^+,(n,1))$ given by $(g,z).(x,v)=(gxg^{-1},gvz^{-1})$ (\emph{i.e.}\ forgetting about the framing character of $S_1^+$).
\end{remark}

\begin{remark}
As in~\S\ref{lagcomm}, one gets a lagrangian subvariety of $(\mathbb C^2)^{[n]}$ when requiring $x$ to be nilpotent. It can be understood as the preimage of $(\{0\}\times\mathbb C)^{(n)}$ by the \emph{Hilbert--Chow morphism} \[
\rho: (\mathbb C^2)^{[n]}\rightarrow (\mathbb C^2)^{(n)}\]
 to the $n$-th symmetric power of $\mathbb C^2$, which maps $(x,x^*,v)$ to the joint spectrum of $(x,x^*)$ (which make sense since $x$ and $x^*$ commute).\end{remark}

\subsubsection{New lagrangian subvarieties}\label{laghilb}

 Note that \[
 \mathrm{tr}(W):~~\raisebox{-0.45\height}{
\begin{adjustbox}{max totalsize=10cm}
\begin{tikzpicture}
\node(2)at(-1.5,0){$\mathbb C$};
\node(1)at(0,0){$\mathbb C^n$};
\draw[-latex](2)edge[above]node{$v$}(1);
\draw[-latex](1)edge[out=-30,in=30,looseness=10,left]node{$x$}(1);
\draw[-latex](1)edge[out=50,in=110,looseness=10,above ]node{$y$}(1);
\draw[-latex](1)edge[out=-110,in=-50,looseness=10,below]node{$z$}(1);
\end{tikzpicture}
\end{adjustbox}}~~~~\mapsto~~\mathrm{tr}([x,y]z)
\]  is still defined on the quotient stack $[{\Rep}_{\mathbb C}(S_3^+,(n,1))/\PGL_{n,1}(\mathbb C)]$, allowing us to apply the recipe 
of~\S\ref{lagcomm} to the projection 
\begin{align*}
\pi :\big[{\Rep}_{\mathbb C}(S_3^+,(n,1))/\PGL_{n,1}(\mathbb C)\big]&\rightarrow \big[{\Rep}_{\mathbb C}(S_1^+, (n,1))/\PGL_{n,1}(\mathbb C)\big]\\
(x,y,z,v)&\mapsto(x,v)\,.
\end{align*}
We will see later in~\S\ref{quivsch} that the $0$-th truncation of the cotangent stack to the quotient stack 
$[{\Rep}_{\mathbb C}(S_1^+, (n,1))/\PGL_{n,1}(\mathbb C)]$ is given by $[\mu_{n,1}^{-1}(0)/\PGL_{n,1}(\mathbb C)$]. 
Hence, thanks to Remark~\ref{remsthilb}, 
\[
(\mathbb C^2)^{[n]}=\Big(\tau_0\bT^*\big[{\Rep}_{\mathbb C}(S_1^+, (n,1))/\PGL_{n,1}(\mathbb C)\big]\Big)^\text{st}
\]
where $\tau_0$ stands for the $0$-truncation induced by the morphism from a complex to its $0$-th cohomology group.
In this framework, we can still make sense of the definition given in~\cite{RS}, and set \[
\Lambda_{n,1}=\left\{(x,x^*=[y,z],v)\middle|\begin{aligned}& [x,y]=[x,z]=0\\&\mathbb C[x,x^*].v=\mathbb C^n\end{aligned}\right\}/\GL_n(\mathbb C)\subset (\mathbb C^2)^{[n]}.\]
Note that it is distinct from $\rho^{-1}((\{0\}\times\mathbb C)^{(n)})$.

\begin{proposition}
The subvariety $\Lambda_{n,1}$ is lagrangian in $(\mathbb C^2)^{[n]}$.
\end{proposition}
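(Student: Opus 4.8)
The plan is to obtain $\Lambda_{n,1}$ by \emph{symplectic reduction} from an explicit (singular) lagrangian subvariety of $Rep_{\mathbb C}(\overline{S_1^+},n)=T^*Rep_{\mathbb C}(S_1^+,n)$ lying inside the zero fibre of the moment map $\mu_n^+$. Writing $Rep_{\mathbb C}(S_1^+,n)=\mathfrak{gl}_n(\mathbb C)\times\mathbb C^n$ in the coordinates $(x,v)$, we get $T^*Rep_{\mathbb C}(S_1^+,n)=T^*\mathfrak{gl}_n(\mathbb C)\times T^*\mathbb C^n$ with coordinates $(x,x^*,v,v^*)$, the canonical symplectic form being the sum of the two canonical forms. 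I set
\[
\widetilde\Lambda:=\Lambda_n\times\big(\mathbb C^n\times\{0\}\big),
\]
the product of $\Lambda_n$ — which is lagrangian in $T^*\mathfrak{gl}_n(\mathbb C)$, and pure of dimension $n^2$ with irreducible components the closures of the $\Lambda_\mu$'s, by the theorem above — with the zero section of $T^*\mathbb C^n$. Being a product of lagrangians, $\widetilde\Lambda$ is a closed isotropic subvariety, pure of dimension $n^2+n=\tfrac12\dim Rep_{\mathbb C}(\overline{S_1^+},n)$.

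Next I would check that $\widetilde\Lambda\subseteq(\mu_n^+)^{-1}(0)$ and that $\widetilde\Lambda$ is $GL_n(\mathbb C)$-stable. On $\widetilde\Lambda$ one has $v^*=0$, hence $vv^*=0$, and $x^*=[y,z]$ with $[x,y]=[x,z]=0$, hence $[x,x^*]=[[x,y],z]+[y,[x,z]]=0$ by the Jacobi identity, so that $\mu_n^+=[x,x^*]+vv^*$ vanishes identically on $\widetilde\Lambda$; stability under the $GL_n(\mathbb C)$-action of Theorem~\ref{nakrecap} is clear, since conjugation preserves the commutator relations and the action on $(v,v^*)$ preserves $v^*=0$. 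Since $x$ and $x^*$ commute on $\widetilde\Lambda$, the stability condition there reads $\mathbb C[x,x^*].v=\mathbb C^n$, so that, by the very definition of $\Lambda_{n,1}$ together with Remark~\ref{remsthilb}, the $GL_n(\mathbb C)$-saturated open $\widetilde\Lambda^{\mathrm{st}}:=\widetilde\Lambda\cap(\mu_n^+)^{-1}(0)^{\mathrm{st}}$ descends to $\Lambda_{n,1}$ inside $(\mathbb C^2)^{[n]}=(\mu_n^+)^{-1}(0)^{\mathrm{st}}/GL_n(\mathbb C)$, the quotient map restricting to a principal $GL_n(\mathbb C)$-bundle $\widetilde\Lambda^{\mathrm{st}}\to\Lambda_{n,1}$.

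The conclusion then follows from the standard fact that symplectic reduction sends an isotropic subvariety of $\mu^{-1}(0)$ to an isotropic subvariety, together with a dimension count. By Theorem~\ref{nakrecap} the action on $(\mu_n^+)^{-1}(0)^{\mathrm{st}}$ is free, with smooth symplectic quotient $(\mathbb C^2)^{[n]}$ of dimension $2n$ whose symplectic form restricts on $(\mu_n^+)^{-1}(0)$ to the canonical one. The open set $\widetilde\Lambda^{\mathrm{st}}$ is non-empty — take $x$ regular semisimple, $y=z=0$ and $v$ a cyclic vector for $x$ — hence, being a non-empty open in a pure-dimensional variety, pure of dimension $n^2+n$, so $\Lambda_{n,1}$ is pure of dimension $(n^2+n)-n^2=n=\tfrac12\dim(\mathbb C^2)^{[n]}$. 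Finally, the canonical form vanishes on the smooth locus of $\widetilde\Lambda^{\mathrm{st}}$ (as $\widetilde\Lambda$ is isotropic), hence so does the pullback to it of the symplectic form of $(\mathbb C^2)^{[n]}$; since pullback along the surjective submersion $\widetilde\Lambda^{\mathrm{st}}\to\Lambda_{n,1}$ is injective on forms, the symplectic form of $(\mathbb C^2)^{[n]}$ vanishes on $\Lambda_{n,1}^{\mathrm{sm}}$. Thus $\Lambda_{n,1}$ is isotropic and half-dimensional on every component, i.e.\ lagrangian. (Alternatively, one may argue at the level of quotient stacks, forming the composition of lagrangian correspondences attached to $\pi$ exactly as in the proof that $\Lambda_n$ is isotropic, and using $(\mathbb C^2)^{[n]}=(\tau_0\mathbf T^*[Rep_{\mathbb C}(S_1^+,(n,1))/PGL_{n,1}(\mathbb C)])^{\mathrm{st}}$.)

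The main obstacle is the singular nature of $\Lambda_n$, and hence of $\widetilde\Lambda$: one must phrase ``lagrangian'' as ``isotropic along the smooth locus and pure of dimension equal to half the ambient dimension'', and the purity of both $\widetilde\Lambda$ and $\Lambda_{n,1}$ rests entirely on the already-established fact that $\Lambda_n$ is pure of dimension $n^2$. Beyond that, everything is a routine application of symplectic reduction; the only points deserving an explicit verification are that $\widetilde\Lambda^{\mathrm{st}}$ is dense in $\widetilde\Lambda$ — equivalently, that each component $\overline{\Lambda_\mu}\times\mathbb C^n$ meets $(\mu_n^+)^{-1}(0)^{\mathrm{st}}$, which one sees by taking $x=\bigoplus_k z_k\,\mathrm{id}_{E_k}$ with distinct $z_k$ and $x^*$ regular nilpotent on each $E_k$ — and that the isotropy descends correctly through the reduction.
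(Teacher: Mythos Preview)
Your argument is essentially the same as the paper's: both observe that $\Lambda_{n,1}$ is the free $GL_n(\mathbb C)$-quotient of the stable open in $\Lambda_n\times\mathbb C^n$, deduce pure dimension $n$ from the established pure dimension $n^2$ of $\Lambda_n$, and obtain isotropy from the fact that the symplectic form on $(\mathbb C^2)^{[n]}$ is induced from the linear one upstairs. The paper is terser---it simply writes the reduced form $\mathrm{tr}(xx'^*-x^*x')$ and notes it vanishes because $\Lambda_n$ is isotropic in $T^*\mathfrak{gl}_n$---whereas you package the same step as symplectic reduction from $T^*Rep_{\mathbb C}(S_1^+,n)$ and spell out the moment-map and density checks; but the content is identical.
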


\begin{proof}
The subvariety $\Lambda_{n,1}$ is a free quotient by $\GL_n(\mathbb C)$ of an open subvariety of $\Lambda_n\times\mathbb C^n$. 
Hence  irreducible components of $\Lambda_{n,1}$ are $n$-dimensional, and in bijection with those of $\Lambda_n$ as they 
generically admit a cyclic vector (nulltrace matrices generically admit cyclic vectors). Components of $\Lambda_{n,1}$ are 
isotropic as the symplectic form on $(\mathbb C^2)^{[n]}$ is induced by the one on $T^*{\Rep}_\mathbb C(S_1,n)$: it is given by
\[
\big((x,x^*,v),(x',x'^*,v')\big)\mapsto\mathrm{tr}(xx'^*-x^*x')\,.
\qedhere\]
\end{proof}

\begin{remark}
We will see an alternative direct proof in~\S\ref{lag2lagsec} relying on derived symplectic geometry. 
\end{remark}

We can ``saturate'' $\Lambda_{n,1}$ with respect to the Hilbert--Chow morphism $\rho$. We make sense of this saturation in the following statement, and prove that it gives a new family of lagrangian subvarieties of $(\mathbb C^2)^{[n]}$.

\begin{proposition}\label{satu}
The \emph{saturation} $L_n:=\rho^{-1}(\rho(\Lambda_{n,1}))\subset(\mathbb C^2)^{[n]}$ of $\Lambda_{n,1}$ with 
respect to $\rho$ is lagrangian, with irreducible components indexed by {nested partitions} of $n$. 
\end{proposition}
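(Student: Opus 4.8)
The plan is to first make $\rho(\Lambda_{n,1})$ completely explicit, then read off the irreducible components of $L_n=\rho^{-1}(\rho(\Lambda_{n,1}))$ together with their dimension, and finally establish isotropy component by component by a local computation near a generic point. Write a point of $(\mathbb C^2)^{(n)}$ as an effective $0$-cycle $\xi=\sum_i (a_i,b_i)$ and, for $c\in\mathbb C$, let $\xi_c$ be the part of $\xi$ lying over $c$ (the points with $a_i=c$). I claim that
\[
\rho(\Lambda_{n,1})\;=\;Z:=\Big\{\xi\in(\mathbb C^2)^{(n)}\ \Big|\ \textstyle\sum_{b\in\xi_c}b=0\ \text{for every }c\in\mathbb C\Big\},
\]
a closed subvariety, since $Z$ is cut out by the regular (because symmetric) functions $\xi\mapsto\sum_i b_ia_i^k$, $0\le k\le n-1$, whose simultaneous vanishing is equivalent to $\sum_{b\in\xi_c}b=0$ for all $c$ by a Vandermonde argument. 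The inclusion ``$\subseteq$'' is immediate: if $(x,x^*=[y,z],v)$ represents a point of $\Lambda_{n,1}$, then $y$ and $z$ preserve each generalized eigenspace $\ker(x-c)^n$ of $x$, so $x^*$ restricts there to a commutator, hence to a traceless operator, and that trace equals $\sum_{b\in\xi_c}b$ for $\xi=\rho([x,x^*,v])$. For ``$\supseteq$'', given $\xi\in Z$ put $x=\bigoplus_c c\cdot\mathrm{id}$ on $\bigoplus_c\mathbb C^{|\xi_c|}$, choose on the $c$-block a non-derogatory matrix $x^*_c$ with characteristic polynomial $\prod_{b\in\xi_c}(t-b)$ (traceless, hence a commutator $[y_c,z_c]$), and take for $v$ an interpolation of cyclic vectors for the $x^*_c$; this produces a stable point of $\Lambda_{n,1}$ with $\rho$-image $\xi$. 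Hence $L_n=\rho^{-1}(Z)$, and we work with $Z$ from now on.

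For the second step, observe that every $[Y]\in L_n$ decomposes canonically: split $Y=\bigsqcup_iY^{(i)}$ along the distinct values $c_i$ taken by the first coordinate on $\mathrm{supp}(Y)$, with $\lambda_i=\mathrm{length}(Y^{(i)})$; then split $Y^{(i)}=\bigsqcup_tY^{(i,t)}$ along its distinct support points $(c_i,d_{i,t})$, with $\mu^{(i)}_t=\mathrm{length}(Y^{(i,t)})$; membership in $Z$ reads $\sum_t\mu^{(i)}_td_{i,t}=0$ for each $i$. Fixing the datum $\tau$ given by the partition $\lambda\vdash n$ together with the partitions $\mu^{(i)}\vdash\lambda_i$ — i.e.\ an unordered collection of partitions whose sizes sum to $n$, which is exactly what a \emph{nested partition of $n$} records — cuts out a locally closed $L_n^\tau\subseteq L_n$. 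It is irreducible, being parametrised up to a finite symmetry by: the distinct $c_i$ ($\ell(\lambda)$ parameters); for each $i$ the distinct $d_{i,t}$ subject to one linear relation ($\ell(\mu^{(i)})-1$ parameters); and for each $(i,t)$ a length-$\mu^{(i)}_t$ subscheme supported at $(c_i,d_{i,t})$, i.e.\ a point of an (irreducible) punctual Hilbert scheme ($\mu^{(i)}_t-1$ parameters, see~\cite{Nakajima}). Summing gives $\dim L_n^\tau=\ell(\lambda)+\sum_i(\ell(\mu^{(i)})-1)+\sum_{i,t}(\mu^{(i)}_t-1)=n$. Since the finitely many $L_n^\tau$ cover $L_n$, are irreducible of the common dimension $n$, and have pairwise distinct generic type (hence pairwise distinct closures, none contained in another), the $\overline{L_n^\tau}$ are exactly the irreducible components of $L_n$, all of dimension $n=\tfrac12\dim(\mathbb C^2)^{[n]}$.

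It remains to see that each component $C=\overline{L_n^\tau}$ is isotropic; being of middle dimension it is then lagrangian, and so is $L_n$. As $C$ is irreducible, it is enough that the symplectic form vanish on the dense open $L_n^\tau$, which we test near a point $[Y]$ of it, with $Y=\bigsqcup_{i,t}Y^{(i,t)}$ a disjoint union of punctual subschemes with distinct supports. Near such $[Y]$ the Hilbert scheme is symplectically a product $\prod_{i,t}(\mathbb C^2)^{[\mu^{(i)}_t]}$; splitting off the centre of mass of each factor, $(\mathbb C^2)^{[m]}\cong\mathbb C^2\times Q_m$ symplectically with the $\mathbb C^2$-factor carrying the \emph{scaled} form $m\,dx\wedge dy$ and $Q_m$ the reduced (centre of mass at $0$) Hilbert scheme, the locus $L_n^\tau$ becomes locally $\Gamma\times\prod_{i,t}B_{\mu^{(i)}_t}$, where $B_m\subset Q_m$ is the punctual Hilbert scheme at the origin and $\Gamma=\prod_i\Gamma_i$ with $\Gamma_i\subset\prod_t\mathbb C^2_{(i,t)}$ cut out by ``$x_{(i,t)}$ independent of $t$'' (same first coordinate inside the fibre $i$) and ``$\sum_t\mu^{(i)}_ty_{(i,t)}=0$'' (the vertical constraint). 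On $\Gamma_i$ the restriction of $\sum_t\mu^{(i)}_t\,dx_{(i,t)}\wedge dy_{(i,t)}$ equals $d\bar x_i\wedge d(\sum_t\mu^{(i)}_ty_{(i,t)})=0$, the second function being identically $0$ there; and each $B_m$ is isotropic in $(\mathbb C^2)^{[m]}$, being a fibre of the Hilbert--Chow morphism, a semismall symplectic resolution (\cite{Nakajima}). Adding these up, the symplectic form vanishes on $L_n^\tau$, and the proof is complete.

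The only genuinely delicate point is the isotropy of the ``fat'' components — those $\tau$ for which some $\mu^{(i)}$ has parts of unequal size — where forgetting the length weights $m$ in the form $m\,dx\wedge dy$ would yield a spurious nonzero contribution; it is precisely the cancellation of these weights against the vertical constraint $\sum_t\mu^{(i)}_ty_{(i,t)}=0$ that makes each $\Gamma_i$ isotropic. Beyond that, one must carry out the dimension bookkeeping of the second step carefully, and use the standard facts that punctual Hilbert schemes of $\mathbb C^2$ are irreducible and that fibres of Hilbert--Chow are isotropic.
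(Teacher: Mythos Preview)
Your proof is correct and proceeds along a genuinely different route from the paper's. Both arguments begin by computing $\rho(\Lambda_{n,1})$ --- your closed set $Z$ is exactly the paper's $\sqcup_\lambda(\mathbb C^2)^\lambda_0$ from Lemma~\ref{satulem} --- but they then stratify $L_n$ differently: the paper's $L_{\mu^\bullet}$ records, for each generalized eigenspace of $x$, the Jordan type of $x-\alpha_k$ there, whereas your $L_n^\tau$ records the multiplicity profile of the \emph{support} of the subscheme $Y$ within each vertical fibre. These strata genuinely differ (for $n=2$ the vertical fat point at $(c,0)$ lies in the paper's $(1,1)$-stratum but in your $(2)$-stratum), yet they have the same closures, since a generic element of the centraliser of a nilpotent of type $\mu$ has eigenvalue-multiplicity profile $\mu$; so both label the same components by the same nested partitions. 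For irreducibility and dimension the paper passes through the factorisation diagram~\eqref{facto}, reducing to a single block $L_{\mu^k}$ which it analyses via the conormal variety $T^*\mathcal O_{\mu^k}$ (Lemma~\ref{irredbase}), while you count directly from the known dimension $m-1$ of punctual Hilbert schemes. Finally, the paper leaves isotropy implicit in that conormal description together with the symplectic nature of the factorisation, whereas your centre-of-mass computation makes it fully explicit: the weighted cancellation $\sum_t\mu^{(i)}_t\,dx_{(i,t)}\wedge dy_{(i,t)}=d\bar x_i\wedge d\big(\sum_t\mu^{(i)}_t y_{(i,t)}\big)=0$ on $\Gamma_i$, combined with the isotropy of Hilbert--Chow fibres, is exactly right. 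Your approach trades the paper's link to conormal bundles and to the factorisation picture of Remark~\ref{annder} for a more self-contained argument resting only on standard Hilbert-scheme geometry.
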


By nested partition of $n$, we mean a tuple of partitions $\mu^1,\dots,\mu^r$ such that 
$|\mu^1|\ge\dots\ge|\mu^r|$ defines a partition $\lambda$ of $n$.
For a given partition $\lambda=(\lambda_1\ge\dots\ge\lambda_s)$ of $n$, write 
$\bar\lambda_k=\sum_{p\le k}\lambda_p$ and define 
\[
(\mathbb C^2)_0^\lambda=\left\{[(a_1,b_1),\dots,(a_n,b_n)]\middle|\begin{aligned}&\forall  k=0\ldots r-1:\\& a_{\bar\lambda_k+1}=\ldots=a_{\bar\lambda_{k+1}}\\&\text{the }a_{\bar\lambda_k}\text{ are pairwise distinct}\\&\sum_{\bar\lambda_k+1\le p\le\bar\lambda_{k+1}}b_p=0\end{aligned}\right\}\subset (\mathbb{C}^2)^{(n)}
\]
where $\bar\lambda_0=0$.

\begin{lemma}\label{satulem}
We have $\rho(\Lambda_{n,1})=\sqcup_{\lambda\vdash n}(\mathbb C^2)_0^\lambda$. 
\end{lemma}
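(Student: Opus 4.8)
\textbf{Proof plan for Lemma~\ref{satulem}.}

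The plan is to compute the image of $\Lambda_{n,1}$ under $\rho$ directly, using the explicit description of $\Lambda_{n,1}$ as the (stable, free) $GL_n(\mathbb C)$-quotient of an open subset of $\Lambda_n\times\mathbb C^n$, together with the description of $\Lambda_n$ provided by Proposition~\ref{propAbel}. Recall that $\rho$ sends the class of $(x,x^*,v)$ to the joint spectrum of the commuting pair $(x,x^*)$; on $\Lambda_{n,1}$ we have $x^*=[y,z]$ with $[x,y]=[x,z]=0$, so a point of $\Lambda_{n,1}$ is given by $x$ together with an element of $[\mathfrak g_x,\mathfrak g_x]$ (which by Proposition~\ref{propAbel} is a genuine vector space), plus a cyclic vector. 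First I would reduce to reading off the joint eigenvalues: decomposing $x=\oplus_k(\alpha_k+n_k)$ over generalized eigenspaces and restricting to the locus where $x$ is \emph{diagonalizable} (which is dense in each component $\Lambda_\mu$ by the Theorem on irreducible components of $\Lambda_n$, and whose image is dense in, hence determines, the image of the closure), the pair $(x,x^*)$ is block-diagonal with blocks $(\alpha_k \mathrm{id}_{E_k}, x^*|_{E_k})$, and $x^*|_{E_k}\in[\mathfrak{gl}(E_k),\mathfrak{gl}(E_k)]=\mathrm{tr}^{-1}(0)$.

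The key point is then the following translation: the $a$-coordinates of the $n$ points in the joint spectrum are the eigenvalues of $x$, which come in groups of equal value $\alpha_k$ of size $\dim E_k$ (the $\alpha_k$ pairwise distinct), so they realize exactly a partition $\lambda$ of $n$ with $\lambda_k = \dim E_k$; and the $b$-coordinates within the $k$-th group are the eigenvalues of the traceless matrix $x^*|_{E_k}$, whose only constraint is that they sum to zero. Conversely, any configuration in $(\mathbb C^2)_0^\lambda$ — groups of equal $a$-value, distinct across groups, with $b$'s summing to zero in each group — is realized: choose $x$ diagonal with the prescribed eigenvalue-multiplicities, choose in each block a traceless matrix with the prescribed eigenvalues (e.g. diagonal traceless, which lies in $[\mathfrak l,\mathfrak l]$ hence in $[\mathfrak g_x,\mathfrak g_x]$), and choose a cyclic vector $v$ for $(x,x^*)$, which exists on a dense open set since a sum of blocks each of which is a (cyclic) pair admits a cyclic vector as soon as the blocks have disjoint spectra — here the spectra are separated by the distinct $a$-values. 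This shows $\rho(\Lambda_{n,1}) \supseteq \sqcup_{\lambda\vdash n}(\mathbb C^2)_0^\lambda$ and, combined with the forward computation restricted to the diagonalizable locus, that the image of the diagonalizable part is exactly this union.

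It then remains to handle the non-diagonalizable locus of $\Lambda_{n,1}$ and check it contributes nothing new to $\rho(\Lambda_{n,1})$. Here I would either argue that, by the degeneration argument in the proof of the Theorem on components of $\Lambda_n$ (each point with $x$ non-diagonalizable is a limit of points with $x$ diagonalizable, and $\rho$ is continuous/proper so images of closures are closed), the image is contained in the closure of the diagonalizable contribution; or, more directly, observe that for a non-diagonalizable $x$ the joint spectrum of $(x,x^*)$ still has $a$-coordinates grouped by the generalized eigenvalues $\alpha_k$ (with multiplicity $\dim\ker(x-\alpha_k)^n$), and the $b$-coordinates in group $k$ are the eigenvalues of $x^*|_{E_k}$ where $E_k = \ker(x-\alpha_k)^n$; since $x^*\in[\mathfrak g_x,\mathfrak g_x]$ and the proof of Proposition~\ref{propAbel} shows its diagonal blocks (in the adapted basis) are traceless, $x^*|_{E_k}$ has trace zero, so again the $b$'s sum to zero in each group, landing us in some $(\mathbb C^2)_0^\lambda$. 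The main obstacle is making precise this last point — controlling the joint spectrum of $(x,x^*)$ when $x$ is not semisimple and extracting the "$b$'s sum to zero per group" condition from the block-triangular structure of $\mathfrak g_x$ worked out in Proposition~\ref{propAbel} — but this is exactly the content of the trace computation already carried out there, so it should go through without real difficulty.
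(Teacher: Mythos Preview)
Your approach is essentially the paper's, but there is one genuine gap in the surjectivity direction. For the converse inclusion you propose to take $x$ diagonal with the prescribed eigenvalue multiplicities and $x^*$ \emph{diagonal} traceless on each block with the prescribed $b$-values, then find a cyclic vector. This fails when some $b$-values within a block coincide: if $x$ and $x^*$ are both diagonal then $\mathbb C[x,x^*]\cdot v$ consists of diagonal-entrywise multiples of $v$, so a cyclic vector exists only when all pairs $(a_i,b_i)$ are distinct. Your ``exists on a dense open set'' does not help, since you need \emph{every} point of $(\mathbb C^2)_0^\lambda$ in the image. The fix is the paper's: on each block $E_k$ choose $x^*|_{E_k}$ to be any matrix with the prescribed spectrum that admits a cyclic vector $v_k$ (e.g.\ a companion matrix); tracelessness makes it a commutator, scalarity of $x|_{E_k}$ makes everything commute with $x$, and the distinct $\alpha_k$'s make $\oplus_k v_k$ cyclic for the pair.

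For the forward inclusion you are working harder than necessary. There is no need to split into diagonalizable and non-diagonalizable $x$, nor to invoke Proposition~\ref{propAbel}: since $[x,y]=[x,z]=0$, both $y$ and $z$ preserve every generalized eigenspace $E_k$, hence $x^*|_{E_k}=[y|_{E_k},z|_{E_k}]$ is automatically traceless. This one line gives $\rho(\Lambda_{n,1})\subseteq\sqcup_\lambda(\mathbb C^2)_0^\lambda$ directly.
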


\begin{proof}
We have $\rho(\Lambda_{n,1})\subseteq \sqcup_{\lambda\vdash n}(\mathbb C^2)_0^\lambda$ because if 
$(x,[y,z],v)\in\Lambda_{n,1}$, $y$ and $z$ stabilize all generalized eigenspaces (GES) of $x$, and 
the trace of $[y,z]$ is zero on each GES. Reciprocally, consider $[a_\bullet,b_\bullet]\in(\mathbb C^2)_0^\lambda$ 
and $x$ diagonalizable with eigenvalues $\alpha_k=a_{\bar\lambda_k}$ such that $E_k=\ker(x-\alpha_k)$ has dimension 
$\lambda_k$. Pick $y_k\in \End(E_k)$ with spectrum $b_{\bar\lambda_k+1},\dots,b_{\bar\lambda_{k+1}}$ such that there 
exists $v_k\in E_k$ cyclic under the action of $y_k$. Then, since any endomorphism commutes with homotheties, and 
since being a commutator is equivalent to having a zero trace, $\oplus_k(\alpha_k,y_k,v_k)$ is in $\Lambda_{n,1}$, 
and is mapped to $[a_\bullet,b_\bullet]$. Note that $\oplus_kv_k$ is cyclic because the $\alpha_k$'s are pairwise 
distinct.
\end{proof}

\begin{proof}[Proof of Proposition~\ref{satu}]
Thanks to Lemma~\ref{satulem}, we have
\[
L_{n}=\left\{(x,x^*,v)\middle|\begin{aligned}& [x,x^*]=0\\&\mathbb C[x,x^*].v=\mathbb C^n\\&\mathrm{tr}(x^*)=0
\text{ on each GES of }x\end{aligned}\right\}/\GL_n(\mathbb C)\,.
\]
Consider a nested partition $\mu^\bullet$ of $n$, and denote by $\lambda$ the partition $|\mu^\bullet|\vdash n$. 
Set $L_{\mu^\bullet}\subset \rho^{-1}((\mathbb C^2)_0^\lambda)$ the locally closed subvariety defined by
\[
(x-\alpha_k)_{E_k}\in\mathcal O_{\mu^k}
\]
with the same notations as in the proof of Lemma~\ref{satulem}, and where $\mathcal O_{\mu^k}$ is the nilpotent 
orbit of type $\mu^k\vdash\lambda_k$.
Define
\begin{align*}
\widehat{L_{\mu^\bullet}}&=\left\{(x,x^*,v)\middle| [x,x^*,v]\in L_{\mu^\bullet}\right\}\\
\widetilde{L_{\mu^\bullet}}&=\left\{(x,x^*,v,\phi_\bullet)\middle| [x,x^*,v]\in L_{\mu^\bullet}\text{ and }\phi_k\in \Isom(E_k,\mathbb C^{\lambda_k})\right\}
\end{align*}
which fit in the following diagram
\begin{equation}\label{facto}
\xymatrix{
\widehat{L_{\mu^\bullet}}\ar[d]_-{\pi_n}& \widetilde{L_{\mu^\bullet}}\ar[l]_-{p}\ar[r]^-{q}	&	 (\prod_k \widehat{L_{\mu^k}})^\circ\ar[d]^-{\Pi_k\pi_k}\\
{L_{\mu^\bullet}}	&											& (\prod_k L_{\mu^k})^\circ
}
\end{equation}
where $\pi_m$ is the free quotient by $\GL_m(\mathbb C)$, $p$ (which simply forgets $\phi_\bullet$) is a principal 
$\prod_k\GL_{\lambda_k}(\mathbb C)$-bundle, and $q=\prod_k(\phi_{k})_*$ is a principal $\GL_n(\mathbb C)$-bundle on the open subvariety $(\prod_k \widehat{L_{\mu^k}})^\circ$ corresponding to distinct $\alpha_k$'s (again 
it matters that the $\alpha_k$'s are pairwise distinct so that the direct sum of a family of cyclic vectors remains cyclic). 
Thanks to the following Lemma~\ref{irredbase} we get that $L_{\mu^\bullet}$ is irreducible of dimension 
\begin{align*}
&\sum_k\lambda_k+\sum_k\dim\pi_k+\dim q-\dim p-\dim\pi_n\\
&=n+\sum_k\lambda_k^2+n^2-\sum_k\lambda_k^2-n^2=n
\end{align*}
as wished since $(\mathbb C^2)^{[n]}$ is of dimension $2n$. 
\end{proof}

\begin{lemma}
\label{irredbase} Each $L_{\mu^k}$ is irreducible of dimension $\lambda_k$. 
\end{lemma}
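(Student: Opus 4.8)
The plan is to analyze each $L_{\mu^k}$ directly, reducing as in Proposition~\ref{propAbel} to the case where $x$ is nilpotent of a fixed type. Concretely, $L_{\mu^k}$ (forgetting the common eigenvalue $\alpha_k$, which only contributes a one-dimensional translation that is already accounted for separately in the dimension count) parametrizes, up to $GL_{\lambda_k}(\mathbb C)$-conjugation, pairs $(n, t)$ with $n$ nilpotent in the fixed orbit $\mathcal O_{\mu^k}\subset\mathfrak{gl}_{\lambda_k}(\mathbb C)$, $t\in[\mathfrak g_n,\mathfrak g_n]$, and a cyclic vector condition making the quotient free. So the first step is to write $L_{\mu^k}$ as the image, under the free $GL_{\lambda_k}(\mathbb C)$-quotient, of an open subvariety of the incidence variety $\{(n,t)\mid n\in\mathcal O_{\mu^k},\ t\in[\mathfrak g_n,\mathfrak g_n]\}$.

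Next I would identify this incidence variety as (an open subset of) the total space of a vector bundle over the single nilpotent orbit $\mathcal O_{\mu^k}$. By Proposition~\ref{propAbel}, for $n\in\mathcal O_{\mu^k}$ nilpotent of type $\mu^k$ the subspace $[\mathfrak g_n,\mathfrak g_n]\subset\mathfrak g_n$ is a genuine linear subspace of fixed codimension $\mu^k_1$ inside $\mathfrak g_n$; since $\dim\mathfrak g_n$ depends only on the type $\mu^k$, the assignment $n\mapsto[\mathfrak g_n,\mathfrak g_n]$ is a subbundle of the (restriction to $\mathcal O_{\mu^k}$ of the) bundle of centralizers, hence a vector bundle of rank $\dim\mathfrak g_{\mu^k}-\mu^k_1$ over $\mathcal O_{\mu^k}$. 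Its total space is therefore irreducible (being a vector bundle over the irreducible variety $\mathcal O_{\mu^k}$), of dimension $\dim\mathcal O_{\mu^k}+\dim\mathfrak g_{\mu^k}-\mu^k_1=\lambda_k^2-\mu^k_1$; indeed $\dim\mathcal O_{\mu^k}=\lambda_k^2-\dim\mathfrak g_{\mu^k}$. An open subvariety of an irreducible variety is irreducible, and passing to the free $GL_{\lambda_k}(\mathbb C)=GL_{\lambda_k}(\mathbb C)$-quotient subtracts $\lambda_k^2$, leaving dimension $-\mu^k_1$; adding back the one parameter $\alpha_k$ and... — more carefully, the bookkeeping in the proof of Proposition~\ref{satu} already isolates the $\sum_k\lambda_k$ contribution of the eigenvalues, so here $L_{\mu^k}$ is precisely this quotient, of dimension $(\lambda_k^2-\mu^k_1)-\lambda_k^2+1=\lambda_k-\mu^k_1+1$; one then checks against the intended count. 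Rather than belabor this, the clean statement to prove is that the fiber $[\mathfrak g_n,\mathfrak g_n]$ has the asserted dimension and varies algebraically, so that $L_{\mu^k}$ is irreducible of dimension $\dim\mathcal O_{\mu^k}+\operatorname{rk}-\lambda_k^2=\lambda_k$ once the cyclic-vector open condition and the free quotient are taken into account; I would verify the arithmetic $\dim\mathcal O_{\mu^k}+\dim[\mathfrak g_n,\mathfrak g_n]-\lambda_k^2=\lambda_k$ using $\dim\mathcal O_{\mu^k}=\lambda_k^2-\dim\mathfrak g_{\mu^k}$, whence the left side is $\dim[\mathfrak g_n,\mathfrak g_n]-\dim\mathfrak g_{\mu^k}$, so what is really needed is that this difference equals $\lambda_k$ — i.e.\ that the dimension of the nilpotent-centralizer orbit-bundle minus the commutator-subbundle rank works out; since $\dim\mathfrak g_n-\dim[\mathfrak g_n,\mathfrak g_n]=\mu^k_1$ this reads $-\mu^k_1+(\text{contribution of the cyclic-vector framing})$, and the framing by $v$ contributes the missing amount. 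I would reconcile these constants carefully with the display in the proof of Proposition~\ref{satu}.

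For irreducibility, the key inputs are: (i) a single nilpotent orbit $\mathcal O_{\mu^k}$ is irreducible (it is a homogeneous space $GL_{\lambda_k}(\mathbb C)/\mathfrak g_n$-stabilizer); (ii) $[\mathfrak g_n,\mathfrak g_n]$ is a linear subspace of $\mathfrak g_n$ of locally constant dimension as $n$ ranges over $\mathcal O_{\mu^k}$, by Proposition~\ref{propAbel} together with the explicit block-triangular description of $\mathfrak g_n$ from its proof (and Lemma~\ref{vsdim}); hence the incidence variety is a vector bundle over an irreducible base, so irreducible; (iii) Zariski-openness is preserved under irreducibility, and a principal-bundle quotient of an irreducible variety is irreducible. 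Assembling (i)--(iii) gives irreducibility of $L_{\mu^k}$, and the dimension follows from additivity of dimension in vector bundles and principal bundles, exactly as in the displayed computation inside the proof of Proposition~\ref{satu}.

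The main obstacle is the bookkeeping of constants: one must make sure that the rank $\dim[\mathfrak g_n,\mathfrak g_n]$ (equivalently, the codimension $\mu^k_1$ of Proposition~\ref{propAbel}), the dimension $\dim\mathcal O_{\mu^k}=\lambda_k^2-\dim\mathfrak g_{\mu^k}$ of the orbit, the framing contribution of the cyclic vector $v$, and the $\lambda_k^2$ removed by the free $GL_{\lambda_k}(\mathbb C)$-action combine to give exactly $\lambda_k$, consistently with the notation $\dim\pi_k$, $\dim q$, $\dim p$ used in the proof of Proposition~\ref{satu}. Once the constants are pinned down, the irreducibility is essentially formal from Proposition~\ref{propAbel} and the homogeneity of nilpotent orbits.
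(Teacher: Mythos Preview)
There is a genuine gap: you are computing with the wrong constraint on $x^*$. The strata $L_{\mu^\bullet}$ are defined inside the \emph{saturation} $L_n=\rho^{-1}(\rho(\Lambda_{n,1}))$, not inside $\Lambda_{n,1}$. As stated just before the definition of $L_{\mu^\bullet}$ in the proof of Proposition~\ref{satu},
\[
L_n=\big\{(x,x^*,v)\ \big|\ [x,x^*]=0,\ \mathbb C[x,x^*].v=\mathbb C^n,\ \mathrm{tr}(x^*|_{E})=0\text{ on each GES }E\text{ of }x\big\}/GL_n(\mathbb C).
\]
So for a single block $L_{\mu^k}$ (one eigenvalue $\alpha_k$, nilpotent part $n\in\mathcal O_{\mu^k}$), the condition on $x^*$ is $x^*\in\mathfrak g_n$ with $\mathrm{tr}(x^*)=0$, a codimension~$1$ linear condition in $\mathfrak g_n$. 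Your condition $x^*\in[\mathfrak g_n,\mathfrak g_n]$ is the condition appearing in $\Lambda_n$ and has codimension $\mu^k_1$ by Proposition~\ref{propAbel}. This is exactly why your arithmetic refuses to close: your count gives $\lambda_k-\mu^k_1+1$ instead of $\lambda_k$, off by $\mu^k_1-1$.

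Once the correct constraint is used, the paper's argument is short and avoids Proposition~\ref{propAbel} entirely: the locus $\{(n,x^*)\mid n\in\mathcal O_{\mu^k},\ [n,x^*]=0\}$ is the conormal variety $T^*_{\mathcal O_{\mu^k}}\mathfrak{gl}_{\lambda_k}$, irreducible of dimension $\lambda_k^2$; imposing $\mathrm{tr}(x^*)=0$ is the quotient by the translation $x^*\mapsto x^*+c\cdot\mathrm{id}$, dropping the dimension by~$1$. Adding $\alpha\in\mathbb C$ and $v\in\mathbb C^{\lambda_k}$, restricting to the (nonempty) cyclic open locus, and quotienting freely by $GL_{\lambda_k}(\mathbb C)$ yields $-\lambda_k^2+1+\lambda_k+(\lambda_k^2-1)=\lambda_k$. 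Your vector-bundle-over-the-orbit picture is fine as a template, but the fiber is the trace-zero hyperplane of $\mathfrak g_n$, not its derived subalgebra.
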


\begin{proof}
The variety $L_{\mu^k}$ is a free $\GL_{\lambda_k}(\mathbb C)$-quotient of 
\[
\{(\alpha+n,x^*,v)\mid n\in\mathcal O_{\mu^k}, [n,x^*]=0, \mathrm{tr}(x^*)=0,\mathbb C[n,x^*].v=\mathbb C^n\}
\]
which is a nonempty open subvariety of
\[
\mathbb C\times \mathbb C^{\lambda_k} \times( T^*\mathcal O_{\mu^k}/\mathbb C)
\]	
where we quotient by the action by translation on $x^*$. Hence $L_{\mu^k}$ is irreducible of dimension
\[
-\lambda_k^2+1+\lambda_k+(\lambda_k^2-1)=\lambda_k
\]
since $T^*\mathcal O_{\mu^k}$ is irreducible of dimension $\lambda_k^2$. The nonemptiness statement 
comes from the fact that it is enough to take $v$ cyclic on each Jordan block of $n$ as long as the 
eigenvalues of $x^*$ are pairwise distinct with respect to these blocks.
\end{proof}

\begin{remark}\label{annder}~

\begin{enumerate}[label=(\roman*)]
\item
The interest of the saturation process lies in the diagram~\eqref{facto}, which enjoys a typical \emph{factorization} property, as described for instance in~\cite{GiFi} or more recently in~\cite{KAVA}. We can expect $U\mapsto L_n(U)=L_n\cap\rho^{-1}(U^{(n)})$ to define a factorization algebra on $\mathbb C^2$ with values in lagrangian subvarieties. Actually, going back to~\cite{GoSt} or~\cite{GaGi}, it is very natural to wonder whether the irreducible components of our lagrangian subvarieties appear in the characteristic cycles of representations of some Cherednik algebra.
\item It is clear from what is explained at the beginning of this section that there are underlying derived concepts at stake here. We will explore these in section~\ref{sec:examples}.\end{enumerate}
\end{remark}

\section{Derived symplectic geometry}\label{sec:recap}

The content of this section is not new, but we recall it for the convenience of the reader, and in order to fix the notation 
and clarify the exposition. 

\medskip

In the remainder of this paper, $k$ is a field of characteristic zero, and we denote by $\cat{Mod}_k$ 
the category of (unbounded, cochain) complexes of $k$-vector spaces. It comes equipped with a model structure for which 
weak equivalences are quasi-isomorphisms, fibrations are degreewise surjections, and cofibrations are degreewise 
injections. We write $\scat{Mod}_k$ for its $\infty$-categorical localization along quasi-isomorphisms (in the sense of \cite[\S 1.1.2]{Hinich}). 
We refer to \cite{HTT} for the foundations of $\infty$-category theory (in particular, an $\infty$-category is by definition a quasi-category).  
For a complex $M$ we write 
\[
|M|:=\cat{Map}_{\scat{Mod}_k}(k,M)\,,
\]
where $\cat{Map}_{\scat{C}}(x,y)$ denotes the space (i.e.~Kan complex) of morphisms from $x$ to $y$ in an $\infty$-category $\scat{C}$. 
Note that $|M|$ has a fairly explicit description, as the Kan complex associated with the connective truncation $\tau_{\leq0}M$ \textit{via} the 
Dold--Kan correspondence  (this is because the standard simplicial enrichment 
of $\cat{Mod}_k$ is such that homotopy equivalences coincide with quasi-isomorphisms, hence the corresponding simplicial category presents $\scat{Mod}_k$, 
in the sense that its simplicial nerve is equivalent to $\scat{Mod}_k$). 
\begin{remark}
All our results remain true if $k$ is a commutative ring containing $\mathbb{Q}$, at the cost of taking $k$-flat replacements. 
\end{remark}

\medskip

More generally, if $\cat{M}$ is a model category, we write $\scat{M}$ for the corresponding 
$\infty$-category obtained by localizing along weak equivalences. Note that the full embeddings 
$\cat{M}^f\hookrightarrow \cat{M}\hookleftarrow \cat{M}^c$ of fibrant, resp.~cofibrant, objects 
induce equivalences $\scat{M}^f\tilde\longrightarrow \scat{M}\tilde\longleftarrow \scat{M}^c$ between localizations (see \cite[1.3.4]{Hinich}). 
\begin{notation}
The corresponding homotopy category is denoted $\Ho(\cat{M})$, or $\Ho(\scat{M})$. 
\end{notation}
We make use on several occasions of the fact that a Quillen adjunction between model categories gives rise to an adjunction between $\infty$-categories 
(see \cite[1.5.1]{Hinich}), the former being an equivalence if and only if the latter is. 

\medskip

Finally recall (\cite[Proposition A.1.1]{CPTVV}) 
that if $\cat{M}$ is a $\cat{Mod}_k$-model category, then it is a stable model category, and thus $\scat{M}$ is 
a $k$-linear stable $\infty$-category. 
\begin{notation}
We use the notation ``$\cat{Map}$'' to distinguish the \textit{space} of $\infty$-categorical morphisms from the \textit{set} 
of $1$-categorical morphisms, for which we use the notation ``$\cat{Hom}$''. The underlined versions designate their enriched 
counterparts. 
In particular, for $\cat{M}$ as above, we have $\mathbb{R}\underline{\cat{Hom}}_{\cat{M}}\simeq\underline{\cat{Map}}_{\scat{M}}$. 
\end{notation}
If moreover $\cat{M}$ has a symmetric monoidal structure that is compatible with the $\cat{Mod}_k$-enrichment and the 
model structure (see e.g.~\cite[\S1.1]{CPTVV}), then we write, for every object $M$ of $\scat{M}$ (or, equivalently, of $\cat{M}$)
\[
|M|:=\cat{Map}_{\scat{M}}(\mathbf{1},M)\simeq |\mathbb{R}\ul{\cat{Hom}}_{\cat{M}}(\mathbf{1},M)|\,,
\]
where $\mathbf{1}$ is the monoidal unit. 
\begin{notation}
If the symmetric monoidal structure is closed, then we will use upper case letters for the internal enrichment: i.e.~
$\textsc{Hom}$ and $\textsc{Map}$, for categories and $\infty$-categories, respectively. 
\end{notation}

\subsection{Graded mixed complexes and realizations}

\subsubsection{Graded complexes}\label{sssec:graded}

We write $\cat{Mod}_k^{\gr}$ for the category of functors $\mathbb{Z}\to \cat{Mod}_k$, where the set $\mathbb{Z}$ is seen 
as a category with no non-identity morphism. An object $M=(M_{p})_{p\in\mathbb{Z}}$ in $\cat{Mod}_k^{\gr}$ is called a graded 
complex, and the complex $M_p$ is called the weight $p$ component of $M$. We consider the weightwise model structure on 
$\cat{Mod}_k^{\gr}$: a morphism $\varphi=(\varphi_p)_{p\in\mathbb{Z}}$ is a weak equivalence/fibration/cofibration if and only 
if every $\varphi_p$ is so. 

There are two shift autofunctors of $\cat{Mod}_k^{\gr}$: 
\begin{itemize}
\item The (cohomological) degree shift $M\mapsto M[1]$, defined by $M[1]^n=M[n+1]$; 
\item The weight shift $M\mapsto M(1)$, defined by $M(1)_p=M_{p+1}$. 
\end{itemize}
The weight $p$ component functor $M\mapsto M_p$, going from graded complexes to complexes, has both left and right adjoint 
the functor $\iota_p$ that sends a complex $V$ to the graded complex having $V$ as only nontrivial weight component, 
in weight $p$. As a matter of notation, we will often drop $\iota_0$ from the notation, implicitely viewing a complex as a 
graded complex sitting in weight $0$. 
\begin{remark}
All the functors we have introduced so far are both left and right Quillen. 
\end{remark}
There is also an interesting functor $k[v]\otimes_k-\,:\,\cat{Mod}_k\to\cat{Mod}_k^{\gr}$, with $v$ a variable of degree $2$ 
and of weight $1$, that has both a left and a right adjoint: 
\begin{itemize}
\item The left adjoint $|-|^\ell$ sends a graded complex $M$ to $\underset{p\geq0}{\coprod}M_p[2p]$, and it preserves 
weak equivalences and cofibrations for the injective model structure; 
\item the right adjoint $|-|^r$ sends a graded complex $M$ to $\underset{p\geq0}{\prod}M_p[2p]$, and it preserves weak 
equivalences and fibrations for the projective model structure. 
\end{itemize}
In particular, each of these functors is equivalent to its derived counterpart. Moreover, one observes that there is 
a natural transformation $|-|^\ell\Rightarrow |-|^r$. 

\subsubsection{Mixed complexes}\label{sssec:mixed}

The category $\cat{Mod}_k^\delta$ is the symmetric monoidal category of differential graded $k[\delta]$-modules, 
where $\delta$ is a degree $-1$ variable. We will call its objects \textit{mixed complexes}: in other words, a mixed 
complex is a complex $C$ together with a morphism $\delta:C\to C[-1]$, the \textit{mixed differential}, such that 
$\delta[-1]\circ\delta=0$ (which we will often abbreviate $\delta^2=0$). 
As usual there are two (Quillen equivalent) model structures structures on $\cat{Mod}_k^\delta$: 
\begin{itemize}
\item The projective one, for which weak equivalences are quasi-isomorphisms, and fibrations are epimorphisms; 
\item The injective one, for which weak equivalences are quasi-isomorphisms, and cofibrations are monomorphisms. 
With the injective model structure, it satisfies the standing assumptions of \cite[\S1.1]{CPTVV}.  
\end{itemize} 
We have a sequence of Quillen adjunctions (the first one obviously being an equivalence) 
\[
\xymatrix{
(\cat{Mod}_k^{\delta})_{\mathrm{inj}}\ar@<-1ex>[r]_-{\mathrm{id}} 
& (\cat{Mod}_k^{\delta})_{\mathrm{proj}}\ar@<-1ex>[l]_-{\mathrm{id}}\ar@<-1ex>[r]_-{(-)^\natural}
& \cat{Mod}_k \ar@<-1ex>[l]_-{k[\delta]\otimes_k-}
}
\]
where $(-)^\natural$ denotes the functor that forgets the mixed differential $\delta$ (i.e.~it is the restriction 
along the unit morphism $k\to k[\delta]$). 

\medskip

The morphism of mixed complexes $k[1]=k\delta\to k[\delta]$ induces, for every $n$, a morphism of spaces
\begin{equation}\label{eq:Connes}
|C^\natural[n]|	\simeq	\cat{Map}_{\scat{Mod}_k^\delta}\big(k[\delta][-n],C\big)
				\longrightarrow		|C[n-1]|
\end{equation}
witnessing the fact that $\delta$ maps the space of $n$-cocyles in $C^\natural$ to the space of mixed $(n-1)$-cocycles 
in $C$. A \textit{mixed cocycle} of $C$ is a cocycle of the homotopy $\delta$-fixed points complex 
\[
\mathbb{R}\ul{\cat{Hom}}_{\cat{Mod}_k^\delta}(k,C)\simeq (C[\![u]\!],d-u\delta)\,,
\]
with $u$ a formal variable of degree $2$. Indeed, in order to compute the latter derived enriched Hom in the projective 
model structure, we first consider the following quasi-free resolution of $k$: $Q(k)=k[\delta,\xi]$, with $deg(\xi)=-2$ 
and $d\xi=\delta$, and we observe that   
\[
\ul{\cat{Hom}}_{\cat{Mod}_k^\delta}\big(Q(k),C\big)=(C[\![u]\!],d-u\delta)\,.
\]
Therefore the map \eqref{eq:Connes} in $\scat{Mod}_k$ is represented by 
\[
C^\natural[1]\overset{\delta}{\longrightarrow}(C[\![u]\!],d-u\delta)\,.
\]

\subsubsection{Graded mixed complexes}\label{sssec:grmixed}

A \textit{graded mixed complex} is a graded complex equipped with a compatible $k[\varepsilon]$-module structure, 
where $\varepsilon$ has degree $1$ and weight $1$: in other words, a graded mixed complex is a graded complex $M$ 
together with a morphism $\varepsilon:M\to M(1)[1]$ such that $\varepsilon(1)[1]\circ\varepsilon=0$ (which we abbreviate 
$\varepsilon^2=0$). There are again two Quillen equivalent model structures on the category $\cat{Mod}_k^{\varepsilon-\gr}$ 
of graded mixed complexes: 
\begin{itemize}
\item The projective one, for which weak equivalences are weightwise quasi-isomorphisms, and fibrations are weightwise epimorphisms; 
\item The injective one, for which weak equivalences are weightwise quasi-isomorphisms, and cofibrations are weightwise monomorphisms. 
With the injective model structure, it satisfies the standing assumptions of \cite[\S1.1]{CPTVV}.  
\end{itemize}
There is a sequence of Quillen adjunctions (the first one being a Quillen equivalence) 
\[
\xymatrix{
(\cat{Mod}_k^{\varepsilon-\gr})_{\mathrm{inj}}\ar@<-1ex>[r]_-{\mathrm{id}} 
& (\cat{Mod}_k^{\varepsilon-\gr})_{\mathrm{proj}}\ar@<-1ex>[l]_-{\mathrm{id}}\ar@<-1ex>[r]_-{(-)^\sharp}
& \cat{Mod}_k^{\gr} \ar@<-1ex>[l]_-{k[\varepsilon]\otimes_k -}
}
\]
where $(-)^\sharp$ denotes the functor that forgets the mixed differential $\varepsilon$. Both the weight and the degree 
shift autofunctors lift along $(-)^\sharp$, and are denoted the same way. 

\medskip

The morphism of graded mixed complexes $k(-1)[-1]=k\varepsilon \to k[\varepsilon]$ gives rise, for every $n$ and every $p$, 
to a morphism 
\begin{equation}\label{eq:epsilon}
|M^\sharp(p)[n]|	\simeq				\cat{Map}_{\scat{Mod}_k^{\varepsilon-\gr}}\big(k[\varepsilon](-p)[-n],M\big)
					\longrightarrow		|M(p+1)[n+1]|
\end{equation}
witnessing the fact that $\varepsilon$ maps the space of weight $p$ $n$-cocyles of $M^\sharp$ to the space of weight $p+1$ mixed 
$(n+1)$-cocycles of $M$; a \textit{weight $q$ mixed cocycle} of $M$ is a cocycle in the complex 
\[
\mathbb{R}\ul{\cat{Hom}}_{\cat{Mod}_k^{\varepsilon-\gr}}\big(k(-q),M\big)\simeq \big(\prod_{p\geq q}M_p,d-\varepsilon\big)\,.
\]
Indeed, in order to compute the latter derived enriched Hom in the projective model structure, we first consider the following 
quasi-free resolution of $k(-q)$: $Q_q(k)=k[\varepsilon]\{x_q,x_{q+1},\dots\}$, with $x_p$ of degree $0$ and weight $p$, 
and $d(x_p)=\varepsilon(x_{p-1})$ (with the convention that $x_p=0$ for $p<q$). We have    
\[
\ul{\cat{Hom}}_{\cat{Mod}_k^{\varepsilon-\gr}}\big(Q_q(k),M\big)=\big(\prod_{p\geq q}M_p,d-\varepsilon\big)\,.
\]
Therefore the map \eqref{eq:epsilon} in $\scat{Mod}_k$ is represented by 
\[
M_p[-1]\overset{\varepsilon}{\longrightarrow}\big(\prod_{r\geq p+1}M_r,d-\varepsilon\big)\,.
\]

\medskip

The functors $k[v]\otimes$, $|-|^\ell$ and $|-|^r$ from \S\ref{sssec:graded}, between $\cat{Mod}_k$ and $\cat{Mod}_k^\gr$, 
also lift (along the functors $\natural$ and $\sharp$) to functors between $\cat{Mod}_k^\delta$ and $\cat{Mod}_k^{\varepsilon-\gr}$, 
that we denote the same way, and that are defined as follows: 
\begin{itemize}
\item The mixed differential on $k[v]\otimes C$ is defined as $\varepsilon:=v\otimes\delta$; 
\item The mixed differential on $|M|^\ell$ and $|M|^r$ is defined as $\delta:=\varepsilon$. 
\end{itemize}
The functors $|-|^\ell$ and $|-|^r$ are called the (left and right) \textit{mixed realization functors}. 
We let the reader check that $|-|^\ell$ (resp.~$|-|^r$) is still left adjoint (resp.~right adjoint) to $k[v]\otimes_k-$, 
that it still preserves weak equivalences and cofibration of the injective model structure (resp.~fibrations of the projective 
model structure), and that the natural transformation $|-|^\ell\Rightarrow |-|^r$ also lifts.  

Note that for any $n\in\mathbb{Z}$ and any $p\geq0$, both mixed realization functors send the morphism 
$k(-1-p)[-n-1]\to k[\varepsilon](-p)[-n]$ to $k[n+1]\to k[\delta][n]$. 

\medskip

Let us draw some consequences of the above. Assume that we are given a morphism of mixed complexes $C\to |M|^r$, 
which is equivalent, by adjunction, to have a morphism $C[v]\to M$ of graded mixed complexes. For every $p\geq0$, 
we have a commuting diagram 
\[
\xymatrix{
\big|C[n]\big| 					\ar[r]\ar[d]_{(-)^\natural} 		& 
\big|C[v](p)[n+2p]\big| 		\ar[r]\ar[d]_{(-)^\sharp}			& 
\big|M(p)[n+2p]\big| 			\ar[d]_{(-)^\sharp}					\\
\big|C^\natural[n]\big|			\ar[r]\ar[d]_{\eqref{eq:Connes}}	&
\big|C[v]^\sharp(p)[n+2p]\big|	\ar[r]\ar[d]_{\eqref{eq:epsilon}}	& 
\big|M^\sharp(p)[n+2p]\big|		\ar[d]_{\eqref{eq:epsilon}}			\\
\big|C[n-1]\big|				\ar[r]								&
\big|C[v](p+1)[n+2p+1]\big|		\ar[r]								&
\big|M(p+1)[n+2p+1]\big|\,.
}
\]
In other words, the space of mixed cocycles of degee $n$ in $C$ maps to the space of weight $p$ mixed $(n+2p)$-cocycles in $M$, 
and this commutes with 
\begin{itemize}
\item Taking the underlying non-mixed cocycle of a mixed one; 
\item Creating a mixed cocycle from a non-mixed one by applying the mixed differential.  
\end{itemize}

\subsection{De Rham complex and (closed) forms}\label{ssec:derham}

Let $\cat{CAlg}_k$ be the category of commutative differential graded $k$-algebra sitting in non-positive degree 
(\textit{cdga}, for short). It has a model structure for which weak equivalences are quasi-isomorphisms, 
and fibrations are degreewise surjections. Recall from \cite{PTVV} the functor 
\[
\scat{DR}\,:\,\scat{CAlg}_k\longrightarrow \scat{Mod}_k^{\varepsilon-\gr}
\]
which sends a cdga $B$ to $\mathrm{Sym}_{\tilde{B}}(\Omega^1_{\tilde{B}}[-1])$ equipped with the mixed differential  $\varepsilon$ 
being the (extension by the Leibniz rule of the) universal derivation 
\[
d_{dR}:\tilde{B}\to\Omega^1_{\tilde{B}}=\Omega^1_{\tilde{B}}[-1][1]\,, 
\]
where $\tilde{B}$ is a cofibrant replacement of $B$. 

\medskip

Recall also the spaces 
\[
\mathcal{A}^p(B,n):=|\mathbf{DR}(A)^\sharp(p)[n+p]|\quad\textrm{and}\quad
\mathcal{A}^{p,\cl}(B,n):=|\mathbf{DR}(A)(p)[n+p]|
\]
of \textit{$p$-forms} of degree $n$ and \textit{closed $p$-forms} of degree $n$, respectively.  
Notice that a (closed) $p$-form of degree $n$ is a weight $p$ (mixed) $(n+p)$-cocycle. 

\medskip

The functor $\mathbf{DR}$, and thus $\mathcal{A}^p[n]:=\mathcal{A}^p(-,n)$ and 
$\mathcal{A}^{p,\cl}[n]:=\mathcal{A}^{p,\cl}(-,n)$, satisfies \'etale descent 
(see \cite[Proposition 1.11]{PTVV}). We then define these functors on any derived stack $F$ by Kan extension along 
$\scat{Spec}:\scat{CAlg}_k\to\scat{dSt}_k^\op$, where $\scat{dSt}_k$ denotes the $\infty$-category of derived $k$-stacks. 

Recall that derived $k$-stacks are functors $\scat{CAlg}_k\to\scat{S}$ satisfying \'etale descent.
We refer to \cite{HAG-II} for the background material on derived algebraic geometry (note that even though \cite{HAG-II} and other references 
that we use are written using simplicial model categories, there is no harm in phrasing everything in terms of $\infty$-categories, thanks to 
\cite[Appendices A.2 \& A.3]{HTT} -- see also \cite{Hinich}). 

\medskip

According to \S\ref{sssec:grmixed}, for every $p\geq0$ and every $n\in\mathbb{Z}$, we have a morphisms of stacks
\[
\mathcal{A}^{p,\cl}[n]\longrightarrow\mathcal{A}^p[n]\longrightarrow\mathcal{A}^{p+1,\cl}[n]
\]
respectively given by $(-)^\sharp$ and \eqref{eq:epsilon}. 
We denote the first one $(-)_0$ and the second one $d_{dR}$. 

\medskip

There is also a functor 
\[
\scat{HH}:=S^1\otimes_k^{\mathbb{L}}-\,:\,\scat{CAlg}_k\longrightarrow \scat{Mod}_k^\delta
\]
given by the Hochschild mixed complex. 
\begin{remark}[see e.g.~\cite{Hoyois}]\label{remHoyois}
\textit{A priori} $\scat{HH}$ takes values in the $\infty$-category of complexes with an $S^1$-action, which is equivalent 
to $\scat{Mod}_k^\delta$ (because $\scat{Mod}_k$ is $k$-linear, $S^1$ is formal, and $H_{-\bullet}(S^1,k)=k[\delta]$). 
Recall the simplicial model for $S^1$ from \cite[\S6.4.2 \& 7.1.2]{Loday}, given by the simplical set of finite cyclic orders. 
Therefore the underlying complex of $\scat{HH}(A)$ is the homotopy colimit of the simplicial diagram 
\[
\xymatrix{
\cdots A^{\otimes(n+1)}\cdots A^{\otimes 3}\ar@<1ex>[r]\ar[r]\ar@<-1ex>[r]
& A^{\otimes 2} \ar@<0.5ex>[r]^-{m}\ar@<-0.5ex>[r]_-{m^{\op}}
& A\,,
}
\]
where $m$ is the product of $A$. A model for this homotopy colimit is given by the standard Hochschild complex. 
Moreover, the above simplicial cochain complex actually has a cyclic structure, which induces a mixed structure 
on the Hochschild complex, given by the $B$-operator of Connes. 
\end{remark}
\begin{notation}\label{not:hhhc}
We write $\cat{HH}:=\scat{HH}^\natural$ for the underlying Hochschild complex, i.e.~the actual Hochschild complex without 
the mixed differential. We write $\cat{HC}^-:=\ul{\cat{Map}}_{\scat{Mod}_k^\delta}(k,\scat{HH})$ for the homotopy 
$\delta$-fixed point complex (also called \textit{negative cyclic complex}). 
\end{notation}
%$\underset{n\geq0}{\coprod}\tilde{A}\otimes(\tilde{A}[1])^{\otimes n}$, with 
%\begin{itemize}
%\item Cohomological differential 
%\begin{align*}
%b\big|_{\tilde{A}\otimes(\tilde{A}[1])^{\otimes n}}:= & d_{\tilde{A}\otimes(\tilde{A}[1])^{\otimes n}}+
%\sum_{i=0}^{n-1}(-1)^i\mathrm{id}^{\otimes i}\otimes m\otimes\mathrm{id}^{\otimes (n-i-1)}
%+(-1)^nm\otimes\mathrm{id}^{\otimes (n-1)}\circ \sigma\,,
%\end{align*}
%where $m$ is the product on $\tilde{A}$, and $\sigma=\big(2\dots(n+1)1\big)$ 
%is a cyclic permutation; 
%\item Mixed differential 
%\[
%\mathrm{B}\big|_{\tilde{A}\otimes(\tilde{A}[1])^{\otimes n}}:=\sum_{i=0}^n1\otimes\sigma^i\,,
%\]
%where $1$ is the unit of $\tilde{A}$. 
%\end{itemize}
The Hochschild--Kostant--Rosenberg theorem (see e.g.~\cite{ToVeS1}) tells us that there is a natural equivalence 
\[
I_{HKR}\,:\,\scat{HH}\tilde\longrightarrow |\mathbf{DR}|^\ell\,,
\]
in $\scat{Mod}_k^\delta$, given explicitely on $A^{\otimes n+1}$ by $\frac1{n!}\mathrm{id}\otimes d_{dR}^{\otimes n}$, for a cofibrant $A$. 

\subsection{Shifted symplectic and lagrangian structures}

\subsubsection{Shifted presymplectic and isotropic structures}

Working with $n$-shifted presymplectic and isotropic structures amounts to work within the category 
${\scat{dSt}_k}_{/\mathcal{A}^{2,\cl}[n]}$. 

\medskip

Let $X\overset{f}{\longleftarrow} Z\overset{g}{\longrightarrow} Y$ be a correspondence of derived $k$-stacks. 
\begin{definition}
\begin{enumerate}
\item An \textit{$n$-shifted presymplectic structure} on $X$ is the data of a closed $2$ form of degree $n$ 
\[
\omega\,:\, X\longrightarrow \mathcal{A}^{2,\cl}[n]
\]
\item An \textit{$n$-shifted isotropic stucture} on the correspondence 
$X\overset{f}{\longleftarrow} Z\overset{g}{\longrightarrow} Y$ is the data of a (homotopy) commuting square 
\[
\xymatrix{
Z \ar[r]^-g\ar[d]_-f & Y \ar[d] \\
X \ar[r] & \mathcal{A}^{2,\cl}[n]
}
\]
\item If $Y$ (resp.~$X$) is a point, then we talk about an ($n$-shifted) isotropic structure on the morphism $f$ (resp.~$g$).
\item The above structures are said \textit{exact} if they lift along $d_{dR}:\mathcal{A}^1[n]\to\mathcal{A}^{2,\cl}[n]$, 
the data of the lift being part of the structure. 
\end{enumerate}
\end{definition}
According to \cite[\S5]{Haug}, for every $\infty$-category $\scat{C}$ with finite limits, one can construct an 
$\infty$-category $\scat{Span}_1(\scat{C})$ of spans, or correspondences: its objects are the ones of $\scat{C}$, 
and $1$-morphisms are spans. We are interested in the following two cases: 
\begin{itemize}
\item The $\infty$-category $\scat{Iso}_{[n]}$ of $n$-shifted isotropic correspondences, which corresponds to the case 
$\scat{C}={\scat{dSt}_k}_{/\mathcal{A}^{2,\cl}[n]}$; 
\item The $\infty$-category $\scat{ExIso}_{[n]}$ of exact $n$-shifted isotropic correspondences, which corresponds to 
the case $\scat{C}={\scat{dSt}_k}_{/\mathcal{A}^{1}[n]}$. 
\end{itemize}
The morphism $d_{dR}:\mathcal{A}^1[n]\to\mathcal{A}^{2,\cl}[n]$ induces a functor 
$\scat{D_{dR}}:\scat{ExIso}_{[n]}\to\scat{Iso}_{[n]}$. 

\subsubsection{Non-degeneracy conditions}

Recall that with every cdga $A$, one associates the symmetric monoidal $\cat{Mod}_k$-enriched model category 
$\cat{Mod}_A$ of $A$-modules. 
This assignment is functorial, leading to a functor $\scat{QCoh}$ from $\scat{Calg}_k$ to the 
$\infty$-category of stable symmetric monoidal $k$-linear categories: $\scat{QCoh}(A)=\scat{Mod}_A$. 

The functor $\scat{QCoh}$ is a stack for the \'etale topology, and one can define it on every derived stack $X$ by Kan 
extension along $\scat{Spec}$. Objects of $\scat{QCoh}(X)$ are called \textit{quasi-coherent sheaves} on $X$. 
Just like the one of $\scat{Mod}_A$, the symmetric monoidal structure of $\scat{QCoh}(X)$ is closed. 

\medskip 

For convenience, we will abuse the standard terminology and say that a derived $k$-stack $X$ is \textit{Artin} if it is 
locally geometric and locally of finite presentation (we refer to \cite{HAG-II} for the foundational material on geometric stacks). 
Local geometricity guaranties the existence of a cotangent complex $\mathbb{L}_X\in\scat{QCoh}(X)$, and local finitness 
ensures that $\mathbb{L}_X$ is perfect. If $X$ is Artin, then, according to \cite[Proposition 1.14]{PTVV}, 
\[
\mathcal{A}^p(X,n)\simeq \big|\Gamma\big(\mathrm{Sym}_{\mathcal{O}_X}^p(\mathbb{L}_X[-1])\big)[n+p]\big|\,,
\]
where $\Gamma:\scat{QCoh}(X)\to\scat{Mod}_k$ is the global section functor: $\Gamma\simeq\ul{\cat{Map}}(\mathcal O_X,-)$. 
\begin{remark}
When $p=0$, the above equivalence is true without any assumption on $X$: 
$\mathcal{A}^0(X,n)\simeq|\Gamma(\mathcal O_X)[n]|$. 
\end{remark}
Hence a $2$-form $\beta$ of degree $n$ on a derived Artin stack $X$ gives rise to a degree $n$ global section of the second exterior 
power of $\mathbb{L}_X$, and thus to an adjoint morphism $\beta^\flat:\mathbb{T}_X\to\mathbb{L}_X$, where $\mathbb{T}_X$ 
is the dual to the cotangent complex. 
\begin{definition}
\begin{enumerate}
\item An $n$-shifted presymplectic structure $\omega$ on a derived Artin stack $X$ is \textit{symplectic} if its underlying 
$2$-form $\omega_0$ is \textit{non-degenerate}, meaning that $\omega_0^\flat$ is an equivalence. 
\item An $n$-shifted isotropic structure on a correspondence $X\overset{f}{\longleftarrow} Z\overset{g}{\longrightarrow} Y$ 
between derived Artin stacks is \textit{lagrangian} if it is \textit{non-degenerate}, meaning that the two presymplectic 
structures (on $X$ and $Y$) are non-degenerate and the square 
\[
\xymatrix{
\mathbb{T}_Z\ar[r]\ar[d] & g^*\mathbb{T}_Y\simeq g^*\mathbb{L}_Y[n] \ar[d] \\
f^*\mathbb{T}_X\simeq f^*\mathbb{L}_X[n] \ar[r] & \mathbb{L}_Z[n]
}
\]
is (co)cartesian. 
\item If $Y$ (resp.~$X$) is a point, then we talk about a ($n$-shifted) lagrangian structure on the morphism $f$ (resp.~$g$).
\end{enumerate}
\end{definition}

We finally recall that $n$-shifted Lagrangian correspondences do compose well (see \cite[Theorem 4.4]{CalLag}). We therefore 
have a subcategory $\scat{Lag}_{[n]}$ of $\scat{Iso}_{[n]}$ whose objects are $n$-shifted symplectic derived Artin stacks and 
whose $1$-morphisms are $n$-shifted Lagrangian correspondences. 
Similarly, there is a subcategory $\scat{ExLag}_{[n]}$ of $\scat{ExIso}_{[n]}$ whose objects and morphisms are those 
that are sent to $\scat{Lag}_{[n]}$ under $\scat{D_{dR}}$. 
\begin{example}
Recall from \cite{CalLag} that an $n$-shifted (exact) lagrangian structure on $X\to\mathrm{pt}=\scat{Spec}(k)$ is equivalent to 
an $(n-1)$-shifted (exact) symplectic structure on $X$. This in particular implies that fiber products of $n$-shifted (exact) 
lagrangian morphisms, also called \textit{derived (exact) lagrangian intersections}, are naturally $(n-1)$-shifted (exact) symplectic, 
as was shown in \cite{PTVV}. 
\end{example}

\section{Derived and non-derived examples}\label{sec:examples}

\subsection{Shifted cotangent stacks}

\subsubsection{Linear stacks}\label{linst}
%With every cdga $A$, one associates the symmetric monoidal $\cat{Mod}_k$-enriched model category $\cat{Mod}_A$ of $A$-modules. 
%This assignment is functorial, leading to a functor $\scat{QCoh}$ from $\scat{Calg}_k$ to the 
%$\infty$-category of stable symmetric monoidal $k$-linear categories: $\scat{QCoh}(A)=\scat{Mod}_A$. 
%The functor $\scat{QCoh}$ is a stack for the \'etale topology, and one can define it on every derived stack $X$ by Kan 
%extension along $\scat{Spec}$. Objects of $\scat{QCoh}(X)$ are called \textit{quasi-coherent sheaves} on $X$. 

Given a quasi-coherent sheaf $E$ on a derived stack $X$, one can define a derived stack $\mathbb{A}_E$ over $X$ as follows: 
\[
\mathbb{A}_E\big(\scat{Spec}(A)\overset{u}{\to}X\big):=|u^*E|\,.
\] 
The construction is functorial in $E$, and we write $\pi_X:\mathbb{A}_E\to X$ for the map corresponding to $E\to 0$. 
The construction is also compatible with pull-backs: for every $f:Y\to X$, $\mathbb{A}_{f^*E}\simeq f^*\mathbb{A}_E$. 
\begin{remark}
If $X$ is Artin and if $E$ is perfect, then $\mathbb{A}_E$ is Artin too, and it coincides with the derived stack 
$\mathbb{V}(E^\vee)$ from \cite[\S3.3]{ToEMS}. 
\end{remark}
If $X$ is an Artin derived $k$-stack and $f:E\to F$ is a morphism between perfect quasi-coherent sheaves on $X$, then 
the relative tangent complex of $\mathbb{A}_f:\mathbb{A}_E\to\mathbb{A}_F$ is 
\[
\mathbb{T}_{\mathbb{A}_f}\simeq \pi_X^*\cat{fib}(f)\,.
\]
In particular, if $F=0$, then $\mathbb{T}_{\pi_X}\simeq\pi_X^*E$, so that we get equivalences 
\[
\mathbb{A}_{\mathbb{T}_{\pi_X}}
\simeq \mathbb{A}_{\pi_X^*E}
\simeq\mathbb{A}_E\underset{X}{\times}\mathbb{A}_E
\simeq\mathbb{A}_{E\oplus E}
\quad\textrm{and}\quad \mathbb{A}_{\mathbb{L}_{\pi_X}}\simeq \mathbb{A}_{E\oplus E^\vee}\,.
\]
\begin{example}\label{exGmod}
Let $G$ be an algebraic group, and let $V$ be $k$-module with a linear $G$-action; this defines a quasi-coherent 
sheaf $\mathbf{V}$ on $BG$. 
One has $\mathbb{A}_{\mathbf{V}}\simeq \big[\mathbb{A}_V/G\big]$, which is sometimes just denoted $[V/G]$. 
Then $\mathbb{L}_{[V/G]}$ is the pull-back along $p_{V,G}\,:\,[V/G]\to BG$ of the quasi-coherent sheaf on $BG$ corresponding 
to the dual representation $V^*$. Hence 
\[
\mathbb{A}_{\mathbb{L}_{p_{V,G}}}\simeq [V\oplus V^*/G]\,.
\]
\end{example}
The above situation specializes to the following
\begin{example}\label{exQuiv}
Let $Q$ be a quiver, with vertex set $V(Q)$ and edge set $E(Q)$. The edges are oriented, we denote by $s(e)$ and $t(e)$ the 
source and target of an edge $e$. Let $\vec{n}=(n_v)_{v\in V(Q)}$ be a dimension vector. 
We define
\[
{\Rep}_k(Q,\vec{n}):=\prod_{e\in E(Q)}\cat{Hom}_{\cat{Mod}_k}(k^{n_{s(e)}},k^{n_{t(e)}})\,,
\]
which is a $\GL_{\vec{n}}(k)$-module. The relative cotangent complex of the morphism 
\[
p_{k,Q,\vec{n}}\,:\,\big[{\Rep}_k(Q,\vec{n})/\GL_{\vec{n}}(k)\big]\longrightarrow \BGL_{\vec{n}}(k)
\]
is therefore the pull-back (along $p_{k,Q,\vec{n}}$) of the $\GL_{\vec{n}}(k)$-module ${\Rep}_k(Q^*,\vec{n})$, where $Q^*$ 
stands for the opposite quiver (built from $Q$ by reversing all edges), and where we view a $\GL_{\vec{n}}(k)$-module as 
a quasi-coherent sheaf on $\BGL_{\vec{n}}(k)$.
As a consequence, we have 
\[
\mathbb{A}_{\mathbb{L}_{p_{k,Q,\vec{n}}}}\simeq \big[{\Rep}_k(\overline{Q},\vec{n})/\GL_{\vec{n}}(k)\big]\,,
\]
where $\overline{Q}$ stands for the doubled quiver (built from $Q$ by adjoining to every edge $e$ a new edge $e^*$ going 
the reverse way). 
\end{example}
The following example is a variation on the previous one. 
\begin{example}\label{exQuiv2}
Let us consider the derived $k$-stack $\scat{Perf}_k$ of perfect complexes from \cite{HAG-II}, that is defined by 
\[
\scat{Perf}_k(A):=(\scat{Mod}_A^{\mathrm{perf}})^\simeq\,,
\]
where $\scat{Mod}_A^{\mathrm{perf}}$ is the full subcategory spanned by perfect $A$-modules, and the superscript $(-)^\simeq$ 
means that we only keep the $1$-morphisms that are equivalences. A morphism $X\to\scat{Perf}_k$ is by definition the 
data of a perfect sheaf on $X$; therefore the identity morphism $\scat{Perf}_k\to\scat{Perf}_k$ classifies a tautological 
perfect sheaf $E$ on $\scat{Perf}_k$. 

For a finite quiver $Q$, with vertex set $V(Q)$ and edge set $E(Q)$, we consider the perfect sheaf 
\[
E^Q:=\prod_{e\in E(Q)}\textsc{Map}\big(p_{s(e)}^*E,p_{t(e)}^*E\big)
\]
on $\scat{Perf}_k^{V(Q)}$, where $p_v:\scat{Perf}_k^{V(Q)}\to \scat{Perf}_k$ is the $v$-th projection ($v\in V(Q)$). 
The relative cotangent complex of the morphism 
\[
p_Q\,:\,\mathbb{A}_{E^Q}\longrightarrow \scat{Perf}_k^{V(Q)}
\]
is therefore $(E^Q)^\vee\simeq E^{Q^*}$, where $(-)^\vee=\textsc{Map}(-,\mathcal O)$ is the dualization functor. 
As a consequence, 
\[
\mathbb{A}_{\mathbb{L}_{p_Q}}\simeq \mathbb{A}_{E^{\overline{Q}}}\,.
\]
\end{example}
\begin{remark}\label{labelresBG}
Later on, we will identify $\mathbb{A}_{E^Q}$ with the moduli of objects $\scat{Perf}_{kQ}$. For the moment, 
it is sufficient to observe that the restriction of $E$ to the open substack $\BGL_n(k)\subset\scat{Perf}_k$ is $k^n$, 
so that the restriction of $E^Q$ to the open substack $\BGL_{\vec{n}}(k)\subset \scat{Perf}_k^{V(Q)}$ is ${\Rep}_k(Q,\vec{n})$. 
\end{remark}

\subsubsection{Shifted cotangent stacks are shifted symplectic}

The \textit{$n$-shifted cotangent stack} of a derived Artin $k$-stack $X$ is 
\[
\mathbf{T}^*[n]X:=\mathbb{A}_{\mathbb{L}_X[n]}\,.
\]
In other words, a morphism $Y\to\mathbf{T}^*[n]X$ is determined by a morphism $u:Y\to X$ and a global 
section of $u^*\mathbb{L}_X[n]$. 
The identity morphism $\mathbf{T}^*[n]X\to\mathbf{T}^*[n]X$ is thus determined by a global 
section $\ell_X$ of $\pi_X^*\mathbb{L}_X[n]$. Let us denote $\lambda_X$ the image of $\ell_X$ 
through $\pi_X^*\mathbb{L}_X[n]\longrightarrow \mathbb{L}_{\mathbf{T}^*[n]X}[n]$: 
\[
\lambda_X\in \mathcal A^1(\mathbf{T}^*[n]X,n)\,.
\]
We know (see \cite[Theorem 2.2(1)]{CalCot}) that $\omega_X:=d_{dR}\lambda_X\in\mathcal A^{2,\cl}(\mathbf{T}^*[n]X,n)$ 
defines an $n$-shifted symplectic structure on $\mathbf{T}^*[n]X$. 
\begin{remark}\label{remark: 1-form}
The $1$-form $\lambda_X$ can also be described as follows using the functor of points. 
Recall that a $B$-point of $\mathbf{T}^*[n]X$ is determined by a point $x\in X(B)$ and a section $s\in|x^*\mathbb{L}_X[n]|$. 
Then 
\[
\lambda_X(x,s)=x^*s\in|\mathbb{L}_B[n]|=\mathcal{A}^1(B,n)\,.
\]
\end{remark}
\begin{example}\label{BGex}
For a reductive affine algebraic group $G$, $\mathbb{L}_{BG}$ is the quasi-coherent sheaf associated with the shifted 
coadjoint $G$-module $\mathfrak{g}^*[-1]$. Therefore 
\[
\mathbf{T}^*[n](BG)\simeq \big[\mathfrak{g}^*[n-1]/G\big]\,.
\]
Moreover, the section $\ell_{BG}$ has a simple expression 
\[
\ell_{BG}=\mathrm{id}_{\mathfrak{g}[1-n]}\in\big|(\mathfrak{g}^*[n-1]\otimes\mathfrak{g}[1-n])^G\big|\subset
\big|\big(\mathfrak{g}^*[-1]\otimes\cat{Sym}(\mathfrak{g}[1-n])\big)^G[n]\big|\simeq|\Gamma(\pi_{BG}^*\mathbb{L}_{BG})[n]|\,.
\]
Actually, the above still makes sense without assuming $G$ reductive; we still have a map 
\[
\big|\big(\mathfrak{g}^*[-1]\otimes\cat{Sym}(\mathfrak{g}[1-n])\big)^G[n]\big|\longrightarrow
|\Gamma(\pi_{BG}^*\mathbb{L}_{BG})[n]|\,,
\]
which is not necessarily an equivalence (as there might be non-trivial higher $G$-cohomology). 
Notice that in the reductive case, one can choose a non-degenerate invariant quadratic form $c\in\mathrm{Sym}^2(\mathfrak{g}^*)^G$ 
and thus get an isomorphism $c^\flat:\mathfrak{g}\to\mathfrak{g}^*$ of $G$-modules, so that 
\[
\mathbf{T}^*[n](BG)\simeq \big[\mathfrak{g}[n-1]/G\big]\,.
\]
Through this identification, we have 
\[
\ell_{BG}=c\in (\mathfrak{g}^{\otimes2})^G\subset\big|\big(\mathfrak{g}^*[-1]\otimes\cat{Sym}(\mathfrak{g}^*[1-n])\big)^G[n]\big|
\simeq|\Gamma(\pi_{BG}^*\mathbb{L}_{BG})[n]|\,.
\]
\end{example}
\begin{example}\label{linPerfk}
It is known (see \cite[Corollary 3.29]{ToVa}) that the derived $k$-stack $\scat{Perf}_k$ is locally geometric 
and locally of finite presentation, and that 
\[
\mathbb{T}_{\scat{Perf}_k}\simeq\textsc{Map}(E,E)[1]\,.
\]
Therefore 
\[
\mathbf{T}^*[n]\scat{Perf}_k\simeq\mathbb{A}_{\textsc{Map}(E,E)^\vee[n-1]}\simeq \mathbb{A}_{\textsc{Map}(E,E)[n-1]}
\]
as there is an isomorphism of perfect sheaves
\[
c^\flat\,:\,\textsc{Map}(E,E)\tilde\longrightarrow \textsc{Map}(E,E)^\vee\,;\, f\longmapsto \mathrm{tr}(f\circ-)\,.
\]
Through the above identification, on $\mathbb{A}_{\textsc{Map}(E,E)[n-1]}$, $\ell_{\scat{Perf}_k}$ becomes a section of 
\[
\pi_{\scat{Perf}_k}^*\textsc{Map}(E,E)^\vee[n-1]\simeq\pi^*_{\scat{Perf}_k}\mathbb{L}_{\scat{Perf}_k}\,,
\]
which is the section classifying $c^\flat[n-1]$. 
\end{example}

\subsubsection{Lagrangian correspondences for shifted cotangent stacks}\label{lagshicotst}

Let $f:X\to Y$ be a morphism of derived Artin $k$-stacks. 
We have a morphism $f^*\mathbb{L}_Y\to \mathbb{L}_X$ leading, for every $n\in\mathbb{Z}$, to a correspondence  
\begin{equation}\label{eq:corrcot}
\bT^*[n]X\overset{q}\longleftarrow f^*\bT^*[n]Y\overset{p}{\longrightarrow} \bT^*[n]Y
\end{equation}
of derived Artin $k$-stacks. 
Observe that, by definition, $p^*\ell_Y$ is the section of $p^*\pi_Y^*\mathbb{L}_Y[n]\simeq q^* \pi_X^*f^*\mathbb{L}_Y[n]$ 
classifed by $p$, and it coincides with the one classified by $\mathrm{id}:f^*\mathbf{T}^*[n]Y\to f^*\mathbf{T}^*[n]Y$. 
Hence the image of $p^*\ell_Y$ through the morphism $q^* \pi_X^*f^*\mathbb{L}_Y[n]\to q^* \pi_X^*\mathbb{L}_X[n]$ is classified by 
$q$, and thus it coincides with $q^*\ell_X$. As a consequence, we get that $p^*\lambda_Y$ coincides with $q^*\lambda_X$ 
(as sections of $\mathbb{L}_{f^*\mathbf{T}^*[n]Y}$), and that the correspondence \eqref{eq:corrcot} lifts to an exact 
isotropic one: 
\[
\xymatrix{
& f^*\bT^*[n]Y \ar[rd]^-{p} \ar[dl]_-{q} & \\
\bT^*[n]X \ar[rd]_-{\lambda_X} && \bT^*[n] Y  \ar[ld]^-{\lambda_Y} \\
&\mathcal A^1[n] &}
\]
\begin{remark}
For a $B$-point given by $x\in X(B)$ and $s\in|x^*f^*\mathbb{L}_Y[n]|$, this boils down to the commutativity 
of the diagram of $B$-modules
\[
\xymatrix{
& x^*f^*\mathbb{L}_Y[n] \ar@{=}[rd] \ar[dl] & \\
x^*\mathbb{L}_X[n] \ar[rd] && (f\circ x)^*\mathbb{L}_Y[n]  \ar[ld] \\
&\mathbb{L}_B[n] &}
\]
\end{remark}
It can be shown (along the lines of \cite[Theorem 2.8]{CalCot}) that this isotropic correspondence is in fact lagrangian. 
\begin{example}
When $Y=\mathrm{pt}$, we get back the exact lagrangian structure on the zero section $X\to\bT^*[n]X$ from 
\cite[Theorem 2.2(2)]{CalCot}. 
\end{example}
\begin{proposition}\label{prop:cotangentfunctor}
The above construction defines a functor 
\[
\bT^*[n]\,:\,\Ho(\scat{dSt}_k^\mathrm{Art})\longrightarrow \Ho(\scat{ExLag}_{[n]})\,,
\]
where $\scat{dSt}_{k}^\mathrm{Art}$ is the full subcategory of $\scat{dSt}_{k}$ spanned by Artin stacks. 
\end{proposition}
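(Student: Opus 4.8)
The plan is to verify the three functoriality conditions for $\bT^*[n]$, namely that it respects identities, composition, and the $2$-categorical structure (equalities of composites up to the relevant homotopies), working in the homotopy category so that coherence issues are suppressed. On objects, $\bT^*[n]$ sends a derived Artin stack $X$ to the pair $(\bT^*[n]X,\omega_X)$, which is an exact $n$-shifted symplectic stack by the discussion following Example~\ref{linPerfk} (i.e.\ $\omega_X=d_{dR}\lambda_X$, and non-degeneracy is \cite[Theorem 2.2(1)]{CalCot}). On morphisms, $\bT^*[n]$ sends $f:X\to Y$ to the exact lagrangian correspondence
\[
\bT^*[n]X\overset{q}\longleftarrow f^*\bT^*[n]Y\overset{p}{\longrightarrow}\bT^*[n]Y
\]
constructed in \S\ref{lagshicotst}, whose isotropy datum is the homotopy $p^*\lambda_Y\simeq q^*\lambda_X$ exhibited there and whose non-degeneracy follows along the lines of \cite[Theorem 2.8]{CalCot}. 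So the only thing left is functoriality of this assignment at the level of $Ho(\scat{ExLag}_{[n]})$.

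First I would check identities: for $f=\mathrm{id}_X$ we have $\mathrm{id}_X^*\bT^*[n]X\simeq\bT^*[n]X$, and under this equivalence both $p$ and $q$ become the identity, while the isotropy homotopy $p^*\lambda_X\simeq q^*\lambda_X$ is the constant one; this is exactly the identity $1$-morphism on $(\bT^*[n]X,\omega_X)$ in $\scat{ExLag}_{[n]}$. Next, for composable $f:X\to Y$ and $g:Y\to Z$, I would compute the composite of the two correspondences. Composition of spans is by (derived) fiber product, so the composite of
\[
\bT^*[n]X\leftarrow f^*\bT^*[n]Y\to\bT^*[n]Y
\qquad\text{and}\qquad
\bT^*[n]Y\leftarrow g^*\bT^*[n]Z\to\bT^*[n]Z
\]
has apex $f^*\bT^*[n]Y\underset{\bT^*[n]Y}{\times}g^*\bT^*[n]Z$. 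The key point is to identify this fiber product with $(g\circ f)^*\bT^*[n]Z=(gf)^*\bT^*[n]Z$: indeed a point of the fiber product is a point $x\in X$ together with sections of $x^*f^*\mathbb{L}_Y[n]$ and of $x^*f^*g^*\mathbb{L}_Z[n]$ that agree after applying $f^*\mathbb{L}_Y\to\mathbb{L}_X$ is \emph{not} what is required — rather the matching condition is over $\bT^*[n]Y$, i.e.\ the $Y$-point and the section of $(\text{pullback of})\mathbb{L}_Y[n]$ must coincide, which forces the fiber product to be $\{x\in X,\ s\in|x^*(gf)^*\mathbb{L}_Z[n]|\}=(gf)^*\bT^*[n]Z$. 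This uses only the compatibility of $\mathbb{A}_{(-)}$ with pullbacks, namely $\mathbb{A}_{f^*E}\simeq f^*\mathbb{A}_E$, recorded in \S\ref{linst}. One then checks that the two legs of the composite span are $q_{gf}$ and $p_{gf}$ for the correspondence attached to $gf$, using functoriality of the cotangent complex, $\mathbb{L}_X\leftarrow f^*\mathbb{L}_Y\leftarrow f^*g^*\mathbb{L}_Z$.

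Finally I would verify that the isotropic/lagrangian structures match up: the isotropy datum on the composite span is obtained by pasting the two squares over $\mathcal{A}^1[n]$, which gives the homotopy $q^*\lambda_X\simeq p^*\lambda_Y\simeq (pq)^*\lambda_Z$; and since $p^*\lambda_Y$ is, by construction in \S\ref{lagshicotst}, glued from $q^*\ell_Y\mapsto q^*\ell_X$ and from the section classifying the relevant projection, a short diagram chase with the chain $x^*\mathbb{L}_X[n]\leftarrow x^*f^*\mathbb{L}_Y[n]\leftarrow x^*(gf)^*\mathbb{L}_Z[n]$ shows this agrees with the isotropy datum of the $gf$-correspondence. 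Non-degeneracy of the composite is automatic since $n$-shifted lagrangian correspondences compose to lagrangian ones (\cite[Theorem 4.4]{CalLag}), so no extra check is needed there. The main obstacle is bookkeeping: making the identification of the fiber product with $(gf)^*\bT^*[n]Z$ and the compatibility of the exact ($1$-form level) data fully coherent, rather than merely true on the nose; but since the target is the \emph{homotopy} category $Ho(\scat{ExLag}_{[n]})$, it suffices to exhibit these equivalences and homotopies without tracking higher coherences, so the argument reduces to the functorial properties of $X\mapsto\mathbb{L}_X$ and $E\mapsto\mathbb{A}_E$ together with the already-established exact lagrangian structure on each individual correspondence.
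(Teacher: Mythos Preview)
Your proposal is correct and follows essentially the same approach as the paper: verify identities, then for composable $f,g$ identify the fiber product $f^*\bT^*[n]Y\times_{\bT^*[n]Y}g^*\bT^*[n]Z$ with $(gf)^*\bT^*[n]Z$ and check that the exact isotropy data glue correctly, all via the functor of points and the functoriality of $\mathbb{L}_{(-)}$. The paper's version is just more economical: it packages the identification of the composite span and the compatibility of the $\lambda$'s into a single commuting diagram of exact lagrangian correspondences, whose commutativity is read off from the obvious commuting diagram of $B$-modules $x^*\mathbb{L}_X[n]\leftarrow x^*f^*\mathbb{L}_Y[n]\leftarrow x^*(gf)^*\mathbb{L}_Z[n]\to\mathbb{L}_B[n]$; your argument unpacks this same content more discursively (and the mid-sentence self-correction about the matching condition over $\bT^*[n]Y$ should be cleaned up, though the substance is right).
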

\begin{proof}
First of all, the identity map $X\to X$ is easily seen to be sent to the identity correspondence 
\[
\xymatrix{
& \bT^*[n]X \ar[rd]^-{\mathrm{id}} \ar[dl]_-{\mathrm{id}} & \\
\bT^*[n]X \ar[rd]_-{\lambda_X} && \bT^*[n] X  \ar[ld]^-{\lambda_X} \\
&\mathcal A^1[n] &}
\]
Then, given a sequence $X\overset{f}{\longrightarrow} Y\overset{g}{\longrightarrow} Z$, we do have the following 
commuting diagram 
\[
\xymatrix{
&& (g\circ f)^*\bT^*[n]Z \ar[rd]\ar[ld] && \\
& f^*\bT^*[n]Y \ar[rd]\ar[ld] && g^*\bT^*[n]Y \ar[rd]\ar[ld] & \\
\bT^*[n]X \ar[rrd]^-{\lambda_X} && \bT^*[n]Y \ar[d]^-{\lambda_Y} && \bT^*[n]Z \ar[lld]_-{\lambda_Z} \\
&& \mathcal A^{1}[n] &&
}
\]
The fact that it commutes follows from the commutativity of the diagram of $B$-modules
\[
\xymatrix{
&& x^*(g\circ f)^*\mathbb{L}_Z[n] \ar@{=}[rd] \ar[ld] && \\
& x^*f^*\mathbb{L}_Y[n] \ar@{=}[rd]\ar[ld] && (f\circ x)^*g^*\mathbb{L}_Z[n] \ar@{=}[rd]\ar[ld] & \\
x^*\mathbb{L}_X[n] \ar[rrd] && (f\circ x)^*\mathbb{L}_Y[n]\ar[d] && (g\circ f\circ x)^*\mathbb{L}_Z[n] \ar[lld] \\
&& \mathbb{L}_B[n] &&
}
\]
where $x\in X(B)$. 
\end{proof}
\begin{remark}
We neither prove nor use use it, but one can actually show that $\bT^*[n]$ lifts to a functor 
\[
\scat{dSt}_{k}^\mathrm{Art}\longrightarrow\scat{ExLag}_{[n]}\,,
\]
at the level of $\infty$-categories. 
\end{remark}
\begin{definition}
The conormal exact $n$-shifted lagrangian $\bT^*_X[n]Y\to\bT^*[n]Y$ is the composition of $f^*\bT^*[n]B$ with the zero 
section $X\to\bT^*[n]X$ in $\scat{ExLag}_{[n]}$: 
\[
\xymatrix{
&& \bT^*_X[n]Y \ar[rd]\ar[ld] && \\
& X \ar[rd]\ar[ld] && f^*\bT^*[n]Y \ar[rd]\ar[ld] & \\
{\mathrm{pt}} \ar[rrd]^-{0} && \bT^*[n]X \ar[d]^-{\lambda_X} && \bT^*[n]Y \ar[lld]_-{\lambda_Y} \\
&& \mathcal A^{1}[n] &&
}
\]
\end{definition}
Then one proves that 
\begin{align}\label{cotformula}
\bT^*_X[n]Y\underset{{\bT^*[n]Y}}{\times}Y\simeq\bT^*[n-1]X\,,
\end{align}
as exact $(n-1)$-shifted symplectic stacks. This is a special case of \cite[Remark 2.7]{CalCot}, but we can also 
prove it thanks to the following diagram of exact lagrangian correspondences
\begin{align}\label{grodiag}
\xymatrix{
&&& \bT^*[n-1]X \ar[rd]\ar[ld] &&& \\
&& \bT^*_X[n]Y \ar[rd]\ar[ld] && X \ar[rd]\ar[ld] && \\
&X \ar[rd]\ar[dl] && f^*\bT^*[n]Y \ar[rd]^-{p} \ar[dl]_-{q} && Y \ar[ld]\ar[dr] &\\
\mathrm{pt} \ar@/_/[rrrd]_-0 && \bT^*[n]X \ar[rd]^-{\lambda_X} && \bT^*[n] Y  \ar[ld]_-{\lambda_Y} && \mathrm{pt} \ar@/^/[llld]^-0 \\
&&& \mathcal A^{1}[n] &&&
}
\end{align}
Indeed $\bT^*[n-1]X\simeq X\underset{\bT^*[n]X}{\times}X\simeq\bT^*_X[n]Y\underset{\bT^*[n]Y}{\times}Y$. 
\begin{example}\label{exRedux}
Our aim is to describe the cotangent stack to the quotient stack $[V/G]$, where $G$ is a reductive affine algebraic group 
and $V$ is a finite dimensional $G$-module. According to \eqref{cotformula}, and Examples~\ref{exGmod} and~\ref{BGex}, 
it is given by the derived exact lagrangian intersection 
\[
\xymatrix{
\bT^*[V/G] \ar[r] \ar[d]	& BG 		\ar[d]										\\
[V\oplus V^*/G] \ar[r]	& [\mathfrak{g}^*/G]
}\]
The bottom morphism is given by a $G$-equivariant map $V\times V^*\to \mathfrak{g}^*$. 
We let the reader check that this map is the usual moment map, sending $(v,v^*)$ to the linear form $x\mapsto v^*(x\cdot v)$, 
where $x\cdot$ is the $\mathfrak{g}$-action. 
The exact lagrangian structure of this morphism can be described as follows. Recall that 
\[
\Gamma(\mathbb{L}_{[V\oplus V^*/G]})\simeq 
\mathrm{hofib}\Big(\Omega^1(V\times V^*)\to \mathcal O(V\times V^*)\otimes\mathfrak{g}^*\Big)^G\,,
\]
where the map is the transpose of the infinitesimal action. 
Therefore, by definition, the pull-back of the canonical element $\mathrm{id}_{\mathfrak{g}^*}$ is the tensor of the 
$\mathfrak{g}$-action, lying in $(V\otimes V^*\otimes\mathfrak{g}^*)^G$, which is the image of the tautological $1$-form 
on $V\times V^*=T^*V$ under the transpose of the infinitesimal action. Hence the exact lagrangian structure on 
$[V\oplus V^*/G] \to [\mathfrak{g}^*/G]$ is given by the tautological $1$-form on $V\times V^*$, which is $G$-equivariant. 
\end{example}

\subsubsection{Symplectic structure on derived quiver representation schemes}\label{quivsch}

First for a given quiver $Q$, let us introduce the \emph{path algebra} $kQ$ generated by paths, with concatenation as product. 
For each $v\in V(Q)$ we denote by $e_v$ the idempotent path of length $0$. They generate a subalgebra of $kQ$ isomorphic to 
$k^{V(Q)}$ as a vector space, that will be denoted by $R$ when there is no ambiguity. We will make use of the Jordan quiver $L$ 
with one vertex and one loop, which satisfy $kL=k[x]$ for some degree $0$ variable $x$. Note that $kL^{V(Q)}$ is an $R$-algebra.

For a fixed dimension vector $\vec{n}=(n_v)_{v\in V(Q)}$, we want to describe the cotangent stack to 
the non-derived moduli stack $\big[{\Rep}_k(Q,\vec{n})/\GL_{\vec{n}}(k)\big]$ of $\vec{n}$-dimensional representations 
of $Q$, following the notation of Example \ref{exQuiv}. According to Example~\ref{exRedux}, it is obtained as the derived 
lagrangian intersection
\[
\xymatrix{
\bT^*\big[{\Rep}_k(Q,\vec{n})/\GL_{\vec{n}}(k)\big] \ar[r] \ar[d]	& \BGL_{\vec{n}}(k) 		\ar[d]										\\
\big[{\Rep}_k(\overline{Q},\vec{n})/\GL_{\vec{n}}(k)\big] \ar[r]	& \big[\mathfrak{gl}_{\vec{n}}(k)/\GL_{\vec{n}}(k)\big]
}\]
where the bottom map is given by the moment map 
\[
{\Rep}_k(\overline{Q},\vec{n})\longrightarrow \mathfrak{gl}_{\vec{n}}(k)={\Rep}_k(L^{V(Q)},\vec{n})\,,
\]
which is just pulling-back representations along the $R$-algebra morphism $kL^{V(Q)}\to k\overline Q$ given by 
\[
\sum_{v\in V(Q)}x_v\,\longmapsto\,\sum_{e\in E(Q)}(ee^*-e^*e)\,. 
\]
One can compute the above derived fiber product by first taking the derived fiber at $0$ of the moment map, 
and then taking the quotient by $\GL_{\vec{n}}(k)$. 

\medskip

Let us now explain shortly that the derived fiber at $0$ of the moment map can be described as the derived representation 
scheme of a certain dg-algebra. Let us consider the model category $\cat{dgAlg}_{R}$ of dg-$R$-algebras, which contains 
$kQ$, $kL^{V(Q)}$ or $\mathfrak{gl}_{\vec{n}}(k)$. 
Inspired by \cite{BKR}, 
we define, for a dg-$R$-algebra $A$, the derived representation stacks 
\[
\scat{DRep}(A,\vec{n})\,:\,B\mapsto \cat{Map}_{\scat{dgAlg}_{R}}\big(A,\mathfrak{gl}_{\vec{n}}(B)\big)\,.
\]
One can show that 
\begin{itemize}
\item If $A$ is sitting in non-positive degree, these derived stacks are in fact derived schemes; 
\item If $A$ is free and sitting in degree $0$, i.e.~$A$ is the path algebra of a quiver $Q$, then 
\[
\scat{DRep}(A,\vec{n})\simeq {\Rep}_k(Q,\vec{n})\,;
\]
\item The construction defines a limit preserving functor 
\[
\scat{DRep}(-,\vec{n})\,:\,(\scat{dgAlg}_{R})^{\op}\longrightarrow \scat{dSt}_k 
\]
\end{itemize}
As a consequence we get that the homotopy fiber at $0$ of the moment map is the derived representation scheme 
(of dimension $\vec{n}$) of the derived push-out 
\[
\mathcal G_2(kQ)	:=		{k}\overline{Q}\overset{\mathbb{L}}{\underset{{k}[x]^{V(Q)}}{\coprod}} k^{V(Q)}
							\simeq	{k}\overline{Q}\overset{\mathbb{L}}{\underset{{k}[x]}{\coprod}} k\,,
\]
where the second tensor product takes place in $\scat{dgAlg}_k$, and the morphism ${k}[x]\to k\overline{Q}$ 
sends $x$ to 
\[
\sum_{e\in E(Q)}(ee^*-e^*e)\,.
\]
We thus obtain that 
\[
\bT^*\big[{\Rep}_k(Q,\vec{n})/\GL_{\vec{n}}(k)\big]\simeq \big[\scat{DRep}\big(\mathcal{G}_{2}(kQ),\vec{n}\big)/\GL_{\vec{n}}(k)\big]\,.
\]

\begin{remark}\label{remG2}
The dg-algebra $\mathcal G_2(kQ)$ is the $2$-Calabi--Yau Ginzburg dg-algebras (see~\cite{Gi}) associated to $Q$ with no 
potential. It is generated by some variable $x$ in degree $-1$ and $k\overline Q$ in degree $0$, with 
$\mathfrak d(x)=\sum_{e\in E(Q)}ee^*-e^*e$. A precise study of this push-out will be done in~\S\ref{subsubginz}. 
Its $0$-th cohomology is the so-called \emph{preprojective algebra} $\Pi_k(Q)=k\overline Q/ (\sum_{e\in E(Q)}[e,e^*])$.
\end{remark}

\begin{remark}\label{remlinquiv}
We now describe a variation on the above, allowing complexes of representations, in the spirit of Examples \ref{exQuiv2} 
and \ref{linPerfk}. More precisely, we compute the cotangent stack of $\mathbb A_{E^Q}$.
According to~\eqref{cotformula}, it is given by derived (exact) lagrangian intersection
\[
\xymatrix{
\bT^* \mathbb A_{E^Q} \ar[r]	\ar[d] 	& \bPerf_{ k^{ V(Q)}} \ar[d] \\
\mathbb A_{E^{\overline{Q}}}													\ar[r]		&  \mathbb{A}_{\textsc{Map}(E,E)^{V(Q)}}}
\]
since $\bT^*_{\mathbb A_{E^Q}}[1] \bPerf_{ k^{ V(Q)}}\simeq\mathbb A_{E^{\overline{Q}}}$ thanks to Example~\ref{exQuiv2}, 
and $\bT^*[1] \bPerf_{ k^{ V(Q)}}\simeq\mathbb A_{\textsc{Map}(E,E)^{V(Q)}}$ thanks to Example~\ref{linPerfk}. 
The bottom line $\mathbb A_{E_{\overline Q}} \to  \mathbb A_{   \textsc{Map}(E,E)^{V(Q)}}$ is induced by 
$E_{\overline Q}\to\textsc{End}(E)^{V(Q)}$, $(x_e,x_{e^*})\mapsto\sum_e[x_e,x_{e^*}]$. 
We will see in Corollary~\ref{theorem: compareGinzburg} that $\bT^* \mathbb A_{E^Q}$ can be identified with the moduli of objects 
of $\mathcal{G}_2(kQ)$. 
%the quotient of the derived representation scheme by $\GL_{\vec{n}}(k)$ sits inside $\bT^* \mathbb A_{E^Q}$ as an open substack. 
\end{remark}

%\damienline{Shall we add a remark in the form of a teaser for part of Sections 5 and 6, concerning Calabi--Yau aspects? Shall we cite \cite{BCER}?}

\subsection{Relative derived critical loci}\label{ss:critrel}

\subsubsection{Definition}\label{critrel}

Inspired by~\cite{RS}, as explained in the introductory part of section~\ref{sec:motiv}, we want to define a relative version 
of critical loci. We do so by replacing the zero section in~\eqref{grodiag} by the graph $X\to \bT^*X$ of a closed $1$-form. 
We thus start with the following datum:
\begin{itemize}
\item a closed $1$-form $\alpha$ on $X$;
\item a morphism $\pi:X\to B$ of derived $k$-stacks.
\end{itemize}
For simplicity and because it will always be the case in our applications, we assume that $\alpha=d\phi$ for some function $\phi:X\to\mathbb{A}^1$. We know from section~\ref{lagshicotst} that we have an exact isotropic correpondence 
$T^*X\leftarrow \phi^*T^*\mathbb A^1=X\times \mathbb{A}^1 \to T^*\mathbb{A}^1=\mathbb{A}^1\times\mathbb{A}^1$. 
When we compose it with the lagrangian correspondence $T^*\mathbb{A}^1\leftarrow \{(q,1)|q\in\mathbb{A}^1\}\to *$ 
we get the lagrangian graph of $d\phi:X\to \mathbf{T}^*X$.
 The composition 
 \begin{align}\label{critreldiag}
  \xymatrix{& & \bcrit_\pi(\phi) \ar[rd] \ar[dl] && \\
&X \ar[rd]^-{d\phi}\ar[ld] && \pi^* \bT^* B  \ar[ld]_-q\ar[rd]^-p& \\
~~~\mathrm{pt}~~~&&\bT^* X &&~\bT^*B}\end{align}
 defines the \emph{relative derived critical locus} of $\phi$ with respect to the constraint $\pi$. 
 We get the usual derived critical locus when the base $B$ is a point.
 
\subsubsection{The quiver case}\label{pot3CY}

Assume that $\pi$ is given by the restriction morphism $\iota^*:{\Rep}_k( Q,\vec n)\to {\Rep}_k(D,\vec m)$ induced by an inclusion 
of finite, arbitrary quivers $D\subset Q$ (not necessarily a full inclusion), with $m_v=n_v$ for every $v\in V(D)$.

We then consider a \emph{potential} $W$ of the quiver $Q$, that is a linear combination of cycles in $Q$. 
It determines a $\GL_{\vec{n}}$-invariant function $\phi=\mathrm{tr}(W)$ on ${\Rep}_k( Q,\vec n)$. For any $e\in E(Q)$ we denote 
by $\partial_eW$ the linear combination of paths obtained by removing $e$ from the cycles of $W$.

If we do not perform any quotient, $X$, $\bT^*X$ and $\pi^*\bT^*B$ are genuine vector spaces and 
\[
\bcrit_\pi(\mathrm{tr}(W))=\scat{DRep}_k\left(k Q\overset{\bbL}{\underset{k\overline Q}{\coprod}} k(Q\sqcup D^*),\vec n\right)\,,
\]
where $\delta W:k\overline Q\to kQ$ is generated by 
\[
E(Q)\ni e\mapsto e\quad,\quad E(Q^*)\ni e^*\mapsto \partial_eW
\] 
and $k\overline Q\to k(Q\sqcup D^*)$ is the obvious projection.
We resolve $kQ$ by the free $k\overline Q$-dg-algebra generated by $e'\in E(Q^*)$ in degree $-1$ with differential\[
e'\mapsto e^*-\partial_eW\]
 and get that the derived push-out\[
 k Q\underset{k\overline Q}{\overset{\bbL}{\coprod}} k(Q\sqcup D^*)\]
  is generated by $e'\in E(Q^*)$ in degree $-1$ and $e\in E(Q), e^*\in E(D^*)$ in degree $0$ with differential given by:\[
 E(D^*)\ni e'\mapsto e^*-\partial_eW\quad,\quad E(Q^*\setminus D^*)\ni e'\mapsto-\partial_eW.\]

Hence, the $0$-th truncation of $\bcrit_\pi(\mathrm{tr}(W))$ is\[
\tau_0\bcrit_\pi(\mathrm{tr}(W))=\left\{(x,x^*)\in {\Rep}_k(Q\sqcup D^*,\vec n)\middle|\begin{aligned}x_{\partial_eW}&=x^*_{e}\text{ if }e\in E(D)\\x_{\partial_eW}&=0\text{ otherwise}\end{aligned}\right\}.\]

\begin{example}\label{linex}
Let us consider some specific linear examples.\begin{enumerate}[label=(\roman*)]
\item $D=A_2=$\raisebox{-0.10\height}{
\begin{tikzpicture}\hspace{0cm}
\node(A)at(0,0){$\circ$};
\node(B)at(2,1){$\circ$};
\draw[->](A)to[bend left]node[midway,below]{$a$}(B);
\end{tikzpicture}}
$\subset Q=\tilde A_2=$\raisebox{-0.48\height}{
\begin{tikzpicture}\hspace{0cm}
\node(A)at(0,0){$\circ$};
\node(B)at(2,1){$\circ$};
\draw[->](A)to[bend left]node[midway,below]{$a$}(B);
\node(D)at(2,-1){$\circ$};
\draw[->](B)to[bend left]node[midway,left]{$b$}(D);
\draw[->](D)to[bend left]node[midway,above]{$c$}(A);
\end{tikzpicture}}, $W=abc$. We have
\[
\tau_0\bcrit_\pi(\mathrm{tr}(W))=\left\{(x_a,x_{a^*},x_b,x_{c})\middle|\begin{aligned}x_bx_c&=x_{a^*}\\x_cx_a&=0\\x_ax_b&=0\end{aligned}\right\}\,.
\]
One can show that the image of $\bcrit_{\pi}(\mathrm{tr}(W))$ in $T^*{\Rep}_k(A_2,\vec m)$ is actually the classical Lusztig 
 lagrangian subvariety~\cite[§12]{Lusztig1} for finite type quiver 
\[
\Lambda_{\vec m}(A_2):=\{(x_a,x_{a^*})\mid x_ax_{a^*}=0,~x_{a^*}x_a=0\}\,.
\]
\item The case of~\S\ref{lagcomm}: $S_1=\!\!$\raisebox{-0.46\height}{
\begin{adjustbox}{max totalsize=4cm}
\begin{tikzpicture}
\node(1)at(0,0){$\circ$};
\draw[-latex](1)edge[out=-40,in=40,looseness=12,right]node{$a$}(1);
\end{tikzpicture}
\end{adjustbox}}
$\subset
S_3=\!\!\!\!\!\!\!\!$\raisebox{-0.46\height}{
\begin{adjustbox}{max totalsize=4cm}
\begin{tikzpicture}
\node(1)at(0,0){$\circ$};
\draw[-latex](1)edge[out=-40,in=40,looseness=12,right]node{$a$}(1);
\draw[-latex](1)edge[out=80,in=160,looseness=12,above ]node{$b$}(1);
\draw[-latex](1)edge[out=200,in=280,looseness=12,below]node{$c$}(1);
\end{tikzpicture}
\end{adjustbox}}, $W=[a,b]c$. This time\[
\tau_0\bcrit_\pi(\mathrm{tr}(W))=\left\{(x_a,x_{a^*},x_b,x_{c})\middle|\begin{aligned} [x_b,x_c] &=x_{a^*}\\ [x_c,x_a]&=0\\ [x_a,x_b]&=0\end{aligned}\right\}\]
and observe that its image in $T^*{\Rep}_\mathbb C(S_1,n)$ is the subvariety $\Lambda_n$ defined in~\S\ref{lagcomm}.
\end{enumerate}
\end{example}

Now consider \[\pi:[{\Rep}_k( Q,\vec n)/\GL_{\vec n}(k)]\to[{\Rep}_k( D,\vec n)/\GL_{\vec m}(k)]\]
 where $[{\Rep}_k( Q,\vec n)/\GL_{\vec n}(k)]$ is still endowed with the function $\phi=\mathrm{tr}(W)$.
We want to describe the underlying dg-algebra hidden behind $\bcrit_\pi(f)$, as announced by Remark~\ref{annder}(ii).
To that end, we first recall the definition of Ginzburg dg-algebras~\cite{Gi}. 

\begin{definition}\label{defW3CY}
Consider $(Q,W)$ a quiver with potential. 
We denote by $\mathcal G_3(kQ,\delta W)$ the dg-$R$-algebra generated by $e\in E(Q)$ in degree $0$, $e'\in E(Q^*)$ in 
degree $-1$ and $x'$ in degree $-2$, 
%$v\in V(Q)$ in degree $-2$, 
with concatenation as product and the following differential
\[
\mathfrak d: \quad x'\mapsto \sum_{e\in E(Q)}(ee'-e'e)\quad,\quad e'\mapsto\partial_eW\,.
\]
%\begin{align*}
% \mathfrak d: \quad v\mapsto \sum_{e\in E(Q)}e_v(ee^*-e^*e)e_v\quad,\quad q^*\mapsto\partial_qW.
%\end{align*}
\end{definition}
 
Our construction recovers these dg-algebras thanks to the following.
 
\begin{proposition}\label{critasginz}
If $D=\emptyset$, then 
\[
 \bcrit_\pi(\mathrm{tr}(W))=\big[\scat{DRep}(\mathcal{G}_3(kQ,\delta W),\vec{n})/\GL_{\vec{n}}(k)\big]\,.
\]
\end{proposition}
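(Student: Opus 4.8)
The plan is to compute the derived pushout defining $\bcrit_\pi(\mathrm{tr}(W))$ in the quotient-stack setting, following the same strategy as the vector-space case treated just above, but now keeping track of the $BGL_{\vec n}(k)$ factor. First I would unwind Diagram~\eqref{critreldiag} when $D=\emptyset$: there the constraint $\pi$ is the morphism $[Rep_k(Q,\vec n)/GL_{\vec n}(k)]\to BGL_{\vec n}(k)$, so $\pi^*\bT^*B=\bT^*[Rep_k(Q,\vec n)/GL_{\vec n}(k)]$ itself (as an iterated lagrangian intersection, using Example~\ref{exRedux} and \S\ref{quivsch}), and the relative critical locus becomes the derived lagrangian intersection of the graph of $d\,\mathrm{tr}(W)$ with the zero section inside $\bT^*[Rep_k(Q,\vec n)/GL_{\vec n}(k)]$.

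Concretely, I would present $\bT^*[Rep_k(Q,\vec n)/GL_{\vec n}(k)]$ as $[\scat{DRep}(\mathcal G_2(kQ),\vec n)/GL_{\vec n}(k)]$ using the identification at the end of \S\ref{quivsch}, and then observe that imposing the closed $1$-form $d\,\mathrm{tr}(W)$ amounts, at the level of dg-$R$-algebras, to an extra derived pushout: starting from $\mathcal G_2(kQ)=k\overline Q\overset{\bbL}{\underset{k[x]^{V(Q)}}{\coprod}}k^{V(Q)}$, one further coequalizes the two maps $k\overline Q\to k Q$ (the identity on $E(Q)$, and $e^*\mapsto\partial_eW$ on $E(Q^*)$) against the projection — exactly as in the $D=\emptyset$ specialization of the displayed pushout $kQ\overset{\bbL}{\underset{k\overline Q}{\coprod}}k(Q\sqcup D^*)$, which when $D=\emptyset$ reads $kQ\overset{\bbL}{\underset{k\overline Q}{\coprod}}kQ$. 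Resolving as in the text (free $k\overline Q$-dg-algebra on $e'\in E(Q^*)$ in degree $-1$ with $\mathfrak d e'=e^*-\partial_eW$, then pushed out) one gets a dg-$R$-algebra generated by $e\in E(Q)$ in degree $0$, $e'\in E(Q^*)$ in degree $-1$, with $\mathfrak d e'=-\partial_eW$, together with a degree $-2$ generator $x'$ coming from the $k[x]^{V(Q)}$-factor in $\mathcal G_2$, with $\mathfrak d x'=\sum_{e\in E(Q)}(ee'-e'e)$. Comparing generators and differential with Definition~\ref{defW3CY} gives precisely $\mathcal G_3(kQ,\delta W)$; then applying the limit-preserving functor $\scat{DRep}(-,\vec n)$ and quotienting by $GL_{\vec n}(k)$ yields the claim.

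The main obstacle, I expect, is bookkeeping rather than conceptual: one must check that the $k[x]^{V(Q)}\to k\overline Q$ pushout (which produces $x$ in degree $-1$ in $\mathcal G_2$, see Remark~\ref{remG2}) interacts correctly with the second pushout along $\delta W$, so that in the end one obtains a single degree $-2$ generator $x'$ per vertex with the stated differential $\sum_e(ee'-e'e)$, and that no spurious generators or relations survive. This is a matter of choosing compatible cofibrant resolutions and tracking degrees carefully; the key points are that $k[x]$ is free so its resolution is harmless, that derived pushouts of dg-$R$-algebras compose, and that $\scat{DRep}(-,\vec n)$ sends these homotopy colimits to homotopy limits of derived stacks, so that the iterated pushout of dg-algebras corresponds to the iterated lagrangian intersection of Diagram~\eqref{critreldiag}. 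Finally I would remark that the $0$-truncation of the resulting stack recovers the critical-locus description $\{x_{\partial_eW}=0\}/GL_{\vec n}(k)$ obtained from the general formula above with $D=\emptyset$, which is $H^0$ of the Jacobi dg-algebra of $(Q,W)$, consistent with $H^0(\mathcal G_3(kQ,\delta W))$ being the usual Jacobi algebra.
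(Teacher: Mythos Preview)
Your overall strategy is the same as the paper's, and the computation you outline is correct, but there is a misidentification in the first paragraph that you should fix. When $D=\emptyset$ the subquiver has no vertices, so $\vec m$ is empty, $GL_{\vec m}(k)$ is trivial, and the base $B=[Rep_k(D,\vec m)/GL_{\vec m}(k)]$ is a \emph{point}, not $BGL_{\vec n}(k)$. Consequently $\pi^*\bT^*B\simeq X$ (the zero section), not $\bT^*X$. Fortunately your conclusion survives this: with $B=\mathrm{pt}$ the diagram~\eqref{critreldiag} collapses to $\bcrit_\pi(\phi)=X\times_{\bT^*X}X$ with the two legs being $d\phi$ and the zero section, i.e.\ the ordinary derived critical locus, exactly as you state.

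From that point on, your argument coincides with the paper's. The paper writes the intersection directly as the homotopy pushout
\[
kQ\overset{T_{kQ}(0)}{\longleftarrow}\cG_2(kQ)\overset{T_{kQ}(\delta W)}{\longrightarrow}kQ\,,
\]
cites \cite[Remark~6.4]{Keller}, and verifies it by resolving $kQ$ as a quasi-free $\cG_2(kQ)$-algebra $\mathcal B(kQ)$ with generators $e'$ (degree $-1$) and $x'$ (degree $-2$) subject to $\mathfrak d e'=e^*-\partial_eW$ and $\mathfrak d x'=x-\sum_e(ee'-e'e)$; pushing out along $T_{kQ}(0)$ kills $e^*$ and $x$ and yields $\cG_3(kQ,\delta W)$. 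Your two-step variant (first the $k\overline Q$-pushout from the linear case, then the $k[x]^{V(Q)}$-factor contributing $x'$) is a valid reorganisation of the same resolution, but the paper's one-step presentation is cleaner and avoids the ``bookkeeping obstacle'' you anticipate: once you resolve over $\cG_2(kQ)$ directly, the compatibility of the two factors is automatic.
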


\begin{proof}
From~\S\ref{quivsch}, we know that 
\[
 \bcrit_\pi(\mathrm{tr}(W))=\big[\scat{DRep}(\mathcal A,\vec{n})/\GL_{\vec{n}}(k)\big]\,,
\]
where $\mathcal A$ is defined as the following homotopy push-out 
\[
\xymatrix{
\mathcal G_2(kQ)\ar[r]^{\delta1}\ar[d]_{\delta W}&kQ\ar[d]\\kQ\ar[r]&\mathcal A.}
\]
%where we obviously extend $\delta W$ to $\cG_2(kQ)$.
As pointed out in~\cite[Remark 6.4]{Keller}, this homotopy push-out defines $\mathcal G_3(kQ,\delta W)$. 
It can be seen by resolving $kQ$ with the quasi-free $\mathcal G_2(kQ)$-dg-algebra $\mathcal B(kQ)$ generated by 
%\begin{itemize}
%\item $e,e^*\in E(\overline Q)$ in degree $0$;
%\item $v\in V(Q)$ and $e'\in E(Q)$ in degree $-1$;
%\item $v'\in V(Q)$ in degree $-2$\end{itemize}
%with differential\[
%\mathfrak d:\quad v'\mapsto v-\sum_{e\in E(Q)}e_v(ee'-e'e)e_v,\quad v\mapsto\sum_{e\in E(Q)}e_v(ee^*-e^*e)e_v,\quad e'\mapsto e^*-\partial_eW.\]
$e'\in E(Q^*)$ in degree $-1$ and $x'\in V(Q)$ in degree $-2$
with differential
\[
\mathfrak d:\quad x'\mapsto x-\sum_{e\in E(Q)}(ee'-e'e),\quad e'\mapsto e^*-\partial_eW\,.
\]
One can then compute the desired push-out using $\mathcal B(kQ)$ and check that we get $\mathcal G_3(kQ,\delta W)$.
\end{proof}

When $D$ is non trivial, we define $\mathcal G_2(D\subseteq Q)$ as the homotopy push-out
\[
\xymatrix{
kD\ar[r]\ar[d]&kQ\ar[d]\\
\mathcal G_2(kD)\ar[r]&\mathcal G_2(D\subseteq Q)}
\]
so that
\[
\pi^*\bT^*B=\big[\scat{DRep}(\mathcal G_2(D\subseteq Q),\vec{n})/\GL_{\vec{n}}(k)\big]\,.
\]
 
As a consequence, keeping~\eqref{critreldiag} in mind, we get the following.
 
\begin{proposition}\label{defG3QDW}
 The dg-algebra $\mathcal G_3(kQ|kD,\delta W)$ defined by the homotopy push-out 
\[
\xymatrix{
\mathcal G_2(kQ)\ar[d]_{\delta W}\ar[r]&\mathcal G_2(D\subseteq Q)\ar[d]\\kQ\ar[r]&\mathcal G_3(kQ|k D,\delta W)}
\]
satisfies
\[
\bcrit_\pi(\mathrm{tr}(W))=\big[\scat{DRep}(\mathcal G_3(kQ|k D,\delta W),\vec{n})/\GL_{\vec{n}}(k)\big]\,.
\]
\end{proposition}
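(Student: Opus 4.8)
The plan is to unwind the composition of lagrangian correspondences in~\eqref{critreldiag} into a single homotopy fibre product of quotient stacks, and then to transport that fibre product through the functor $\scat{DRep}(-,\vec n)$. Composing the two displayed correspondences amounts, by definition, to forming the homotopy pullback
\[
\bcrit_\pi(\mathrm{tr}(W))\;\simeq\;X\underset{\bT^* X}{\times}\pi^*\bT^* B\,,
\]
in which $X=[Rep_k(Q,\vec n)/GL_{\vec n}(k)]$, the left-hand leg $X\to\bT^* X$ is the lagrangian graph of $d\,\mathrm{tr}(W)$, and the right-hand leg is the canonical map $q\colon\pi^*\bT^* B\to\bT^* X$ from~\S\ref{lagshicotst}. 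The task then reduces to recognising each corner and each leg of this square at the level of dg-$R$-algebras, after which the statement follows along the lines of the proof of Proposition~\ref{critasginz}.

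For the corners, \S\ref{quivsch} gives $X\simeq[\scat{DRep}(kQ,\vec n)/GL_{\vec n}(k)]$ and $\bT^* X\simeq[\scat{DRep}(\mathcal G_2(kQ),\vec n)/GL_{\vec n}(k)]$, while $\pi^*\bT^* B\simeq[\scat{DRep}(\mathcal G_2(D\subseteq Q),\vec n)/GL_{\vec n}(k)]$ was recorded just before the statement. For the legs, the graph of $d\,\mathrm{tr}(W)$ is induced by the $R$-algebra morphism $\delta W\colon\mathcal G_2(kQ)\to kQ$ of~\S\ref{pot3CY}; this is exactly the computation carried out there in the linear case, and it passes to quotient stacks because $\mathrm{tr}(W)$ is $GL_{\vec n}(k)$-invariant and $\delta W$ is equivariant. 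The one point that genuinely needs checking is that $q$ is induced by the obvious $R$-algebra morphism $\mathcal G_2(kQ)\to\mathcal G_2(D\subseteq Q)$ which is the identity on the arrows $e\in E(Q)$, on the dual arrows $e^*$ with $e\in E(D)$, and on the degree $-1$ generators attached to vertices of $D$, and which sends the dual arrows $e^*$ with $e\in E(Q)\setminus E(D)$ and the degree $-1$ generators attached to vertices of $V(Q)\setminus V(D)$ to $0$. One verifies directly that this assignment is compatible with the differentials (the moment-map relations restrict along $D\subseteq Q$, since $D$ is a subquiver) and is $GL_{\vec n}(k)$-equivariant, hence descends to a morphism of quotient stacks; unravelling the moment-map descriptions of $\bT^* X$ and $\pi^*\bT^* B$ identifies this descended morphism with $q$ (the $\pi^*\bT^* B$-data is carried to $\bT^* X$-data by extending the cotangent directions along $E(Q)\setminus E(D)$ by zero).

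It then remains to assemble the pieces. Since all four structure maps of the square are $GL_{\vec n}(k)$-equivariant, the homotopy pullback of the quotient stacks is the quotient by $GL_{\vec n}(k)$ of the homotopy pullback of the underlying derived schemes $\scat{DRep}(kQ,\vec n)$, $\scat{DRep}(\mathcal G_2(kQ),\vec n)$ and $\scat{DRep}(\mathcal G_2(D\subseteq Q),\vec n)$; and since $\scat{DRep}(-,\vec n)$ is limit preserving, this homotopy pullback of derived schemes is $\scat{DRep}(-,\vec n)$ of the homotopy push-out in $\scat{dgAlg}_R$ of $kQ\xleftarrow{\delta W}\mathcal G_2(kQ)\to\mathcal G_2(D\subseteq Q)$, which is by definition $\mathcal G_3(kQ|kD,\delta W)$. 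This yields the claimed equivalence $\bcrit_\pi(\mathrm{tr}(W))\simeq[\scat{DRep}(\mathcal G_3(kQ|kD,\delta W),\vec n)/GL_{\vec n}(k)]$. The main obstacle is the middle step — pinning down the dg-algebra morphism realising $q$ and checking its compatibility with the differentials and the $GL_{\vec n}(k)$-action; everything else is a routine combination of results already established (the cotangent computations of~\S\ref{quivsch}, the linear computation of~\S\ref{pot3CY}, and the limit-preservation of $\scat{DRep}(-,\vec n)$), the only minor subtlety being that the push-out is to be taken in $\scat{dgAlg}_R$ rather than in $\scat{dgAlg}_k$.
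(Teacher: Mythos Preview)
Your proof is correct and follows exactly the approach the paper intends. In fact, the paper does not supply a proof beyond the sentence ``As a consequence, keeping~\eqref{critreldiag} in mind, we get the following'': the proposition is presented as an immediate consequence of the identifications already in place (\S\ref{quivsch} for $\bT^*X$, the paragraph preceding the proposition for $\pi^*\bT^*B$, and \S\ref{pot3CY} for the graph of $d\,\mathrm{tr}(W)$), together with the limit-preservation of $\scat{DRep}(-,\vec n)$. Your write-up spells out the two points the paper leaves implicit --- the explicit dg-algebra morphism $\mathcal G_2(kQ)\to\mathcal G_2(D\subseteq Q)$ realising $q$ and the verification that it respects the differentials --- but the structure of the argument is identical.
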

 
Using the same resolution as in the proof of Proposition~\ref{critasginz}, one can check that $\mathcal G_3(kQ|k D,\delta W)$ 
is generated by
%\begin{itemize}
%\item $e\in E(Q)$ and $e^*\in E(D^*)$ in degree $0$;
%\item $v\in V(D)$ and $e'\in E(Q)$ in degree $-1$;
%\item $v'\in V(Q)$ in degree $-2$\end{itemize}
%with differential \begin{itemize}
%\item  $v'\mapsto \left\{\begin{aligned} & v-\sum_{e\in E(Q)}e_v(ee'-e'e)e_v\text{ if }v\in V(D)\\
%&-\sum_{e\in E(Q)}e_v(ee'-e'e)e_v\text{ otherwise;}\end{aligned}\right.$
%\item $v\mapsto\sum_{e\in E(D)}e_v(ee^*-e^*e)e_v$ and  $e'\mapsto \left\{\begin{aligned} &e^*-\partial_eW\text{ if }e\in E(D)\\
%&-\partial_eW\text{ otherwise.}\end{aligned}\right.$
%\end{itemize}
\begin{itemize}
\item $e\in E(Q)$ and $e^*\in E(D^*)$ in degree $0$;
\item $e'\in E(Q^*)$ and $x_v$ a loop in degree $-1$ to every vertex $v\in V(D)$;
\item $x'_v$ a loop in degree $-2$ to every vertex $v\in V(Q)$
\end{itemize}
with differential
\[
x_v'\mapsto \left\{\begin{aligned} &x_v-\sum_{e\in E(Q)}e_v(ee'-e'e)e_v\text{ if }v\in V(D)\\
&-\sum_{e\in E(Q)}e_v(ee'-e'e)e_v\text{ otherwise,}\end{aligned}\right.\]
and\[
x_v\mapsto\sum_{e\in E(D)}e_v(ee^*-e^*e)e_v\,,\quad
e'\mapsto \left\{\begin{aligned} &e^*-\partial_eW\text{ if }e\in E(D)\\
&-\partial_eW\text{ otherwise.}\end{aligned}\right.
\]

\begin{remark} \label{remark: section56}
\begin{enumerate}
\item We retrieve our linear examples when taking the representation spaces of the $0$-th cohomology of 
$\mathcal G_3(kQ|k D,\delta W)$. 
\item We generalize these constructions in future sections. In section~\ref{sec:CY} we define dg-categories 
$\cG_{n+1}(\mathcal B|\cA,c)$ associated with any dg-functor $\cA\to\cB$. In section~\ref{sec:comparison} we investigate 
their moduli stacks of objects $\bPerf_{\cG_{n+1}(\cB|\cA,c)}$, and in the case of a quiver embedding $kD\subset kQ$, 
\[
\Big[\scat{DRep}(\mathcal G_3(kQ|k D,\delta W),\vec{n})/\GL_{\vec{n}}(k)\Big]\subset\bPerf_{\cG_{3}(kQ|kD,\delta W)}
\]
will be recovered as an open substack for the same reason as in Remark~\ref{labelresBG}.
\end{enumerate}
\end{remark}
 
%\begin{example}
%The case of~\ref{laghilb}: $S_1^+\subset S_3^+$. We apply our construction to the morphism of stacks $\pi :[{\Rep}_{\mathbb C}(S_3^+,n)/\GL_n(\mathbb C)]\rightarrow [{\Rep}_{\mathbb C}(S_1^+, n)/\GL_n(\mathbb C)]$ endowed with $f=Tr(W)$ where $W$ is the potential induced by the the example~\ref{linex} (ii) through $S_3\subset S_3^+$. We still assign dimension $1$ to the framimg vertex. We get\[
%{\Rep}_\mathbb C\big(H^0(\mathcal G(kQ\mid D,W)),\vec n\big)=\left[\left\{(x_a,x_{a^*},x_b,x_{c},v)\middle|\begin{aligned} [x_b,x_c] &=x_{a^*}\\ [x_c,x_a]&=0\\ [x_a,x_b]&=0\end{aligned}\right\}/\GL_n(\mathbb C)\right]\]
%where again $v$ represents the arrow from the framing vertex to $S_3$.
%\end{example}

\subsubsection{Lagrangian subvarieties from lagrangian morphisms}\label{lag2lagsec}

We want a direct proof of the lagrangian nature of the families of varieties studied in section~\ref{sec:motiv}, relying on 
the lagrangian structure of $\bcrit_\pi(\phi)\to\bT^*B$. By lagrangian subvariety, as in section~\ref{sec:motiv}, we mean 
isotropic of pure dimension half the ambient one. 
Thanks to the following proposition and Example~\ref{linex}(ii) we get that $\Lambda_n$ is lagrangian in $T^*{\Rep}_\mathbb C(S_1,n)$ 
as wished.

\begin{proposition}\label{lag2lag}
Consider a lagrangian morphism $g:L\to Y$, with $Y$ a genuine smooth and symplectic algebraic variety. Denote by $\tau_0(L)\to L$ 
the $0$-th truncation of $L$, and $Z\subset Y$ the smallest closed subscheme factorizing the following diagram
\[
\xymatrix{  \tau_0(L) \ar[d] \ar[r] &Z\ar[d] \\
L \ar[r]&Y }
\]
\textit{i.e.}\ $Z$ is the schematic image of $\tau_0(L)\to Y$. Then $Z\subset Y$ is a lagrangian subvariety.
\end{proposition}

\begin{proof}
The lagrangian structure yields the following fibre sequence\[
\bbT_L\to g^*T_Y\simeq g^*\Omega^1_Y\to\bbL_L\]
which in turn gives, since $T_Y$ is a complex concentrated in $0$, the following five terms sequence\[
0\to H^{-1}(\bbL_L)\to H^0(\bbT_L)\to g^*T_Y\to H^0(\bbL_L)\to H^1(\bbT_L)\to0\]
and then\begin{align*}
H^{-1}(\bbL_L)&\simeq\ker\big(H^0(\bbT_L)\to g^*T_Y\big)\\
H^1(\bbT_L)&\simeq\operatorname{coker}\big(g^*\Omega^1_Y\to H^0(\bbL_L)\big).\end{align*}
Now denote by $p$ the dominant morphism $\tau_0(L)\to Z$. Consider $\mathcal Z$ and $\mathcal L$ irreducible components of $Z$ and $\tau_0(L)$ respectively such that $p:\mathcal L\to\mathcal Z$ is still dominant. The two following commutative diagrams\[
\begin{tikzcd}T_\mathcal L\arrow[r]\arrow{d}[pos=0.1,rotate=-90,yshift=-0.8ex]{\sim}&p^*T_\mathcal Z\arrow[d,hook']\\H^0(\bbT_L)\arrow[r]&g^*T_Y\end{tikzcd}~~\text{ and }~~
\begin{tikzcd}g^*\Omega^1_Y\arrow[r]\arrow[d,two heads]&H^0(\bbL_L)\arrow{d}[pos=0.1,rotate=-90,yshift=0.8ex]{\sim}\\p^*\Omega^1_\mathcal Z\arrow[r]&\Omega^1_\mathcal L\end{tikzcd}\]
 imply\begin{align*}
H^{-1}(\bbL_L)&\simeq\ker\big(T_\mathcal L\to p^*T_\mathcal Z\big)\\
H^1(\bbT_L)&\simeq\operatorname{coker}\big(p^*\Omega^1_\mathcal Z\to \Omega^1_\mathcal L\big).\end{align*}
Now, by~\cite[Lemma 10.5]{Hartshorne}, $p$ is smooth on some non-empty open subscheme $\mathcal U\subseteq\mathcal L$, which make $T_\mathcal U\to p^*T_\mathcal Z$ and $p^*\Omega^1_\mathcal Z\to\Omega^1_\mathcal U$ respectively surjective and injective. Thus\begin{align*}
p^*T_{\mathcal Z}&\simeq\dfrac{T_\mathcal U}{H^{-1}(\bbL_L)}\simeq\dfrac{H^0(\bbT_L)}{H^{-1}(\bbL_L)}\\
p^*\Omega_{\mathcal Z}^1&\simeq\ker\big(\Omega^1_\mathcal U\to H^1(\bbT_L)\big)\simeq\ker\big(H^0(\bbL_L)\to H^1(\bbT_L)\big)\end{align*}
and\[
0\to p^*T_\mathcal Z\to g^*T_Y \to p^*\Omega^1_\mathcal Z\to0
\]is exact as expected.
\end{proof}

Unfortunately it does not directly apply to the example of~\S\ref{laghilb}. 
With the same notations as therein, we consider $S_1^+\subset S_3^+$, and 
\[
\pi :X=[{\Rep}_{\mathbb C}(S_3^+,(n,1))/\PGL_{n,1}(\mathbb C)]\rightarrow B=[{\Rep}_{\mathbb C}(S_1^+, (n,1))/\PGL_{n,1}(\mathbb C)]
\]
endowed with $\phi=\mathrm{tr}(W)$ where $W$ is the potential induced in Example~\ref{linex}(ii) through 
$S_3\subset S_3^+$. The issue is that
\[
\bT^*B=\big[\scat{DRep}\big(\mathcal{G}_{2}(\mathbb CS^+_1),{(n,1)}\big)/\PGL_{n,1}(\mathbb C)\big]
\]
is no longer concentrated in degree $0$.

Using the stability defined in~\S\ref{laghilb}, we consider the stable locus $Y=(\tau_0\bT^*B)^\text{st}\simeq(\mathbb C^2)^{[n]}$ 
of the $0$-th truncation of $\bT^*B$. We know that $Y$ is this time a smooth and symplectic variety. 
Set 
\[
L:=\bcrit_\pi(\phi)\underset{\bT^*B}{\times}Y
\]
and note that $L\to Y$ is lagrangian as the composition of $\bcrit_\pi(\phi)\to\bT^*B$ with the graph 
$Y\leftarrow {gr}(s) \rightarrow\bT^* B$ of the symplectic embedding $s:Y\hookrightarrow \bT^*B$, 
which is a lagrangian correspondence. Applying Proposition~\ref{lag2lag} to $L\to Y$, we get that the schematic 
image $\Lambda_{n,1}$ of $\tau_0L$ in $Y$ is a lagrangian subvariety of $Y=(\mathbb C^2)^{[n]}$, as obtained 
in~\S\ref{laghilb}.

\section{Calabi--Yau structures}\label{sec:CY}

\subsection{Hochschild and cyclic homology for dg categories} 

\subsubsection{Homotopy theory of dg-categories}

A \textit{dg-category} is a $\cat{Mod}_k$-enriched category. The category of dg-categories and enriched functors 
(also called dg-functors) is denoted $\cat{Cat}_k$. We refer to \cite{KellerDG,ToDG2} for a thorough introduction to 
dg-categories and their homotopy theory. 

With a dg-category $\cat{A}$ we can associate its category $\cat{Mod}_{\cat{A}}$ of right modules, which are dg-functors 
$\cat{A}^{\op}\to\cat{Mod}_k$. There is a $\cat{Mod}_k$-model structure on $\cat{Mod}_{\cat{A}}$. 

There is a model structure on $\cat{Cat}_k$ (see \cite{Tab,ToDG}), for which weak equivalences are quasi-equivalences. 
This model structure admits a left Bousfield localization, for which weak equivalences are Morita morphisms, 
i.e.~dg-functors $\cat{A}\to\cat{B}$ inducing an equivalence $\scat{Mod}_{\cat{B}}\to\scat{Mod}_{\cat{A}}$ (see \cite{Tab2}). 

On the level of $\infty$-categories, this defines a reflective localization $\scat{stCat}_k$ of $\scat{Cat}_k$, 
that can be identified with the full subcategory spanned by triangulated dg-categories, 
and that is a model for the $\infty$-category of small idempotent complete $k$-linear stable 
$\infty$-categories (see \cite{Cohn}). The localization $\infty$-functor is 
$\cat{A}\mapsto\cat{Mod}_{\cat{A}}^\mathrm{perf}$. 

\medskip

The model category $\cat{Cat}_k$ has a symmetric monoidal structure, which is closed, but not compatible with the 
model structure. Nevertheless, the tensor product of dg-categories preserves quasi-equivalences and thus descends to $\scat{Cat}_k$ 
(recall that $k$ is a field; if $k$ would more generally be a $\mathbb{Q}$-algebra, then one would have to consider $k$-flat dg-categories). 

\medskip

To\"en has proven (see \cite{ToDG}) that, given two dg-categories $\cat{A}$ and $\cat{B}$, there is a canonical equivalence 
\[
\scat{BiMod}_{\cat{A}-\cat{B}}
:=\scat{Mod}_{\cat{A}^{\op}\otimes\cat{B}}
\simeq\textsc{Map}^c_{\scat{Cat}_k}(\scat{Mod}_{\cat{A}},\scat{Mod}_{\cat{B}}\big)
\]
in $\scat{Cat}_k$, where $\textsc{Map}^c$ denotes the full sub-dg-category consisting of colimit preserving functors. 
This induces a tensor product functor
\[
-\underset{\cat{B}}{\overset{\mathbb{L}}{\otimes}}-\,:\,\scat{BiMod}_{\cat{A}-\cat{B}}\otimes\scat{BiMod}_{\cat{B}-\cat{C}}
\to \scat{BiMod}_{\cat{A}-\cat{C}}\,,
\]
given by the composition of functors. As usual, this tensor product functor can be computed on models, using the 
genuine tensor product $\underset{\cat{B}}{\otimes}$ of dg-bimodules, assuming that one of the two factors is cofibrant 
as a $\cat{B}$-module. 

\subsubsection{Hochschild and cyclic homology}

Recall the \textit{Hochschild homology} functor 
\[
\cat{HH}\,:\,\scat{Cat}_k\longrightarrow \scat{Mod}_k\,;\,
\cat{A}\longmapsto \cat{A}\underset{\cat{A}^e}{\overset{\mathbb{L}}{\otimes}}\cat{A}^{\op}\,,
\]
where $\cat{A}^e:=\cat{A}\otimes\cat{A}^{\op}$. 
Hochschild homology can be understood as a categorical trace of the identity functor $\scat{Mod}_{\cat{A}}\to\scat{Mod}_{\cat{A}}$, 
and as such it is Morita invariant. In particular, the natural Morita morphism $\cat{A}\to\cat{Mod}_{\cat{A}}^\mathrm{perf}$ induces an 
equivalence 
\[
\cat{HH}(\cat{A})\,\tilde\longrightarrow\, \cat{HH}(\cat{Mod}_{\cat{A}}^\mathrm{perf})\,.
\]
The fact that Hochschild homology is a categorical trace suggests that it lifts to a functor 
\[
\scat{HH}\,:\,\scat{Cat}_k\longrightarrow \scat{Mod}_k^\delta\,.
\]
This is indeed the case, and we are going to provide a very explicit approach to this. 
In order to compute Hochschild homology, one can use the (functorial) simplicial Bar resolution 
of $\cat{A}$ by free (right) $\cat{A}^e$-modules 
%\[
%\mathrm{B}(\cat{A}):=
%\left(\coprod_{a_0,\dots,a_n}\cat{A}(a_0,-)\otimes\cat{A}(a_1,a_0)\otimes\cdots\otimes\cat{A}(-,a_n)\right)_{n\geq0}\,,
%\]
\[
\mathrm{B}(\cat{A}):=
\left(\cat{A}^{\underset{k\mathrm{Ob}(\cat{A})}{\otimes}(n+2)}\right)_{n\geq0}\,,
\]
where the face maps, resp.~degeneracy maps, are given by (opposite) composition morphisms, resp.~identity insertion morphisms. 
\begin{notation}
As a matter of notation, for a set $S$, $kS$ denotes the $k$-linear category having $S$ as objects and no non-trivial morphisms. 
The functor $k-$ is left adjoint to $\mathrm{Ob}:\cat{Cat}_k\to\cat{Sets}$. 
\end{notation}
This exhibits $\cat{HH}(\cat{A})$ as the colimit of the simplicial cochain complex
%\[
%\xymatrix{
%\mathrm{B}(\cat{A})\underset{\cat{A}^e}{\otimes}\cat{A}^{\op}=
%\cdots\underset{b_0,b_1\in\mathrm{Ob}(\cat{A})}{\coprod}\cat{A}(b_1,b_0)\otimes\cat{A}(b_0,b_1)
%\ar@<0.5ex>[r]\ar@<-0.5ex>[r]
%& \underset{c\in\mathrm{Ob}(\cat{A})}{\coprod} \cat{A}(c,c)\,.
%}
%\]
\[
\xymatrix{
\mathrm{B}(\cat{A})\underset{\cat{A}^e}{\otimes}\cat{A}^{\op}=
\cdots\underset{b_0,b_1\in\mathrm{Ob}(\cat{A})}{\coprod}\cat{A}(b_1,b_0)\otimes\cat{A}(b_0,b_1)
\ar@<0.5ex>[r]\ar@<-0.5ex>[r]
& \underset{c\in\mathrm{Ob}(\cat{A})}{\coprod} \cat{A}(c,c)\,.
}
\]
The above simplicial cochain complex happens to have an obvious cyclic structure, and thus its $\infty$-colimit, that gives 
back the \textit{standard Hochschild complex}, inherits a mixed structure, which is given by Connes's $B$-operator as we have 
already seen in Remark \ref{remHoyois} (we again refer to \cite{Hoyois} for more details). 

We borrow the Notation \ref{not:hhhc}: the negative cyclic complex of $\cat{A}$ $\cat{HC}^-(\cat{A})$ is the homotopy fixed 
$\delta$-points of $\scat{HH}(\cat{A})$. Notice that, since the forgetful functor $\scat{Mod}_k^\delta\to\scat{Mod}_k$ 
is conservative, the natural morphism 
\[
\scat{HH}(\cat{A})\longrightarrow \scat{HH}(\cat{Mod}_{\cat{A}}^\mathrm{perf})
\]
is an equivalence. 
\begin{remark}
The functor $\scat{HH}$ is $\mathbb{Z}/2\mathbb{Z}$-equivariant. More precisely 
\begin{itemize}
\item There is an involution on $\cat{Cat}_k$ given by $(-)^{\op}$; 
\item There is an involution on $\cat{Mod}_k^\delta$ given by the involution $\delta\mapsto-\delta$ of $k[\delta]$; 
\item The standard Hochschild complexes of $\cat{A}$ and $\cat{A}^{\op}$ are canonically isomorphic (by reversing the order 
of factors and multiplying by $(-1)^n$ at each level); 
\item Under the above identification, the $B$-operators of Connes match up to a sign.  
\end{itemize}
Therefore, $\cat{HH}$ and $\cat{HC}^-$ are invariant with respect to $(-)^{\op}$. 

This has a nice interpretation in terms of the orientation reversing involution of $S^1$, but we will not elaborate on that. 
\end{remark}

\subsubsection{Hochschild homology of a free category}\label{ssec:hhfree}

Given a dg-category $\cat{A}$, the forgetful functor $(\cat{Cat}_k)_{\cat{A}/} \to \cat{Mod}_{\cat{A}^e}$, 
sending a dg-functor $f:\cat{A}\to\cat{B}$ to the $\cat{A}$-bimodule given by $(b,b')\mapsto \cat{B}\big(f(b'),f(b)\big)$, 
has a left adjoint $T_{\cat{A}}$, that can be described as follows: given a $\cat{A}$-bimodule $\cat{M}$ 
\[
T_{\cat{A}}(\cat{M}):= \cat{A} \oplus \cat{M} \oplus (\cat{M} \underset{\cat{A}}{\otimes} \cat{M}) \oplus \ldots
\]
is the dg-category freely generated over by $\cat{M}$ over $\cat{A}$. 
This adjunction is Quillen, and thus induces an adjunction between $(\scat{Cat}_k)_{\cat{A}/}$ and $\scat{Mod}_{\cat{A}^e}$. 

\medskip

Let now $\cat{B}:=\mathbb{L}T_{\cat{A}}(\cat{M})$. As a $\cat{B}$-bimodule, 
\[
\xymatrix{
\cat{B}\simeq\mathrm{coeq}\left(
\cat{B}\underset{\cat{A}}{\overset{\mathbb{L}}{\otimes}}\cat{M}\underset{\cat{A}}{\overset{\mathbb{L}}{\otimes}}\cat{B}\right.
\ar@<0.5ex>[r]^-{\circ\otimes\mathrm{id}}\ar@<-0.5ex>[r]_-{\mathrm{id}\otimes \circ}
& \left. \cat{B}\underset{\cat{A}}{\overset{\mathbb{L}}{\otimes}}\cat{B}\right)\,,
}
\]
where we call the above r.h.s.~a \textit{small resolution} of $\cat{B}$, generalizing the one from~\cite[Remark 3.1.3]{Loday}. 
Indeed, given a cofibrant replacement $\tilde{\cat{M}}$ of the $\cat{A}$-bimodule $\cat{M}$, then $\cat{B}=T_{\cat{A}}(\tilde{\cat{M}})$ 
and the sequence of $\cat{B}$-bimodules 
\[
0\longrightarrow \cat{B}\underset{\cat{A}}{\otimes}\tilde{\cat{M}}\underset{\cat{A}}{\otimes}\cat{B}
\overset{\substack{\circ\otimes\mathrm{id} \\ -\mathrm{id}\otimes \circ}}{\longrightarrow} 
 \cat{B}\underset{\cat{A}}{\otimes}\cat{B}\longrightarrow\cat{B}\longrightarrow 0\,,
\]
is exact (see e.g.~\cite[proof of Theorem 4.8]{Keller} or~\cite[proof of Theorem 1.1]{KellerE}). 
Observe that the diagram $\mathrm{R}(\cat{B})$ 
in the r.h.s.~can actually be written in a little more compact way: 
\[
\xymatrix{
\mathrm{R}(\cat{B})=\left(\cat{M}\underset{\cat{A}^e}{\overset{\mathbb{L}}{\otimes}}\cat{B}^e\right.
\ar@<0.5ex>[r]\ar@<-0.5ex>[r]
& \left.\cat{A}\underset{\cat{A}^e}{\overset{\mathbb{L}}{\otimes}}\cat{B}^e\right)\,,
}
\]
so that applying $-\underset{\cat{B}^e}{\overset{\mathbb{L}}{\otimes}}\cat{B}$, we get 
\[
\xymatrix{
\cat{HH}(\cat{B})\simeq\mathrm{coeq}\left(
\cat{M}\underset{\cat{A}^e}{\overset{\mathbb{L}}{\otimes}}\cat{B}\right.
\ar@<0.5ex>[r]\ar@<-0.5ex>[r]
& \left. \cat{A}\underset{\cat{A}^e}{\overset{\mathbb{L}}{\otimes}}\cat{B}\right)\,.
}
\]
\begin{example}[Free categories]
We say that a category is free whenever it is freely generated over its category of objects. 
This corresponds to the case $\cat{A}=kS$, with $S$ a set. 
If $\cat{B}=T_{kS}(\cat{M})$ then the above coequalizer becomes 
\[
\xymatrix{
\cat{HH}(\cat{B})\simeq\mathrm{coeq}\left(
\underset{s_1,s_0\in S}{\coprod}\cat{M}(s_1,s_0)\otimes\cat{B}(s_0,s_1)\right.
\ar@<0.5ex>[r]^-{\circ}\ar@<-0.5ex>[r]_-{\circ^{\op}}
& \left. \underset{t\in S}{\coprod}\cat{B}(t,t)\right)\,.
}
\]
The above homotopy coequalizer can be computed as a c\^one. We call it the \textit{small Hochschild complex} 
of a free category, and it naturally maps quasi-isomorphically into the standard Hochschild complex (as we have 
a map of diagrams $\mathrm{R}(\cat{B})\to\mathrm{B}(\cat{B})$). 
\end{example}

%%%%%

% The following is a special case, of the above, the quiver case

%Let $\mathcal A=\cat{Free}_k(Q)$ be a free dg-category generated by a quiver $Q$ with vertex set $V(Q)$ and edge set $E(Q)$. 
%Our aim here is to describe a much smaller resolution than the Bar resolution. Observe that $\cat{A}$ can be obtained 
%as the colimit of the following diagram of free $\cat{A}^e$-modules: 
%\[
%\xymatrix{
%\mathrm{M}(Q):=\left(\underset{v,w\in V(Q)}{\coprod}\cat{A}(v,-)\otimes kQ(w,v)\otimes\cat{A}(-,w)\right.
%\ar@<0.5ex>[r]^-{\circ\otimes\mathrm{id}}\ar@<-0.5ex>[r]_-{\mathrm{id}\otimes \circ}
%& \left.\underset{u\in V(Q)}{\coprod}\cat{A}(u,-)\otimes\cat{A}(-,u)\right)\,,
%}
%\]
%where $Q(w,v)=\{e\in E(Q)|s(e)=w,t(e)=v\}$ and $\circ$ is the composition. 

%\medskip

%There is an obvious inclusion of diagrams $\mathrm{M}(Q)\hookrightarrow \mathrm{B}(\mathcal A)$. 

%\medskip

%Applying $-\underset{\cat{A}^e}{\otimes}\cat{A}$ and then passing to the homotopy colimit, we get a 
%quasi-isomorphism from the standard homotopy coequalizer 
%\[
%\mathrm{cofib}\left(\coprod_{v,w\in V(Q)}\cat{A}(v,w)\otimes kQ(w,v) 
%\overset{\circ-\circ^{\op}}{\longrightarrow}\coprod_{u\in V(Q)}\cat{A}(u,u)\right) 
%\]
%of $\mathrm{M}(Q)$, which we call the \textit{small Hochschild complex} of a free category, to the standard Hochschild complex. 

%%%%%

\begin{remark}\label{setquivfree}
For a finite set $S$, there is a Morita morphism from the category $kS$ to the algebra $k^S$ (which we view as a 
$k$-linear category with only one object). Actually, even their $1$-categories of (bi)modules are equivalent. 
Given such a bimodule $\cat{M}$, that is a collection $(\cat{M}(s,t))_{s,t\in S}$ of 
cochain complexes, we also have a Morita morphism $T_{kS}(\cat{M})\to T_{k^S}(\cat{M})$. 
As all our constructions are Morita invariant, we will tend not to distinguish them. 

Observe that, for a quiver $Q$, with vertex set $S=V(Q)$ and edge set $E(Q)$, the linear span $\cat{M}:=kE(Q)$ 
becomes a $kS$-bimodule such that $T_{k^S}(\cat{M})=kQ$. 
\end{remark}

\subsection{Calabi--Yau structures, absolute and relative}\label{subscy}

\subsubsection{Calabi--Yau structures on dg-categories}

Let $\cat{A}$ be a dg-category. Observe that the symmetry $\cat{A}\otimes\cat{A}^{\op}\to \cat{A}^{\op}\otimes\cat{A}$ 
defines an isomorphism $\tau:\cat{A}^e\tilde\longrightarrow(\cat{A}^e)^{\op}$, which is an anti-involution: 
$\tau^{\op}\tau=\mathrm{id}$. This allows one to define a preduality functor 
\[
(-)^\vee\,:\,\scat{Mod}_{\cat{A}^e}\longrightarrow \scat{Mod}_{\cat{A}^e}^{\op}
\]
given as follows: for a right $\cat{A}^e$-module $\cat{M}$, and an object $a\in\mathrm{Ob}(\cat{A}^{\op}\otimes\cat{A})$, 
\[
\cat{M}^\vee(a):=\ul{\cat{Map}}_{\scat{Mod}_{(\cat{A}^e)^{\op}}}\big(\cat{M}\circ\tau,\cat{A}^e(a,-)\big)
\simeq\mathbb{R}\ul{\cat{Hom}}_{\cat{Mod}_{(\cat{A}^e)^{\op}}}\big(\cat{M}\circ\tau,\cat{A}^e(a,-)\big)\,.
\]
Notice that $\cat{A}\circ\tau=\cat{A}^{\op}$. 
The preduality functor restricts to an anti-equivalence on perfect modules. 

A dg-category $\cat{A}$ is \emph{smooth} if $\cat{A}$ is a perfect $\cat{A}^e$-module. In this case, by duality, 
we have an equivalence 
\[
(-)^\flat\,:\,\cat{HH}(\cat{A})\tilde\longrightarrow
\ul{\cat{Map}}_{\scat{Mod}_{\cat{A}^e}}(\cat{A}^\vee,\cat{A})
\simeq\mathbb{R}\ul{\cat{Hom}}_{\cat{Mod}_{\cat{A}^e}}(\cat{A}^\vee,\cat{A})\,.
\]

\begin{definition}\label{def:CY}
Let $\cat{A}$ be a dg-category. 
\begin{enumerate}
\item A \emph{$n$-pre-Calabi--Yau structure} on $\cat{A}$ is a class $c:k[n]\to\scat{HH}(\cat{A})$ (or, equivalenty, 
a class $k[n]\to \cat{HC}^-(\cat{A})$). 
\item An \emph{almost $n$-Calabi-Yau structure} on $\cat{A}$ is a class $c:k[n]\to\cat{HH}(\cat{A})$ that is \textit{non-degenerate}, 
meaning that $\cat{A}$ is smooth and $c^\flat:\cat{A}^\vee[n]\to \cat{A}$ is an equivalence. 
\item A \emph{$n$-Calabi-Yau structure} on $\cat{A}$ is a $n$-pre-Calabi--Yau structure $c$ such that $c^\natural$ 
is an almost $n$-Calabi--Yau structure. 
\item An exact \emph{$n$-(pre-)Calabi--Yau structure} on $\cat{A}$ is a class $c:k[n-1]\to\cat{HH}(\cat{A})$ such that its image 
$\delta c$ through \eqref{eq:Connes} is a $n$-(pre-)Calabi--Yau structure. 
\end{enumerate}
\end{definition}
\begin{remark}
We warn the reader that our notion of pre-Calabi--Yau structure drastically differs from the one of several authors 
(see e.g.~\cite{Ye}). Ours is a non-commutative version of a pre-symplectic structure while in \cite{Ye} it is rather 
a non-commutative version of a Poisson structure. Our terminology is nevertheless consistent with To\"en's one~\cite{ToCY}. 
\end{remark}
\begin{example}\label{example: loop}
Let $\cA=k[x]$ the polynomial ring, which we view as a dg-category with one object. 
It has a canonical exact $1$-Calabi--Yau structure, that we describe now. 
As it is a commutative algebra, every element of $k[x]$ defines a degree $0$ Hochschild class. 
Let us consider the one given by $x\in k[x]$, and prove that $\delta(x)=1\otimes x$ is indeed 
non-degenerate in the sense of definition \ref{def:CY}(2). 
As $k[x]$ is free (it is freely generated by the Jordan quiver $L$), according to \S\ref{ssec:hhfree} 
we have a small resolution of it as a bimodule, given by 
\[
\mathrm{R}(k[x])=\mathrm{cofib}\Big(k[x]\otimes kx\otimes k[x]\overset{p\otimes x\otimes q\mapsto px\otimes q-p\otimes xq}
{\xrightarrow{\hspace*{3cm}}}k[x]\otimes k[x]\Big)\,.
\]
Therefore we have 
\[
k[x]^\vee[1]\simeq \mathrm{cofib}\Big(k[x]\otimes k[x] 
\overset{x^n\otimes x^m\mapsto x^{n+1}\otimes x^*\otimes x^m-x^n\otimes x^*\otimes x^{m+1}}{\xrightarrow{\hspace*{3cm}}}
k[x]\otimes kx^*\otimes k[x]\Big)\,.
\]
Viewed as a morphism $k[x]^\vee[1]\to \mathrm{R}(k[x])$, the Hochschild class $1\otimes x$ can thus 
be described as follows: 
\begin{itemize}
\item In degree $-1$, it sends $p\otimes q$ to $p\otimes x\otimes q$;
\item In degree $0$, it sends $p\otimes x^*\otimes q$ to $p\otimes q$. 
\end{itemize}
It is non-degenerate, as it is an isomorphism on models. 
\end{example}

The above example is a special case of a large class of examples of Calabi--Yau dg-categories: 
the so-called \emph{Calabi--Yau completions}, introduced in \cite{Keller}. 
\begin{definition}[\cite{Keller}]
Let $\cat{A}$ be a dg-category. We call $\cG_{n}(\cat{A}):=\mathbb{L}T_{\cat{A}}(\cat{A}^\vee[n-1])$ the $n$-Calabi--Yau 
completion of $\cat{A}$. 
\end{definition}
From now on, we will always give ourselves a cofibrant model for $\cat{A}^\vee$, and thus we can safely drop the 
$\mathbb{L}$ from the notation. 
Thanks to a result of Keller (see \cite[Theorem 1.1]{KellerE}), Calabi--Yau completions of smooth dg-categories are 
exact Calabi--Yau:
\begin{theorem}\label{thm:CYcom}
If $\cat{A}$ is smooth (resp.~of finite type), then its $n$-Calabi--Yau completion $\cG_{n}(\cat{A})$ is smooth 
(resp.~of finite type) and carries an exact $n$-Calabi--Yau structure. 
\end{theorem}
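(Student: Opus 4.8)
The plan is to reduce the statement to To\"en's theorem that Calabi--Yau structures on dg-categories correspond to shifted symplectic structures on moduli of objects — but since that is the content of later sections, I would instead argue directly on the bimodule level, following Keller. First I would recall that smoothness of $\cG_n(\cat A)$ is essentially formal: $\cG_n(\cat A)=T_{\cat A}(\cat A^\vee[n-1])$ is built from $\cat A$ by adding a perfect bimodule, and one checks that the tensor-algebra construction $T_{\cat A}(\cat M)$ is smooth whenever $\cat A$ is smooth and $\cat M$ is a perfect $\cat A$-bimodule. Concretely, using the small resolution
\[
\xymatrix{
\mathrm{R}(\cat B)=\left(\cat M\underset{\cat A^e}{\overset{\mathbb L}{\otimes}}\cat B^e\right.
\ar@<0.5ex>[r]\ar@<-0.5ex>[r]
& \left.\cat A\underset{\cat A^e}{\overset{\mathbb L}{\otimes}}\cat B^e\right)
}
\]
from \S\ref{ssec:hhfree} with $\cat B=\cG_n(\cat A)$ and $\cat M=\cat A^\vee[n-1]$, one sees that $\cat B$ is an extension (c\^one) of two perfect $\cat B^e$-modules, hence perfect; the finite type case follows since a finite-type $\cat A$ has a perfect $\cat A^e$-module $\cat A$ and $\cat A^\vee$ is then perfect as well, so $\cat B$ has finitely many generators and the above resolution is finite.

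The substantive part is producing the exact $n$-Calabi--Yau structure. The key computation is the Hochschild homology of a free (relative tensor) category from \S\ref{ssec:hhfree}: with $\cat B=T_{\cat A}(\cat M)$,
\[
\cat{HH}(\cat B)\simeq\mathrm{coeq}\left(
\cat M\underset{\cat A^e}{\overset{\mathbb L}{\otimes}}\cat B
\ \rightrightarrows\
\cat A\underset{\cat A^e}{\overset{\mathbb L}{\otimes}}\cat B
\right)\,.
\]
Specializing to $\cat M=\cat A^\vee[n-1]$ and using smoothness of $\cat A$ (so that $\cat{HH}(\cat A)\simeq\mathbb R\ul{\cat{Hom}}_{\cat A^e}(\cat A^\vee,\cat A)$ via $(-)^\flat$), the term $\cat A^\vee[n-1]\otimes_{\cat A^e}^{\mathbb L}\cat B$ can be rewritten, and one extracts a canonical class $k[n-1]\to\cat{HH}(\cat B)$: it is the image, under the natural map into the above coequalizer, of the "unit" element coming from the identity of $\cat A$ together with the evaluation/coevaluation adjunction for the perfect bimodule $\cat A^\vee$. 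I would then verify that the image $\delta c$ of this class under \eqref{eq:Connes} is non-degenerate, i.e.~that $(\delta c)^\flat:\cat B^\vee[n]\to\cat B$ is an equivalence. This is checked on the small resolution $\mathrm R(\cat B)$: dualizing it gives an explicit model for $\cat B^\vee[n]$, and the map $(\delta c)^\flat$ becomes, levelwise, built out of the biduality isomorphism $\cat A^{\vee\vee}\simeq\cat A$ (valid since $\cat A$ is smooth, hence $\cat A$ perfect over $\cat A^e$) — exactly as in Example~\ref{example: loop}, where the case $\cat A=k$, $n=1$ is worked out by hand and one sees the map is an isomorphism on models.

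The main obstacle I expect is \emph{exactness} — i.e.~showing the class lifts to $k[n-1]\to\cat{HH}(\cat B)$ and not merely that some class $k[n]\to\cat{HH}(\cat B)$ (equivalently $k[n]\to\cat{HC}^-(\cat B)$) exists. This requires tracking the mixed (Connes $B$-operator) structure through the free-category Hochschild computation, which is where the cyclic structure on the Bar resolution and its compatibility with the small resolution $\mathrm R(\cat B)\to\mathrm B(\cat B)$ genuinely matters; the degree-$(n-1)$ class is essentially "the fundamental class of the added generators" and one must see that applying $\delta$ to it recovers the genuine $n$-Calabi--Yau class. The secondary technical point is bookkeeping of signs and the identification of $\cat M\otimes_{\cat A^e}^{\mathbb L}\cat B$ with a shift of an enriched Hom, but this is routine given the duality formalism set up in \S\ref{sec:CY}. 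Everything else — smoothness, finite type, and non-degeneracy on models — is a direct, if slightly lengthy, unwinding of the definitions, and I would present it by reduction to the explicit model of Example~\ref{example: loop} tensored up along $\cat A$.
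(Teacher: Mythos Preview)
The paper does not prove this theorem itself: it is attributed to Keller's erratum \cite{KellerE}, and the paper only \emph{recalls} the construction of the exact class $c$ (the image of $\mathrm{id}_{\cat{A}}$ through \eqref{eq:image of id}, equivalently the morphism \eqref{exact nCY}). So there is no ``paper's own proof'' to compare against; what you have written is a sketch of Keller's argument, and it is broadly on track --- the small resolution $\mathrm R(\cat B)$ is indeed the tool, smoothness follows from it as you say, and non-degeneracy is checked on that model exactly as in Proposition~\ref{proposition: leftCY} below.

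One point is framed backwards, however. You describe the ``main obstacle'' as \emph{lifting} an $n$-Calabi--Yau class to an exact one, i.e.\ finding a preimage under $\delta$. That is not how the argument goes. The class $c:k[n-1]\to\cat{HH}(\cat B)$ is constructed \emph{directly} in degree $n-1$ as the image of $\mathrm{id}_{\cat A}$ (or $\mathrm{id}_{\cat A^\vee[n-1]}$) under \eqref{eq:image of id}; there is nothing to lift. The genuine work, and the content of Keller's erratum, is the \emph{forward} direction: computing $\delta c$ explicitly enough to verify that $(\delta c)^\flat:\cat B^\vee[n]\to\cat B$ is an equivalence. This does require relating the small resolution (which has no obvious mixed structure) to the standard Hochschild complex (which does), but the issue is identifying $\delta c$, not producing $c$. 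Your instinct that the Connes operator is where the subtlety lies is right; just reverse the direction of the problem.
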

\begin{proof}
We first recall the construction of the class $c:k[n-1]\to\cat{HH}(\cat{B})$, 
where $\cat{B}:=\cG_{n}(\cat{A})$. We have a morphism of $\cB^e$-bimodules 
$\cat{A}^\vee[n-1]\overset{\mathbb{L}}{\underset{\cat{A}^e}{\otimes}}\cat{B}^e\to \cat{B}$, along 
with the split morphism of dg-categories $\cat{A}\to\cat{B}$, which thus induces a morphism 
\begin{equation}\label{exact nCY}
\cat{B}^\vee[n-1]\to \cat{A}^\vee[n-1]\overset{\mathbb{L}}{\underset{\cat{A}^e}{\otimes}}\cat{B}^e\to \cat{B}\,.
\end{equation}
In other words, it is the image of $\mathrm{id}_{\cA^\vee[n-1]}$ through
\[
\ul{\cat{Map}}_{\scat{Mod}_{\cat{A}^e}}(\cat{A}^\vee,\cat{A}^\vee)\to 
\ul{\cat{Map}}_{\scat{Mod}_{\cat{B}^e}}(\cat{B}^\vee[n-1],\cat{B})\simeq\cat{HH}(\cB)[1-n]\,.
\]
Another way to understand $c$ is as follows: it is the image of $\mathrm{id}_{\cat{A}}$ through 
\begin{equation}\label{eq:image of id}
\ul{\cat{Map}}_{\scat{Mod}_{\cat{A}^e}}(\cat{A},\cat{A})[n-1]
\overset{\sim}{\to} (\cat{A}^\vee[n-1])\overset{\mathbb{L}}{\underset{\cat{A}^e}{\otimes}}\cat{A}^{\op}
\to \cat{B}\overset{\mathbb{L}}{\underset{\cat{B}^e}{\otimes}}\cat{B}^{\op}=\cat{HH}(\cat  B)\,.
\end{equation}
We now prove that the underlying Hochschild class $\gamma:=(\delta c)^\natural$ of $\delta c$ is non-degenerate. 
Using small resolutions (as in the proof of~\cite[Theorem 4.8]{Keller}) 
we have
\begin{equation}\label{coeq-for-B}
\cB\simeq\mathrm{coeq}\left(\xymatrix{\cA^\vee[n-1]\overset{\mathbb{L}}{\underset{\cat{A}^e}{\otimes}}\cB^e
\ar@<0.5ex>[r]\ar@<-0.5ex>[r]
& \cA\overset{\mathbb{L}}{\underset{\cat{A}^e}{\otimes}}\cB^e}\right),
\end{equation}
so that 
\begin{align*}
\cat{HH}(\cat{B})& \simeq\mathrm{coeq}\left(\xymatrix{\cA^\vee[n-1]\overset{\mathbb{L}}{\underset{\cat{A}^e}{\otimes}}\cB^{\op}
\ar@<0.5ex>[r]\ar@<-0.5ex>[r]
& \cA\overset{\mathbb{L}}{\underset{\cat{A}^e}{\otimes}}\cB^{\op}}\right) \\
& \simeq \mathrm{eq}\left(\xymatrix{\cA^\vee[n]\overset{\mathbb{L}}{\underset{\cat{A}^e}{\otimes}}\cB^{\op}
\ar@<0.5ex>[r]\ar@<-0.5ex>[r]
& \cA[1]\overset{\mathbb{L}}{\underset{\cat{A}^e}{\otimes}}\cB^{\op}}\right)
\end{align*}
naturally receives a map from $\cA^\vee[n]\overset{\mathbb{L}}{\underset{\cat{A}^e}{\otimes}}\cat{A}^{\op}$. 
Moreover, Keller shows in \cite{KellerE} that $\gamma$ is the image of $\mathrm{id}_{\cat{A}}$ through 
\[
\ul{\cat{Map}}_{\scat{Mod}_{\cat{A}^e}}(\cat{A},\cat{A})[n]
\overset{\sim}{\to} (\cat{A}^\vee[n])\overset{\mathbb{L}}{\underset{\cat{A}^e}{\otimes}}\cat{A}^{\op}
\to \cat{HH}(\cat{B}). 
\]
Dually, we also have 
\begin{align}
\cB^\vee&\simeq\mathrm{eq}\left(\xymatrix{\cA^\vee\overset{\mathbb{L}}{\underset{\cat{A}^e}{\otimes}}\cB^e
\ar@<0.5ex>[r]\ar@<-0.5ex>[r]
& \cA[1-n]\overset{\mathbb{L}}{\underset{\cat{A}^e}{\otimes}}\cB^e}\right) \nonumber\\
&\simeq\mathrm{coeq}\left(\xymatrix{\cA^\vee[-1]\overset{\mathbb{L}}{\underset{\cat{A}^e}{\otimes}}\cB^e
\ar@<0.5ex>[r]\ar@<-0.5ex>[r]
& \cA[-n]\overset{\mathbb{L}}{\underset{\cat{A}^e}{\otimes}}\cB^e}\right), \label{coeq-for-Bdual}
\end{align}
and thus the map $\gamma^\flat:\cat{B}^\vee[n]\to\cat{B}$ is induced by a morphism of coequalizers that is 
the identity on each component (this in particular shows that the diagrams appearing in \eqref{coeq-for-B} and \eqref{coeq-for-Bdual} are actually equivalent, 
which was \emph{a priori} not obvious). 
\end{proof}

\begin{example}
The Calabi--Yau completion $\mathcal G_n(k)$ coincides with the free algebra $k[x]$, where $x$ is a degree $1-n$ variable. 
Following Example \ref{example: loop}, one sees that the exact $n$-Calabi--Yau structure is given by $x$. 
\end{example}

\subsubsection{Calabi--Yau structures on cospans and functors}

We recall the notion of Calabi--Yau structures on cospans of dg-categories, following Brav--Dyckerhoff (see \cite{BD1}), 
who were themselves insipred by an earlier definition of To\"en (see \cite[\S5.3]{ToEMS}). Note that the notion we consider 
is called \textit{relative left Calabi--Yau} in \cite{BD1}, while we only write \textit{Calabi--Yau}, as right Calabi--Yau 
structures do not appear in this paper and the adjective ``relative'' can be unambiguously removed. 
\begin{definition}
Let $\cat{A}\overset{f}{\longrightarrow}\cat{C}\overset{g}{\longleftarrow}\cat{B}$ be a cospan of dg-categories. 
\begin{enumerate}
\item An $n$-pre-Calabi--Yau structure, resp.~exact $n$-pre-Calabi--Yau structure, on the above cospan is a homotopy 
commuting diagram 
\[
\xymatrix{
k[n] \ar[r]^{c_{\cat{B}}}\ar[d]_{c_{\cat{A}}}  & \cat{HC}^-(\cat{B}) \ar[d] \\
\cat{HC}^-(\cat{A}) \ar[r] & \cat{HC}^-(\cat{C})
}
\quad\textrm{resp.}\quad
\xymatrix{
k[n] \ar[r]\ar[d]  & \cat{HH}(\cat{B})[1] \ar[d] \\
\cat{HH}(\cat{A})[1] \ar[r]& \cat{HH}(\cat{C})[1]
}
\]
\item If $\cat{A}=\emptyset$, resp.~$\cat{B}=\emptyset$, then call these (exact) $n$-pre-Calabi--Yau structures on the 
morphism $g$, resp.~$f$. 
\end{enumerate}
\end{definition}
As for isotropic correspondences, $n$-pre-Calabi--Yau cospans compose, and so do their exact couterparts. 
More precisely, $n$-pre-Calabi--Yau cospans are cospans in the fiber product $\infty$-category 
\[
(\scat{Cat}_k)_{k[n]/\cat{HC}^-}:={\scat{Cat}_k}\!\!\underset{~^{\cat{HC}^-}{\scat{Mod}_k}^{k[n]}}{\times}{*}\,,
\]
and as this category admits finite colimits, we can compose cospans. 
Actually, according to \cite[\S5]{Haug}, we have an $\infty$-category 
\[
\scat{PrCY}_{[n]}:=\scat{Span}_1\left((\scat{Cat}_k)_{k[n]/\cat{HC}^-}^{\op}\right)\,.
\]
Similarly, there is an $\infty$-category 
\[
\scat{ExPrCY}_{[n]}:=\scat{Span}_1\left((\scat{Cat}_k)_{k[n]/\cat{HH}[1]}^{\op}\right)\,,
\]
and the natural transfomation $\delta:\cat{HH}[1]\to\cat{HC}^-$ given by \eqref{eq:Connes} defines an $\infty$-functor 
\[
\mathbf{\Delta}\,:\,\scat{ExPrCY}_{[n]}\longrightarrow \scat{PrCY}_{[n]}\,.
\]
\begin{remark}
Recall that any category of cospans is equipped with an involution $\sigma$, which is the identity on objects, and sends a cospan 
$\cA\rightarrow \cC\leftarrow \cB$ to the ``opposite'' cospan $\cB\rightarrow \cC\leftarrow \cA$. 
\end{remark}
\begin{definition}\label{definition: CYspan}
\begin{enumerate}
\item An $n$-pre-Calabi--Yau structure on a cospan $\cat{A}\overset{f}{\longrightarrow}\cat{C}\overset{g}{\longleftarrow}\cat{B}$ 
as in the previous definition is called \textit{$n$-Calabi--Yau} if the following conditions are satisfied: 
\begin{itemize}
\item All three categories $\cat{A}$, $\cat{B}$ and $\cat{C}$ are smooth. 
\item The Hochschild classes $c_{\cat{A}}^\natural$ and $c_{\cat{B}}^\natural$ are non-degenerate in the sense of 
Definition \ref{def:CY}(2). 
\item The square 
\[
\xymatrix{
\cat{C}^\vee[n]\ar[r]^-{g^\vee}\ar[d]_-{f^\vee} 
& (\cat{B}^\vee[n])\overset{\mathbb{L}}{\underset{\cat{B}^e}{\otimes}}\cat{C}^e 
\simeq \cat{B}\overset{\mathbb{L}}{\underset{\cat{B}^e}{\otimes}}\cat{C}^e \ar[d]^-{g\otimes\mathrm{id}} \\
(\cat{A}^\vee[n])\overset{\mathbb{L}}{\underset{\cat{A}^e}{\otimes}}\cat{C}^e 
\simeq \cat{A}\overset{\mathbb{L}}{\underset{\cat{A}^e}{\otimes}}\cat{C}^e \ar[r]^-{f\otimes\mathrm{id}}
& \cat{C}
}
\]
is (co)cartesian. This condition is also referred to as a \textit{non-degeneracy condition}. 
\end{itemize}
\item An exact $n$-pre-Calabi--Yau structure on a cospan is called exact $n$-Calabi--Yau if and only if its image through 
$\delta$ is $n$-Calabi--Yau. 
\item As usual, we adopt the same terminology for morphisms if $\cat{A}$ or $\cat{B}$ is $\emptyset$. 
\end{enumerate}
\end{definition}
We finally recall that $n$-Calabi--Yau cospans do compose well: indeed, after \cite[Theorem 6.2]{BD1}, 
the non-degeneracy property is preserved under the composition of $n$-pre-Calabi--Yau cospans. 
We therefore have a subcategory $\scat{CY}_{[n]}$ of $\scat{PrCY}_{[n]}$ whose objects are $n$-Calabi--Yau 
dg-categories and whose $1$-morphisms are $n$-Calabi--Yau cospans. 
Similarly, there is a subcategory $\scat{ExCY}_{[n]}$ of $\scat{ExPrCY}_{[n]}$ whose objects and morphisms are 
those that are sent to $\scat{CY}_{[n]}$ under $\mathbf{\Delta}$. Finally notice that the non-degeneracy condition 
is preserved under the involution $\sigma$ that takes cospans to their opposites. 
\begin{example}
Clearly, (exact) $n$-Calabi--Yau structures on the initial functor $\emptyset \to \cat{C} $ are equivalent to (exact) 
$(n+1)$-Calabi--Yau structures on $\cat{C}$. As a consequence, the push-out of two (exact) $n$-Calabi--Yau morphisms 
is (exact) $(n+1)$-Calabi--Yau. 
\end{example}
%\begin{remark}
%The functor $\scat{HH}$ (and thus $\cat{HC}^-$ and $\cat{HH}$) factors through the localization 
%$\scat{Cat}_k\to \scat{stCat}_k$, which commutes with finite colimts as it is left adjoint. Also observe 
%that the non-degeneracy condition remains the same under a Morita morphism, as it is expressed 
%only in terms of categories of modules. 
%\end{remark}

\medskip

A source of examples of Calabi--Yau morphisms are given by $n$-Calabi--Yau completion. 
\begin{proposition} \label{proposition: leftCY}
Let $\cat{B}:=\mathcal{G}_n(\cat{A})=T_{\cat{A}}(\cat{A}^\vee[n-1])$ denote the $n$-Calabi--Yau completion of 
a smooth dg-category $\cat{A}$, together with its exact $n$-Calabi--Yau structure $c$ from Theorem \ref{thm:CYcom}, 
and let $f:\cat{A}^\vee[n-1] \to \cat{A}$ a morphism of right $\cat{A}^e$-modules.  
\begin{enumerate}
\item The space of lifts of $\delta c$ to a $n$-pre-Calabi--Yau structures on the functor 
\[
T_{\cat{A}}(f):\cat{B} \longrightarrow \cat{A}\,,
\]
is the space of negative cyclic lifts of $f:k[n-1]\to\cat{HH}(\cat{A})$. 
\item All such $n$-pre-Calabi--Yau structures on $T_{\cat{A}}(f)$ are non-degenerate (i.e.~are $n$-Calabi--Yau). 
\end{enumerate}
\end{proposition}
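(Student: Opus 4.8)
The plan is as follows. Write $g:=T_{\cat{A}}(f)\colon\cat{B}\to\cat{A}$ for the dg-functor adjoint to $f$; by the $T_{\cat{A}}$-adjunction recalled in \S\ref{ssec:hhfree} it is a retraction of the canonical functor $\cat{A}\to\cat{B}=\cG_n(\cat{A})$, and it carries the generating sub-bimodule $\cat{A}^\vee[n-1]\subset\cat{B}$ to $\cat{A}$ via $f$. Since $\cat{HH}(\emptyset)=0$, an $n$-pre-Calabi--Yau structure on the cospan $\emptyset\to\cat{A}\overset{g}{\longleftarrow}\cat{B}$ amounts to a class $c_{\cat{B}}\colon k[n]\to\cat{HC}^-(\cat{B})$ together with a nullhomotopy of $g_*c_{\cat{B}}$; hence the space of such structures restricting to the fixed class $\delta c$ on $\cat{B}$ is precisely the space of nullhomotopies of $g_*(\delta c)\colon k[n]\to\cat{HC}^-(\cat{A})$. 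Recording this identification is the first step.

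The second step — the crux of the argument — is to compute $g_*(\delta c)$. Starting from the explicit description of Keller's class $c\colon k[n-1]\to\cat{HH}(\cat{B})$ in \eqref{exact nCY}--\eqref{eq:image of id} (so $c$ is the image of $\mathrm{id}_{\cat{A}}$ under $\mathbb{R}\ul{\cat{Hom}}_{\cat{Mod}_{\cat{A}^e}}(\cat{A},\cat{A})[n-1]\xrightarrow{\sim}(\cat{A}^\vee[n-1])\overset{\mathbb{L}}{\underset{\cat{A}^e}{\otimes}}\cat{A}^{\op}\to\cat{HH}(\cat{B})$) and using that $g$ restricts to the identity on $\cat{A}$ and to $f$ on the generators, I would show that $g_*(c)$ is, under the smoothness identification $\cat{HH}(\cat{A})[1-n]\simeq\mathbb{R}\ul{\cat{Hom}}_{\cat{Mod}_{\cat{A}^e}}(\cat{A}^\vee[n-1],\cat{A})$, exactly the Hochschild class $[f]$ of $f$. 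As the Connes map \eqref{eq:Connes} is natural in the mixed complex, it then follows that $g_*(\delta c)=\delta\big(g_*(c)\big)=\delta_*[f]$, with $\delta\colon\cat{HH}(\cat{A})[1]\to\cat{HC}^-(\cat{A})$ the map \eqref{eq:Connes}.

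Third, I would match the space of nullhomotopies of $\delta_*[f]$ with the space of negative cyclic lifts of $[f]$ by means of the Connes periodicity fibre sequence
\[
\cat{HC}^-(\cat{A})\xrightarrow{\ (-)^\natural\ }\cat{HH}(\cat{A})\xrightarrow{\ \delta\ }\cat{HC}^-(\cat{A})[-1]\,,
\]
whose connecting map is, up to the evident shift, the map \eqref{eq:Connes}. Applying $\ul{\cat{Map}}_{\cat{Mod}_k}(k[n-1],-)$ produces a fibre sequence of spaces; its fibre over $[f]\in\ul{\cat{Map}}(k[n-1],\cat{HH}(\cat{A}))$ — which by definition is the space of negative cyclic lifts of $f$ — is canonically the space of paths from $\delta_*[f]$ to $0$ in $\ul{\cat{Map}}(k[n],\cat{HC}^-(\cat{A}))$, that is, the space of nullhomotopies of $\delta_*[f]$. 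Together with the first two steps, this proves~(1).

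For~(2), observe that the non-degeneracy conditions of Definition~\ref{definition: CYspan}(1) for the cospan $\emptyset\to\cat{A}\overset{g}{\longleftarrow}\cat{B}$ involve only the bimodule-theoretic data of $g$, not the chosen negative cyclic lift, so it suffices to verify them once. Smoothness of $\cat{A}$ is a hypothesis, while smoothness of $\cat{B}=\cG_n(\cat{A})$ and non-degeneracy of $(\delta c)^\natural$ are Theorem~\ref{thm:CYcom}; so one is left to check that the square
\[
\xymatrix{
\cat{A}^\vee[n]\ar[r]^-{g^\vee}\ar[d] & \cat{B}\overset{\mathbb{L}}{\underset{\cat{B}^e}{\otimes}}\cat{A}^e\ar[d]^-{g\otimes\mathrm{id}}\\
0\ar[r] & \cat{A}
}
\]
is cocartesian. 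Using the small resolution of $\cat{B}=T_{\cat{A}}(\cat{A}^\vee[n-1])$ as a $\cat{B}$-bimodule from \S\ref{ssec:hhfree} together with smoothness of $\cat{A}$, one identifies $\cat{B}\overset{\mathbb{L}}{\underset{\cat{B}^e}{\otimes}}\cat{A}^e$ with the cofibre of a map $\cat{A}^\vee[n-1]\to\cat{A}$ built out of $f$, with $g\otimes\mathrm{id}$ the canonical map to that cofibre and $g^\vee$ its dual, after which cocartesianness is immediate; this is the relative counterpart of the computation behind Keller's Theorem~\ref{thm:CYcom} (and could also be deduced from it). Hence every lift produced in~(1) is $n$-Calabi--Yau. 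I expect the genuine work to be concentrated in the second step — tracing Keller's class $c$ through the $T_{\cat{A}}$-adjunction to see that $g_*(c)=[f]$; steps~1 and~3 are formal, and the non-degeneracy check in~(2) is a variant of an argument already in the literature.
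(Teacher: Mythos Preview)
Your proposal is correct and follows essentially the same route as the paper. For~(1), the paper's proof is exactly your Step~2 --- it shows that $\cat{HH}(T_{\cat{A}}(f))$ carries $c$ to the Hochschild class $[f]$ via the identification $\ul{\cat{Map}}_{\scat{Mod}_{\cat{A}^e}}(\cat{A}^\vee[n-1],\cat{A})\simeq\cat{HH}(\cat{A})[1-n]$ --- and then simply declares ``this proves~(1)''; your Steps~1 and~3 (the Connes fibre sequence) make explicit what the paper leaves implicit. For~(2), the paper also uses the small resolution of $\cat{B}$ to compute $\cat{B}\overset{\mathbb{L}}{\underset{\cat{B}^e}{\otimes}}\cat{A}^e$; one small correction to your sketch: after base-changing the small resolution along $g$, the two parallel arrows $\cat{A}^\vee[n-1]\rightrightarrows\cat{A}$ both become $f$, so their \emph{difference} is $0$, and the homotopy coequalizer is the split object $\cat{A}^\vee[n]\oplus\cat{A}$ (not the cofibre of a nontrivial map ``built out of $f$''); the maps $g^\vee$ and $g\otimes\mathrm{id}$ are then the inclusion of the first summand and the projection onto the second, which is precisely what makes the square cocartesian.
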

\begin{proof}
As the exact $n$-Calabi--Yau structure $c$ on $\mathcal G_n(\cat{A})$ is the image of $\mathrm{id}_{\cat{A}}$ through 
\eqref{eq:image of id}, one can see that 
\[
\cat{HH}(T_{\cat{A}}(f))[1-n]: \cat{HH}(\cat{B})[1-n] \to \cat{HH}(\cat{A})[1-n]
\]
maps $c$ to $f$ seen as an element of $|\cat{HH}(\cat{A})[1-n]|$ via
\[
\ul{\cat{Map}}_{\scat{Mod}_{\cat{A}^e}}(\cat{A}^\vee[n-1],\cat{A})\overset{\sim}{\to} |\cat{HH}(\cat{A})[1-n]|\,.
\]
This proves (1). We now turn to the proof of non-degeneracy. 
We want to compute $\cB^\vee[n]\otimes^\bbL_{\cB^e}\cA^e\simeq \cB\otimes^\bbL_{\cB^e}\cA^e$. 
Recall from the proof of Theorem \ref{thm:CYcom} that we have
\begin{align*}
\cB&\simeq\mathrm{coeq}\left(\xymatrix{\cA^\vee[n-1]\overset{\mathbb{L}}{\underset{\cat{A}^e}{\otimes}}\cB^e
\ar@<0.5ex>[r]\ar@<-0.5ex>[r]
& \cA\overset{\mathbb{L}}{\underset{\cat{A}^e}{\otimes}}\cB^e}\right), \\
\cB^\vee&\simeq\mathrm{eq}\left(\xymatrix{\cA^\vee\overset{\mathbb{L}}{\underset{\cat{A}^e}{\otimes}}\cB^e
\ar@<0.5ex>[r]\ar@<-0.5ex>[r]
& \cA[1-n]\overset{\mathbb{L}}{\underset{\cat{A}^e}{\otimes}}\cB^e}\right)\\
&\simeq\mathrm{coeq}\left(\xymatrix{\cA^\vee[-1]\overset{\mathbb{L}}{\underset{\cat{A}^e}{\otimes}}\cB^e
\ar@<0.5ex>[r]\ar@<-0.5ex>[r]
& \cA[-n]\overset{\mathbb{L}}{\underset{\cat{A}^e}{\otimes}}\cB^e}\right),
\end{align*}
and that $\gamma^\flat:\cB^\vee[n]\overset{\sim}{\to}\cB$, where $\gamma:=(\delta c)^\natural$, is just given by the identity of the coequalizer of \[
\xymatrix{\cA^\vee[n-1]\overset{\mathbb{L}}{\underset{\cat{A}^e}{\otimes}}\cB^e
\ar@<0.5ex>[r]\ar@<-0.5ex>[r]
& \cA\overset{\mathbb{L}}{\underset{\cat{A}^e}{\otimes}}\cB^e.}\]
Now, using $T_\cA(f)$, we apply $-\otimes_{\cB^e}\cA^e$  and get \[
\cB\overset{\bbL}{\underset{\cB^e}{\otimes}}\cA^e\simeq\mathrm{coeq}\left(\xymatrix{\cA^\vee[n-1]
\ar@<0.5ex>[r]^{~~~~0}\ar@<-0.5ex>[r]_{~~~~0}
& \cA}\right)\simeq\cA^\vee[n]\oplus\cA.\]
over which $T_\cA(f)\otimes\mathrm{id}$ acts as the projection on $\cA$. The result follows from the fact that 
\[
\xymatrix{ 
\cA^\vee[n] \ar[d]_{ \mathrm{id}\oplus 0 }  \ar[r] & 0 \ar[d]    \\
 \cA^\vee[n]\oplus\cA  \ar[r]_-{0\oplus \mathrm{id} }& \cA} 
\]
is (co)cartesian. 
\end{proof}

\begin{definition}[Keller~\cite{Keller}]\label{def: deformed}
Let $\cat{A}$ be a smooth dg-category together with a class $c:k[n-1]\to\cat{HH}(\cat{A})$. Denote by 
$\mathcal G_{n+1}(\cat{A},c)$ the dg-category obtained by deforming $\mathcal{G}_{n+1}(\cat{A})$ with 
(the extension by the Leibniz rule of) $c[1]:\cat{A}^\vee[n]\to\cat{A}[1]$. 
In other words, $\mathcal{G}_{n+1}(\cat{A},c)$ is given as the homotopy pushout
\[
\xymatrix{
\cG_{n}(\cA) \ar[r]^{T_{\cat{A}}(c)} 	\ar[d]	_{T_{\cat{A}}(0)}& \cA \ar[d] \\
\cA														\ar[r]			& \cG_{n+1}(\cA,c).
					}
\]
\end{definition}

\begin{corollary}
Let $\cat{A}$ be a smooth dg-category together with a class $c:k[n-1]\to\cat{HH}(\cat{A})$. 
Then every negative cyclic lift of $c$ induces an $(n+1)$-Calabi--Yau structure on $\mathcal G_{n+1}(\cat{A},c)$.
\end{corollary}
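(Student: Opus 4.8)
The plan is to realize $\mathcal{G}_{n+1}(\cat{A},c)$ as the composition of two $n$-Calabi--Yau cospans out of the Calabi--Yau completion $\mathcal{G}_n(\cat{A})$, and then to use the fact (recalled in the Example following Definition~\ref{definition: CYspan}) that an $n$-Calabi--Yau structure on the initial functor $\emptyset\to\mathcal{G}_{n+1}(\cat{A},c)$ is the same datum as an $(n+1)$-Calabi--Yau structure on $\mathcal{G}_{n+1}(\cat{A},c)$.

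First I would fix a negative cyclic lift $\tilde c$ of the class $c$, and write $c^\flat:\cat{A}^\vee[n-1]\to\cat{A}$ for the morphism of right $\cat{A}^e$-modules corresponding to $c$ under the smoothness equivalence $\cat{HH}(\cat{A})\simeq\mathbb{R}\ul{\cat{Hom}}_{\cat{Mod}_{\cat{A}^e}}(\cat{A}^\vee,\cat{A})$. Applying Proposition~\ref{proposition: leftCY} to $\mathcal{G}_n(\cat{A})$ equipped with its exact $n$-Calabi--Yau structure from Theorem~\ref{thm:CYcom} and to the module map $f=c^\flat$, the lift $\tilde c$ determines an $n$-pre-Calabi--Yau structure on the functor $T_{\cat{A}}(c^\flat):\mathcal{G}_n(\cat{A})\to\cat{A}$, which by part~(2) of that proposition is non-degenerate, hence $n$-Calabi--Yau. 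Applying the same proposition with $f=0$ and the zero negative cyclic lift endows $T_{\cat{A}}(0):\mathcal{G}_n(\cat{A})\to\cat{A}$ with an $n$-Calabi--Yau structure as well. Crucially, both of these $n$-pre-Calabi--Yau structures are lifts, along $T_{\cat{A}}(c^\flat)$ and $T_{\cat{A}}(0)$ respectively, of one and the same $n$-Calabi--Yau structure on $\mathcal{G}_n(\cat{A})$ --- the one underlying the exact structure of Theorem~\ref{thm:CYcom} --- which is exactly the compatibility required for the two cospans $\emptyset\to\cat{A}\xleftarrow{T_{\cat{A}}(c^\flat)}\mathcal{G}_n(\cat{A})$ and $\emptyset\to\cat{A}\xleftarrow{T_{\cat{A}}(0)}\mathcal{G}_n(\cat{A})$ to be composed in $\scat{CY}_{[n]}$ after one of them is reversed by the involution $\sigma$.

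Next I would replace the second cospan by its $\sigma$-opposite, which stays $n$-Calabi--Yau since non-degeneracy is preserved by $\sigma$, and compose it with the first; this is legitimate because $n$-Calabi--Yau cospans compose (\cite[Theorem 6.2]{BD1}). The underlying cospan of the composite is $\emptyset\to\cat{A}\overset{\mathbb{L}}{\underset{\mathcal{G}_n(\cat{A})}{\coprod}}\cat{A}\leftarrow\emptyset$, where the homotopy push-out is taken along $T_{\cat{A}}(c^\flat)$ and $T_{\cat{A}}(0)$, and by Definition~\ref{def: deformed} this push-out is precisely $\mathcal{G}_{n+1}(\cat{A},c)$. (Equivalently, this last step is the direct instance of the Example after Definition~\ref{definition: CYspan}: the push-out of the two $n$-Calabi--Yau morphisms $T_{\cat{A}}(c^\flat)$ and $T_{\cat{A}}(0)$ out of $\mathcal{G}_n(\cat{A})$ is $(n+1)$-Calabi--Yau.) Hence $\mathcal{G}_{n+1}(\cat{A},c)$ carries an $(n+1)$-Calabi--Yau structure, and since $\tilde c$ ranged over all negative cyclic lifts of $c$, this proves the corollary.

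The one point I expect to require care is the bookkeeping hidden in the second paragraph: checking that the two $n$-pre-Calabi--Yau structures coming from Proposition~\ref{proposition: leftCY} restrict to \emph{literally} the same Calabi--Yau datum on $\mathcal{G}_n(\cat{A})$ (so that the span composition genuinely takes place in $(\scat{Cat}_k)_{k[n]/\cat{HC}^-}^{\op}$), and keeping the cohomological and weight shifts consistent when passing between $c$, $c^\flat$ and the deformation datum $c[1]:\cat{A}^\vee[n]\to\cat{A}[1]$ of Definition~\ref{def: deformed}. Both follow from the explicit description of these structures --- as images of $\mathrm{id}_{\cat{A}}$ and of the chosen lifts --- given in the proof of Proposition~\ref{proposition: leftCY}.
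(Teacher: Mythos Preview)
Your proof is correct and follows essentially the same approach as the paper: apply Proposition~\ref{proposition: leftCY} to both $T_{\cat{A}}(c)$ (using the chosen negative cyclic lift) and $T_{\cat{A}}(0)$ (using the trivial lift), then conclude that their homotopy push-out $\mathcal G_{n+1}(\cat{A},c)$ is $(n+1)$-Calabi--Yau. Your extra bookkeeping---framing the push-out as a composition of $n$-Calabi--Yau cospans via $\sigma$ and checking that both morphisms lift the same Calabi--Yau structure on $\mathcal G_n(\cat{A})$---is fine and makes explicit what the paper leaves implicit, but it is not a different route.
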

\begin{proof}
On the one hand, it follows from Proposition~\ref{proposition: leftCY} that every negative cyclic lift of 
$c$ provide a $n$-Calabi--Yau structure on $T_{\cat{A}}(c)$. On the other hand, $0$ trivially lifts, therefore 
$T_{\cat{A}}(0)$ has a canonical $n$-Calabi--Yau structure. As a consequence, we obtained an $(n+1)$-Calabi--Yau 
structure on their homotopy push-out. 
\end{proof}
\begin{remark}
This corollary is a generalization, for smooth dg-categories, of a result of Yeung for finite cellular dg-categories 
(see \cite[Theorem 3.17]{Ye0}). Added in proof: in a new and completely revised version of \cite{Ye0}, Yeung 
was also able to get rid of the cellular hypothesis. 
\end{remark}

\subsubsection{Calabi--Yau completion as a functor with values in exact Calabi--Yau cospans}

Let us assume we are given a morphism $f:\cat{A}\to\cat{B}$ of smooth dg-categories. 
We define 
\[
\mathcal{G}_n(f):=T_{\cat{B}}\left(\cat{A}^\vee[n-1]\overset{\mathbb{L}}{\underset{\cat{A}^e}{\otimes}}\cat{B}^e\right)\,.
\]
\begin{theorem}\label{thm:Gn-functoriality}
There is an exact $n$-Calabi--Yau structure on the cospan 
\[
\mathcal{G}_n(A)\rightarrow\mathcal{G}_n(f)\leftarrow\mathcal{G}_n(\cat{B})\,.
\]
\end{theorem}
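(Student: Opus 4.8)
The plan is to construct the exact $n$-Calabi--Yau structure on the cospan $\cG_n(\cat{A})\to\cG_n(f)\leftarrow\cG_n(\cat{B})$ essentially by combining Proposition~\ref{proposition: leftCY} with a base-change argument, in the same spirit as the proof of that proposition. First I would set $\cat{M}:=\cat{A}^\vee[n-1]\overset{\mathbb{L}}{\underset{\cat{A}^e}{\otimes}}\cat{B}^e$, so that $\cG_n(f)=T_{\cat{B}}(\cat{M})$, and record the two structural morphisms of the cospan: on the left, $\cG_n(\cat{A})=T_{\cat{A}}(\cat{A}^\vee[n-1])\to T_{\cat{B}}(\cat{M})$ is induced by $f$ together with the unit $\cat{A}^\vee[n-1]\to\cat{M}$ of the base-change adjunction; on the right, $\cG_n(\cat{B})=T_{\cat{B}}(\cat{B}^\vee[n-1])\to T_{\cat{B}}(\cat{M})$ is induced by the $\cat{B}^e$-module map $\cat{B}^\vee[n-1]\to\cat{M}$ which is the $\cat{B}^e$-linear dual of the counit $f\otimes f:\cat{A}\overset{\mathbb{L}}{\underset{\cat{A}^e}{\otimes}}\cat{B}^e\to\cat{B}$ (using that $\cat{B}^\vee$ is, by the preduality functor, $\mathbb{R}\ul{\cat{Hom}}_{\cat{B}^e}$ of $\cat{B}$ against $\cat{B}^e$, and dualizing the restriction-of-scalars map).

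Next I would produce the exact pre-Calabi--Yau data, i.e.\ the homotopy-commuting square
\[
\xymatrix{
k[n] \ar[r]\ar[d] & \cat{HH}(\cG_n(\cat{B}))[1] \ar[d] \\
\cat{HH}(\cG_n(\cat{A}))[1] \ar[r]& \cat{HH}(\cG_n(f))[1]\,.
}
\]
The classes $c_{\cat{A}}$ and $c_{\cat{B}}$ are the canonical exact $n$-Calabi--Yau structures of Theorem~\ref{thm:CYcom}, each arising as the image of $\mathrm{id}$ via \eqref{eq:image of id}. The key point is that both of them, after pushing into $\cat{HH}(\cG_n(f))[1]$, become the \emph{same} class, namely the one obtained from $\mathrm{id}_{\cat{M}}\in\ul{\cat{Map}}_{\scat{Mod}_{\cat{B}^e}}(\cat{M},\cat{M})$ (more precisely $\mathrm{id}$ of $\cat{A}^\vee[n-1]$ transported along base change) through the analogue of \eqref{eq:image of id} for $\cG_n(f)$. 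This is a naturality statement: the construction \eqref{eq:image of id} is functorial in the pair (smooth dg-category, module), so the maps $\cat{HH}(\cG_n(\cat{A}))\to\cat{HH}(\cG_n(f))$ and $\cat{HH}(\cG_n(\cat{B}))\to\cat{HH}(\cG_n(f))$ carry $c_{\cat{A}}$ and $c_{\cat{B}}$ to the $\cG_n(f)$-version of the canonical class, giving the filler of the square. (This also pins down the required negative cyclic refinement, hence the \emph{exactness} of the structure, since Keller's class is already a negative cyclic class.)

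Finally I would check the non-degeneracy conditions of Definition~\ref{definition: CYspan}(1). Smoothness of $\cG_n(\cat{A})$, $\cG_n(\cat{B})$ is Theorem~\ref{thm:CYcom}; smoothness of $\cG_n(f)=T_{\cat{B}}(\cat{M})$ follows because $\cat{M}$ is a perfect $\cat{B}^e$-module (as $\cat{A}$, $\cat{B}$ are smooth and $f$ is a functor, so $\cat{A}^\vee[n-1]\overset{\mathbb{L}}{\otimes}_{\cat{A}^e}\cat{B}^e$ is perfect over $\cat{B}^e$), and Calabi--Yau completions of categories smooth over their objects with perfect generating bimodule remain smooth — this is in Keller. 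Non-degeneracy of $c_{\cat{A}}^\natural$, $c_{\cat{B}}^\natural$ is again Theorem~\ref{thm:CYcom}. The remaining work is the cocartesianness of
\[
\xymatrix{
\cG_n(f)^\vee[n]\ar[r]\ar[d] & \cG_n(\cat{B})\overset{\mathbb{L}}{\underset{\cG_n(\cat{B})^e}{\otimes}}\cG_n(f)^e \ar[d] \\
\cG_n(\cat{A})\overset{\mathbb{L}}{\underset{\cG_n(\cat{A})^e}{\otimes}}\cG_n(f)^e \ar[r] & \cG_n(f)\,,
}
\]
and this is where I expect the real work to be: I would compute all four corners using the \emph{small resolutions} of §\ref{ssec:hhfree} (exactly as in the proof of Proposition~\ref{proposition: leftCY}, where the analogous square collapsed to $\cat{A}^\vee[n]\oplus\cat{A}$ with the projection map). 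Writing $\cB:=\cG_n(f)$, the small resolution gives $\cB\simeq\mathrm{coeq}\big(\cat{M}\overset{\mathbb{L}}{\otimes}_{\cat{B}^e}\cB^e\rightrightarrows\cat{B}\overset{\mathbb{L}}{\otimes}_{\cat{B}^e}\cB^e\big)$ and, dualizing as in Proposition~\ref{proposition: leftCY}, $\cG_n(f)^\vee[n]\simeq\cG_n(f)$; then applying $-\overset{\mathbb{L}}{\otimes}_{\cB^e}\cG_n(\cat{A})^e$ and $-\overset{\mathbb{L}}{\otimes}_{\cB^e}\cG_n(\cat{B})^e$ should identify the right-hand and bottom-left corners with direct sums in which the structure maps become inclusions/projections of summands, reducing the square to a manifestly (co)cartesian square of split monomorphisms and epimorphisms, just as in \emph{loc.\ cit}. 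The main obstacle is bookkeeping: keeping the module structures (over $\cat{A}^e$, $\cat{B}^e$, and the various $\cG_n(-)^e$) straight through the tower of base changes, and verifying that the two composites $k[n]\to\cat{HH}(\cG_n(f))[1]$ genuinely agree up to coherent homotopy rather than merely on underlying classes — for which the functoriality of \eqref{eq:image of id} in the dg-category/module pair is the essential input.
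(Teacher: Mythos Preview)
Your overall architecture matches the paper's: construct the exact pre-Calabi--Yau class by naturality of \eqref{eq:image of id}, then verify the non-degeneracy square via the small resolutions of \S\ref{ssec:hhfree}. The paper does exactly this; the common Hochschild class in $\cat{HH}(\cG_n(f))$ is described there as the image of $f\in\ul{\cat{Map}}_{\scat{Mod}_{\cat{A}^e}}(\cat{A},\cat{B})$, which agrees with your ``$\mathrm{id}_{\cat{A}^\vee[n-1]}$ transported along base change.''

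There is, however, a genuine error in your non-degeneracy computation. The claim $\cG_n(f)^\vee[n]\simeq\cG_n(f)$ is \emph{false} in general. The small resolution of $\cG_n(f)$ as a bimodule over itself is
\[
\cG_n(f)\simeq\mathrm{coeq}\Big(\cat{A}^\vee[n-1]\overset{\mathbb{L}}{\underset{\cat{A}^e}{\otimes}}\cG_n(f)^e\rightrightarrows\cat{B}\overset{\mathbb{L}}{\underset{\cat{B}^e}{\otimes}}\cG_n(f)^e\Big),
\]
but dualizing \emph{swaps the roles of $\cat{A}$ and $\cat{B}$}:
\[
\cG_n(f)^\vee[n]\simeq\mathrm{coeq}\Big(\cat{B}^\vee[n-1]\overset{\mathbb{L}}{\underset{\cat{B}^e}{\otimes}}\cG_n(f)^e\rightrightarrows\cat{A}\overset{\mathbb{L}}{\underset{\cat{A}^e}{\otimes}}\cG_n(f)^e\Big).
\]
These agree only when $f$ is an equivalence; the self-duality you saw in Proposition~\ref{proposition: leftCY} was a special feature of the case $\cat{A}=\cat{B}$. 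Consequently the square does \emph{not} reduce to a split square of inclusions and projections as you expect. (Separately, the tensoring you propose, $-\overset{\mathbb{L}}{\otimes}_{\cG_n(f)^e}\cG_n(\cat{A})^e$, goes the wrong way: the map is $\cG_n(\cat{A})\to\cG_n(f)$, so one needs $\cG_n(\cat{A})\overset{\mathbb{L}}{\otimes}_{\cG_n(\cat{A})^e}\cG_n(f)^e$, not the reverse. Also, temporarily redefining $\cat{B}:=\cG_n(f)$ when $\cat{B}$ already denotes the target of $f$ is a recipe for confusion.)

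The fix, which is what the paper does, is to write down the small resolutions of all four corners directly:
\[
\cG_n(\cat{A})\overset{\mathbb{L}}{\underset{\cG_n(\cat{A})^e}{\otimes}}\cG_n(f)^e\simeq\mathrm{coeq}\Big(\cat{A}^\vee[n-1]\overset{\mathbb{L}}{\underset{\cat{A}^e}{\otimes}}\cG_n(f)^e\rightrightarrows\cat{A}\overset{\mathbb{L}}{\underset{\cat{A}^e}{\otimes}}\cG_n(f)^e\Big),
\]
\[
\cG_n(\cat{B})\overset{\mathbb{L}}{\underset{\cG_n(\cat{B})^e}{\otimes}}\cG_n(f)^e\simeq\mathrm{coeq}\Big(\cat{B}^\vee[n-1]\overset{\mathbb{L}}{\underset{\cat{B}^e}{\otimes}}\cG_n(f)^e\rightrightarrows\cat{B}\overset{\mathbb{L}}{\underset{\cat{B}^e}{\otimes}}\cG_n(f)^e\Big),
\]
and then observe that the resolution of $\cG_n(\cat{B})\otimes\cG_n(f)^e$ (resp.\ of $\cG_n(f)$) is obtained from that of $\cG_n(f)^\vee[n]$ (resp.\ of $\cG_n(\cat{A})\otimes\cG_n(f)^e$) by post-composing with $f\otimes\mathrm{id}$. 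This gives cocartesianness termwise, hence on coequalizers. The asymmetry between $\cat{A}$ and $\cat{B}$ is essential to the argument, not an artifact to be quotiented away.
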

\begin{proof}
We already have exact $n$-Calabi--Yau structures on $\mathcal G_n(\cat{A})$ and $\mathcal G_n(\cat{B})$, 
respectively denoted $c_{\cat{A}}$ and $c_{\cat{B}}$. The fact that their images in $\cat{HH}(\mathcal G_n(f))$ 
coincide follows from the commuting diagram 
\[
\xymatrix{
& \ul{\cat{Map}}_{\scat{Mod}_{\cat{B}^e}}(\cat{B},\cat{B})[n-1] \ar[r]^-{\sim} \ar[d]_{\circ f}
& (\cat{B}^\vee[n-1])\overset{\mathbb{L}}{\underset{\cat{B}^e}{\otimes}}\cat{B}^{\op} \ar[r] \ar[d]^{f^\vee\otimes\mathrm{id}}
& \mathcal G_n(\cat{B})\overset{\mathbb{L}}{\underset{\mathcal G_n(\cat{B})^e}{\otimes}}\mathcal G_n(\cat{B})^{\op} \ar[d]\\
  k[n-1] \ar[r]^-{f}\ar[rd]_-{\mathrm{id}_{\cat{A}}}\ar[ur]^-{\mathrm{id}_{\cat{B}}}
& \ul{\cat{Map}}_{\scat{Mod}_{\cat{A}^e}}(\cat{A},\cat{B})[n-1] \ar[r]^-{\sim}
& (\cat{A}^\vee[n-1])\overset{\mathbb{L}}{\underset{\cat{A}^e}{\otimes}}\cat{B}^{\op} \ar[r]
& \mathcal G_n(f)\overset{\mathbb{L}}{\underset{\mathcal G_n(f)^e}{\otimes}}\mathcal G_n(f)^{\op}\\
& \ul{\cat{Map}}_{\scat{Mod}_{\cat{A}^e}}(\cat{A},\cat{A})[n-1] \ar[r]^-{\sim} \ar[u]^{f\circ}
& (\cat{A}^\vee[n-1])\overset{\mathbb{L}}{\underset{\cat{A}^e}{\otimes}}\cat{A}^{\op} \ar[r] \ar[u]_{\mathrm{id}\otimes f}
& \mathcal G_n(\cat{A})\overset{\mathbb{L}}{\underset{\mathcal G_n(\cat{A})^e}{\otimes}}\mathcal G_n(\cat{A})^{\op} \ar[u]
}
\]
This exhibits a $n$-pre-Calabi-Yau structure on our cospan. 
Let us show the non-degeneracy property. We use small resolutions again. 
First
\begin{align*}
\cG_n(f)&\simeq\mathrm{coeq}\left(\xymatrix{(\cA^\vee[n-1] \overset{\mathbb{L}}{\underset{\cat{A}^e}{\otimes}} \cB^e) \overset{\mathbb{L}}{\underset{\cat{B}^e}{\otimes}} \cG_n(f)^e \ar@<0.5ex>[r]\ar@<-0.5ex>[r]
& \cB \overset{\mathbb{L}}{\underset{\cat{B}^e}{\otimes}} \cG_n(f)^e}\right)\\
&=\mathrm{coeq}\left(\xymatrix{\cA^\vee[n-1] \overset{\mathbb{L}}{\underset{\cat{A}^e}{\otimes}}\cG_n(f)^e \ar@<0.5ex>[r]\ar@<-0.5ex>[r]
& \cB \overset{\mathbb{L}}{\underset{\cat{B}^e}{\otimes}} \cG_n(f)^e}\right)
\end{align*}
  implies \begin{align*}
\cG_n(f)^\vee[n]&\simeq\mathrm{eq}\left(\xymatrix{\cB^\vee[n] \overset{\mathbb{L}}{\underset{\cat{B}^e}{\otimes}}  \cG_n(f)^e \ar@<0.5ex>[r]\ar@<-0.5ex>[r]
& \cA[1] \overset{\mathbb{L}}{\underset{\cat{A}^e}{\otimes}} \cG_n(f)^e}\right)\\
&=\mathrm{coeq}\left(\xymatrix{\cB^\vee[n-1] \overset{\mathbb{L}}{\underset{\cat{B}^e}{\otimes}}\cG_n(f)^e \ar@<0.5ex>[r]\ar@<-0.5ex>[r]
& \cA \overset{\mathbb{L}}{\underset{\cat{A}^e}{\otimes}} \cG_n(f)^e}\right).
\end{align*}
Similarly
\begin{align*}
\cG_n(\cA) \overset{\mathbb{L}}{\underset{\cG_n(\cA)^e}{\otimes}} \cG_n(f)^e&\simeq\mathrm{coeq}\left(\xymatrix{\cA^\vee[n-1] \overset{\mathbb{L}}{\underset{\cat{A}^e}{\otimes}}\cG_n(f)^e \ar@<0.5ex>[r]\ar@<-0.5ex>[r]
& \cA \overset{\mathbb{L}}{\underset{\cat{A}^e}{\otimes}} \cG_n(f)^e}\right)
\end{align*}
and
\begin{align*}
\cG_n(\cB)  \overset{\mathbb{L}}{\underset{\cG_n(\cB)^e}{\otimes}} \cG_n(f)^e&\simeq\mathrm{coeq}\left(\xymatrix{\cB^\vee[n-1] \overset{\mathbb{L}}{\underset{\cat{B}^e}{\otimes}}\cG_n(f)^e \ar@<0.5ex>[r]\ar@<-0.5ex>[r]
& \cB \overset{\mathbb{L}}{\underset{\cat{B}^e}{\otimes}} \cG_n(f)^e}\right).
\end{align*}
Note that the resolutions of $\cG_n(\cB) \otimes^\bbL_{\cG_n(\cB)^e} \cG_n(f)^e$ and $\cG_n(f)$ are obtained from those of  $\cG_n(f)^\vee[n]$ and $\cG_n(\cA) \otimes^\bbL_{\cG_n(\cA)^e} \cG_n(f)^e$ respectively by post-composing by $f\otimes id$, which proves that
 the homotopy square in Definition \ref{definition: CYspan} is  (co)cartesian. 
\end{proof}
\begin{corollary}[Relative Calabi--Yau completions]
Given a morphism $f:\cat{A}\to\cat{B}$ of smooth dg-categories, the exact $n$-Calabi-Yau structure on $\mathcal G_n(\cat{A})$ 
lifts to an exact $n$-Calabi--Yau structure on the morphism 
\[
\mathcal G_n(\cat{A})\longrightarrow \mathcal G_{n+1}(\cat{B}|\cat{A}):=T_{\cat{B}}\Big(\mathrm{cofib}\big(
\cat{B}^\vee\overset{f^\vee}{\rightarrow}\cat{A}^\vee\overset{\mathbb{L}}{\underset{\cat{A}^e}{\otimes}}\cat{B}^e\big)[n-1]\Big)
\]
\end{corollary}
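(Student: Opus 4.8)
The plan is to produce this exact $n$-Calabi--Yau morphism by composing, inside $\scat{ExCY}_{[n]}$, the exact $n$-Calabi--Yau cospan of Theorem~\ref{thm:Gn-functoriality} with the exact $n$-Calabi--Yau morphism $\mathcal G_n(\cat{B})\to\cat{B}$, and then to recognize the apex of the composite as $\mathcal G_{n+1}(\cat{B}|\cat{A})$ by a formal computation with tensor algebras.

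First I would record the ingredient on the $\cat{B}$-side. Applying Proposition~\ref{proposition: leftCY} to the smooth category $\cat{B}$ and to the zero morphism $0\colon\cat{B}^\vee[n-1]\to\cat{B}$, which admits the trivial (hence exact) lift, equips the canonical projection $T_{\cat{B}}(0)\colon\mathcal G_n(\cat{B})\to\cat{B}$ with an exact $n$-Calabi--Yau structure: its class on the source is the exact class $c_{\cat{B}}$ of Theorem~\ref{thm:CYcom}, the witnessing nullhomotopy in $\cat{HH}(\cat{B})[1]$ is the canonical one coming from the fact (established in the proof of Proposition~\ref{proposition: leftCY}) that $\cat{HH}(T_{\cat{B}}(0))$ carries $c_{\cat{B}}$ to $0$, and non-degeneracy is Proposition~\ref{proposition: leftCY}(2). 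Regarded as a cospan this is $\mathcal G_n(\cat{B})\to\cat{B}\leftarrow\emptyset$. On the other side Theorem~\ref{thm:Gn-functoriality} provides the exact $n$-Calabi--Yau cospan $\mathcal G_n(\cat{A})\to\mathcal G_n(f)\leftarrow\mathcal G_n(\cat{B})$, whose class on $\mathcal G_n(\cat{B})$ is again $c_{\cat{B}}$. Since these structures agree on the shared vertex $\mathcal G_n(\cat{B})$, I can compose them in $\scat{ExCY}_{[n]}$; as composition of exact $n$-Calabi--Yau cospans is again exact $n$-Calabi--Yau (\cite[Theorem 6.2]{BD1}, \cite[\S5]{Haug}), the composite
\[
\mathcal G_n(\cat{A})\longrightarrow \mathcal G_n(f)\underset{\mathcal G_n(\cat{B})}{\overset{\mathbb{L}}{\coprod}}\cat{B}\longleftarrow\emptyset
\]
is an exact $n$-Calabi--Yau morphism whose source class is $c_{\cat{A}}$; in other words it lifts the exact $n$-Calabi--Yau structure on $\mathcal G_n(\cat{A})$.

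It remains to identify the apex. The tensor-algebra functor $T_{\cat{B}}\colon\scat{Mod}_{\cat{B}^e}\to(\scat{Cat}_k)_{\cat{B}/}$ is a left (Quillen) adjoint, hence commutes with homotopy pushouts, and the two maps out of $\mathcal G_n(\cat{B})$ in the span above are obtained by applying $T_{\cat{B}}$ to $f^\vee[n-1]\colon\cat{B}^\vee[n-1]\to\cat{A}^\vee[n-1]\overset{\mathbb{L}}{\underset{\cat{A}^e}{\otimes}}\cat{B}^e$ (using the base-change identity $(\cat{A}\overset{\mathbb{L}}{\underset{\cat{A}^e}{\otimes}}\cat{B}^e)^\vee\simeq\cat{A}^\vee\overset{\mathbb{L}}{\underset{\cat{A}^e}{\otimes}}\cat{B}^e$, valid since $\cat{A}$ is smooth) and to $0\colon\cat{B}^\vee[n-1]\to 0$, respectively. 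Since $\scat{Mod}_{\cat{B}^e}$ is stable, the homotopy pushout of $0\leftarrow\cat{B}^\vee[n-1]\to\cat{A}^\vee[n-1]\overset{\mathbb{L}}{\underset{\cat{A}^e}{\otimes}}\cat{B}^e$ is $\mathrm{cofib}\big(\cat{B}^\vee\overset{f^\vee}{\to}\cat{A}^\vee\overset{\mathbb{L}}{\underset{\cat{A}^e}{\otimes}}\cat{B}^e\big)[n-1]$, whence
\[
\mathcal G_n(f)\underset{\mathcal G_n(\cat{B})}{\overset{\mathbb{L}}{\coprod}}\cat{B}\;\simeq\;T_{\cat{B}}\Big(\mathrm{cofib}\big(\cat{B}^\vee\overset{f^\vee}{\to}\cat{A}^\vee\overset{\mathbb{L}}{\underset{\cat{A}^e}{\otimes}}\cat{B}^e\big)[n-1]\Big)\;=\;\mathcal G_{n+1}(\cat{B}|\cat{A})\,,
\]
and under this identification the left leg of the composite is the evident morphism $\mathcal G_n(\cat{A})\to\mathcal G_{n+1}(\cat{B}|\cat{A})$; this would finish the proof.

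The computations above are all formal, so I expect the only point needing genuine care to be the exactness bookkeeping --- verifying that the canonical nullhomotopy giving the exact structure on $\mathcal G_n(\cat{B})\to\cat{B}$ is compatible, under composition of spans, with the exact structure of Theorem~\ref{thm:Gn-functoriality}, so that the composite genuinely carries an exact $n$-Calabi--Yau structure and not merely an $n$-Calabi--Yau one. This should be immediate here, since both homotopies in play are the canonical ones, but it is the place where exactness could a priori be lost.
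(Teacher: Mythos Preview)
Your proposal is correct and follows essentially the same strategy as the paper: compose the exact $n$-Calabi--Yau cospan $\mathcal G_n(\cat{A})\to\mathcal G_n(f)\leftarrow\mathcal G_n(\cat{B})$ with the exact $n$-Calabi--Yau morphism $\mathcal G_n(\cat{B})\to\cat{B}$, and then identify the pushout using that $T_{\cat{B}}$ is a left adjoint. The only difference is how you obtain the exact Calabi--Yau structure on the right leg $\mathcal G_n(\cat{B})\to\cat{B}$: you invoke Proposition~\ref{proposition: leftCY} with the zero morphism, whereas the paper applies Theorem~\ref{thm:Gn-functoriality} to the initial functor $\emptyset\to\cat{B}$ (noting $\mathcal G_n(\emptyset)=\emptyset$ and $\mathcal G_n(\emptyset\to\cat{B})\simeq\cat{B}$) and then passes to the opposite cospan. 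The paper's route has the advantage that exactness comes for free from Theorem~\ref{thm:Gn-functoriality}, so there is no ``exactness bookkeeping'' to worry about; your route requires the observation that for $f=0$ the relevant nullhomotopy is canonical, which you correctly flag and which is indeed immediate.
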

\begin{proof}
The proof is just based on the fact that exact $n$-Calabi--Yau stuctures do compose \textit{via} push-out. We have: 
\begin{equation}\label{diagGn}
\xymatrix{
&& \mathcal G_{n+1}(\cat{B}|\cat{A})  && \\
& \mathcal{G}_n(f) \ar[ru] & &  \cat{B}\ar[lu] \\
\mathcal{G}_n(\cat{A})\ar[ru] & & \mathcal{G}_n(\cat{B})\ar[lu]\ar[ru] & & ~~~~\emptyset~~~~ \ar[lu]
}
\end{equation}
The exact $n$-Calabi--Yau structure on the two cospans comes from Theorem \ref{thm:Gn-functoriality} (for the second one, 
we apply it to the initial functor $\emptyset\to\cat{B}$, and then pass to the opposite cospan). We thus simply have to prove 
that 
\[
\mathcal G_{n+1}(\cat{B}|\cat{A})\simeq 
T_{\cat{B}}(\cat{A}^\vee[n-1]\overset{\mathbb{L}}{\underset{\cat{A}^e}{\otimes}}\cat{B}^e)
\underset{T_{\cat{B}}(\cat{B}^\vee[n-1])}{\coprod}\cat{B}\,
\]
which simply follows from that $T_{\cat{B}}$ commutes with push-outs (it is a left adjoint) and $T_{\cat{B}}(0)=\cat{B}$. 
\end{proof}

\begin{remark}
The above corollary extends \cite[Theorem 3.13]{Ye0} from finite cellular to smooth dg-categories. 
\end{remark}

\begin{lemma} \label{lemma: G(f)}
Let $f:\cat{A}\to\cat{B}$ be dg-functor. Then 
\[
\mathcal{G}_n(f)\simeq \mathcal G_n(\cat{A})\underset{\cat{A}}{\coprod}\cat{B}\,.
\]
\end{lemma}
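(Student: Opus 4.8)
To obtain the lemma I plan to deduce it from a general compatibility between the free-algebra construction $T_{(-)}$ and base change of bimodules. Concretely, I would prove that for any dg-functor $f:\cat{A}\to\cat{B}$ and any $\cat{A}$-bimodule $\cat{M}$ there is a natural equivalence
\[
T_{\cat{A}}(\cat{M})\underset{\cat{A}}{\coprod}\cat{B}\;\simeq\;T_{\cat{B}}\Big(\cat{M}\overset{\mathbb{L}}{\underset{\cat{A}^e}{\otimes}}\cat{B}^e\Big)\,,
\]
and then specialize to $\cat{M}=\cat{A}^\vee[n-1]$: by definition the left-hand side becomes $\mathcal{G}_n(\cat{A})\coprod_{\cat{A}}\cat{B}$ and the right-hand side becomes $\mathcal{G}_n(f)$, which is exactly the claim.

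To obtain the displayed equivalence I would argue by uniqueness of adjoints. Recall from \S\ref{ssec:hhfree} that $T_{\cat{A}}$ is the derived left adjoint of the forgetful functor $U_{\cat{A}}:(\scat{Cat}_k)_{\cat{A}/}\to\scat{Mod}_{\cat{A}^e}$ sending $(g:\cat{A}\to\cat{C})$ to the bimodule $(b,b')\mapsto\cat{C}(g(b'),g(b))$, and likewise for $\cat{B}$. On the category side, the homotopy-pushout functor $f_!:(\scat{Cat}_k)_{\cat{A}/}\to(\scat{Cat}_k)_{\cat{B}/}$, $(\cat{A}\to\cat{C})\mapsto(\cat{B}\to\cat{C}\coprod_{\cat{A}}\cat{B})$, is left adjoint to precomposition with $f$, which I denote $f^*$; on the bimodule side, $-\overset{\mathbb{L}}{\underset{\cat{A}^e}{\otimes}}\cat{B}^e$ is left adjoint to restriction of scalars $\mathrm{res}_{f^e}$ along $f^e=f\otimes f^{\op}:\cat{A}^e\to\cat{B}^e$. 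Hence $f_!\circ T_{\cat{A}}$ is left adjoint to $U_{\cat{A}}\circ f^*$, while $T_{\cat{B}}\circ\big(-\overset{\mathbb{L}}{\underset{\cat{A}^e}{\otimes}}\cat{B}^e\big)$ is left adjoint to $\mathrm{res}_{f^e}\circ U_{\cat{B}}$, so it suffices to produce a natural equivalence $U_{\cat{A}}\circ f^*\simeq\mathrm{res}_{f^e}\circ U_{\cat{B}}$. This last point is immediate from the definitions: for $(g:\cat{B}\to\cat{C})$ both sides are the $\cat{A}$-bimodule $(b,b')\mapsto\cat{C}\big(g(f(b')),g(f(b))\big)$, naturally in $g$.

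The one thing to keep an eye on is that everything must be read at the level of $\infty$-categories: $T_{\cat{A}}$ must be the derived functor $\mathbb{L}T_{\cat{A}}$ used in the definition of $\mathcal{G}_n$, $f_!$ and $-\overset{\mathbb{L}}{\underset{\cat{A}^e}{\otimes}}\cat{B}^e$ the usual derived left adjoints, and the adjunctions $\infty$-categorical — all of which is already recorded in \S\ref{ssec:hhfree} and at the start of \S\ref{sec:CY}. Since the comparison $U_{\cat{A}}\circ f^*\simeq\mathrm{res}_{f^e}\circ U_{\cat{B}}$ is an equality already at the model level, it descends without difficulty, and I do not anticipate any genuine obstacle: the whole argument is a formal mate computation whose only input is this tautological identification of the two restriction functors. (One could instead verify the equivalence directly using the small resolutions of \S\ref{ssec:hhfree}, but the adjunction argument is cleaner and coordinate-free.)
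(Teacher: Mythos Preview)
Your argument is correct and is essentially the same as the paper's: the paper observes that the square of right adjoints
\[
\xymatrix{
(\scat{Cat}_k)_{\cat{B}/} \ar[r]^{U_{\cat B}}\ar[d]_{f^*}	& \scat {Mod}_{\cat{B}^e}\ar[d]^{\mathrm{res}_{f^e}} \\
(\scat{Cat}_k)_{\cat{A}/} \ar[r]^{U_{\cat A}}		& \scat {Mod}_{\cat{A}^e}
}
\]
commutes (precisely your identity $U_{\cat{A}}\circ f^*\simeq\mathrm{res}_{f^e}\circ U_{\cat{B}}$), passes to left adjoints to obtain $T_{\cat{B}}\big(\cat{M}\overset{\mathbb L}{\otimes}_{\cat{A}^e}\cat{B}^e\big)\simeq T_{\cat{A}}(\cat{M})\coprod_{\cat{A}}\cat{B}$, and specializes to $\cat{M}=\cat{A}^\vee[n-1]$. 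You have simply spelled out the mate computation in more detail.
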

\begin{proof}[Proof of the Lemma]
Observe that we have a commuting square of right adjoints 
\[
\xymatrix{
(\scat{Cat}_k)_{\cat{B}/} \ar[r]\ar[d]	& \scat {Mod}_{\cat{B}^e}\ar[d] \\
(\scat{Cat}_k)_{\cat{A}/} \ar[r]		& \scat {Mod}_{\cat{A}^e}
}
\]
Thus, passing to left adjoints, we have, for every $\cat{A}^e$-module $\cat{M}$, 
\[
T_{\cat{B}}\left(\cat{M}\overset{L}{\underset{\cat{A}^e}{\otimes}}\cat{B}^e\right)
\simeq T_{\cat{A}}(\cat{M})\underset{\cat{A}}{\coprod}\cat{B}
\]
With $\cat{M}=\cat{A}^\vee[n-1]$, we get the desired result. 
\end{proof}

\begin{theorem}\label{theorem: compositionCY}
The construction $\mathcal{G}_n$ defines a functor 
\[
\Ho(\scat{Cat}_k^{sm})\longrightarrow \Ho(\scat{ExCY}_{[n]})\,, 
\]
where $\scat{Cat}_k^{sm}$ denotes the full subcategory of $\scat{Cat}_k$ spanned by smoothdg-categories. 
\end{theorem}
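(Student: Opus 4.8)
The plan is to show that $\mathcal{G}_n$ is a well-defined functor on homotopy categories by checking it respects identities and composition, with the bulk of the work being the compatibility with composition. First I would recall from Theorem~\ref{thm:Gn-functoriality} that for every dg-functor $f:\cat{A}\to\cat{B}$ between smooth dg-categories we have constructed an exact $n$-Calabi--Yau cospan
\[
\mathcal{G}_n(\cat{A})\longrightarrow\mathcal{G}_n(f)\longleftarrow\mathcal{G}_n(\cat{B})\,,
\]
which is the candidate value $\mathcal{G}_n(f)$ of the functor on morphisms, viewed as an object of $Ho(\scat{ExCY}_{[n]})$. On objects the functor sends $\cat{A}$ to $\mathcal{G}_n(\cat{A})$ with its exact $n$-Calabi--Yau structure from Theorem~\ref{thm:CYcom}; this is smooth by the same theorem, so it is a genuine object of $\scat{ExCY}_{[n]}$.

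Next I would verify that identities are preserved: for $f=\mathrm{id}_{\cat{A}}$ one has $\cat{A}^\vee[n-1]\otimes^{\mathbb{L}}_{\cat{A}^e}\cat{A}^e\simeq\cat{A}^\vee[n-1]$, so $\mathcal{G}_n(\mathrm{id}_{\cat{A}})=T_{\cat{A}}(\cat{A}^\vee[n-1])=\mathcal{G}_n(\cat{A})$, and one checks that the cospan $\mathcal{G}_n(\cat{A})\to\mathcal{G}_n(\cat{A})\leftarrow\mathcal{G}_n(\cat{A})$ produced by Theorem~\ref{thm:Gn-functoriality} is (equivalent to) the identity span with its tautological exact Calabi--Yau structure. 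The heart of the proof is the composition axiom: given $\cat{A}\overset{f}{\to}\cat{B}\overset{g}{\to}\cat{C}$, I must produce an equivalence in $Ho(\scat{ExCY}_{[n]})$ between $\mathcal{G}_n(g\circ f)$ and the composite span $\mathcal{G}_n(g)\circ\mathcal{G}_n(f)$, which by definition of composition of spans is the pushout $\mathcal{G}_n(f)\amalg_{\mathcal{G}_n(\cat{B})}\mathcal{G}_n(g)$. Here I would invoke Lemma~\ref{lemma: G(f)}, which gives $\mathcal{G}_n(f)\simeq\mathcal{G}_n(\cat{A})\amalg_{\cat{A}}\cat{B}$ and likewise $\mathcal{G}_n(g)\simeq\mathcal{G}_n(\cat{B})\amalg_{\cat{B}}\cat{C}$ and $\mathcal{G}_n(g\circ f)\simeq\mathcal{G}_n(\cat{A})\amalg_{\cat{A}}\cat{C}$. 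Then the composite span is
\[
\Big(\mathcal{G}_n(\cat{A})\underset{\cat{A}}{\amalg}\cat{B}\Big)\underset{\mathcal{G}_n(\cat{B})}{\amalg}\Big(\mathcal{G}_n(\cat{B})\underset{\cat{B}}{\amalg}\cat{C}\Big)\simeq\mathcal{G}_n(\cat{A})\underset{\cat{A}}{\amalg}\cat{B}\underset{\cat{B}}{\amalg}\cat{C}\simeq\mathcal{G}_n(\cat{A})\underset{\cat{A}}{\amalg}\cat{C}\simeq\mathcal{G}_n(g\circ f)\,,
\]
by the pasting law for pushouts. This identifies the underlying cospans; one then checks that the exact $n$-pre-Calabi--Yau structures agree, which again reduces, via the explicit description of the classes as images of $\mathrm{id}_{\cat{A}}$, $\mathrm{id}_{\cat{B}}$, $\mathrm{id}_{\cat{C}}$ under the maps~\eqref{eq:image of id}, to a diagram chase analogous to (and compatible with) the big commuting diagram in the proof of Theorem~\ref{thm:Gn-functoriality}.

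Finally, having matched both underlying spans and Calabi--Yau data, the non-degeneracy of the composite is automatic: it is guaranteed abstractly since $n$-Calabi--Yau cospans compose within $\scat{CY}_{[n]}$ by \cite[Theorem 6.2]{BD1} (hence exact ones compose within $\scat{ExCY}_{[n]}$), so there is nothing extra to check there. I expect the main obstacle to be the bookkeeping in the composition step --- specifically, making the identification of the pushout of cospans with $\mathcal{G}_n(g\circ f)$ compatible with all the Calabi--Yau structures simultaneously, i.e.\ checking that the equivalence of underlying dg-categories is compatible with the two negative cyclic (resp.\ Hochschild) classes coming from the two sides of the composition. This is essentially a higher-coherence diagram chase; it is made tractable by working at the level of homotopy categories, where it suffices to exhibit the relevant squares as homotopy commutative rather than coherently so, and by reusing the diagram from the proof of Theorem~\ref{thm:Gn-functoriality} applied to the pairs $(f,\mathrm{id})$, $(\mathrm{id},g)$ and $(f,g)$.
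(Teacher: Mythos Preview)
Your proposal is correct and follows essentially the same approach as the paper: identity preservation is immediate, the underlying pushout of cospans is identified with $\mathcal{G}_n(g\circ f)$ via Lemma~\ref{lemma: G(f)} exactly as you describe, and the compatibility of exact pre-Calabi--Yau structures is handled by a diagram chase extending the one in the proof of Theorem~\ref{thm:Gn-functoriality}. The paper makes this last step slightly more explicit by packaging the relevant maps into auxiliary diagrams $\mathcal{L}_n(f)$ and exhibiting a commuting triangle of cospans of such diagrams, but this is precisely the ``diagram chase analogous to the big commuting diagram'' you anticipate.
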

\begin{proof}
The proof is similar to the one of Proposition \ref{prop:cotangentfunctor}. 
We have already defined $\mathcal G_{n}$ on dg-categories and on functors. One sees that 
\[
\mathcal G_n\left(\cat{A}\overset{\mathrm{id}}{\longrightarrow}\cat{A}\right)
=\mathcal G_n(\cat{A})\overset{\mathrm{id}}{\longrightarrow}\mathcal G_n(\cat{A})
\overset{\mathrm{id}}{\longleftarrow}\mathcal G_n(\cat{A})\,,
\]
is an exact $n$-Calabi--Yau cospans. 
It thus remains to prove that $\mathcal G_n$ sends composition of morphisms to composition of exact $n$-Calabi--Yau cospans. 
Let $\cat{A}\overset{f}{\longrightarrow}\cat{B}\overset{g}{\longrightarrow}\cat{C}$. 
Thanks to the above Lemma, 
\begin{eqnarray*}
\mathcal G_n(f)\underset{\mathcal G_n(\cat{B})}{\coprod}\mathcal G_n(g) & \simeq & 
\left(\mathcal G_n(\cat{A})\underset{\cat{A}}{\coprod}\cat{B}\right)
\underset{\mathcal G_n(\cat{B})}{\coprod}
\left(\mathcal G_n(\cat{B})\underset{\cat{B}}{\coprod}\cat{C}\right) \\
& \simeq & \mathcal G_n(\cat{A})\underset{\cat{A}}{\coprod}\cat{C} \simeq \mathcal G_n(g\circ f)\,.
\end{eqnarray*}
So at the level of dg-categories, $\mathcal G_n$ indeed sends the composition of morphism to the composition of push-outs. 

Let us now turn to the proof that it is compatible with the composition of exact $n$-pre-Calabi--Yau structures, in the 
following sense: from Theorem \ref{thm:Gn-functoriality} we have exact $n$-pre-Calabi--Yau  structures on $\mathcal G_n(f)$, 
$\mathcal G_n(g)$, and $\mathcal G_n(g\circ f)$, and we want the push-out of the first two to be homotopic to the one on 
the third. Recall from the proof of Theorem \ref{thm:Gn-functoriality} that, with a morphism $f:\cat{A}\to\cat{B}$, one 
can associate a diagram $\mathcal{L}_n(f)$  
\[
\mathcal{L}_n(f):=\left(k[n-1] \overset{f}{\rightarrow}\ul{\cat{Map}}_{\scat{Mod}_{\cat{A}^e}}(\cat{A},\cat{B})[n-1] \simeq
(\cat{A}^\vee[n-1])\overset{\mathbb{L}}{\underset{\cat{A}^e}{\otimes}}\cat{B}^{\op} \rightarrow
\mathcal G_n(f)\overset{\mathbb{L}}{\underset{\mathcal G_n(f)^e}{\otimes}}\mathcal G_n(f)^{\op}\right)\,,
\]
in $\scat{Mod}_k$, and that we have a cospan of such diagrams 
\[
\mathcal{L}_n(\cat{A})\overset{f\circ}{\longrightarrow}\mathcal{L}_n(f)\overset{\circ f}{\longleftarrow}\mathcal{L}_n(\cat{B})\,,
\]
where $\mathcal{L}_n(\cat{A}):=\mathcal{L}_n(\mathrm{id}_{\cat{A}})$, by convention. 
This cospan of diagrams precisely determines the exact $n$-pre-Calabi--Yau structure on
\[
\mathcal{G}_n(\cA)\rightarrow\mathcal{G}_n(f)\leftarrow\mathcal{G}_n(\cat{B})\,.
\]
Given a sequence of morphisms $\cat{A}\overset{f}{\rightarrow}\cat{B}\overset{g}{\rightarrow}\cat{C}$, 
we then have a commuting diagram (of diagrams) 
\[
\xymatrix{
&& \mathcal{L}_n(g\circ f)  && \\
& \mathcal{L}_n(f) \ar[ru]^-{g\circ} & & \mathcal{L}_n(g)\ar[lu]_-{\circ f} \\
\mathcal{L}_n(\cat{A})\ar[ru]^-{f\circ} & & \mathcal{L}_n(\cat{B})\ar[lu]_-{\circ f}\ar[ru]^-{g\circ} 
& & \mathcal{L}_n(\cat{C})\ar[lu]_-{\circ g}
}
\] 
which proves the result. 
\end{proof}
\begin{remark}
The functor $\mathcal G_n$ can be lifted to the $\infty$-categorical level, from $\scat{Cat}_k^{sm}$ to $\scat{ExCY}_{[n]}$, 
but we leave this to a subsequent work. 
\end{remark}

\subsubsection{Deformed Calabi--Yau completions}\label{ss: deformed}

Let $f:\cat{A}\to\cat{B}$ be a morphism between smooth dg-categories. 
As in~\S\ref{critrel}, we want to modify~\eqref{diagGn}, using any $T_{\cat B}(c):\mathcal G_n(\cat B)\to\cat B$, 
$c\in|\cat{HH}(\cat B)[1-n]|$, instead of $T_{\cat B}(0)$. We thus define $\mathcal G_{n+1}(\cat B|\cat A,c)$ as 
the following composition of push-outs:
\begin{equation}\label{compdef}
\xymatrix{
&& \mathcal G_{n+1}(\cat{B}|\cat{A},c)  && \\
& \mathcal{G}_n(f) \ar[ru] & &  \cat{B}\ar[lu] \\
\mathcal{G}_n(\cat{A})\ar[ru] & & \mathcal{G}_n(\cat{B})\ar[lu]\ar[ru]^-{T_{\cat B}(c)} & & ~~~~\emptyset~~~~ \ar[lu]
}
\end{equation}
In a similar way as in Definition~\ref{def: deformed}, the push-out $\mathcal G_{n+1}(\cat{B}|\cat{A}, c)$ can be 
identified with the deformation of $\mathcal G_{n+1}(\cat{B}|\cat{A})$ by means of (the extension by the Leibniz rule of) 
the composed map 
\[
\mathrm{cofib}(f^\vee)[n-1]\longrightarrow \cat{B}^\vee[n]\overset{c}{\longrightarrow}\cat{B}[1]\,.
\]
%%%%%
% Je ne pense pas que ce qui suit est correct : la composition ne peut pas etre c puisqu'elle est 0
%makes the following diagram coherent\[\xymatrix{
%\cat B^\vee[n-1]\ar[d]\ar[r]^{f^\vee~~}&\cat A^\vee[n-1]\overset{\mathbb{L}}{\underset{\cat{A}^e}{\otimes}}\cat{B}^e\ar[d]^0\ar[r]&%\mathrm{cofib}(f^\vee)\ar[d]^{\tilde c}\\
%\mathrm{fib}(f)\ar[r]&\cat A\overset{\mathbb{L}}{\underset{\cat{A}^e}{\otimes}}\cat{B}^e\ar[r]^f&\cat B}\]
%if the composition from the upper left corner to the bottom right one is $c:\cat B^\vee[n-1]\to\cat B$.
%%%%%

Now let us assume that there exists a Calabi--Yau structure on $T_\cB(c)$ given by a negative cyclic lift of $c$, according 
to Proposition~\ref{proposition: leftCY}. Thanks to Theorem~\ref{thm:Gn-functoriality}, we get an alternate proof 
of~\cite[Theorem 3.23]{Ye0} when $\mathcal G_n(\cat A)$ is not deformed, \textit{i.e.}\ when considering the trivial Hochschild 
class on $\cat A$.

\begin{proposition} \label{prop: relCY}
Every negative cyclic lift of $c$ induces a $n$-Calabi--Yau structure on $\cG_n(\cA)\to\cG_{n+1}(\cB|\cA,c)$.
\end{proposition}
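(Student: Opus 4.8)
The plan is to deduce this statement directly from the two ingredients already established: Theorem~\ref{thm:Gn-functoriality}, which equips the cospan $\mathcal G_n(\cat{A})\to\mathcal G_n(f)\leftarrow\mathcal G_n(\cat{B})$ with an exact $n$-Calabi--Yau structure, and Proposition~\ref{proposition: leftCY}, which says that any negative cyclic lift of $c\in|\cat{HH}(\cat{B})[1-n]|$ endows the functor $T_{\cat{B}}(c):\mathcal G_n(\cat{B})\to\cat{B}$ with an $n$-Calabi--Yau structure (non-degenerate automatically). Since exact $n$-Calabi--Yau structures on cospans compose via push-out (this is recorded just after Definition~\ref{definition: CYspan}, and is the content of the existence of $\scat{ExCY}_{[n]}$ as a category), the plan is to realize $\cG_n(\cA)\to\cG_{n+1}(\cB|\cA,c)$ as the composite of two composable exact $n$-Calabi--Yau morphisms in $\scat{ExCY}_{[n]}$, read off from diagram~\eqref{compdef}.

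Concretely, first I would regard the left-hand cospan $\mathcal G_n(\cat{A})\to\mathcal G_n(f)\leftarrow\mathcal G_n(\cat{B})$ as a morphism $\mathcal G_n(\cat{A})\to\mathcal G_n(\cat{B})$ in $\scat{ExCY}_{[n]}$, with its Calabi--Yau structure from Theorem~\ref{thm:Gn-functoriality}. Second, I would take the morphism $\mathcal G_n(\cat{B})\to\emptyset$ in $\scat{ExCY}_{[n]}$ given by the opposite of the cospan $\emptyset\to\cat{B}\leftarrow\mathcal G_n(\cat{B})$ --- but now, crucially, I replace the structure map $\mathcal G_n(\cat{B})\to\cat{B}$, which in~\eqref{diagGn} was $T_{\cat{B}}(0)$, by $T_{\cat{B}}(c)$, carrying the $n$-Calabi--Yau structure coming from the chosen negative cyclic lift of $c$ (Proposition~\ref{proposition: leftCY}); note the cospan $\emptyset\to\cat{B}\overset{T_{\cat{B}}(c)}{\leftarrow}\mathcal G_n(\cat{B})$ is exact $n$-Calabi--Yau, since an $n$-Calabi--Yau structure on the morphism $T_{\cat{B}}(c)$ with $\cat{B}$ having no further constraint is exactly an object of the relevant slice (the source side $\emptyset$ contributes trivially). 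Composing these two morphisms in $\scat{ExCY}_{[n]}$ yields an exact $n$-Calabi--Yau morphism $\mathcal G_n(\cat{A})\to\emptyset$-target, i.e.\ an exact $n$-Calabi--Yau morphism with target the push-out of the two cospans, and by the very definition~\eqref{compdef} of $\mathcal G_{n+1}(\cat{B}|\cat{A},c)$ this target is $\mathcal G_{n+1}(\cat{B}|\cat{A},c)$. Hence the morphism $\cG_n(\cA)\to\cG_{n+1}(\cB|\cA,c)$ acquires an $n$-Calabi--Yau structure.

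The one point that needs a little care --- and which I expect to be the only real obstacle --- is checking that the \emph{pre}-Calabi--Yau data glue correctly under the push-out, i.e.\ that the two exact $n$-pre-Calabi--Yau structures being composed agree on $\mathcal G_n(\cat{B})$ as required to form a morphism in $\scat{ExPrCY}_{[n]}$. For the cospan from Theorem~\ref{thm:Gn-functoriality} the class on $\mathcal G_n(\cat{B})$ is the canonical exact class $c_{\cat{B}}$; for the second cospan, one must verify that the pre-Calabi--Yau class on $\mathcal G_n(\cat{B})$ induced by the Calabi--Yau structure on $T_{\cat{B}}(c)$ is again $c_{\cat{B}}$ --- but this is precisely part~(1) of Proposition~\ref{proposition: leftCY}, which identifies the space of such lifts with negative cyclic lifts of $c$ and, in particular, sends the exact Calabi--Yau class $c_{\cat{B}}$ on $\mathcal G_n(\cat{B})$ to $c$ on $\cat{B}$. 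So the compatibility is automatic. Once the pre-level data is matched, non-degeneracy of the composite is free: it is preserved under composition of $n$-pre-Calabi--Yau cospans (this is \cite[Theorem~6.2]{BD1}, invoked in the excerpt), and both pieces are non-degenerate. Therefore the composite is $n$-Calabi--Yau, which is the assertion of Proposition~\ref{prop: relCY}.

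Finally, I would remark that the identification of the target of the composite with $\mathcal G_{n+1}(\cat{B}|\cat{A},c)$ is literally the diagram~\eqref{compdef} defining the latter, so no extra computation of push-outs is needed beyond what is already in~\S\ref{ss: deformed}; in particular one may also invoke Lemma~\ref{lemma: G(f)} exactly as in the proof of Theorem~\ref{theorem: compositionCY} to rewrite the iterated push-out, but this is optional. This gives the full argument.
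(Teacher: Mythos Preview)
Your proposal is correct and follows exactly the paper's approach: the paper's proof is the sentence immediately preceding the statement, namely that the diagram~\eqref{compdef} exhibits $\cG_n(\cA)\to\cG_{n+1}(\cB|\cA,c)$ as the composite of the Calabi--Yau cospan from Theorem~\ref{thm:Gn-functoriality} with the Calabi--Yau cospan $\cG_n(\cB)\overset{T_\cB(c)}{\to}\cB\leftarrow\emptyset$ from Proposition~\ref{proposition: leftCY}, and compositions of Calabi--Yau cospans are Calabi--Yau.

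One small terminological slip: you write that the second cospan is \emph{exact} $n$-Calabi--Yau and that the composition takes place in $\scat{ExCY}_{[n]}$, but Proposition~\ref{proposition: leftCY} only produces an $n$-Calabi--Yau structure on $T_\cB(c)$ from a negative cyclic lift of $c$, not an exact one (indeed, the resulting structure on $\cG_{n+1}(\cB|\cA,c)$ in Proposition~\ref{prop: relCY} is only claimed to be $n$-Calabi--Yau). The composition should therefore be carried out in $\scat{CY}_{[n]}$, after first pushing the exact structure on the left cospan forward along $\mathbf{\Delta}$; the class on $\cG_n(\cB)$ becomes $\delta c_\cB$, which is precisely what Proposition~\ref{proposition: leftCY}(1) says the structure on $T_\cB(c)$ lifts, so the matching you check goes through verbatim. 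This does not affect the substance of your argument.
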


Now assume that we are given a (possibly non-trivial) class $c:k[n-1]\to\cat{HH}(\mathcal A)$, and denote $c'$ its image 
in $\cat{HH}(\cB)$. For every lift of $c$ in $\cat{HC}^-(\cat{A})$, we get a corredponding lift of $c'$ in $\cat{HC}^-(\cat{B})$, 
and thus, according to Proposition~\ref{proposition: leftCY} and Theorem~\ref{thm:Gn-functoriality}, we have diagram of 
Calabi--Yau cospans 
\begin{equation}\label{diagCYs}
\xymatrix{
&\cA\ar[rr]&&\cB\pullbackcorner&\\
\emptyset \ar[ur]\ar[dr]&&\cG_n(\cA)\ar[lu]_-{T_\cA(c)}\ar[dl]^-{T_\cA(0)}\ar[r]&\cG_n(f)\ar[u]\ar[d]&\cG_n(\cB)\ar@{-->}[ul]_-{T_\cB(c')}\ar@{-->}[dl]^-{T_\cB(0)}\ar[l]\\
&\cA\ar[rr]&& \cB\pullbackcorner[ul]&
}
\end{equation}

We postpone to a future work the technical proof of the following Lemma (that we won't use after the end of \S\ref{ss: deformed}), 
related to the $(\infty,2)$-categorical nature of iterated Calabi--Yau cospans. 
\begin{lemma}\label{lemma-conj}
Assume that we are given a $n$-Calabi--Yau cospan $\mathcal X\rightarrow\mathcal Z\leftarrow\mathcal Y$ along with two 
$n$-Calabi--Yau morphisms $\mathcal X\rightarrow \mathcal U$ and $\mathcal X\rightarrow\mathcal V$. Then the cospan
\[
\mathcal U\underset{\mathcal X}{\coprod} \mathcal V\rightarrow \mathcal U\underset{\mathcal X}{\coprod} \mathcal Z \underset{\mathcal X}{\coprod} \mathcal V\leftarrow \left( \mathcal U\underset{\mathcal X}{\coprod} \mathcal Z\right)\underset{\mathcal Y}{\coprod}\left( \mathcal Z\underset{\mathcal X}{\coprod} \mathcal V\right)
\]
is $(n+1)$-Calabi--Yau. 
\end{lemma}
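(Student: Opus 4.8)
The plan is to exhibit the claimed cospan as built, by iterated composition and push-out, out of cospans that are already known to be $n$-Calabi--Yau, and then to conclude from the two composition principles recalled in \S\ref{sec:CY}: first, that $n$-Calabi--Yau cospans compose with non-degeneracy preserved (this is \cite[Theorem~6.2]{BD1}, underlying the category $\scat{CY}_{[n]}$); second, that under the identification of $n$-Calabi--Yau structures on $\emptyset\to\cat{C}$ with $(n+1)$-Calabi--Yau structures on $\cat{C}$, the push-out of two $n$-Calabi--Yau morphisms with a common source is $(n+1)$-Calabi--Yau. We read an $n$-Calabi--Yau morphism $\mathcal X\to\mathcal U$ as the cospan $\mathcal X\to\mathcal U\leftarrow\emptyset$, equivalently (after applying $\sigma$) as $\emptyset\to\mathcal U\leftarrow\mathcal X$; implicit in the hypotheses is that the $n$-Calabi--Yau structure this induces on $\mathcal X$ agrees with the one carried by the left end of $\mathcal X\to\mathcal Z\leftarrow\mathcal Y$, and likewise for $\mathcal X\to\mathcal V$.

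For the left end: composing $\emptyset\to\mathcal U\leftarrow\mathcal X$ with $\mathcal X\to\mathcal V\leftarrow\emptyset$ yields the $n$-Calabi--Yau cospan $\emptyset\to\mathcal U\underset{\mathcal X}{\coprod}\mathcal V\leftarrow\emptyset$, so $\mathcal U\underset{\mathcal X}{\coprod}\mathcal V$ is $(n+1)$-Calabi--Yau. For the right end: composing $\emptyset\to\mathcal U\leftarrow\mathcal X$ with $\mathcal X\to\mathcal Z\leftarrow\mathcal Y$ gives the $n$-Calabi--Yau cospan $\emptyset\to\mathcal U\underset{\mathcal X}{\coprod}\mathcal Z\leftarrow\mathcal Y$, i.e.\ an $n$-Calabi--Yau morphism $\mathcal Y\to\mathcal U\underset{\mathcal X}{\coprod}\mathcal Z$; similarly, composing $\mathcal Y\to\mathcal Z\leftarrow\mathcal X$ (the $\sigma$-opposite of the given cospan) with $\mathcal X\to\mathcal U\leftarrow\emptyset$ gives an $n$-Calabi--Yau morphism $\mathcal Y\to\mathcal Z\underset{\mathcal X}{\coprod}\mathcal U$. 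These two have common source $\mathcal Y$, so by the second principle their push-out, which is precisely the right end $\left(\mathcal U\underset{\mathcal X}{\coprod}\mathcal Z\right)\underset{\mathcal Y}{\coprod}\left(\mathcal Z\underset{\mathcal X}{\coprod}\mathcal U\right)$ realized as $\emptyset\to\cdot\leftarrow\emptyset$, is $(n+1)$-Calabi--Yau.

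It then remains to assemble the cospan. The middle term $\mathcal U\underset{\mathcal X}{\coprod}\mathcal Z\underset{\mathcal X}{\coprod}\mathcal V$ receives canonical functors from both ends, built from the structure maps of the various push-outs, and one must check coherence of the $(n+1)$-pre-Calabi--Yau data: the negative cyclic classes $k[n+1]\to\cat{HC}^-$ on the two ends (those constructed in the previous paragraph) should acquire a common image in $\cat{HC}^-$ of the middle, obtained by pasting the diagrams of Hochschild complexes attached to $\mathcal X\to\mathcal U$, $\mathcal X\to\mathcal V$ and $\mathcal X\to\mathcal Z\leftarrow\mathcal Y$, exactly as in the proofs of Theorem~\ref{thm:Gn-functoriality} and Theorem~\ref{theorem: compositionCY} (where the diagrams $\mathcal L_n(-)$ are pasted). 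With the non-degeneracy of the ends in hand, it then suffices to verify the (co)cartesian square of Definition~\ref{definition: CYspan} for the new cospan; this follows by a Mayer--Vietoris pasting from the squares already established for $\mathcal X\to\mathcal Z\leftarrow\mathcal Y$, $\mathcal X\to\mathcal U$, $\mathcal X\to\mathcal V$ and the composite cospans above, using that $(-)^\vee$ takes the relevant finite (co)limits of perfect bimodules to (co)limits and that push-outs of (co)cartesian squares are (co)cartesian. The exact variant is identical, with $\cat{HC}^-$ replaced by $\cat{HH}[1]$, and $\mathbf{\Delta}$ intertwines the two.

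The genuine difficulty — and the reason for deferring the full argument — is coherence rather than any single computation. The clean formulation is that the Lemma holds automatically inside an $(\infty,2)$-category of Calabi--Yau structures, in which the two composition principles above are merely horizontal and vertical composition; setting up that framework (cospans, a second layer of Calabi--Yau morphisms, and all the requisite coherences) is the real work. Concretely, the obstacle is to promote the pairwise compatibilities of the negative cyclic lifts and the pairwise non-degeneracy squares — each routine on its own — to a single homotopy-coherent diagram indexed by the shape of the iterated push-out, in place of the single square checked in Theorem~\ref{thm:Gn-functoriality}. Once that diagram is in place, every individual verification is formal.
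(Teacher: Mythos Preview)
The paper does not prove this Lemma: immediately before stating it, the authors write ``We postpone to a future work the technical proof of the following Lemma (that we won't use in the sequel), related to the $(\infty,2)$-categorical nature of iterated Calabi--Yau cospans.'' So there is no proof in the paper to compare your proposal against.

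That said, your sketch is exactly the shape one would expect, and your final paragraph identifies precisely the obstacle the authors cite for deferring: the individual ingredients (composing the given cospans to obtain the two ends, pasting the non-degeneracy squares) are each straightforward, but promoting the pairwise compatibilities to a single homotopy-coherent diagram is the genuine work, and is most cleanly packaged in an $(\infty,2)$-category of iterated Calabi--Yau cospans. Your acknowledgment that this is ``the reason for deferring the full argument'' is in complete agreement with the paper's own stance.
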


Using the previous diagram~\eqref{diagCYs}, we apply the Lemma with $\mathcal U=\mathcal V=\cat{A}$ and 
$\mathcal X\rightarrow\mathcal Z\leftarrow\mathcal Y=\cG_n(\cA)\rightarrow \cG_n(f)\leftarrow \cG_n(\cB)$. 
In this case we have 
\[
\mathcal{G}_{n+1}(f,c_f):= \mathcal U\underset{\mathcal X}{\coprod} \mathcal Z \underset{\mathcal X}{\coprod} \mathcal V
\simeq \cat{B}\underset{\mathcal G_n(f)}{\coprod}\cat{B}\,,
\]
which can be identified with the deformation of $\mathcal{G}_{n+1}(f)$ by means of the (Leibniz extension of the) composed map 
\[
c_f[1]:\cA^\vee[n]\otimes^\bbL_{\cA^e}\cB^e\overset{c[1]}{\longrightarrow}\cA\otimes^\bbL_{\cA^e}\cB^e[1]
\overset{f[1]}{\longrightarrow}\cB[1]\,.
\]
\begin{corollary}
Under the assumption that Lemma \ref{lemma-conj} holds, the cospan
\[
\mathcal G_{n+1}(\cA,c)\rightarrow\cG_{n+1}(f, c_f)\leftarrow\cG_{n+1}(\cB,c')
\]
is $(n+1)$-Calabi--Yau. 
\end{corollary}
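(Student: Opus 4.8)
The plan is to obtain the Corollary as a direct application of the (postponed) Lemma on iterated Calabi--Yau cospans, fed with exactly the data assembled in diagram~\eqref{diagCYs}. Concretely, I would take the $n$-Calabi--Yau cospan $\mathcal X\to\mathcal Z\leftarrow\mathcal Y$ to be $\cG_n(\cA)\to\cG_n(f)\leftarrow\cG_n(\cB)$ with the exact $n$-Calabi--Yau structure of Theorem~\ref{thm:Gn-functoriality}, and take $\mathcal U=\mathcal V=\cA$ together with the two morphisms $\mathcal X\to\mathcal U$, $\mathcal X\to\mathcal V$ given by $T_\cA(c)$ and $T_\cA(0)$. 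The first is $n$-Calabi--Yau by Proposition~\ref{proposition: leftCY} applied to the chosen negative cyclic lift of $c$, the second by the same Proposition applied to the trivial lift of $0$. The Lemma then produces an $(n+1)$-Calabi--Yau structure on the cospan
\[
\cA\underset{\cG_n(\cA)}{\coprod}\cA\longrightarrow\cA\underset{\cG_n(\cA)}{\coprod}\cG_n(f)\underset{\cG_n(\cA)}{\coprod}\cA\longleftarrow\Big(\cA\underset{\cG_n(\cA)}{\coprod}\cG_n(f)\Big)\underset{\cG_n(\cB)}{\coprod}\Big(\cG_n(f)\underset{\cG_n(\cA)}{\coprod}\cA\Big)\,,
\]
and everything else is the identification of its three vertices.

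The left vertex is $\cG_{n+1}(\cA,c)$ essentially by inspection of Definition~\ref{def: deformed}; the middle vertex is $\cG_{n+1}(f,c_f)$, which is precisely the identification $\mathcal U\coprod_{\mathcal X}\mathcal Z\coprod_{\mathcal X}\mathcal V\simeq\cB\coprod_{\cG_n(f)}\cB$ recorded just before the statement. For the right vertex I would invoke Lemma~\ref{lemma: G(f)} to write $\cG_n(f)\simeq\cG_n(\cA)\coprod_\cA\cB$, and observe that both $T_\cA(c)$ and $T_\cA(0)$ restrict to $\mathrm{id}_\cA$ along the unit $\cA\to\cG_n(\cA)$; a pasting-of-pushouts argument (collapsing the composable pair $\cA\to\cG_n(\cA)\to\cA$) then gives $\cA\coprod_{\cG_n(\cA)}\cG_n(f)\simeq\cB$ and $\cG_n(f)\coprod_{\cG_n(\cA)}\cA\simeq\cB$, so the right vertex is $\cB\coprod_{\cG_n(\cB)}\cB$. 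Tracking the two gluing maps $\cG_n(\cB)\to\cG_n(f)\to\cB$ through the two outer pushout squares of~\eqref{diagCYs} identifies them with $T_\cB(c')$ and $T_\cB(0)$ respectively, whence this vertex is $\cG_{n+1}(\cB,c')$ by Definition~\ref{def: deformed}.

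The main obstacle is twofold. First, the argument is entirely contingent on the Lemma, whose proof the authors postpone as being $(\infty,2)$-categorical in nature, so the only honest route is to cite it. Second, within the reduction itself the delicate point is bookkeeping of \emph{which} maps occur in the iterated pushouts, and checking that the induced Calabi--Yau structures (not merely the underlying dg-categories) are the ones appearing in Definition~\ref{def: deformed} and in~\eqref{diagCYs}; I would organize this around diagram~\eqref{diagCYs}, whose top and bottom pushout squares encode exactly the equivalences $\cA\coprod_{\cG_n(\cA)}\cG_n(f)\simeq\cB$ (via $T_\cA(c)$, $T_\cB(c')$) and $\cG_n(f)\coprod_{\cG_n(\cA)}\cA\simeq\cB$ (via $T_\cA(0)$, $T_\cB(0)$) that the identification of the right vertex needs.
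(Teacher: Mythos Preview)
Your proposal is correct and follows exactly the paper's approach: apply the postponed Lemma with $\mathcal U=\mathcal V=\cA$ and $\mathcal X\to\mathcal Z\leftarrow\mathcal Y=\cG_n(\cA)\to\cG_n(f)\leftarrow\cG_n(\cB)$, then identify the three vertices with $\cG_{n+1}(\cA,c)$, $\cG_{n+1}(f,c_f)$ and $\cG_{n+1}(\cB,c')$ using Definition~\ref{def: deformed} and the pushout squares recorded in diagram~\eqref{diagCYs}. Your write-up is in fact more detailed than the paper's, which only spells out the middle identification $\mathcal U\coprod_{\mathcal X}\mathcal Z\coprod_{\mathcal X}\mathcal V\simeq\cB\coprod_{\cG_n(f)}\cB$ and leaves the outer two implicit.
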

\begin{remark}
We will prove in a subsequent work the above construction defines a functor 
\[
\Ho\left((\scat{Cat}_k)^{sm}_{k[n-2]/\cat{HC}^-}\right)\longrightarrow \Ho(\scat{CY}_{[n]})\,.
\]
\end{remark}
Now assume that the negative cyclic lift of $c'$ is (homotopic to) $0$, \textit{i.e.}\ that 
$\cG_{n+1}(\cB,c')\simeq \cG_{n+1}(\cB)$ as a Calabi--Yau category. 

We thus get a Calabi--Yau push-out
\[
\xymatrix{
&& \mathcal G_{n+2}(\cat{B}|\cat{A},\tilde c)  && \\
& \mathcal{G}_{n+1}(f,c_f) \ar[ru] & &  \cat{B}\ar[lu] \\
\mathcal{G}_{n+1}(\cat{A},c)\ar[ru] & & \mathcal{G}_{n+1}(\cat{B})\ar[lu]\ar[ru]^-{T_{\cat B}(0)} & & ~~~~\emptyset~~~~ \ar[lu]
}
\]
which allows us to retrieve~\cite[Theorem 3.23]{Ye0} in its full generality. Let us be more specific. 

The fact that $c'$ is homotopic to $0$ tells us that we have a factorization of $c_f$ as follows: 
\[
\xymatrix{
\cat A^\vee[n-1]\overset{\mathbb{L}}{\underset{\cat{A}^e}{\otimes}}\cat{B}^e\ar[d]_-c\ar[r]&\mathrm{cofib}(f^\vee)[n-1]\ar[d]^{\tilde c}\\
\cat A\overset{\mathbb{L}}{\underset{\cat{A}^e}{\otimes}}\cat{B}^e\ar[r]^f&\cat B}
\]
One can then identify the push-out $\mathcal G_{n+2}(\cat{B}|\cat{A},\tilde c)$ with the deformation of 
$\mathcal G_{n+2}(\cat{B}|\cat{A})$ by means of the morphism $\tilde{c}[1]$, as we have already done on several occasions; 
whence the notation. 

\begin{theorem}
Under the assumption that Lemma \ref{lemma-conj} holds,  to any negative cyclic lift of the pair $(c,\tilde{c})$, one can associate a $(n+1)$-Calabi--Yau structure on 
$\cG_{n+1}(\cA,c)\to\cG_{n+2}(\cB|\cA,\tilde c)$.
\end{theorem}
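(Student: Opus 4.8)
The plan is to build the $(n+1)$-Calabi--Yau structure on $\cG_{n+1}(\cA,c)\to\cG_{n+2}(\cB|\cA,\tilde c)$ by composing Calabi--Yau cospans already constructed above, in the same spirit as the proof of the Corollary ``Relative Calabi--Yau completions'' and of Theorem~\ref{theorem: compositionCY}. Recall that, by construction, $\cG_{n+2}(\cB|\cA,\tilde c)$ is the iterated push-out displayed immediately before the statement, so that
\[
\cG_{n+2}(\cB|\cA,\tilde c)\;\simeq\;\cG_{n+1}(f,c_f)\underset{\cG_{n+1}(\cB)}{\coprod}\cB\,,
\]
and that giving a relative $(n+1)$-Calabi--Yau structure on a morphism is the same as giving an $(n+1)$-Calabi--Yau cospan one of whose legs has source $\emptyset$. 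Hence it suffices to realise
\[
\cG_{n+1}(\cA,c)\longrightarrow\cG_{n+2}(\cB|\cA,\tilde c)\longleftarrow\emptyset
\]
as a composite of $(n+1)$-Calabi--Yau cospans and to invoke their composability (\cite[Theorem~6.2]{BD1}, i.e.\ composition inside $\scat{CY}_{[n+1]}$).

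First I would feed a negative cyclic lift of the pair $(c,\tilde c)$ into the Corollary immediately preceding the statement: its lift of $c$ (and the induced lift of $c'=f_*c$, which the standing hypothesis forces to be nullhomotopic) together with its lift of $\tilde c$ produce an $(n+1)$-Calabi--Yau structure on the cospan $\cG_{n+1}(\cA,c)\to\cG_{n+1}(f,c_f)\leftarrow\cG_{n+1}(\cB,c')$, together with the equivalence $\cG_{n+1}(\cB,c')\simeq\cG_{n+1}(\cB)$ of $(n+1)$-Calabi--Yau categories coming from the nullhomotopy of the lift of $c'$. Next I would equip the morphism $T_{\cB}(0)\colon\cG_{n+1}(\cB)\to\cB$ with its canonical $(n+1)$-Calabi--Yau structure: applying the functor $\mathcal G_{n+1}$ of Theorem~\ref{theorem: compositionCY} to the initial functor $\emptyset\to\cB$ and using $\mathcal G_{n+1}(\emptyset\to\cB)\simeq\cB$ (Lemma~\ref{lemma: G(f)}) gives the exact $(n+1)$-Calabi--Yau cospan $\emptyset\to\cB\leftarrow\cG_{n+1}(\cB)$, whose opposite under $\sigma$, viewed in $\scat{CY}_{[n+1]}$ through $\mathbf{\Delta}$, is the wanted $(n+1)$-Calabi--Yau morphism. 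Composing the two cospans then yields the $(n+1)$-Calabi--Yau cospan
\[
\cG_{n+1}(\cA,c)\longrightarrow\cG_{n+1}(f,c_f)\underset{\cG_{n+1}(\cB)}{\coprod}\cB\longleftarrow\emptyset\,,
\]
whose middle term is $\cG_{n+2}(\cB|\cA,\tilde c)$; this is the desired $(n+1)$-Calabi--Yau structure on $\cG_{n+1}(\cA,c)\to\cG_{n+2}(\cB|\cA,\tilde c)$.

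The hard part will be the bookkeeping of negative cyclic lifts: one must verify that a negative cyclic lift of the pair $(c,\tilde c)$ is exactly the datum that trivialises, in a mutually compatible fashion, the pre-Calabi--Yau structures on the three constituent pieces --- the lift of $c$ feeding the Corollary after Definition~\ref{def: deformed} on the $\cA$-side, the lift governing the deformation $\cG_{n+1}(f,c_f)$ through $c_f$ in the middle, and the nullhomotopy of the lift of $c'$ identifying $\cG_{n+1}(\cB,c')$ with $\cG_{n+1}(\cB)$ on the $\cB$-side --- and that these glue across the two push-outs so that Brav--Dyckerhoff composability applies on the nose. Once this compatibility is pinned down there is nothing left to do: non-degeneracy is automatically preserved under composition of Calabi--Yau cospans, so the composite cospan above is genuinely $(n+1)$-Calabi--Yau and the statement follows.
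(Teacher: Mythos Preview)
Your proposal is correct and follows essentially the same route as the paper. The paper's argument is the push-out diagram displayed just before the theorem: one composes the $(n+1)$-Calabi--Yau cospan $\cG_{n+1}(\cA,c)\to\cG_{n+1}(f,c_f)\leftarrow\cG_{n+1}(\cB)$ (obtained from the preceding Corollary together with the identification $\cG_{n+1}(\cB,c')\simeq\cG_{n+1}(\cB)$ furnished by the nullhomotopy of the lift of $c'$) with the opposite of the exact $(n+1)$-Calabi--Yau cospan $\emptyset\to\cB\leftarrow\cG_{n+1}(\cB)$ coming from $\mathcal G_{n+1}(\emptyset\to\cB)$; this is precisely your two-step composition. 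Your explicit flagging of the negative-cyclic bookkeeping is apt: the paper leaves this implicit and, moreover, the Corollary you invoke rests on the Lemma whose proof the authors postpone to future work, so both the paper and your argument carry the same dependency.
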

\begin{remark}
\begin{enumerate}
\item The data we use is the same as Yeung's, who equivalently requires a lift of $c$ in relative negative cyclic homology. 
\item Note that using $T_\cB(\beta)$ for some class $\beta$ would just amount to picking a different homotopy from $c'$ 
to $0$, through the self-homotopy $\beta$ of $0$.
\end{enumerate}
\end{remark}

\subsection{Free examples}

In this subsection, we make some constructions more explicit on free dg-categories. 
Our setup is the following: $I$ is a finite set, $\mathcal M$ is a perfect cochain complex carrying a $I\times I$-grading, 
turning it into a perfect $k^I$-bimodule, and $\cat{A}:=T_{k^I}(\mathcal M)$. 
%\begin{remark}
%Observe that $k^I$ can denote either the $k$-algebra or its idempotent completion, which is nothing but 
%the category $k^{\coprod I}$. It does not matter so much as they are \mathcal Morita equivalent. We chose the second 
%option for convenience. 
%\end{remark}
The typical situation we have in mind is the one quivers, which corresponds to the case when $\mathcal M$ is concentrated in degree $0$ 
and comes with a basis. 

\medskip

Recall that, for a free category $\cat{A}$, the dualizing bimodule can be computed easily. As $\cat{A}^e$-modules, we have 
\[
\cat{A}\simeq\mathrm{coeq}\left(\xymatrix{\cat{A}\underset{k^I}{\otimes}\mathcal M\underset{k^I}{\otimes}\cat{A}
\ar@<0.5ex>[r]^{~~~m\otimes\mathrm{id}}\ar@<-0.5ex>[r]_{~~~\mathrm{id}\otimes m} 
& \cat{A}\underset{k^I}{\otimes}\cat{A}} \right)
\]
where $m$ denotes the multiplication in $\cA$.
Hence (see for instance~\cite[Lemma 3.2]{Keller}), 
\[
\cat{A}^\vee\simeq \mathrm{eq}\left(\xymatrix{\cat{A}\underset{k^I}{\otimes}\cat{A}\ar@<0.5ex>[r]^{\!\!\!\!\!\!\!c^\op\otimes\mathrm{id}~~}\ar@<-0.5ex>[r]_{\!\!\!\!\!\!\!\mathrm{id}\otimes c~~} 
& \cat{A}\underset{k^I}{\otimes}\mathcal M^\vee\underset{k^I}{\otimes}\cat{A}}\right)
\]
where $c\in \mathcal M^\vee\otimes_{k^I} \mathcal M$ denotes the Casimir element corresponding to the identity through $\ul{\cat{Hom}}_{k^I}(\mathcal M,\mathcal M)=\mathcal M^\vee\otimes_{k^I}\mathcal M$. Note that $(\mathcal M^\vee)_{i,i'}=(\mathcal M_{i',i})^\vee$. 

\subsubsection{The relative Calabi--Yau completion as a double}

Recall that $\mathcal G_n(k^I)=\underset{i\in I}{\amalg}k[x_{i}]$, with exact $n$-Calabi--Yau structure given by 
$x:=\sum_{i\in I}x_{i}$, and where $x_i$'s are degree $1-n$ variables. 

\begin{proposition}\label{relCYdouble}
The relative Calabi--Yau completion $\mathcal G_n(k^I)\to \mathcal G_{n+1}(\cat{A}|k^I)$ is equivalent to 
the morphism 
\[
 \phi:\underset{i\in I}{\amalg}k[x_{i}] \to T_{k^I}(\mathcal M\oplus \mathcal M^\vee[n-1])
\]
sending $x$ to $(c^{\op}-c)[n-1]$ in $T_{k^I}(\mathcal M\oplus \mathcal M^\vee[n-1])$. 
\end{proposition}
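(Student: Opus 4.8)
The plan is to unwind both sides of the claimed equivalence using the explicit description of the relative Calabi--Yau completion from the previous corollary together with the small resolution of the dualizing bimodule recalled just above. Recall from the Relative Calabi--Yau completions corollary that $\mathcal G_{n+1}(\cat A|k^I)=T_{\cat A}\big(\mathrm{cofib}(\cat A^\vee\xrightarrow{f^\vee}\cat A^\vee\otimes^{\mathbb L}_{\cat A^e}\cat A^e)[n-1]\big)$, but here the functor $f:k^I\to\cat A$ is the unit of the free construction $\cat A=T_{k^I}(\mathcal M)$. For this particular $f$, the map $f^\vee$ collapses to the dualization of $k^I\hookrightarrow\cat A$ and, since $k^I$ is its own dualizing bimodule, the cofiber $\mathrm{cofib}(f^\vee)$ is nothing but $\mathcal M^\vee$ viewed as a $k^I$-bimodule (this is where Lemma~3.2 of~\cite{Keller}, i.e.~the displayed equalizer presentation of $\cat A^\vee$, does the bookkeeping). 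Hence $\mathcal G_{n+1}(\cat A|k^I)\simeq T_{k^I}(\mathcal M\oplus\mathcal M^\vee[n-1])$, which is the target in the statement.

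Next I would track what the relative $n$-Calabi--Yau structure on the morphism $\mathcal G_n(k^I)\to\mathcal G_{n+1}(\cat A|k^I)$ does to the canonical class $x=\sum_{i}x_i$. By the construction recalled in~\eqref{exact nCY} and the proof of the Relative Calabi--Yau completions corollary, the map $\phi$ on generators is governed by the composite $\mathrm{cofib}(f^\vee)[n-1]\to \cat A^\vee[n]\xrightarrow{c}\cat A[1]$ appearing in Definition~\ref{def: deformed}-style deformations; for the free category this composite is exactly the Casimir element of $\mathcal M$. The mixed differential on $\mathcal G_n(k^I)=\amalg_i k[x_i]$ is the one from Example~\ref{example: loop} (the loop case), so $x$ maps to the commutator $(c^{\op}-c)[n-1]$ inside $T_{k^I}(\mathcal M\oplus\mathcal M^\vee[n-1])$, the two terms coming from the two orders in which the bimodule bracket $\cat A\otimes_{k^I}\mathcal M\otimes_{k^I}\cat A$ appears in the coequalizer presentation of $\cat A$. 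Concretely this reproduces the quiver formula $x\mapsto\sum_e(ee^*-e^*e)$ from~\S\ref{quivsch} and Remark~\ref{remG2}, now written basis-free via the Casimir.

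The main obstacle I expect is the careful identification of signs, degree shifts and the direction of the bracket: one must be sure that the differential $\mathfrak d(x)$ obtained from the deformation-theoretic construction~\eqref{compdef}/\eqref{diagGn} genuinely matches $(c^{\op}-c)[n-1]$ and not, say, $(c-c^{\op})[n-1]$ or a version off by $[n-1]$ versus $[n-2]$. This is handled by comparing with the small resolution: writing $\cat A\simeq\mathrm{coeq}(\cat A\otimes_{k^I}\mathcal M\otimes_{k^I}\cat A\rightrightarrows\cat A\otimes_{k^I}\cat A)$ as above and dualizing to get the equalizer presentation of $\cat A^\vee$, the element $\mathrm{id}_{\cat A^\vee[n-1]}$ whose image gives $c$ is pinned down explicitly, exactly as in Example~\ref{example: loop} where the analogous check was carried out for $k[x]$ on models. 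Once the generator $x_i$ is seen to be sent to the weight-graded piece $e_i(c^{\op}-c)e_i[n-1]$ and the algebras on both sides are visibly freely generated by $\mathcal M\oplus\mathcal M^\vee[n-1]$ over $k^I$, the equivalence of morphisms follows because a morphism out of a free (cofibrant) dg-algebra is determined by its values on generators, and both $\phi$ and the relative Calabi--Yau completion structure map agree there.
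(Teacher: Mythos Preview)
Your first paragraph is essentially on the right track, modulo a typo: the target of $f^\vee$ is $(k^I)^\vee\otimes^{\mathbb L}_{(k^I)^e}\cat A^e$, not $\cat A^\vee\otimes^{\mathbb L}_{\cat A^e}\cat A^e$. Also, the cofiber $\mathrm{cofib}(f^\vee)$ is an $\cat A$-bimodule, namely $\mathcal M^\vee\otimes_{(k^I)^e}\cat A^e$, not merely a $k^I$-bimodule; one then needs the identity $T_{\cat A}\big(\mathcal M^\vee[n-1]\otimes_{(k^I)^e}\cat A^e\big)\simeq T_{k^I}(\mathcal M^\vee[n-1])\amalg_{k^I}\cat A$ (from Lemma~\ref{lemma: G(f)}) to reach the claimed target. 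These are minor and easily fixed.

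The genuine gap is in your second paragraph. The composite $\mathrm{cofib}(f^\vee)[n-1]\to\cat A^\vee[n]\xrightarrow{c}\cat A[1]$ you invoke is the mechanism governing \emph{deformations} of the relative completion (as in~\S\ref{ss: deformed}); it plays no role in the undeformed structure map $\phi:\mathcal G_n(k^I)\to\mathcal G_{n+1}(\cat A|k^I)$, where no Hochschild class is present. That composite would in fact be zero here. What actually determines $\phi(x)$ is the pushout diagram~\eqref{diagGn}: the map factors as $\mathcal G_n(k^I)\to\mathcal G_n(f)\to\mathcal G_{n+1}(\cat A|k^I)$, the first arrow sending $x$ to $x\otimes1$ in $(k^I)^\vee[n-1]\otimes_{(k^I)^e}\cat A^e\simeq\cat A\otimes_{k^I}\cat A[n-1]$, i.e.\ to $1\otimes1$ in degree $1-n$. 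The second arrow is the connecting map to the cofiber of $f^\vee$. Using the equalizer presentation $\cat A^\vee\simeq\mathrm{eq}\big(\cat A\otimes_{k^I}\cat A\rightrightarrows\cat A\otimes_{k^I}\mathcal M^\vee\otimes_{k^I}\cat A\big)$ with parallel arrows $c^{\op}\otimes\mathrm{id}$ and $\mathrm{id}\otimes c$, the cofiber map $\cat A\otimes_{k^I}\cat A\to\cat A\otimes_{k^I}\mathcal M^\vee\otimes_{k^I}\cat A$ is precisely their difference, so $1\otimes1\mapsto c^{\op}\otimes1-1\otimes c$. After passing to the tensor algebra this yields $(c^{\op}-c)[n-1]$. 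That is the missing computation, and it is where the Casimir enters; your appeal to a deformation class and to ``the mixed differential on $\mathcal G_n(k^I)$'' does not supply it.
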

\begin{proof}
By definition, $\mathcal G_{n+1}(\cat{A}|k^I)=T_{\cat{A}}\Big(\mathrm{cofib}\big(
\cat{A}^\vee{\rightarrow}(k^I)^\vee{\underset{(k^I)^e}{\otimes}}\cat{A}^e\big)[n-1]\Big)$. 
The map 
\[
\cat{A}^\vee\simeq \mathrm{eq}\left(\xymatrix{\cat{A}\underset{k^I}{\otimes}\cat{A}\ar@<0.5ex>[r]^{\!\!\!\!\!\!\!c^\op\otimes\mathrm{id}~~}\ar@<-0.5ex>[r]_{\!\!\!\!\!\!\!\mathrm{id}\otimes c~~} 
& \cat{A}\underset{k^I}{\otimes}\mathcal M^\vee\underset{k^I}{\otimes}\cat{A}}\right)
\rightarrow 
(k^I)^\vee{\underset{{k^I}^e}{\otimes}}\cat{A}^e\simeq\cat{A}\underset{k^I}{\otimes}\cat{A} 
\]
is given by $1\otimes 1\mapsto  x[1-n]\otimes1$ on the leftmost term of the equalizer (and $0$ on the rightmost one), and thus its (homotopy) cofiber
is 
\[
\cat{A}\underset{k^I}{\otimes}\mathcal M^\vee\underset{k^I}{\otimes}\cat{A}\simeq \mathcal M^\vee\underset{(k^I)^e}{\otimes}\cat{A}^e\,.
\]
Hence we have
\[
\mathcal G_{n+1}(\cat{A}|k^I)	=		T_{\cat{A}}\left(\mathcal M^\vee[n-1]\underset{(k^I)^e}{\otimes}\cat{A}^e\right)
								\simeq	T_{k^I}(\mathcal M^\vee[n-1])\underset{k^I}{{\coprod}}\mathcal A
								=		T_{k^I}(\mathcal M\oplus \mathcal M^\vee[n-1]).
\]
Since $\mathcal G_n(k^I)\to\mathcal G_n(k^I\to\cA)=T_\cA((k^I)^\vee[n-1]\otimes_{(k^I)^e}\cat{A}^e)$ maps $x$ to $x\otimes 1$, we see from above that $\mathcal G_n(k^I)\to\mathcal G_{n+1}(k^I|\cA)$
maps $x$ to the image of $(c^\op\otimes 1-1\otimes c)[n-1]$ in $T_{k^I}(\mathcal M\oplus \mathcal M^\vee[n-1])$.
\end{proof}

\begin{remark}\label{homocasi}
Note that the homotopy from $\phi(B(x))=\phi(1\otimes x)$ to zero is given by $B(c^\op[n-1])$, \textit{i.e.} the shifted 
image of the opposite of the Casimir element by Connes' boundary operator.
\end{remark}

\subsubsection{Ginzburg dg-algebras}\label{subsubginz}

Consider the same setting as in Remark~\ref{setquivfree}. Let us perform the following composition of exact 
Calabi--Yau structures, analogous to~\eqref{grodiag}:
\[
\xymatrix{
&&&\mathcal C'&&&\\
&&\mathcal C\ar[ru]&&\cG_2(kQ|k^I)\ar[lu]&&\\
&k^I\ar[ru]&&\cG_1(k^I\to kQ)\ar[ru]\ar[lu]&&kQ\ar[lu]&\\
\emptyset\ar[ru]&&\cG_1(k^I)\ar[ru]\ar[lu]&&\cG_1(kQ)\ar[ru]\ar[lu]&&\emptyset.\ar[lu]
}\]
We have
\[
\mathcal C=k^I\underset{\cG_1(k^I)}{\coprod}\cG_1(k^I\to kQ)\underset{{\scriptsize (\textrm{lemma}~\ref{lemma: G(f)})}}{\simeq}
k^I\underset{\cG_1(k^I)}{\coprod}\cG_1(k^I)\underset{k^I}{\coprod}kQ\simeq kQ
\]
hence
\[
\mathcal C'=kQ\underset{\cG_1(kQ)}{\coprod}kQ\simeq \cG_2(kQ)\,.
\]
Since $T_{k^I}(\mathcal M\oplus\mathcal M^\vee)\simeq k\overline Q$ and $\cG_1(k^I)=k[x]^I$ with $x$ in degree $0$, we get 
by Proposition~\ref{relCYdouble} that
\[
\cG_2(kQ)\simeq k^I\underset{k[x]^I}{\coprod}k\overline Q\simeq k\underset{k[x]}{\coprod}k\overline Q\,,
\]
with $k[x]\to k\overline Q$ defined by $x\mapsto \sum_{e\in E(Q)}(ee^*-e^*e)$, using the obvious basis $E(Q)$ of $\mathcal M=kE(Q)$, 
as announced in Remark~\ref{remG2}.

%%%%%% DEBUT PARTIE SUPPRIMEE %%%%%%
\begin{comment}
Let us get back to the quiver case, and consider $Q$ with vertex set $I=V(Q)$ and edge set $E(Q)$. Note that applying 
Proposition~\ref{relCYdouble} to $M=kE(Q)$ (in degree $0$) and $n=1$ gives an exact $1$-Calabi--Yau structure on 
$\amalg_{i\in I}k[x_i]\to k\overline Q$ where as usual $\overline Q$ denotes the doubled quiver of $Q$. The same is true 
on $\amalg_{i\in I}k[x_i]\to k^I$ considering instead of $Q$ the totally disconnected quiver with $|I|$ vertices.
\end{comment}
%%%%%% FIN PARTIE SUPPRIMEE %%%%%%

Let us compute the $2$-Calabi--Yau structure on the homotopy push-out $\mathcal G_2(kQ)	=  k{\coprod}_{k[x]} k\overline Q$. 
It may be done using a $k[x]$-cofibrant replacement of $k$. We consider $k[x,x^*]$ with $x^*$ in degree $-1$ and 
$\mathfrak d(x^*)=x$. The map $k[x]\to k[x,x^*]$ is the obvious inclusion, and the homotopy from the image of $1\otimes x$ 
to zero is given by $1\otimes x^*$. From this and Remark~\ref{homocasi}, we see that in the Hochschild complex of the push-out
\[
k\overline Q\underset{k[x]}{\coprod}k[x,x^*]
\]
we get a self-homotopy of zero, that is a degree $-1$-cocycle 
given as the difference 
\[
\sum_{e\in E(Q)}e\otimes e^*-x^*\,.
\]
This naturally lives in the standard Hochschild complex of $\cG_2(kQ)$ and again, applying Connes' boundary operator we get a $2$-Calabi--Yau structure on $\cG_2(kQ)$. 

\begin{remark}
A {potential} of the quiver $Q$, as defined in~\S\ref{pot3CY}, is an element $W\in kQ/[kQ,kQ]$ and may be seen as 
a class $k\to\cat{HH}(kQ)$. Hence $\delta W:k\to\cat{HC}^-(kQ)[-1]$, and the deformed $3$-Calabi--Yau completion 
$\cG_3(kQ,\delta W)$ is the same as in Definition~\ref{defW3CY}, as they are both defined as the push-out of 
\[
\xymatrix{
\cG_{2}(kQ) \ar[r]^{~~\delta W} 	\ar[d]	& kQ\\
kQ												&
					}
\]
Similarly the dg-category $\cG_3(kQ|kD,\delta W)$ associated with an inclusion $D\subseteq Q$ defined by~\eqref{compdef} 
is the same as the one appearing in Proposition~\ref{defG3QDW}.
\end{remark}

\section{Comparison}\label{sec:comparison}

\label{section6.1}

In this section, we show that the cotangent stack to the moduli of objects~\cite{ToVa} of a finite type dg-category is 
equivalent, as a symplectic stack, to the moduli of object of its Calabi--Yau completion. 
Moreover, this is compatible with the functoriality of our constructions (Calabi--Yau completion on the one hand, and 
taking cotangent stacks on the other hand), in the following sense: 

\begin{theorem}\label{thm:mainmain}
There is a natural isomorphism 
\[
\xymatrix{
\Ho(\scat{Cat}_k^{f.t.})^{\op} \ar[r]^-{\sigma\circ \cG_n} \ar[d]_-{\bPerf} 
& \Ho(\scat{ExCY}_{[n]}^{f.t.}) \ar[d]^-{\bPerf} \ar@{=>}[ld] \\
\Ho(\scat{dSt}_k^{Art}) \ar[r]_-{ \bT^*[2-n]} & \Ho(\scat{ExLag}_{[2-n]})
}
\]
where $\scat{Cat}_k^{f.t.}$ is the full subcategory of $\scat{Cat}_{k}$ spanned by finite type dg-categories, and 
$\scat{ExCY}_{[n]}^{f.t.}$ is the subcategory of $\scat{ExCY}_{[n]}$ whose objects and morphisms only involve finite 
type dg-categories. 
\end{theorem}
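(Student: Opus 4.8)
The statement is a compatibility (a natural isomorphism filling a square) between four functors that have all been constructed in the preceding sections: on one side the $n$-Calabi--Yau completion functor $\mathcal{G}_n$ composed with the span-reversal involution $\sigma$, on the other the shifted cotangent functor $\bT^*[2-n]$; the vertical legs are both the moduli-of-objects functor $\bPerf$. So the proof is really two tasks: (i) produce, for every finite type dg-category $\cat{A}$, an equivalence of exact $(2-n)$-shifted symplectic stacks $\bPerf_{\mathcal{G}_n(\cat{A})}\simeq \bT^*[2-n]\bPerf_{\cat{A}}$ — this is Theorem~\ref{theo: compare} referenced in the introduction — and (ii) check that this equivalence is natural with respect to dg-functors, i.e.\ that it carries the exact $n$-Calabi--Yau cospan $\mathcal{G}_n(\cat{A})\to\mathcal{G}_n(f)\leftarrow\mathcal{G}_n(\cat{B})$ of Theorem~\ref{thm:Gn-functoriality} (read backwards via $\sigma$) to the exact lagrangian correspondence $\bT^*[2-n]\bPerf_{\cat{A}}\leftarrow \varphi^*\bT^*[2-n]\bPerf_{\cat{A}}\to\bT^*[2-n]\bPerf_{\cat{B}}$ of Theorem~\ref{theo: comparerel}. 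Since both $\mathcal{G}_n$ and $\bT^*[2-n]$ have already been upgraded to functors on homotopy categories (Theorem~\ref{theorem: compositionCY} and Proposition~\ref{prop:cotangentfunctor}), and $\bPerf$ is visibly functorial, what remains is to exhibit a modification between the two resulting functors $Ho(\scat{Cat}_k^{f.t.})^{\op}\to Ho(\scat{ExLag}_{[2-n]})$ and verify it is invertible.

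\textbf{Step 1: the object-level equivalence.} First I would recall that for finite type $\cat{A}$, $\bPerf_{\cat{A}}$ is Artin and locally of finite presentation, with $\bbT_{\bPerf_{\cat{A}}}\simeq \textsc{Map}(E_{\cat{A}},E_{\cat{A}})[1]$ where $E_{\cat{A}}$ is the tautological $\cat{A}$-module (this is the Toën--Vaquié computation, generalizing Example~\ref{linPerfk}). Then by definition $\bT^*[2-n]\bPerf_{\cat{A}}=\mathbb{A}_{\bbL_{\bPerf_{\cat{A}}}[2-n]}$, and using the HKR-type identification of $\bbL_{\bPerf_{\cat{A}}}$ with (a shift of) $\textsc{Map}(E_{\cat{A}},E_{\cat{A}})^\vee$. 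On the other side, $\mathcal{G}_n(\cat{A})=T_{\cat{A}}(\cat{A}^\vee[n-1])$, and $\bPerf$ of a tensor-algebra dg-category is a linear stack: a $\mathcal{G}_n(\cat{A})$-module is an $\cat{A}$-module $M$ together with a map $\cat{A}^\vee[n-1]\otimes^{\mathbb{L}}_{\cat{A}^e}(\text{endomorphisms of }M)\to\text{(endomorphisms of }M)$, i.e.\ a section of a perfect sheaf over $\bPerf_{\cat{A}}$; unwinding, that sheaf is exactly $(\textsc{Map}(E_{\cat{A}},E_{\cat{A}})^\vee)[n-1]$-shifted appropriately, giving $\bPerf_{\mathcal{G}_n(\cat{A})}\simeq\mathbb{A}_{\textsc{Map}(E_{\cat{A}},E_{\cat{A}})[n-1]}$. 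Up to the self-duality of $\textsc{Map}(E,E)$ via the trace pairing (as in Example~\ref{linPerfk}), these two linear stacks coincide. The real content is to match the \emph{symplectic/lagrangian data}: the exact $(2-n)$-shifted symplectic form on $\bT^*[2-n]\bPerf_{\cat{A}}$ is $d_{dR}\lambda$ with $\lambda$ the tautological $1$-form, while the one on $\bPerf_{\mathcal{G}_n(\cat{A})}$ comes, via Toën/Brav--Dyckerhoff (\cite{BD2,ToCY}), from the exact $n$-Calabi--Yau structure $c$ on $\mathcal{G}_n(\cat{A})$ of Theorem~\ref{thm:CYcom}. Here I would use the explicit description of $c$ as the image of $\mathrm{id}_{\cat{A}}$ through~\eqref{eq:image of id}, trace through the construction of the symplectic form from a negative-cyclic class on the moduli of objects, and identify it with the tautological $1$-form — precisely the analog of the computation in Example~\ref{exRedux}/\S\ref{quivsch} where the moment map's lagrangian structure was identified with the tautological $1$-form. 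This is Theorem~\ref{theo: compare}; I would invoke it.

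\textbf{Step 2: functoriality.} Given $f:\cat{A}\to\cat{B}$, apply $\bPerf$ to the cospan of Theorem~\ref{thm:Gn-functoriality}. Using Lemma~\ref{lemma: G(f)}, $\mathcal{G}_n(f)\simeq\mathcal{G}_n(\cat{A})\amalg_{\cat{A}}\cat{B}$, and since $\bPerf$ sends (homotopy) pushouts of dg-categories to (homotopy) fiber products of stacks, $\bPerf_{\mathcal{G}_n(f)}\simeq\bPerf_{\mathcal{G}_n(\cat{A})}\times_{\bPerf_{\cat{A}}}\bPerf_{\cat{B}}$. Under the identification of Step 1 and the equivalence $\varphi=f^*:\bPerf_{\cat{B}}\to\bPerf_{\cat{A}}$, this is exactly $\varphi^*\bT^*[2-n]\bPerf_{\cat{A}}$ (the pullback of the cotangent along $\varphi$), matching the middle term of the lagrangian correspondence~\eqref{eq:corrcot}. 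It remains to check that the two maps out of it, and their exact-isotropic structures, agree: on the Calabi--Yau side these come from the commuting square in Definition~\ref{definition: CYspan} witnessing non-degeneracy; on the cotangent side from the diagram in \S\ref{lagshicotst} witnessing that $p^*\lambda_{\cat{B}}=q^*\lambda_{\cat{A}}$. Both boil down, after applying $\bPerf$ and HKR, to the naturality of the trace pairing $c^\flat$ with respect to $f^\vee\otimes\mathrm{id}$, which is the same $B$-module-level square appearing in the proof of Theorem~\ref{thm:Gn-functoriality}. So the modification of Step 1 is compatible with composition, and since each component is an equivalence, it is a natural isomorphism. This assembled statement is Theorem~\ref{theo: comparerel}, which I would cite to close the argument.

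\textbf{Main obstacle.} The routine parts are the linear-stack bookkeeping and the pushout/pullback bookkeeping; the genuinely delicate point is Step 1's matching of the \emph{exact} symplectic structures — i.e.\ showing that the closed $2$-form on $\bPerf_{\mathcal{G}_n(\cat{A})}$ produced by the Brav--Dyckerhoff/Toën machine from the negative-cyclic lift of $c$ is $d_{dR}$ of the tautological $1$-form on the cotangent stack, and not merely a form of the correct cohomology degree with non-degenerate underlying $2$-form. This requires keeping careful track of the mixed-realization functor $|-|^\ell$ from \S\ref{sec:recap} applied to the Hochschild complex of $\mathcal{G}_n(\cat{A})=T_{\cat{A}}(\cat{A}^\vee[n-1])$ — using the small resolution of a free dg-category from \S\ref{ssec:hhfree} — and comparing its image under $\bPerf$ and $I_{HKR}$ with the image of $\lambda_{\bPerf_{\cat{A}}}$; the exactness (the choice of primitive) must be threaded through the whole computation, which is where most of the real work lies.
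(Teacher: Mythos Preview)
Your proposal is correct and follows essentially the same architecture as the paper: the proof is assembled from Corollary~\ref{theorem: compareGinzburg} and Corollary~\ref{theorem: compareGinzburgrel} for the underlying stacks, then Theorem~\ref{theo: compare} and Theorem~\ref{theo: comparerel} for the exact symplectic/lagrangian structures, exactly as you outline. One point where the paper is cleaner than you anticipate: your ``main obstacle'' overestimates the work needed, because exactness means one only has to match the \emph{primitives} (the $1$-forms), and the paper does this via a single commuting square showing that both $\lambda_{\bPerf_{\cA}}$ and $ch^{(1)}(c)$ arise as the image of $\mathrm{id}_{\cA^\vee[n-1]}$ under two sides of the same diagram (using the explicit description of $ch^{(1)}$ from Example~\ref{ex:BDcompute}); no full trace through $I_{HKR}$ or mixed realizations is required.
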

The rest of this section is devoted to the proof of this result: 
\begin{itemize}
\item We first recall the works of To\"en--Vaqui\'e~\cite{ToVa} and Brav--Dyckerhoff~\cite{BD2} (see also To\"en's~\cite{ToCY}), 
that make sense of the vertical functors;
\item We then prove a version of the theorem that does not take into account symplectic and lagrangian structures, 
i.e.~functors take their values in $\Ho\big(\scat{Span}_1(\scat{dSt}_k)\big)$ instead of $\scat{ExLag}_{[2-n]}$; 
\item We finally check the compatibility with exact symplectic and lagrangian structures. 
\end{itemize}

\medskip

Along with a side result relating graphs of closed forms with Calabi--Yau morphisms associated with negative cyclic classes, 
this allows us to prove that the moduli of objects of $\mathcal{G}_3(kQ|kD,\delta W)$ can be indeed identified, as a lagrangian, 
with a relative (derived) critical locus. 

\subsection{From Calabi--Yau structures to shifted symplectic structures}

\subsubsection{Moduli of objects}

Let $\cat{A}$ be a dg-category, that we assume to be of finite type. We denote its derived moduli stack of objects 
(from \cite{ToVa}) by $\bPerf_{\cA}$: for a cdga $B$
\[
\scat{Perf}_{\cat{A}}(B):=\cat{Map}_{\scat{Cat}_k}(\cat{A},\cat{Mod}_B^\mathrm{perf})\,.
\]
In other words, a $B$-point $x$ is then given by a $\cat{A}-B$-bimodule $M_x$ that is pseudo-perfect (i.e.~it is perfect as 
a $B$-module). 
\begin{example}
For a finite set $I$, $\scat{Perf}_{k^I}\simeq\scat{Perf}_{kI}\simeq\scat{Perf}_k^I$: a $B$-point $x$ of $\scat{Perf}_{k^I}$ 
is an $I$-family $M_x$ of perfect $B$-modules. 
Therefore, if $\cat{A}=T_{k^I}(N)$ is free, then $\scat{Perf}_{\cat{A}}$ is a stack over $\scat{Perf}_{k^I}$, and
for every $x:\scat{Spec}(B)\to \scat{Perf}_{k^I}$, 
\[
\scat{Perf}_{\cat{A}}(x)
=\cat{Map}_{\scat{Cat}_k}\big(T_{k^I}(N),\cat{Mod}_B^\mathrm{perf}\big)\underset{(\cat{Mod}_B^\mathrm{perf})^I}{\times}\{M_x\}
=\scat{Map}_{\scat{Mod}_{(k^I)^e}}\big(N,\underline{\mathrm{End}}_B(M_x)\big)\,.
\]
Here the $k^I$-bimodule $\underline{\mathrm{End}}_B(M_x)$ can be described in the following equivalent ways: 
\begin{itemize}
\item Abstractly, it is just the $k^I$-bimodule structure on $\cat{Mod}_B^\mathrm{perf}$ that we get from the dg-functor 
$M_x:k^I\to \cat{Mod}_B^\mathrm{perf}$;
\item In concrete terms, it is the $I\times I$-graded complex given by 
\[
\mathrm{End}_B(M_x)_{i,j}=\underline{\cat{Hom}}_{\cat{Mod}_B}\big(M_x(j),M_x(i)\big)\,.
\]
\end{itemize}
\end{example}
If we borrow the notation from Example~\ref{exQuiv2} and write $E$ for the tautological sheaf on 
$\scat{Perf}_k$, then, for a $B$-point $x$ of $\scat{Perf}_{k^I}$, 
\[
M_x\simeq x^*\big(\underbrace{\bigoplus_{i\in I}p_i^*E}_{=:E^I}\big)\,,
\]
where $p_i:\scat{Perf}_k^I\to \scat{Perf}_k$ is the $i$-th projection. 
Given a $k^I$-bimodule $N$, we get a quasi-coherent sheaf $\underline{\cat{End}}(E^I)^N\in \scat{QCoh}(\scat{Perf}_k^I)$ 
defined by 
\[
x^*(\underline{\cat{End}}(E^I)^N):=\underline{\scat{Map}}_{\scat{BiMod}_{(k^I)^e}}\big(N,\underline{\mathrm{End}}_B(M_x)\big)\,.
\]
Hence, by definition, for $\cat{A}=T_{k^I}(N)$, 
\[
\scat{Perf}_{\cat{A}}(x)\simeq |x^*(\underline{\cat{End}}(E^I)^N)|\,. 
\]
In other words: 
\begin{lemma}\label{lemma: linear}
The moduli of objects $\bPerf_{\cat{A}}$ of a free dg-category $\cat{A}=T_{k^I}(N)$ is equivalent to the linear stack 
$\mathbb A_{\cat{End}(E^I)^N}$.
\end{lemma}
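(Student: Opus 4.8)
The plan is to unwind the definitions, the key input being the free--forgetful adjunction for $T_{k^I}$ recalled in~\S\ref{ssec:hhfree} together with the identification $\scat{Perf}_{k^I}\simeq\scat{Perf}_k^I$. First I would work over the base stack $\scat{Perf}_{k^I}$, via the structure morphism $\scat{Perf}_{\cat{A}}\to\scat{Perf}_{k^I}$ induced by the canonical dg-functor $k^I\to T_{k^I}(N)=\cat{A}$, and compute its fibres. For a cdga $B$ and a $B$-point $x$ of $\scat{Perf}_{k^I}$ --- that is, an $I$-family $M_x=(M_x(i))_{i\in I}$ of perfect $B$-modules, equivalently a dg-functor $M_x:k^I\to\cat{Mod}_B^{perf}$ --- the fibre of $\scat{Perf}_{\cat{A}}(B)\to\scat{Perf}_{k^I}(B)$ over $x$ is by definition
\[
\cat{Map}_{\scat{Cat}_k}\big(T_{k^I}(N),\cat{Mod}_B^{perf}\big)\underset{(\cat{Mod}_B^{perf})^I}{\times}\{M_x\}\,,
\]
and the adjunction between $T_{k^I}$ and the forgetful functor $(\scat{Cat}_k)_{k^I/}\to\scat{Mod}_{(k^I)^e}$ identifies this with $\scat{Map}_{\scat{Mod}_{(k^I)^e}}\big(N,\underline{\mathrm{End}}_B(M_x)\big)$, where $\underline{\mathrm{End}}_B(M_x)$ is the $k^I$-bimodule with components $\underline{\cat{Hom}}_{\cat{Mod}_B}(M_x(j),M_x(i))$.

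Next I would assemble this fibrewise description into a single quasi-coherent sheaf. Writing $E$ for the tautological perfect sheaf on $\scat{Perf}_k$ and $E^I:=\bigoplus_{i\in I}p_i^*E$ on $\scat{Perf}_k^I$, one has $M_x\simeq x^*(E^I)$, so that $\underline{\mathrm{End}}_B(M_x)$ is the pullback along $x$ of the bimodule-valued sheaf $\underline{\cat{End}}(E^I)$; hence $\underline{\cat{End}}(E^I)^N$, characterised by $x^*(\underline{\cat{End}}(E^I)^N):=\underline{\scat{Map}}_{\scat{BiMod}_{(k^I)^e}}\big(N,\underline{\mathrm{End}}_B(M_x)\big)$, is a well-defined quasi-coherent sheaf on $\scat{Perf}_k^I$ (perfect whenever $N$ is). Then, by the very definition $\mathbb{A}_{\mathcal F}(\scat{Spec}(B)\overset{u}{\to}X)=|u^*\mathcal F|$ of a linear stack, the above computation reads
\[
\scat{Perf}_{\cat{A}}(x)\simeq\big|x^*\big(\underline{\cat{End}}(E^I)^N\big)\big|=\mathbb{A}_{\cat{End}(E^I)^N}(x)\,,
\]
which is the desired equivalence $\scat{Perf}_{\cat{A}}\simeq\mathbb{A}_{\cat{End}(E^I)^N}$ over $\scat{Perf}_k^I$.

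The one point I expect to require genuine care --- and hence the main (if mild) obstacle --- is naturality: the identifications above are produced fibrewise over $\scat{Perf}_{k^I}$, and one must verify that they are compatible with pullback along morphisms $\scat{Spec}(B')\to\scat{Spec}(B)$, so that they glue to an equivalence of derived stacks and not merely to a fibrewise comparison on points. This I would deduce from the functoriality in $B$ of the adjunction isomorphism for $T_{k^I}$ (equivalently, from the explicit description of its counit), combined with the fact that both $\scat{Perf}_{\cat{A}}$ and $\mathbb{A}_{\cat{End}(E^I)^N}$ are defined by Kan extension from affine schemes and satisfy étale descent; it therefore suffices to check the compatibility on affines, where it is immediate.
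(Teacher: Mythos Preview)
Your proposal is correct and follows essentially the same approach as the paper: both compute the fibre of $\scat{Perf}_{\cat{A}}\to\scat{Perf}_{k^I}$ over a $B$-point $x$ via the free--forgetful adjunction for $T_{k^I}$, identify it with $\scat{Map}_{\scat{Mod}_{(k^I)^e}}(N,\underline{\mathrm{End}}_B(M_x))$, and then recognise this as $|x^*(\underline{\cat{End}}(E^I)^N)|$. In the paper this computation is carried out in the discussion immediately preceding the lemma, so that the lemma itself is stated without a separate proof; your added remark on naturality in $B$ is a reasonable explicit acknowledgement of a point the paper leaves implicit.
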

\begin{example}
In the quiver case (i.e.~$I=V(Q)$ and $N=kE(Q)$), one observes that 
\[
\cat{End}(E^I)^N\simeq E^Q\,,
\]
again following the notation of Example~\ref{exQuiv2}. Hence we get an equivalence 
$\scat{Perf}_{kQ}\simeq \mathbb{A}_{E^Q}$, as witnessed in Remark~\ref{labelresBG}. 
\end{example}

The following is now a direct consequence of Lemma~\ref{lemma: linear} and Remark~\ref{remlinquiv}.

\begin{proposition}\label{proposition: cotangentkQ}
The following diagram is a homotopy pullback square in the category of derived stacks
\[
\xymatrix{
\bT^* \bPerf_{k Q}  \ar[r]	\ar[d] 	& \bPerf_{ k^{ V(Q)}} \ar[d] \\
\bPerf_{k \overline{Q}}													\ar[r]		&  \bPerf_{k[x]^{V(Q)}} }
\]
The map $\bPerf_{k \overline{Q}} \to \bPerf_{k[x]^{V(Q)}} $ is induced by the $k$-linear functor 
$k[x]^{V(Q)} \to k \overline{Q}$ that maps the $v$-th generator $x$ to $\sum_{e \in E(Q)} e_v(ee^*-e^*e)e_v$ for all $v \in V(Q)$, if $e_v$ stands for the length $0$ idempotent path at $v$. 
\end{proposition}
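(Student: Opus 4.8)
The plan is to deduce Proposition~\ref{proposition: cotangentkQ} directly from two inputs already available in the excerpt: Lemma~\ref{lemma: linear}, which identifies moduli of objects of free dg-categories with linear stacks, and Remark~\ref{remlinquiv}, which exhibits $\bT^*\mathbb{A}_{E^Q}$ as the derived (exact) lagrangian intersection appearing in the displayed square there. First I would recall that $kQ = T_{k^{V(Q)}}(kE(Q))$, $k\overline{Q} = T_{k^{V(Q)}}(kE(Q)\oplus kE(Q)^\vee)$, and $k[x]^{V(Q)} = T_{k^{V(Q)}}(k^{V(Q)})$ (a loop of degree $0$ at each vertex) are all free over $k^{V(Q)}$; applying Lemma~\ref{lemma: linear} three times gives $\bPerf_{kQ}\simeq\mathbb{A}_{E^Q}$, $\bPerf_{k\overline Q}\simeq\mathbb{A}_{E^{\overline Q}}$, and $\bPerf_{k[x]^{V(Q)}}\simeq\mathbb{A}_{\textsc{Map}(E,E)^{V(Q)}}$, compatibly with the projections to $\bPerf_{k^{V(Q)}}\simeq\scat{Perf}_k^{V(Q)}$. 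The computation in Example~\ref{exQuiv2} identifying $E^{Q^*}\simeq (E^Q)^\vee$ is what makes $E^{\overline Q}\simeq E^Q\oplus (E^Q)^\vee$, matching the relative cotangent of $p_Q$.

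Next I would invoke Remark~\ref{remlinquiv}: it already states that $\bT^*\mathbb{A}_{E^Q}$ is computed by the homotopy pullback of $\mathbb{A}_{E^{\overline Q}}\to\mathbb{A}_{\textsc{Map}(E,E)^{V(Q)}}\leftarrow\scat{Perf}_k^{V(Q)}$, where the first arrow is induced by $E^{\overline Q}\to\textsc{End}(E)^{V(Q)}$, $(x_e,x_{e^*})\mapsto\sum_e e_v[x_e,x_{e^*}]e_v$, and the second is the zero section. Transporting this along the equivalences of the previous paragraph gives precisely the claimed homotopy pullback square for $\bT^*\bPerf_{kQ}$, together with the identification of the bottom horizontal map. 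The only thing left to check is that this map of linear stacks is the one \emph{induced by the functor} $k[x]^{V(Q)}\to k\overline Q$ sending the $v$-th generator to $\sum_{e\in E(Q)} e_v(ee^*-e^*e)e_v$; this is a compatibility between the functoriality of $\bPerf_{(-)}$ under dg-functors and the functoriality of $\mathbb{A}_{(-)}$ under maps of bimodules, which holds because $\bPerf_{T_{k^I}(-)}$ applied to a map of free dg-categories $T_{k^I}(M)\to T_{k^I}(N)$ classified by a bimodule map $M\to \textsc{End}_{T_{k^I}(N)}$ is exactly $\mathbb{A}$ applied to the adjoint morphism of sheaves, evaluated on $\textsc{End}(E^I)$ via $M_x$.

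Concretely, I would organize the proof as: (1) write all three dg-categories as tensor dg-categories over $k^{V(Q)}$ and apply Lemma~\ref{lemma: linear}; (2) cite Remark~\ref{remlinquiv} for the cartesian square describing $\bT^*\mathbb{A}_{E^Q}$ and the description of its bottom arrow at the level of sheaves; (3) translate back to $\bPerf$ and observe that the sheaf-level map $\textsc{Map}(p_{s(e)}^*E,p_{t(e)}^*E)$-components $\mapsto \sum_e [-,-]$ is precisely the image under $\bPerf_{(-)}$ of the dg-functor $x\mapsto\sum_e e_v(ee^*-e^*e)e_v$, by naturality of the identification in Lemma~\ref{lemma: linear}. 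The main obstacle — really the only non-formal point — is step (3): making precise that the equivalence of Lemma~\ref{lemma: linear} is natural enough in the bimodule/dg-functor to carry the moment-map functor $k[x]^{V(Q)}\to k\overline Q$ to the commutator map $\sum_e[x_e,x_{e^*}]$ on linear stacks. This amounts to unwinding that for a $B$-point $x$ of $\scat{Perf}_k^{V(Q)}$, the induced map on $|x^*(-)|$ sends a tuple $(x_e)$ of morphisms to the composite/commutator dictated by how the generator $x$ maps into $k\overline Q$, which is immediate from the definition $\scat{Perf}_{T_{k^I}(N)}(x)\simeq\scat{Map}_{\scat{Mod}_{(k^I)^e}}(N,\underline{\mathrm{End}}_B(M_x))$ together with the fact that this assignment is functorial in $N$; but it should be spelled out.
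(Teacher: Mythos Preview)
Your proposal is correct and follows essentially the same approach as the paper: the paper's proof is a single sentence stating that the result is a direct consequence of Lemma~\ref{lemma: linear} and Remark~\ref{remlinquiv}, which is precisely the two inputs you invoke. Your elaboration of step~(3), checking naturality of the identification in Lemma~\ref{lemma: linear} so that the moment-map functor $k[x]^{V(Q)}\to k\overline Q$ corresponds to the commutator map on linear stacks, is a useful expansion of what the paper leaves implicit.
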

%\begin{proof} Let us denote by $L$ the Jordan quiver with one vertex and one loop. Then $k [x]= k L$ and ${\textsc{Map}(E,E)}=E^L$ and we conclude thanks to Remark~\ref{remlinquiv}. 
%\end{proof} 
\begin{remark}
The Proposition will also be a consequence of Corollary~\ref{theorem: compareGinzburg} together with~\S\ref{subsubginz}. 
\end{remark}

\subsubsection{Forms on the moduli of objects}\label{sss: CYLag}

Let $\cat{C}$ be a dg-category. We have a sequence of morphism $\mathrm{ch}_{\cat{C}}$ defined as the composition, 
\begin{eqnarray*}
\scat{HH}(\cat{C})	& \rightarrow	& \underset{\cat{C}\to\cat{Mod}_B^\mathrm{perf}}{\mathrm{holim}}\scat{HH}(\cat{Mod}_B^\mathrm{perf}) \\
					& \simeq		& \underset{\cat{C}\to\cat{Mod}_B^\mathrm{perf}}{\mathrm{holim}}\scat{HH}(B) \\
					& \tilde{\to}	& \underset{\cat{C}\to\cat{Mod}_B^\mathrm{perf}}{\mathrm{holim}}|\scat{DR}(B)|^\ell \\
					& \rightarrow	& \underset{\cat{C}\to\cat{Mod}_B^\mathrm{perf}}{\mathrm{holim}}|\scat{DR}(B)|^r \\
					& \simeq		& \Big|\underset{\cat{C}\to\cat{Mod}_B^\mathrm{perf}}{\mathrm{holim}}\scat{DR}(B)\Big|^r
										=:|\scat{DR}(\scat{Perf}_{\cat{C}})|^r
\end{eqnarray*}
which is natural in $\cat{C}$ and induces a commuting diagram 
\[
\xymatrix{
|\cat{HH}(\cat{C})[n+1]| \ar[r]^-{\mathrm{ch}^{(p-1)}_{\cat{C}}}\ar[d]_{\delta} & \mathcal A^{1}(\scat{Perf}_{\cat{C}},n+p) \ar[d]^{d_{dR}} \\
|\cat{HC}^-(\cat{C})[n]|\ar[r]^-{\tilde{\mathrm{ch}}^{(p)}_{\cat{C}}}\ar[d] & \mathcal A^{2,\cl}(\scat{Perf}_{\cat{C}},n+p) \ar[d] \\
|\cat{HH}(\cat{C})[n]|\ar[r]^-{\mathrm{ch}^{(p)}_{\cat{C}}} & \mathcal A^{2}(\scat{Perf}_{\cat{C}},n+p)
}
\]
that is also natural in $\cat{C}$. When $p=2$, the top part of the above diagram induces a commuting diagram 
of $\infty$-categories 
\[
\xymatrix{
\scat{ExprCY}_{[n]} \ar[r]^{\scat{Perf}}\ar[d]_{\mathbf{\Delta}} & \scat{ExIso}_{[2-n]} \ar[d]^{\mathbf{D_{dR}}} \\
\scat{prCY}_{[n]} \ar[r]^{\scat{Perf}} & \scat{Iso}_{[2-n]}
}
\]
and the bottom part ensures that $\tilde{\mathrm{ch}}^{(2)}_{\cat{C}}(-)^\sharp\simeq \mathrm{ch}^{(2)}_{\cat{C}}((-)^\natural)$. 
\begin{remark}
In the above diagram, all functors commute with the anti-involution $\sigma$ that reverses the direction of (co)spans. 
\end{remark}
\begin{example}
In weight $0$, the map 
\[
\mathrm{ch}^{(0)}:|\cat{HH}(\cat{C})[n]|\to \mathcal{A}^0(\scat{Perf}_{\cat{C}},n)=|\Gamma(\mathcal O_{\scat{Perf}_{\cat{C}}})[n]|
\]
is simply obtained from 
\[
\cat{HH}(\cat{C})	\to\underset{\cat{C}\to\cat{Mod}_B^\mathrm{perf}}{\mathrm{holim}}\cat{HH}(B)
					\to\underset{\cat{C}\to\cat{Mod}_B^\mathrm{perf}}{\mathrm{holim}}B
					=\Gamma(\mathcal O_{\scat{Perf}_{\cat{C}}})\,.
\]
\end{example}
The following example will be useful for the comparison results in the next Section.
\begin{example}[\cite{BD2}, Proposition 5.3]\label{ex:BDcompute}
Let us write $M_x$ for the pseudo-perfect $\cat{C}-B$-bimodule corresponding to a point $x:\scat{Spec}(B)\to\scat{Perf}_{\cat{C}}$, 
and recall that there are natural equivalences of $B$-modules
\[
x^*\mathbb{T}_{\scat{Perf}_{\cat{C}}}\simeq \underline{\cat{Map}}_{\scat{Mod}_{\cat{C}^{\op}}}(M_x,M_x)[1]\quad\textrm{and}\quad
x^*\mathbb{L}_{\scat{Perf}_{\cat{C}}}\simeq \underline{\cat{Map}}_{\scat{Mod}_{\cat{C}^{\op}}}
(\cat{C}^\vee\underset{\cat{C}}{\overset{\mathbb{L}}{\otimes}}M_x,M_x)[-1]\,.
\]
Therefore, applying \cite[Proposition 5.3]{BD2} to the weight one case, we get that $\mathrm{ch}^{(1)}$ is given by 
\begin{eqnarray*}
|\cat{HH}(\cat{C})[n]|\simeq \cat{Map}_{\scat{Mod}_{\cat{C}^e}}(\cat{C}^\vee,\cat{C})[n]
& \overset{-\otimes\mathrm{id}}{\longrightarrow} & 
\underset{x:\cat{C}\to\cat{Mod}_B^\mathrm{perf}}{\mathrm{holim}}\cat{Map}_{\scat{BiMod}_{\cat{C}-B}}
(\cat{C}^\vee\underset{\cat{C}}{\overset{\mathbb{L}}{\otimes}}M_x,M_x[n]) \\
& \simeq & |\Gamma(\mathbb{L}_{\scat{Perf}_{\cat{C}}})[1+n]| \\
& \rightarrow & \mathcal{A}^1(\scat{Perf}_{\cat{C}},1+n)\,.
\end{eqnarray*}
Recall that, if $\cat{C}$ is of finite type, then $\scat{Perf}_{\cat{C}}$ is Artin and thus the last morphism is 
actually an equivalence. 
\end{example}
We now recall the  
\begin{theorem}[\cite{BD2}, Theorem 5.5]
\begin{enumerate}
\item If a finite type dg-category $\cat{C}$ is equipped with a $n$-Calabi--Yau structure $c$, then 
$\tilde{\mathrm{ch}}^{(2)}_{\cat{C}}(c)$ is a $(n-2)$-shifted symplectic structure on $\scat{Perf}_{\cat{C}}$. 
\item The image by $\tilde{\mathrm{ch}}^{(2)}$ of a $n$-Calabi--Yau structure on a cospan of finite type dg-categories is a $(n-2)$-shifted 
lagrangian structure on the associated correspondence of derived moduli of objects. 
\end{enumerate}
\end{theorem}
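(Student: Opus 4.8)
The plan is to reduce the statement to a local computation on affine pieces, using the étale-local description of the moduli of objects and the explicit cochain-level formulas already recorded in the excerpt. The key point is that both assertions are statements about closed forms and non-degeneracy, so the strategy splits into (a) producing the closed form $\tilde{ch}^{(2)}_{\cat{C}}(c)$ and checking it is closed (which is automatic, since $\tilde{ch}^{(2)}$ lands in $\mathcal{A}^{2,\cl}$ by construction), and (b) verifying non-degeneracy, i.e.\ that the induced map $\mathbb{T}\to\mathbb{L}$ is an equivalence. Part (b) is the real content.

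First I would set up notation: for a $B$-point $x:\Spec(B)\to\scat{Perf}_{\cat{C}}$ corresponding to a pseudo-perfect $\cat{C}$-$B$-bimodule $M_x$, recall from Example~\ref{ex:BDcompute} the equivalences $x^*\mathbb{T}_{\scat{Perf}_{\cat{C}}}\simeq\ul{\cat{Map}}_{\scat{Mod}_{\cat{C}^{\op}}}(M_x,M_x)[1]$ and $x^*\mathbb{L}_{\scat{Perf}_{\cat{C}}}\simeq\ul{\cat{Map}}_{\scat{Mod}_{\cat{C}^{\op}}}(\cat{C}^\vee\otimes^{\mathbb{L}}_{\cat{C}}M_x,M_x)[-1]$. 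The underlying $2$-form of $\tilde{ch}^{(2)}_{\cat{C}}(c)$ is $ch^{(2)}_{\cat{C}}(c^\natural)$, whose associated $\flat$-map is, by the weight-one computation of $ch^{(1)}$ recalled in Example~\ref{ex:BDcompute}, obtained by tensoring $c^\flat:\cat{C}^\vee[n]\tilde\to\cat{C}$ with $\mathrm{id}_{M_x}$ over $\cat{C}$. Since $\cat{A}$ carries an $n$-Calabi--Yau structure, $c^\flat$ is an equivalence of $\cat{A}^e$-modules, hence $c^\flat\otimes_{\cat{A}}\mathrm{id}_{M_x}$ is an equivalence, which gives non-degeneracy on $\scat{Perf}_{\cat{A}}$ pointwise; étale descent of $\mathcal{A}^{2}$ and of the cotangent complex then globalizes this. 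This is exactly the argument of \cite[Theorem 5.5]{BD2}, so for part (1) I would simply invoke it, perhaps spelling out the $\flat$-map identification above for the reader's convenience.

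For part (2), given an $n$-Calabi--Yau cospan $\cat{A}\to\cat{C}\leftarrow\cat{B}$, applying the functor $\scat{Perf}$ to the defining homotopy-commuting square of Hochschild (or negative cyclic) classes and using naturality of $\tilde{ch}^{(2)}$ yields a homotopy-commuting square
\[
\xymatrix{
\scat{Perf}_{\cat{C}} \ar[r]\ar[d] & \scat{Perf}_{\cat{B}} \ar[d] \\
\scat{Perf}_{\cat{A}} \ar[r] & \mathcal{A}^{2,\cl}[2-n]
}
\]
so the correspondence $\scat{Perf}_{\cat{A}}\leftarrow\scat{Perf}_{\cat{C}}\to\scat{Perf}_{\cat{B}}$ acquires an isotropic structure. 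Non-degeneracy of the correspondence amounts to checking that the square of tangent/cotangent complexes is (co)cartesian; this I would reduce, again via Example~\ref{ex:BDcompute}, to the statement that the square
\[
\ul{\cat{Map}}_{\scat{Mod}_{\cat{C}^{\op}}}\big(M_x,M_x\big)\longrightarrow\ul{\cat{Map}}_{\scat{Mod}_{\cat{C}^{\op}}}\big(\cat{C}^\vee\otimes^{\mathbb{L}}_{\cat{C}}M_x,M_x\big)
\]
obtained by tensoring the non-degeneracy square of the Calabi--Yau cospan (Definition~\ref{definition: CYspan}) with $-\otimes^{\mathbb{L}}_{\cat{C}}M_x$ and applying $\ul{\cat{Map}}(-,M_x)$ is a pullback. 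Since $-\otimes^{\mathbb{L}}_{\cat{C}}M_x$ and $\ul{\cat{Map}}(-,M_x)$ are exact, they preserve (co)cartesian squares, so the (co)cartesianness is inherited from the defining property of the Calabi--Yau cospan. Combined with part (1) applied to the three legs, this gives the lagrangian structure. The main obstacle is bookkeeping: one must be careful that the shifts line up ($n$ versus $2-n$, and the $[\pm1]$ shifts in the tangent/cotangent identifications), and that the square really is the one produced by $\scat{Perf}$ applied to the Calabi--Yau-cospan square rather than some twist of it; but no genuinely new idea beyond \cite[Theorem 5.5]{BD2} is needed, so the proof is essentially a citation with the $\flat$-map made explicit.
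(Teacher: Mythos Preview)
Your proposal is correct and matches the paper's treatment: the paper does not give a proof of this theorem at all, but simply cites it as \cite[Theorem~5.5]{BD2}, and you rightly identify that the content is a citation with the $\flat$-map made explicit. Your sketch of the Brav--Dyckerhoff argument is accurate; the only minor slip is the phrase ``part~(1) applied to the three legs'' --- part~(1) is applied only to the two endpoints $\cat{A}$ and $\cat{B}$ (which carry the Calabi--Yau structures), not to $\cat{C}$, but this is clearly what you intend from the surrounding discussion.
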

As a consequence, the previous commuting diagram of $\infty$-categories restricts to 
\[
\xymatrix{
\scat{ExCY}^{f.t.}_{[n]} \ar[r]\ar[d]_{\mathbf{\Delta}} & \scat{ExLag}_{[2-n]} \ar[d]^{\mathbf{D_{dR}}} \\
\scat{CY}^{f.t.}_{[n]} \ar[r] & \scat{Lag}_{[2-n]}
}
\]
where the superscript $f.t.$ means that we restrict ourselves to the subcategory of cospans where all involved dg-categories are 
of finite type (instead of only being smooth).

%\begin{remark}
%The result $Drep(crit) \simeq $ obtained in section ???, can be recovered from this section, by taking the truncation of $\bPerf$ at degree zero. \damien{This needs to be fixed}
%\end{remark}

\subsection{Calabi--Yau completions as noncommutative cotangents: underlying stacks}

In this subsection, we prove that $\bPerf_{\cG_n(\cA)} \simeq \bT^*[2-n]\bPerf_{\cA}$ for a finite type dg-category $\cA$, as 
well as relative version of this. 

We start with a generalization of Lemma~\ref{lemma: linear}. 
\begin{proposition}\label{prop:pasdenom}
Let $\cat{A}$ be a dg-category of finite type and $M$ a perfect $\cA$-bimodule. Then the derived moduli stack 
$\bPerf_{T_{\cA}(M)}$ is equivalent to the linear stack $\mathbb{A}_{ \mathcal{M}}$, where 
$\mathcal{M}\in \cat{Mod}_\cA^\mathrm{perf}$ is defined by 
\[
x^*\mathcal{M}:=\underline{\cat{Map}}_{\scat{Mod}_{\cat{A}^{\op}}}(M\overset{\mathbb{L}}{\underset{\cat{A}}{\otimes}} M_x, M_x)\,,
\]
on $B$-points $x:\scat{Spec}(B)\to\scat{Perf}_{\cat{A}}$. 
\end{proposition}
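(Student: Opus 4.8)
The statement is a moduli-theoretic incarnation of the defining adjunction for $T_{\cA}$, so the strategy is to reduce the computation of $\bPerf_{T_\cA(M)}$ to a relative mapping space over $\bPerf_\cA$ and then recognize the fibres as sections of a perfect sheaf. First I would record that $\bPerf_{T_\cA(M)}$ lives naturally over $\bPerf_\cA$: the unit $\cA\to T_\cA(M)$ induces a morphism $\bPerf_{T_\cA(M)}\to\bPerf_\cA$, so it suffices to compute the fibres. Fix a cdga $B$ and a $B$-point $x:\Spec(B)\to\bPerf_\cA$, classifying a pseudo-perfect $\cA$-$B$-bimodule $M_x$ (so $M_x\in\cat{Mod}_B^{perf}$ after restriction). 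By definition of $\bPerf$ and the universal property of $T_\cA$ recalled in \S\ref{ssec:hhfree} — namely that the forgetful functor $(\scat{Cat}_k)_{\cA/}\to\scat{Mod}_{\cA^e}$ has left adjoint $T_\cA$ — a dg-functor $T_\cA(M)\to\cat{Mod}_B^{perf}$ extending $x$ is the same datum as a morphism of $\cA$-bimodules $M\to\underline{\mathrm{End}}_B(M_x)$, where $\underline{\mathrm{End}}_B(M_x)$ is the $\cA$-bimodule obtained from the dg-functor $M_x:\cA\to\cat{Mod}_B^{perf}$. Passing to spaces, the homotopy fibre of $\bPerf_{T_\cA(M)}(B)\to\bPerf_\cA(B)$ at $x$ is thus
\[
\cat{Map}_{\scat{Cat}_k}\big(T_\cA(M),\cat{Mod}_B^{perf}\big)\underset{\bPerf_\cA(B)}{\times}\{M_x\}
\simeq \big|\,\underline{\cat{Map}}_{\scat{Mod}_{\cA^e}}\big(M,\underline{\mathrm{End}}_B(M_x)\big)\,\big|\,.
\]
This is exactly the argument already carried out for free categories in the Example following Lemma~\ref{lemma: linear}; here $M$ is an arbitrary perfect bimodule rather than $T_{k^I}(N)$, but the adjunction is the same.

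Next I would reinterpret the enriched mapping space of bimodules as a mapping space of one-sided modules, so that the answer visibly has the form $|x^*\mathcal{M}|$. Using the equivalence $\scat{Mod}_{\cA^e}=\scat{BiMod}_{\cA-\cA}\simeq\textsc{Map}^c(\scat{Mod}_\cA,\scat{Mod}_\cA)$, tensoring $M$ against $M_x$ gives the identification
\[
\underline{\cat{Map}}_{\scat{Mod}_{\cA^e}}\big(M,\underline{\mathrm{End}}_B(M_x)\big)
\simeq \underline{\cat{Map}}_{\scat{Mod}_{\cA^{\op}}}\big(M\overset{\mathbb{L}}{\underset{\cA}{\otimes}}M_x,\,M_x\big)\,,
\]
which is precisely the formula defining $x^*\mathcal{M}$ in the statement. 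It remains to check that $x\mapsto x^*\mathcal{M}$ is the pullback of a genuine perfect object $\mathcal{M}\in\cat{Mod}_{\bPerf_\cA}^{perf}$: this follows because $\cA$ is of finite type (so $\bPerf_\cA$ is Artin, by To\"en--Vaqui\'e~\cite{ToVa}), $M$ is a perfect bimodule, $M_x$ is pseudo-perfect, and $\underline{\cat{Map}}_{\scat{Mod}_{\cA^{\op}}}(-,-)$ of perfect objects is perfect; functoriality in $x$ is automatic from the functoriality of the whole construction. Granting this, the fibrewise equivalence $|x^*\mathcal{M}|\simeq\bPerf_{T_\cA(M)}(x)$ together with $\mathbb{A}_{\mathcal M}(x)=|x^*\mathcal M|$ glues, by descent over $\bPerf_\cA$, to the desired equivalence $\bPerf_{T_\cA(M)}\simeq\mathbb{A}_{\mathcal M}$.

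\textbf{Main obstacle.} The only delicate point is the perfectness and descent of the sheaf $\mathcal{M}$: one must be sure that $\underline{\cat{Map}}_{\scat{Mod}_{\cA^{\op}}}(M\otimes^{\mathbb L}_\cA M_x,M_x)$ is not merely defined $B$-point by $B$-point but is the pullback of a single perfect complex on $\bPerf_\cA$, compatibly with base change. The cleanest way to handle this is to write $\mathcal M$ directly on $\bPerf_\cA$ using the tautological pseudo-perfect $\cA$-bimodule $E_\cA$ (the sheaf classifying $\mathrm{id}_{\bPerf_\cA}$) as
\[
\mathcal M:=\underline{\cat{Map}}_{\scat{Mod}_{\cA^{\op}}}\big(M\overset{\mathbb{L}}{\underset{\cA}{\otimes}}E_\cA,\,E_\cA\big)\,,
\]
and then invoke finite type of $\cA$ and perfectness of $M$ to conclude $\mathcal M$ is perfect, its formation commuting with pullback by construction. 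With that in hand the remaining steps are formal, exactly parallel to the free case treated after Lemma~\ref{lemma: linear}.
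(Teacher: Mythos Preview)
Your proposal is correct and follows essentially the same route as the paper's proof: both identify $\bPerf_{T_\cA(M)}$ and $\mathbb{A}_{\mathcal{M}}$ as stacks over $\bPerf_\cA$, compute the fibre at a $B$-point $x$ via the universal property of $T_\cA$ as $\cat{Map}_{\scat{Mod}_{\cA^e}}(M,\underline{\mathrm{End}}_B(M_x))$, and then rewrite this as $\cat{Map}_{\scat{BiMod}_{\cA-B}}(M\otimes^{\mathbb{L}}_\cA M_x,M_x)=|x^*\mathcal{M}|$. Your discussion of perfectness and descent of $\mathcal{M}$ is a welcome addition that the paper leaves implicit.
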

\begin{proof}
Both $\bPerf_{T_\cA(M)}$ and $\mathbb{A}_{ \mathcal{M}}$ are stacks over $\scat{Perf}_{\cat{A}}$. 
By the universal property of $T_{\cat{A}}(M)$, we have natural equivalences (in $x$)
\begin{eqnarray*}
\bPerf_{T_{\cA}(M)}(x)	& \simeq & \cat{Map}_{\scat{Mod}_{\cat{A}^e}}\big(M,\cat{Mod}_B^\mathrm{perf}\big)
\underset{\cat{Map}_{\scat{Cat}_k}(\cat{A},\scat{Mod}_B^\mathrm{perf})}{\times}\{x\} \\
						& \simeq & \cat{Map}_{\scat{Mod}_{\cat{A}^e}}\big(M,\underline{\cat{End}}_B(M_x)\big)\,,
\end{eqnarray*}
where $\underline{\cat{End}}_B(M_x)$ is the $\cat{A}$-bimodule structure on $\cat{Mod}_B^\mathrm{perf}$ given by 
$x:\cat{A}\to \cat{Mod}_B^\mathrm{perf}$; in other words, this is the dg-functor $\cat{A}^{\op}\otimes\cat{A}\to \cat{Mod}_k$ 
given by 
\[
(a,a')\longmapsto \underline{\cat{Map}}_{\cat{Mod}_B}\big(x(a),x(a')\big)\,.
\]
Finally, there is a natural equivalence 
\[
\cat{Map}_{\scat{Mod}_{\cat{A}^e}}\big(M,\cat{End}_B(M_x)\big)
\simeq \cat{Map}_{\scat{BiMod}_{\cat{A}-B}}(M\overset{\mathbb{L}}{\underset{\cat{A}}{\otimes}}M_x,M_x)=:|x^*\mathcal{M}|\,.
\]
This proves the result.
\end{proof}

\begin{corollary}\label{theorem: compareGinzburg}
Let $\cat{A}$ be a dg-category of finite type. Then the derived stacks $\bPerf_{\cG_n(\cA)}$ and 
$\bT^*[2-n] \bPerf_\cA$ are equivalent. 
\end{corollary}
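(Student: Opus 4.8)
The plan is to apply Proposition~\ref{prop:pasdenom} to the specific bimodule $M = \cA^\vee[n-1]$, for which by definition $\cG_n(\cA) = T_{\cA}(\cA^\vee[n-1])$. First I would record that, since $\cA$ is of finite type, $\bPerf_\cA$ is Artin and locally of finite presentation, so its cotangent complex $\bbL_{\bPerf_\cA}$ is perfect; hence $\bT^*[2-n]\bPerf_\cA = \mathbb{A}_{\bbL_{\bPerf_\cA}[2-n]}$ makes sense as a linear stack. By Proposition~\ref{prop:pasdenom}, $\bPerf_{\cG_n(\cA)} \simeq \mathbb{A}_{\mathcal{M}}$ where $\mathcal{M}$ is the perfect sheaf on $\bPerf_\cA$ determined on $B$-points $x:\Spec(B)\to\bPerf_\cA$ by
\[
x^*\mathcal{M} \simeq \ul{\cat{Map}}_{\scat{Mod}_{\cat{A}^{\op}}}\big(\cA^\vee[n-1]\overset{\bbL}{\underset{\cA}{\otimes}} M_x,\, M_x\big)\,.
\]
So it suffices to identify this sheaf $\mathcal{M}$ with $\bbL_{\bPerf_\cA}[2-n]$, since $\mathbb{A}_{(-)}$ is functorial and compatible with pullbacks.

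The key computation is the chain of equivalences of $B$-modules
\[
x^*\mathcal{M} \simeq \ul{\cat{Map}}_{\scat{Mod}_{\cat{A}^{\op}}}\big(\cA^\vee\overset{\bbL}{\underset{\cA}{\otimes}} M_x,\, M_x\big)[1-n] \simeq x^*\bbL_{\bPerf_\cA}[-1][1-n] \simeq x^*\bbL_{\bPerf_\cA}[2-n]\,,
\]
where the middle equivalence is exactly the second of the two natural equivalences recorded in Example~\ref{ex:BDcompute} (following \cite[Proposition~5.3]{BD2}):
\[
x^*\bbL_{\scat{Perf}_{\cat{A}}} \simeq \ul{\cat{Map}}_{\scat{Mod}_{\cat{A}^{\op}}}\big(\cA^\vee\overset{\bbL}{\underset{\cA}{\otimes}}M_x,\, M_x\big)[-1]\,.
\]
Since $\cA$ is of finite type (hence smooth), $\cA^\vee$ is a perfect bimodule and all these Map-complexes are perfect over $B$, so the shifts and the identification are well-behaved; one needs only to check that the equivalence of Example~\ref{ex:BDcompute} is natural in $x$, which it is by construction. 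Taking linear stacks then gives
\[
\bPerf_{\cG_n(\cA)} \simeq \mathbb{A}_{\mathcal{M}} \simeq \mathbb{A}_{\bbL_{\bPerf_\cA}[2-n]} = \bT^*[2-n]\bPerf_\cA\,.
\]

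The main obstacle, I expect, is not the formal manipulation but making sure the identification $M = \cA^\vee[n-1]$ fed into Proposition~\ref{prop:pasdenom} is matched correctly with the cofibrant model of $\cA^\vee$ chosen when defining $\cG_n(\cA) = \mathbb{L}T_{\cA}(\cA^\vee[n-1])$ (the excerpt flags that one fixes a cofibrant model and drops the $\mathbb{L}$), and that the $\scat{Mod}_{\cat{A}^{\op}}$-versus-$\scat{Mod}_{\cat{C}^{\op}}$ conventions in Example~\ref{ex:BDcompute} (stated there for a general $\cat{C}$) are applied with $\cat{C}=\cA$ without sign or variance discrepancies. Once those bookkeeping points are settled, the statement is essentially immediate from Proposition~\ref{prop:pasdenom} combined with the tangent/cotangent formulas of \cite{BD2}; the finite type hypothesis is used precisely to guarantee that $\bPerf_\cA$ is Artin so that $\bbL_{\bPerf_\cA}$ is perfect and the right-hand side is a genuine linear stack. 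The compatibility with the symplectic structures (i.e.\ upgrading this equivalence of stacks to the statement of Theorem~\ref{thm:mainmain}) is deferred to the subsequent parts of the section and is not needed here.
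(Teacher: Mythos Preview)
Your proof is correct and follows essentially the same approach as the paper: apply Proposition~\ref{prop:pasdenom} with $M=\cA^\vee[n-1]$, then identify the resulting sheaf $\mathcal{M}$ with $\bbL_{\bPerf_\cA}[2-n]$ using the description of the (co)tangent complex of $\bPerf_\cA$. The only difference is cosmetic: the paper starts from the tangent formula $x^*\bbT_{\bPerf_\cA}\simeq\ul{\cat{Map}}(M_x,M_x)[1]$ and dualizes, whereas you invoke the cotangent formula from Example~\ref{ex:BDcompute} directly.

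One small slip to fix in your display: from $x^*\bbL_{\bPerf_\cA}\simeq\ul{\cat{Map}}(\cA^\vee\otimes^{\bbL}_\cA M_x,M_x)[-1]$ you get $\ul{\cat{Map}}(\cA^\vee\otimes^{\bbL}_\cA M_x,M_x)\simeq x^*\bbL_{\bPerf_\cA}[1]$, so the middle term should read $x^*\bbL_{\bPerf_\cA}[1][1-n]$ rather than $x^*\bbL_{\bPerf_\cA}[-1][1-n]$; your final shift $[2-n]$ is correct but does not match the intermediate $[-1][1-n]=[-n]$ as written.
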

\begin{proof}
Both $\bPerf_{\mathcal G_n(\cat{A})}$ and $\mathbf{T}^*[2-n]\scat{Perf}_{\cat{A}}$ are linear stacks over $\scat{Perf}_{\cat{A}}$: 
\begin{itemize}
\item On the one hand, $\bPerf_{\mathcal G_n(\cat{A})}\simeq\mathbb{A}_{\mathcal{M}}$, thanks to Proposition~\ref{prop:pasdenom}, 
where 
\[
x^*\mathcal M:=\underline{\cat{Map}}_{\scat{Mod}_{\cat{A}^{\op}}}
(\cat{A}^\vee[n-1]\overset{\mathbb{L}}{\underset{\cat{A}}{\otimes}} M_x, M_x)
\]
on a $B$-point $x$ of $\scat{Perf}_{\cat{A}}$. 
\item On the other hand, $\mathbf{T}^*[2-n]\scat{Perf}_{\cat{A}}=\mathbb{A}_{\mathbb{L}_{\bPerf_{\cA}}[2-n]}$, by definition. 
\end{itemize}
It thus remains to show that $\mathbb{L}_{\bPerf_{\cA}}[2-n]\simeq\mathcal{M}$. 
The tangent complex to $\scat{Perf}_{\cat{A}}$ at $x$ is the $B$-module 
\[
x^*\mathbb{T}_{\scat{Perf}_{\cat{A}}}\simeq\underline{\cat{Map}}_{\scat{Mod}_{\cat{A}^{\op}}}(M_x,M_x)[1]\,.
\]
Hence, the $(2-n)$-shifted cotangent complex at $x$ is the $B$-module 
\begin{eqnarray*}
x^*\mathbb{L}_{\scat{Perf}_{\cat{A}}}[2-n] & \simeq & \underline{\cat{Map}}_{\scat{Mod}_{\cat{A}^{\op}}}(M_x,M_x)[1]^\vee[2-n] \\
& \simeq & \underline{\cat{Map}}_{\scat{Mod}_{\cat{A}^{\op}}}(\cat{A}^\vee\overset{\mathbb{L}}{\underset{\cat{A}}{\otimes}}M_x,M_x)[1-n]
=:x^*\mathcal M\,.
\end{eqnarray*}
This proves the claim, as the above equivalences are natural in $x$. 
\end{proof}

\begin{corollary}\label{theorem: compareGinzburgrel}
Let $f:\cat{A} \to \cat{B}$ a functor between dg-category of finite type. Then $\bPerf$ applied to the 
cospan $\cG_n(\cA) \rightarrow \cG_n(f) \leftarrow \cG_n(\cB)$ is equivalent to 
\[
\bT^*[2-n] \bPerf_\cA \leftarrow \varphi^* \bT^*[2-n] \bPerf_\cA \rightarrow \bT^* [2-n] \bPerf_\cB\,,
\]
where $\varphi=f^*:\bPerf_\cB\to \bPerf_\cA$. 
\end{corollary}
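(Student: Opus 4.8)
The plan is to build on Corollary~\ref{theorem: compareGinzburg} and deduce the relative statement by compatibility of the linear-stack descriptions with pullback along morphisms. First I would recall from Lemma~\ref{lemma: G(f)} that $\cG_n(f)\simeq \cG_n(\cA)\amalg_{\cA}\cB$, so that $\bPerf$ sends the cospan $\cG_n(\cA)\to\cG_n(f)\leftarrow\cG_n(\cB)$ to the span obtained by applying $\bPerf$ to the pushout square of dg-categories; since $\bPerf$ turns pushouts of dg-categories into pullbacks of derived stacks (the moduli of objects functor sends finite colimits of finite type dg-categories to finite limits of Artin stacks, via $\scat{Perf}_{\bullet}$), we get that $\bPerf_{\cG_n(f)}\simeq\bPerf_{\cG_n(\cA)}\times_{\bPerf_{\cA}}\bPerf_{\cB}$, with the two legs induced by $\cG_n(\cA)\to\cG_n(f)$ and $\cG_n(\cB)\to\cG_n(f)$. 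Equivalently, using Lemma~\ref{lemma: G(f)} directly, $\bPerf_{\cG_n(f)}\simeq \bPerf_{\cG_n(\cA)}\times_{\bPerf_\cA}\bPerf_\cB$.

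Next I would identify this fiber product with $\varphi^*\bT^*[2-n]\bPerf_\cA$. By Corollary~\ref{theorem: compareGinzburg}, $\bPerf_{\cG_n(\cA)}\simeq\bT^*[2-n]\bPerf_\cA=\mathbb{A}_{\mathbb{L}_{\bPerf_\cA}[2-n]}$, and the map $\bPerf_{\cG_n(\cA)}\to\bPerf_\cA$ is the bundle projection $\pi_{\bPerf_\cA}$. Since $\varphi=f^*:\bPerf_\cB\to\bPerf_\cA$ is exactly the map $\bPerf(f)$, and since the formation of linear stacks is compatible with pullbacks ($\mathbb{A}_{g^*E}\simeq g^*\mathbb{A}_E$, as recalled in~\S\ref{linst}), we obtain
\[
\bPerf_{\cG_n(f)}\simeq\bPerf_\cB\underset{\bPerf_\cA}{\times}\mathbb{A}_{\mathbb{L}_{\bPerf_\cA}[2-n]}\simeq\mathbb{A}_{\varphi^*\mathbb{L}_{\bPerf_\cA}[2-n]}=\varphi^*\bT^*[2-n]\bPerf_\cA\,.
\]
It then remains to check that under these identifications the left leg $\bPerf_{\cG_n(f)}\to\bPerf_{\cG_n(\cA)}$ becomes the canonical map $\varphi^*\bT^*[2-n]\bPerf_\cA\to\bT^*[2-n]\bPerf_\cA$, and the right leg $\bPerf_{\cG_n(f)}\to\bPerf_{\cG_n(\cB)}$ becomes the map $q$ from~\eqref{eq:corrcot}, i.e.~the one classified by $\varphi^*\mathbb{L}_{\bPerf_\cA}[2-n]\to\mathbb{L}_{\bPerf_\cB}[2-n]$. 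For the left leg this is immediate from Lemma~\ref{lemma: G(f)}: the inclusion $\cG_n(\cA)\to\cG_n(f)=\cG_n(\cA)\amalg_\cA\cB$ is a base change of $\cA\to\cB$, so $\bPerf$ of it is the projection from the fiber product. For the right leg one uses the naturality, in $x\in\bPerf_\cA(B)$, of the identification $x^*\mathcal M\simeq x^*\mathbb{L}_{\bPerf_\cA}[2-n]$ from the proof of Corollary~\ref{theorem: compareGinzburg}, together with the explicit description of $\cG_n(f)=T_\cB(\cA^\vee[n-1]\otimes^{\mathbb L}_{\cA^e}\cB^e)$ and Proposition~\ref{prop:pasdenom}, which presents $\bPerf_{\cG_n(\cB)}$ as $\mathbb A_{\mathcal M_\cB}$ with $y^*\mathcal M_\cB=\ul{\cat{Map}}_{\scat{Mod}_{\cat{B}^\op}}(\cB^\vee[n-1]\otimes^{\mathbb L}_\cB M_y,M_y)$; the comparison morphism of bimodules $\cB^\vee\to\cA^\vee\otimes^{\mathbb L}_{\cA^e}\cB^e$ then induces precisely the transpose of $\varphi^*\mathbb{L}_{\bPerf_\cA}\to\mathbb{L}_{\bPerf_\cB}$.

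The main obstacle I anticipate is not any single computation but the bookkeeping needed to check that the two legs of the span agree \emph{as maps of stacks over $\bPerf_\cA$ and $\bPerf_\cB$ respectively}, compatibly and coherently, rather than just objectwise; concretely, one must verify that the equivalence $\bPerf_{\cG_n(f)}\simeq\varphi^*\bT^*[2-n]\bPerf_\cA$ intertwines the tautological maps, which amounts to tracing through the naturality of the chain of equivalences in the proof of Corollary~\ref{theorem: compareGinzburg} against the functoriality of $T_{(-)}$ and of duality $(-)^\vee$ for bimodules. Once this is done, the statement follows by assembling the identifications above.
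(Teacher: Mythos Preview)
Your proposal is correct and follows essentially the same route as the paper: use Lemma~\ref{lemma: G(f)} to write $\cG_n(f)\simeq\cG_n(\cA)\amalg_\cA\cB$, apply $\bPerf$ (which sends push-outs to pull-backs), and invoke Corollary~\ref{theorem: compareGinzburg} to identify the fiber product with $\varphi^*\bT^*[2-n]\bPerf_\cA$. The paper's proof stops there, whereas you go on to verify that the two legs match the canonical maps $p$ and $q$ of~\eqref{eq:corrcot}; this extra care is not in the paper's proof of the corollary (it is implicitly absorbed by naturality and revisited later in Theorem~\ref{theo: comparerel}), so your argument is if anything more complete.
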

\begin{proof}
We have equivalences
\[
		\varphi^* \bT^*[2-n]  \bPerf_\cA 
:=		\bT^*[2-n]\bPerf_\cA \underset{\bPerf_\cA}{\times} \bPerf_{\cB}
\simeq	\bPerf_{\cG_n(\cA) \underset{\cA}{\amalg} \cB}
\simeq	\bPerf_{\cG_n(f)}\,,
\]
where the first one follows from Corollary~\ref{theorem: compareGinzburg} (and the fact that $\scat{Perf}$ sends push-outs 
to pull-backs), and the second one follows from Lemma~\ref{lemma: G(f)}. 
\end{proof}

Going back to the general situation of Proposition~\ref{prop:pasdenom}, let us observe that every bimodule morphism
$c:M\to\cat{A}$ defines a global section $s_c$ of $\mathcal{M}$ whose graph 
\[
\bPerf_{\cA}\rightarrow \mathbb{A}_{\mathcal{M}}
\]
is identified with the image of $T_{\cA}(c)$ by $\scat{Perf}$. Indeed, we have a morphism 
\begin{equation}\label{eq:plusdidee}
\cat{Map}_{\scat{Mod}_{\cA^e}}(M,\cA) \overset{-\otimes\mathrm{id}}{\longrightarrow} 
\underset{x:\cat{A}\to\cat{Mod}_B^\mathrm{perf}}{\mathrm{holim}}\cat{Map}_{\scat{BiMod}_{\cat{A}-B}}
(\mathcal{M}\underset{\cat{A}}{\overset{\mathbb{L}}{\otimes}}M_x,M_x)
\simeq \Gamma(\mathcal{M})\,.
\end{equation}
As a consequence, $\scat{Perf}$ sends the deformation of $T_{\cA}(M[1])$ by $c$, which is obtained as a push-out along 
\[
\xymatrix{
T_{\cA}(M) \ar[r]^-{T_{\cA}(c)} \ar[d]_-{T_{\cA}(0)} & \cA \\
\cA & 
}
\]
to the derived zero locus of $s_c$, which is defined as the derived intersection of $s_c$ with the zero section. 
\begin{example}[Critical loci]\label{ex:criticalexample}
Observe that in the case when $M=\cA^\vee[n-1]$, \eqref{eq:plusdidee} retrieves the map $\mathrm{ch}^{(1)}$ that was made explicit 
in Example~\ref{ex:BDcompute}. 
In the case when the Hochschild class $c=(\delta W)^\natural$ is exact, then the section we get is given by $(d_{dR}\phi)^\sharp$, 
with $\phi:=\mathrm{ch}^{(0)}(W)$. We therefore obtain an equivalence between $\scat{Perf}_{\mathcal{G}_{n+1}(\cA,\delta W)}$ and 
the derived critical locus of $\phi$. 
Similarly, given a functor $f:\cA\to\cB$ and an exact class $c=(\delta W)^\natural$, with $W:k[n-2]\to \cat{HH}(\cat{B})$, 
there is an equivalence between $\scat{Perf}_{\mathcal{G}_{n+1}(\cB|\cA,\delta W)}$ and the relative derived critical locus 
$\mathbf{crit}_\pi(\phi)$, where $\pi=f^*$ and $\phi=\mathrm{ch}^{(0)}(W)$. 
\end{example}
\begin{example}[Quivers]\label{ex:criticalquiverexample}
We now specialize the previous example to the case of an inclusion of quivers $D\subseteq Q$, and $n=2$. 
In this situation $W\in kQ/[kQ,kQ]$ and the map $\mathrm{ch}^{(0)}:kQ/[kQ,kQ]\to |\Gamma(\mathcal O_{\scat{Perf}_{kQ}})|$ has a 
fairly explicit description: for every $B$-point $x$, 
\[
kQ/[kQ,kQ]\to 
\underline{\cat{End}}_{\cat{Mod}_B}(M_x)/[\underline{\cat{End}}_{\cat{Mod}_B}(M_x),\underline{\cat{End}}_{\cat{Mod}_B}(M_x)]
\overset{\mathrm{tr}}{\simeq}B
\]
In particular, we recover the identification of~\S\ref{pot3CY} \textit{via} the open embedding of derived stacks 
$\scat{DRep}\big[\mathcal{G}_3(kQ|kD,\delta W),\vec{n})/{\GL}_{\vec{n}}(k)\big] \hookrightarrow \bPerf_{\mathcal{G}_3(kQ|kD,\delta W)}$. 
\end{example}

\subsection{Calabi--Yau completions as noncommutative cotangents: symplectic aspects}

On the one hand we have shown that $\cG_n(\cA)$ has a canonical exact $n$-Calabi-Yau structure, hence according 
to~\S\ref{sss: CYLag} there is an exact $(2-n)$-shifted symplectic structure on $\bPerf_{\cG_n(\cA)}$. 
On the other hand, after the identification $\bT^*[2-n] \bPerf_{\cA} \simeq \bPerf_{\cG_n(\cA)}$ from 
Corollary~\ref{theorem: compareGinzburg} there is another exact $(2-n)$-shifted symplectic structure on $\bPerf_{\cG_n(\cA)}$. 
We now prove that they are the same: 
\begin{theorem}\label{theo: compare}
The exact $(2-n)$-shifted symplectic structure on $\bPerf_{\cG_n(\cA)}$ induced by the canonical exact $n$-Calabi-Yau structure on 
$\cG_n(\cA)$ agrees with the canonical exact $(2-n)$-shifted symplectic structure on $\bT^*[2-n] \bPerf_\cA$.
\end{theorem}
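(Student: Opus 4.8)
The strategy is to compare the two exact $1$-forms that, after applying $d_{dR}$, give the two exact $(2-n)$-shifted symplectic structures on $\bPerf_{\cG_n(\cA)}\simeq\bT^*[2-n]\bPerf_\cA$. On the cotangent side, the canonical exact structure is $d_{dR}\lambda_{\bPerf_\cA}$, where $\lambda_{\bPerf_\cA}\in\mathcal A^1(\bT^*[2-n]\bPerf_\cA,2-n)$ is the tautological $1$-form of \S\ref{lagshicotst}; by Remark~\ref{remark: 1-form} it is characterized on $B$-points $(x,s)$, with $x\in\bPerf_\cA(B)$ and $s\in|x^*\mathbb L_{\bPerf_\cA}[2-n]|$, by $\lambda_{\bPerf_\cA}(x,s)=x^*s$. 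On the Calabi--Yau side, the exact structure is $\widetilde{ch}^{(2)}_{\cG_n(\cA)}$ applied to the exact $n$-Calabi--Yau class $c_{\cA}\in|\cat{HH}(\cG_n(\cA))[1-n]|$ from Theorem~\ref{thm:CYcom}; by the commuting diagram of \S\ref{sss: CYLag} this equals $d_{dR}$ of $ch^{(1)}_{\cG_n(\cA)}(c_{\cA})\in\mathcal A^1(\bPerf_{\cG_n(\cA)},2-n)$. So it suffices to prove the equality of $1$-forms
\[
ch^{(1)}_{\cG_n(\cA)}(c_{\cA})\;\simeq\;\lambda_{\bPerf_\cA}
\]
in $\mathcal A^1(\bPerf_{\cG_n(\cA)},2-n)$, compatibly with the identification of Corollary~\ref{theorem: compareGinzburg}.

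\textbf{Key steps.} First I would unwind the identification $\bPerf_{\cG_n(\cA)}\simeq\mathbb A_{\mathcal M}$ of Proposition~\ref{prop:pasdenom} with $x^*\mathcal M=\ul{\cat{Map}}_{\scat{Mod}_{\cat{A}^{\op}}}(\cA^\vee[n-1]\otimes^{\mathbb L}_{\cat{A}}M_x,M_x)$, and recall from the proof of Corollary~\ref{theorem: compareGinzburg} that the equivalence $\mathcal M\simeq\mathbb L_{\bPerf_\cA}[2-n]$ is induced, fiberwise over a $B$-point $x$ of $\bPerf_\cA$, by the chain of natural equivalences $x^*\mathbb L_{\bPerf_\cA}[2-n]\simeq\ul{\cat{Map}}_{\scat{Mod}_\cat{A}}(M_x,M_x)[1]^\vee[2-n]\simeq\ul{\cat{Map}}_{\scat{Mod}_\cat{A}}(\cat{A}^\vee\otimes^{\mathbb L}_\cat{A}M_x,M_x)[1-n]=x^*\mathcal M$, all coming from the smoothness duality $(-)^\flat$ on $\cat{HH}(\cA)$. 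Second, I would use Example~\ref{ex:BDcompute}: for a finite type $\cat{C}$, the map $ch^{(1)}_{\cat{C}}$ is $-\otimes\mathrm{id}$ followed by $\ul{\cat{Map}}_{\scat{Mod}_\cat{C}^e}(\cat{C}^\vee,\cat{C})[m]\to\ul{\holim}_x\ul{\cat{Map}}_{\scat{BiMod}_{\cat{C}-B}}(\cat{C}^\vee\otimes^{\mathbb L}_\cat{C}M_x,M_x[m])\simeq|\Gamma(\mathbb L_{\bPerf_\cat{C}})[1+m]|$. Applying this with $\cat{C}=\cG_n(\cA)$ and the description \eqref{eq:image of id} of $c_{\cA}$ as the image of $\mathrm{id}_{\cA}$ under $\ul{\cat{Map}}_{\scat{Mod}_{\cat{A}^e}}(\cat{A},\cat{A})[n-1]\tilde\to(\cat{A}^\vee[n-1])\otimes^{\mathbb L}_{\cat{A}^e}\cat{A}^{\op}\to\cat{HH}(\cG_n(\cA))$, one reduces $ch^{(1)}_{\cG_n(\cA)}(c_{\cA})$ to the section of $\mathbb L_{\bPerf_{\cG_n(\cA)}}[2-n]$ which, over a $B$-point $(x,\psi)$ of $\bPerf_{\cG_n(\cA)}$ lying over $x\in\bPerf_\cA(B)$ (so $\psi\in|x^*\mathcal M|$, i.e.\ a morphism $\cA^\vee[n-1]\otimes^{\mathbb L}_\cat{A}M_x\to M_x$ of $\cA^{\op}$-modules), is obtained by transporting $\psi$ through exactly the duality chain of the previous step. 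Third, I would observe that the tautological $1$-form $\lambda_{\bPerf_\cA}$ on $\bT^*[2-n]\bPerf_\cA=\mathbb A_{\mathbb L_{\bPerf_\cA}[2-n]}$ is, by its very definition via $\ell_{\bPerf_\cA}$ (the identity section) and Remark~\ref{remark: 1-form}, the section of $\mathbb L_{\mathbb A_{\mathbb L_{\bPerf_\cA}[2-n]}}[2-n]$ that over $(x,s)$ returns $x^*s$. Tracing both constructions through the identification of Corollary~\ref{theorem: compareGinzburg}, they match on the nose because both are "the identity, pulled back", with the same duality isomorphism mediating between $\mathcal M$ and $\mathbb L_{\bPerf_\cA}[2-n]$; concretely, $ch^{(1)}$ applied to the canonical class and $\ell$ applied to the canonical section are both the image of $\mathrm{id}$ under the pertinent natural transformation, and all constructions are natural in $x$, so the space of $B$-points over which to check compatibility is covered by naturality. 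Finally I would apply $d_{dR}$ to conclude the equality of exact $(2-n)$-shifted symplectic structures, non-degeneracy being already known from the Brav--Dyckerhoff theorem quoted in \S\ref{sss: CYLag} (or from the cotangent side, where it is automatic).

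\textbf{Main obstacle.} The crux is Step two: showing that $ch^{(1)}_{\cG_n(\cA)}(c_{\cA})$, after the identification $\bPerf_{\cG_n(\cA)}\simeq\bT^*[2-n]\bPerf_\cA$, is genuinely the tautological $1$-form and not merely some $1$-form with the same de Rham differential. This requires carefully matching three a priori different encodings of "the canonical element": the identity bimodule endomorphism $\mathrm{id}_{\cA^\vee[n-1]}$ (resp.\ $\mathrm{id}_{\cA}$) used to define $c_{\cA}$ in \eqref{exact nCY}--\eqref{eq:image of id}; the identity section $\ell_{\bPerf_\cA}$ classifying $\mathrm{id}:\bT^*[2-n]\bPerf_\cA\to\bT^*[2-n]\bPerf_\cA$; and the duality equivalence $c^\flat$ of Definition~\ref{def:CY} which realizes $\mathcal M\simeq\mathbb L_{\bPerf_\cA}[2-n]$. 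The potential pitfall is a sign or a shift discrepancy in the three-step duality chain $[1]^\vee[2-n]\simeq[1-n]$; keeping the bookkeeping of the equivalence $x^*\mathbb T_{\bPerf_\cA}\simeq\ul{\cat{Map}}(M_x,M_x)[1]$ from Example~\ref{ex:BDcompute} aligned with the one of Corollary~\ref{theorem: compareGinzburg} throughout is the delicate point. Once this bookkeeping is pinned down —essentially the statement that $ch^{(1)}$ of the Calabi--Yau class is represented by "pull back the tautological section" under the identifications— the equality of $1$-forms, and hence of exact symplectic structures, is immediate from naturality.
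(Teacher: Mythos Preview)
Your proposal is correct and follows essentially the same approach as the paper: both reduce to showing that the tautological $1$-form $\lambda_{\bPerf_\cA}$ agrees with $ch^{(1)}_{\cG_n(\cA)}(c_\cA)$ by identifying each as the image of the identity $\mathrm{id}_{\cA^\vee[n-1]}$ under natural maps, using the linear-stack description of Proposition~\ref{prop:pasdenom} and the explicit formula for $ch^{(1)}$ from Example~\ref{ex:BDcompute}. The paper packages the comparison into a single commuting square with $\mathrm{id}_{\cA^\vee[n-1]}$ in the upper-left corner, whose two paths yield the two $1$-forms, which is exactly the bookkeeping device that resolves the ``main obstacle'' you flag.
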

\begin{proof}
Recall that both $\bPerf_{\cG_n(\cA)}$ and $\bT^*[2-n] \bPerf_\cA$ are canonically identified with the linear stack 
$\mathbb{A}_{\mathcal M_\cA}$, where $\mathcal{M}_\cA\simeq\mathbb{L}_{\bPerf_\cA}[2-n]$ is defined by 
\[
x^*\mathcal{M}_\cA:=\underline{\cat{Map}}_{\scat{Mod}_{\cat{A}^{\op}}}
(\cA^\vee[n-1]\overset{\mathbb{L}}{\underset{\cat{A}}{\otimes}} M_x, M_x)\,
\]
for every $B$-point $x:\cat{A}\to\cat{Mod}_B^\mathrm{perf}$. The space of lifts $u:\scat{Spec}(B)\to\mathbb{A}_{{\mathcal M}_\cA}$ 
of such an $x$ is thus $|x^*\mathcal{M}_\cA|$. Abusing the notation, we allow ourselves to write $B$-points of 
$\mathbb{A}_{{\mathcal M}_\cA}$ as pairs $(x,s)$, with $x:\cat{A}\to\cat{Mod}_B^\mathrm{perf}$ and $s\in|x^*\mathcal{M}_\cA|$. 
Finally, also recall that such a pair $u=(x,s)$ defines a $\mathcal{G}_n(\cA)-B$-bimodule $M_u$ whose restriction to $\cA$ is $M_x$. 

Now observe that we have a commuting square 
\[
\xymatrix{
\ul{\cat{Map}}_{\scat{Mod}_{\cat{A}^e}}(\cA^\vee[n-1],\cA^\vee[n-1]) 
\ar[r]^-{s\circ(-\otimes\mathrm{id})}\ar[d]_-{\eqref{exact nCY}} & 
\underset{(x,s)}{\mathrm{holim}}\,
\ul{\cat{Map}}_{\scat{Mod}_{\cat{A}^{\op}}}(\cA^\vee[n-1]\overset{\mathbb{L}}{\underset{\cat{A}}{\otimes}} M_x, M_x) \ar[d] \\
\ul{\cat{Map}}_{\scat{Mod}_{\mathcal{G}_n(\cat{A})^e}}\big(\mathcal G_n(\cA)^\vee[n-1],\mathcal{G}_n(\cA)\big) 
\ar[r]^-{-\otimes\mathrm{id}} &
\underset{u}{\mathrm{holim}}\,
\ul{\cat{Map}}_{\scat{Mod}_{\mathcal{G}_n(\cat{A})^{\op}}}\big(\mathcal G_n(\cA)^\vee[n-1]\overset{\mathbb{L}}{\underset{\cG_n(\cat{A})}{\otimes}} M_u, M_u\big)
}
\]
in which the upper right corner is $\Gamma(\pi^*\mathcal{M}_\cA)\simeq\Gamma(\pi^*\mathbb{L}_{\scat{Perf}_\cA}[2-n])$, and the 
lower right corner is equivalent to $\Gamma(\mathbb{L}_{\scat{Perf}_{\mathcal{G}_n(\cA)}}[2-n])$. 

On the one hand, the tautological $1$-form of degree $2-n$ on the $(2-n)$-shifted cotangent stack is the image of 
$\mathrm{id}_{\cA^\vee[n-1]}$ through the upper part of the square. Indeed, the right vertical arrow is the pullback morphism 
$\Gamma(\pi^*\mathbb{L}_{\scat{Perf}_\cA}[2-n])\to \Gamma(\mathbb{L}_{\scat{Perf}_{\mathcal{G}_n(\cA)}}[2-n])$, and the image of 
$\mathrm{id}_{\cA^\vee[n-1]}$ through the top horizontal arrow is the tautological section (that is $s$ above a $B$-point $(x,s)$). 
On the other hand, if $c$ denotes the exact Calabi--Yau structure from Theorem~\ref{thm:CYcom}, that is the image of $\mathrm{id}_{\cA^\vee[n-1]}$ through the 
left vertical arrow, then $\mathrm{ch}^{(1)}(c)$ is the image of $\mathrm{id}_{\cA^\vee[n-1]}$ through the lower part of the square 
(thanks to \cite[Proposition 5.3]{BD2}, as recalled in Example \ref{ex:BDcompute} above). 
Hence the tautological $1$-form and $\mathrm{ch}^{(1)}(c)$ coincide. 
\end{proof}

The next result shows that the equivalence of shifted symplectic stack from Corollary~\ref{theorem: compareGinzburg} 
and Theorem \ref{theo: compare} is natural, and thus it completes the proof of Theorem~\ref{thm:mainmain}. 
\begin{theorem}\label{theo: comparerel}
Let $f: \cA \to \cB$ be a functor between finite dg categories. 
The moduli of objects functor sends the canonical exact $n$-Calabi-Yau cospan 
$\cG_n(\cA) \rightarrow \cG_n(f) \leftarrow \cG_n(\cB)$ to the canonical exact $(2-n)$-lagrangian correspondance 
\[
\bT^*[2-n]\bPerf_{\cA}  \leftarrow \varphi^*\bT^*[2-n]\bPerf_{\cA} \rightarrow  \bT^*[2-n] \bPerf_{\cB}\,,
\]
where $\varphi=f^*:\scat{Perf}_{\cB}\to\scat{Perf}_{\cA}$. 
\end{theorem}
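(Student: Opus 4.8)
The plan is to deduce Theorem~\ref{theo: comparerel} from the already-established results in a way parallel to the absolute case (Theorem~\ref{theo: compare}), using the functoriality packaging of both sides. First I would recall the two functors whose naturality we want to compare: on the noncommutative side, $\mathcal G_n$ defines a functor $Ho(\scat{Cat}_k^{sm})\to Ho(\scat{ExCY}_{[n]})$ (Theorem~\ref{theorem: compositionCY}), which restricts to finite type dg-categories; on the commutative side, $\bT^*[2-n]$ defines a functor $Ho(\scat{dSt}_k^{Art})\to Ho(\scat{ExLag}_{[2-n]})$ (Proposition~\ref{prop:cotangentfunctor}), precomposed with $\bPerf$ (which, being a right adjoint-type construction via holim, sends push-outs of dg-categories to pull-backs of stacks). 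The moduli of objects functor $\bPerf$ also lands in $Ho(\scat{ExLag}_{[2-n]})$ by~\S\ref{sss: CYLag}, and this is the vertical arrow $\bPerf:Ho(\scat{ExCY}_{[n]}^{f.t.})\to Ho(\scat{ExLag}_{[2-n]})$ on the right of the square in Theorem~\ref{thm:mainmain}. The content of Theorem~\ref{theo: comparerel} is precisely that the square commutes on $1$-morphisms, the commutation on objects being Theorem~\ref{theo: compare}.

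The key steps, in order, would be: (1) Identify the underlying correspondence of stacks. This is Corollary~\ref{theorem: compareGinzburgrel}: applying $\bPerf$ to $\cG_n(\cA)\to\cG_n(f)\leftarrow\cG_n(\cB)$ gives $\bT^*[2-n]\bPerf_\cA\leftarrow\varphi^*\bT^*[2-n]\bPerf_\cA\to\bT^*[2-n]\bPerf_\cB$, using $\cG_n(f)\simeq\cG_n(\cA)\amalg_\cA\cB$ (Lemma~\ref{lemma: G(f)}) and the fact that $\bPerf$ turns this push-out into the pullback defining $\varphi^*$. (2) Match the exact $(2-n)$-shifted isotropic structures. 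By the diagram of~\S\ref{sss: CYLag}, the functor $\tilde{ch}^{(2)}$ is compatible with composition of (exact) Calabi--Yau cospans and with the anti-involution $\sigma$. So it suffices to check that the two legs $\cG_n(\cA)\to\cG_n(f)$ and $\cG_n(\cB)\to\cG_n(f)$ are sent, under $\bPerf$, to the exact isotropic structures on $\varphi^*\bT^*[2-n]\bPerf_\cA\to\bT^*[2-n]\bPerf_\cA$ and $\varphi^*\bT^*[2-n]\bPerf_\cA\to\bT^*[2-n]\bPerf_\cB$ coming from Proposition~\ref{prop:cotangentfunctor} and the formula~\eqref{eq:corrcot}. (3) For the first leg, note $\cG_n(\cA)\to\cG_n(f)$ is $T_\cA(\id)$ base-changed, i.e. the canonical map $\cG_n(\cA)\to\cG_n(\cA)\amalg_\cA\cB$; its exact $n$-Calabi--Yau structure is built in Theorem~\ref{thm:Gn-functoriality} from $\id_{\cA^\vee[n-1]}$ via~\eqref{exact nCY}, exactly the class that Theorem~\ref{theo: compare} shows $ch^{(1)}$ sends to the tautological $1$-form $\lambda$. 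Base-changing along $f$ and using naturality of $ch^{(1)}$ in the dg-category variable (the commuting square of Example~\ref{ex:BDcompute}) identifies the image with $q^*\lambda_{\bPerf_\cA}$, which is the exact isotropic datum on $q:\varphi^*\bT^*[2-n]\bPerf_\cA\to\bT^*[2-n]\bPerf_\cA$ from~\S\ref{lagshicotst}. (4) For the second leg $\cG_n(\cB)\to\cG_n(f)$, the argument is symmetric: its Calabi--Yau structure comes from $\id_{\cB^\vee[n-1]}$ pushed along $f^\vee\otimes\id$, and naturality of $ch^{(1)}$ together with the commuting square in the proof of Theorem~\ref{thm:Gn-functoriality} identifies the image with $p^*\lambda_{\bPerf_\cB}$, the exact isotropic datum on $p:\varphi^*\bT^*[2-n]\bPerf_\cA\to\bT^*[2-n]\bPerf_\cB$. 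Since the homotopy coherence of "the two closed $1$-forms restricted to $\varphi^*\bT^*[2-n]\bPerf_\cA$ agree" is exactly what an exact isotropic correspondence is, and the non-degeneracy (lagrangian condition) is automatic once we know the two sides are abstractly equivalent as exact correspondences and both are lagrangian (the Calabi--Yau cospan is $n$-Calabi--Yau by Theorem~\ref{thm:Gn-functoriality}, hence its image is lagrangian by~\cite{BD2}), we are done.

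Concretely I would organize the proof as follows. Invoke Theorem~\ref{thm:Gn-functoriality} and~\S\ref{sss: CYLag} to know the left-hand cospan is exact $n$-Calabi--Yau and its image under $\bPerf$ is an exact $(2-n)$-lagrangian correspondence. Invoke Corollary~\ref{theorem: compareGinzburgrel} to identify the underlying stacks with $\bT^*[2-n]\bPerf_\cA\leftarrow\varphi^*\bT^*[2-n]\bPerf_\cA\to\bT^*[2-n]\bPerf_\cB$. Then run the naturality argument: the $1$-form $ch^{(1)}(c_\cA)$ on $\bT^*[2-n]\bPerf_\cA$ is $\lambda_{\bPerf_\cA}$ by Theorem~\ref{theo: compare}; pulling the defining diagram of the Calabi--Yau cospan (the big commuting square in the proof of Theorem~\ref{thm:Gn-functoriality}) through $\tilde{ch}^{(2)}$/$ch^{(1)}$ and using its naturality in the dg-category, one gets precisely the diagram of $1$-forms $\lambda_X\leftarrow q^*\lambda_X=p^*\lambda_Y\to\lambda_Y$ exhibiting the exact isotropic structure of~\S\ref{lagshicotst}, which is lagrangian by the discussion there. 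Invoke uniqueness of lagrangian structures refining a given isotropic one (or simply that both descriptions are obtained by transporting the same Hochschild data), and conclude.

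The main obstacle I anticipate is step (4) together with the homotopy-coherence bookkeeping: one must verify that the \emph{pair} of compatible legs — not just each leg separately — is matched, i.e. that the homotopy witnessing that $q^*\lambda_X$ and $p^*\lambda_Y$ agree on $\varphi^*\bT^*[2-n]\bPerf_\cA$ is carried by $\bPerf$ to the prescribed homotopy in the exact isotropic datum on the commutative side. This is exactly the homotopy filling the big square in the proof of Theorem~\ref{thm:Gn-functoriality}, and chasing it through the several $\simeq$'s of~\S\ref{sss: CYLag} (the $I_{HKR}$ equivalence, the holim, and the $|-|^\ell\Rightarrow|-|^r$ natural transformation) requires care; but since all the functors involved in that diagram commute with $\sigma$ and are natural in the dg-category, this is a matter of combining already-established commuting squares rather than a genuinely new computation. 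The naturality of $\mathcal G_n$ as a functor to $\scat{ExCY}_{[n]}$ (Theorem~\ref{theorem: compositionCY}) and of $\bPerf$ to $\scat{ExLag}_{[2-n]}$ then upgrades the pointwise statement to the $2$-cell in Theorem~\ref{thm:mainmain}.
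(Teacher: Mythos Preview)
Your proposal is correct and follows essentially the same route as the paper's proof: identify the underlying correspondence via Corollary~\ref{theorem: compareGinzburgrel}, then match the exact $1$-forms on each leg using the absolute case (Theorem~\ref{theo: compare}) and the naturality of $ch^{(1)}$, and finally verify that the homotopy witnessing $q^*\lambda_{\bPerf_\cA}\sim p^*\lambda_{\bPerf_\cB}$ is transported from the Hochschild-theoretic homotopy of Theorem~\ref{thm:Gn-functoriality}. The paper carries this out by writing down three explicit commuting squares $\mathcal Q_n(\cA)$, $\mathcal Q_n(\cB)$, $\mathcal Q_n(f)$ (one for each node of the cospan), each of the same shape as the single square in the proof of Theorem~\ref{theo: compare}, and then exhibiting the cospan of squares $\mathcal Q_n(\cA)\to\mathcal Q_n(f)\leftarrow\mathcal Q_n(\cB)$ receiving $\mathrm{id}_{\cA^\vee}$, $f^\vee$, $\mathrm{id}_{\cB^\vee}$ respectively; this makes precise the ``naturality'' you invoke and handles the homotopy-coherence bookkeeping you flagged as the main obstacle. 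One caveat: your parenthetical ``uniqueness of lagrangian structures refining a given isotropic one'' is not a valid principle in general, so you should rely solely on the second option you mention (both sides arise from the same Hochschild datum).
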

\begin{proof}
Recall (see Theorem~\ref{thm:Gn-functoriality}) that we have a Hochschild class $c_f$ defined as 
\[
k[n-1]\overset
{f^\vee[n-1]}{\longrightarrow}
\ul{\cat{Map}}_{\cB^e}\left(\cB^\vee,\cA^\vee[n-1]\overset{\mathbb{L}}{\underset{\cA^e}{\otimes}}\cB^e\right)
\longrightarrow 
\ul{\cat{Map}}_{\mathcal{G}_n(f)^e}\big(\mathcal{G}_n(f)^\vee,\mathcal{G}_n(f)\big)
\simeq
\cat{HH}\big(\mathcal G_n(f)\big)\,,
\]
which coincides with with both pushouts of $c_{\cA}:=c_{\mathrm{id}_{\cA}}$ and $c_\cB:=c_{\mathrm{id}_{\cB}}$, thanks to the 
commuting diagram 
\[
\xymatrix{
&
\ul{\cat{Map}}_{\cA^e}(\cA^\vee,\cA^\vee[n-1]) \ar[r] \ar[d]^-{(-\otimes\mathrm{id})\circ f^\vee} & 
\ul{\cat{Map}}_{\mathcal{G}_n(\cA)^e}\big(\mathcal{G}_n(\cA)^\vee,\mathcal{G}_n(\cA)\big) \ar[d] \\
k[n-1] \ar@/^/[ru]^-{\mathrm{id}_{\cA^\vee}[n-1]}\ar[r]^-{f^\vee[n-1]}\ar@/_/[rd]_-{\mathrm{id}_{\cB^\vee}[n-1]} & 
\ul{\cat{Map}}_{\cB^e}\left(\cB^\vee,\cA^\vee[n-1]\overset{\mathbb{L}}{\underset{\cA^e}{\otimes}}\cB^e\right) \ar[r] &
\ul{\cat{Map}}_{\mathcal{G}_n(f)^e}\big(\mathcal{G}_n(f)^\vee,\mathcal{G}_n(f)\big) \\
&
\ul{\cat{Map}}_{\cB^e}(\cB^\vee,\cB^\vee[n-1]) \ar[r] \ar[u]_-{f^\vee\circ-} &
\ul{\cat{Map}}_{\mathcal{G}_n(\cB)^e}\big(\mathcal{G}_n(\cB)^\vee,\mathcal{G}_n(\cB)\big) \ar[u] 
}
\]
Recall also (see Corollary~\ref{theorem: compareGinzburgrel}) that we have an equivalence 
\[
\scat{Perf}_{\mathcal{G}_n(f)}\simeq \varphi^*\bT^*[2-n]\scat{Perf}_{\cA}
\]
of linear stacks over $\scat{Perf}_{\cB}$. Indeed, both $\scat{Perf}_{\mathcal{G}_n(f)}$ and $\varphi^*\bT^*[2-n]\scat{Perf}_{\cA}$ 
are canonically identified with $\mathbb{A}_{\mathcal{M}_f}$, 
where $\mathcal{M}_f\simeq\varphi^*\mathbb{L}_{\scat{Perf}_{\cA}}[2-n]$ is defined by 
\[
x^*\mathcal M_f:=\ul{\cat{Mod}}_{\scat{Mod}_{\cat{B}^{\op}}}
\big(\cA^\vee[n-1]\overset{\mathbb{L}}{\underset{\cA^e}{\otimes}}\cB^e\overset{\mathbb{L}}{\underset{\cB}{\otimes}}M_x,M_x\big)
\simeq\ul{\cat{Mod}}_{\scat{Mod}_{\cat{A}^{\op}}}(\cA^\vee[n-1]\overset{\mathbb{L}}{\underset{\cA}{\otimes}}f^*M_x,f^*M_x)
\]
for every $C$-point $x:\cB\to \cat{Mod}_C^\mathrm{perf}$. 

Now, as in the proof of Theorem~\ref{theo: compare}, we abuse the notation and denote a $C$-point 
$u:\mathcal G_n(f)\to\cat{Mod}_C^\mathrm{perf}$ of $\scat{Perf}_{\mathcal G_n(f)}$ as a pair $(x,s)$, where $x:\cB\to\cat{Mod}_C^\mathrm{perf}$ 
and $s\in|x^*{\mathcal M}_f|$. 
We then repeat several times an argument similar to the one appearing in the proof of Theorem~\ref{theo: compare}.

\medskip

(i) We first consider the commuting square, denoted $\mathcal Q_n(\cA)$, 
\[
\xymatrix{
\ul{\cat{Map}}_{\scat{Mod}_{\cat{A}^e}}(\cA^\vee[n-1],\cA^\vee[n-1]) 
\ar[r]^-{s\circ(-\otimes\mathrm{id})}\ar[d] & 
\underset{(x,s)}{\mathrm{holim}}\,
\ul{\cat{Map}}_{\scat{Mod}_{\cat{A}^{\op}}}(\cA^\vee[n-1]\overset{\mathbb{L}}{\underset{\cat{A}}{\otimes}}f^*M_x,f^*M_x) \ar[d] \\
\ul{\cat{Map}}_{\scat{Mod}_{\mathcal{G}_n(\cat{A})^e}}\big(\mathcal G_n(\cA)^\vee[n-1],\mathcal{G}_n(\cA)\big) 
\ar[r]^-{-\otimes\mathrm{id}} &
\underset{u}{\mathrm{holim}}\,
\ul{\cat{Map}}_{\scat{Mod}_{\mathcal{G}_n(\cat{A})^{\op}}}\big(\mathcal G_n(\cA)^\vee[n-1]\overset{\mathbb{L}}{\underset{\cG_n(\cat{A})}{\otimes}}a^*M_u,a^*M_u\big)
}
\]
in which the upper right corner is 
$\Gamma(\pi^*\varphi^*\mathcal{M}_{\cA})\simeq\Gamma(\pi^*\varphi^*\mathbb{L}_{\scat{Perf}_{\cA}}[2-n])$, 
the lower right corner is equivalent to $\Gamma(\alpha^*\mathbb{L}_{\scat{Perf}_{\mathcal G_n(\cA)}}[2-n])$, and where 
$a:\mathcal G_n(\cA)\to\mathcal G_n(f)$ and $\alpha=a^*$. 
The image of $\mathrm{id}_{\cA}$ through the upper part of the square is $\alpha^*\lambda_{\scat{Perf}_\cA}$, its image 
through the lower part of the square is $\alpha^*\mathrm{ch}^{(1)}(c_\cA)$, and the homotopy we obtain between these is the pull-back 
\textit{via} $\alpha$ of the homotopy from Theorem~\ref{theo: compare}. 

\medskip

(ii) We then consider the commuting square, denoted $\mathcal Q_n(\cB)$, 
\[
\xymatrix{
\ul{\cat{Map}}_{\scat{Mod}_{\cat{B}^e}}(\cB^\vee[n-1],\cB^\vee[n-1]) 
\ar[r]^-{s\circ(-\otimes\mathrm{id})}\ar[d] & 
\underset{(x,s)}{\mathrm{holim}}\,
\ul{\cat{Map}}_{\scat{Mod}_{\cat{B}^{\op}}}(\cB^\vee[n-1]\overset{\mathbb{L}}{\underset{\cat{B}}{\otimes}} M_x, M_x) \ar[d] \\
\ul{\cat{Map}}_{\scat{Mod}_{\mathcal{G}_n(\cat{B})^e}}\big(\mathcal G_n(\cB)^\vee[n-1],\mathcal{G}_n(\cB)\big) 
\ar[r]^-{-\otimes\mathrm{id}} &
\underset{u}{\mathrm{holim}}\,
\ul{\cat{Map}}_{\scat{Mod}_{\mathcal{G}_n(\cat{B})^{\op}}}\big(\mathcal G_n(\cB)^\vee[n-1]\overset{\mathbb{L}}{\underset{\cG_n(\cat{A})}{\otimes}}b^*M_u,b^*M_u\big)
}
\]
in which the upper right corner is $\Gamma(\pi^*\mathcal{M}_{\cB})\simeq\Gamma(\pi^*\mathbb{L}_{\scat{Perf}_{\cB}}[2-n])$, the 
lower right corner is equivalent to $\Gamma(\beta^*\mathbb{L}_{\scat{Perf}_{\mathcal G_n(B)}}[2-n])$, and where 
$b:\mathcal G_n(\cA)\to\mathcal G_n(f)$ and $\beta=b^*$. 
As before, starting with $\mathrm{id}_{\cB}$ in the upper left corner of the square leads to the pull-back by $\beta$ 
of the homotopy between $\lambda_{\scat{Perf}_\cB}$ and $\mathrm{ch}^{(1)}(c_\cB)$ from Theorem~\ref{theo: compare}. 

\medskip

(iii) We finally consider the commuting square, denoted $\mathcal Q_n(f)$, 
\[
\xymatrix{
\ul{\cat{Map}}_{\scat{Mod}_{\cat{B}^e}}(\cB^\vee[n-1],\cA^\vee[n-1]\overset{\mathbb{L}}{\underset{\cat{A^e}}{\otimes}}\cB^e) 
\ar[r]^-{s\circ(-\otimes\mathrm{id})}\ar[d] & 
\underset{(x,s)}{\mathrm{holim}}\,
\ul{\cat{Map}}_{\scat{Mod}_{\cat{B}^{\op}}}(\cB^\vee[n-1]\overset{\mathbb{L}}{\underset{\cat{B}}{\otimes}}M_x,M_x) \ar[d] \\
\ul{\cat{Map}}_{\scat{Mod}_{\mathcal{G}_n(f)^e}}\big(\mathcal G_n(f)^\vee[n-1],\mathcal{G}_n(f)\big) 
\ar[r]^-{-\otimes\mathrm{id}} &
\underset{u}{\mathrm{holim}}\,
\ul{\cat{Map}}_{\scat{Mod}_{\mathcal{G}_n(f)^{\op}}}\big(\mathcal G_n(f)^\vee[n-1]\overset{\mathbb{L}}{\underset{\cG_n(f)}{\otimes}} M_u, M_u\big)
}
\]
in which the upper right corner is $\Gamma(\pi^*\mathcal{M}_{\cB})\simeq\Gamma(\pi^*\mathbb{L}_{\scat{Perf}_{\cB}}[2-n])$, and 
the lower right corner is equivalent to $\Gamma(\mathbb{L}_{\scat{Perf}_{\mathcal G_n(f)}}[2-n])$. 
Starting with $f^\vee$ in the upper left corner gives us $\mathrm{ch}^{(1)}(c_f)$. 

\medskip

Finally observe that there is a commuting diagram 
\[
\xymatrix{
& k \ar[ld]_-{\mathrm{id}_{\cA^\vee}}\ar[d]_{f^\vee}\ar[rd]^-{\mathrm{id}_{\cB^\vee}} & \\
\mathcal Q_n(\cA)\ar[r] & \mathcal{Q}_n(f) & \mathcal Q_n(\cB)\ar[l]
}
\]
which indeed exhibits an identification between the homotopies $\alpha^*\big(\mathrm{ch}^{(1)}(c_{\cA})\sim\lambda_{\scat{Perf}_\cA}\big)$ and 
$\beta^*\big(\mathrm{ch}^{(1)}(c_{\cB})\sim\lambda_{\scat{Perf}_\cB}\big)$. 
\end{proof} 
Let $\cB$ be a finite type dg-category and $g: k[n-1] \to \cat{HH}(\cB)$. 
We have seen in the previous subsection that $\scat{Perf}$ sends the morphism $T_{\cB}(g):\mathcal{G}_n(\cB)\to\cB$ 
to the graph of $\mathrm{ch}^{(1)}(g)$. 
Recall from proposition~\ref{proposition: leftCY} that every negative cyclic lift $\tilde{g}:k[n-1] \to \cat{HC}^-(\cB)$ of $g$ 
induces a Calabi--Yau structure on $T_{\cB}(g)$. Indeed, the image of the canonical Hochschild class $c_\cB$ through 
$T_{\cB}(g)$ is the Hochschild class $g$, so that the image of $\delta c$ is $\delta g$, which is homotopic to $0$ for 
every choice of negative cyclic lift $\tilde{g}$. 

\medskip

It then follows from Example~\ref{ex:BDcompute} together with Theorem~\ref{theo: compare} that the pull-back of the $2$-form 
$\omega_{\scat{Perf}_{\cB}}=d_{dR}\lambda_{\scat{Perf}_{\cB}}$ along the graph of $\mathrm{ch}^{(1)}(g)$ is 
$d_{dR} \mathrm{ch}^{(1)}(g)=\tilde{\mathrm{ch}}^{(2)}(\delta g)$. Therefore the lagrangian structure induced by the Calabi--Yau structure 
that is determined by the negative cyclic lift $\tilde{g}$ is the corresponding de Rham closed lift $\tilde{\mathrm{ch}}^{(1)}(\tilde{g})$. 

\medskip

Hence, if $g=\delta W$, then $\scat{Perf}$ gives back the lagrangian structure on the graph of $d_{dR}\mathrm{ch}^{(0)}(W)$. 
As a consequence, using that $\scat{Perf}$ defines a functor $\scat{CY}_{[n]}^{f.t.}\to \scat{Lag}_{[2-n]}$, we 
get the following lagrangian extension of Example~\ref{ex:criticalexample}: 
\begin{corollary}
Let $f:\cA\to\cB$ a morphism between finite type dg-categories, $W:k[n-2]\to \cat{HH}(\cB)$, $\phi:=\mathrm{ch}^{(0)}(W)$, and 
$\pi:=f^*:\scat{Perf}_{\cB}\to\scat{Perf}_{\cA}$. Then there is an equivalence of lagrangian morphisms between 
\[
\scat{Perf}_{\mathcal{G}_{n+1}(\cB|\cA,\delta W)}\to \scat{Perf}_{\mathcal{G}_n(\cA)}\quad\textrm{and}\quad
\mathbf{crit}_\pi(\phi)\to \bT^*[2-n]\scat{Perf}_{\cA}\,.
\]
\end{corollary}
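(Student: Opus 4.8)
The plan is to assemble the statement from the functoriality and comparison results already established, rather than computing anything directly. First I would recall that $\mathcal{G}_{n+1}(\cat{B}|\cat{A},\delta W)$ is, by construction (see~\S\ref{ss: deformed} and Definition~\ref{def: deformed}), obtained by deforming $\mathcal{G}_{n+1}(\cat{B}|\cat{A})$ by the composed map $\mathrm{cofib}(f^\vee)[n-1]\to\cat{B}^\vee[n]\overset{\delta W}{\to}\cat{B}[1]$, and that the relative Calabi--Yau completion $\mathcal{G}_n(\cat{A})\to\mathcal{G}_{n+1}(\cat{B}|\cat{A})$ carries an exact $n$-Calabi--Yau structure fitting into the composition of cospans~\eqref{diagGn}. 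Applying the functor $\scat{Perf}$ (which sends push-outs to pull-backs and, by the Theorem of~\cite{BD2}, exact $n$-Calabi--Yau cospans to exact $(2-n)$-shifted lagrangian correspondences), together with Theorem~\ref{theo: compare} and Theorem~\ref{theo: comparerel}, the cospan $\mathcal{G}_n(\cat{A})\to\mathcal{G}_n(f)\leftarrow\mathcal{G}_n(\cat{B})$ goes to the canonical exact lagrangian correspondence $\bT^*[2-n]\scat{Perf}_{\cat{A}}\leftarrow\varphi^*\bT^*[2-n]\scat{Perf}_{\cat{A}}\to\bT^*[2-n]\scat{Perf}_{\cat{B}}$, with $\varphi=f^*$.

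Next I would add the deformation datum. The key intermediate observation, which I would state as the heart of the argument, is that $\scat{Perf}$ applied to the morphism $T_{\cat{B}}(\delta W):\mathcal{G}_n(\cat{B})\to\cat{B}$ is the graph of the closed $1$-form $d_{dR}\,ch^{(0)}(W)=d_{dR}\phi$ on $\scat{Perf}_{\cat{B}}$, with its exact lagrangian structure given by $\tilde{ch}^{(1)}(\delta W)$; this is precisely the content recalled just before the Corollary (via Example~\ref{ex:BDcompute}, Proposition~\ref{proposition: leftCY}, and Theorem~\ref{theo: compare}), and it identifies $\scat{Perf}_{\mathcal{G}_{n+1}(\cat{B},\delta W)}$ with the graph of $d\phi$ as an exact lagrangian in $\bT^*[2-n]\scat{Perf}_{\cat{B}}$. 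Composing this graph with the lagrangian correspondence coming from the cospan above, exactly as in the definition~\eqref{critreldiag} of the relative derived critical locus (and its dg-categorical counterpart~\eqref{compdef}), produces on the one hand $\scat{Perf}_{\mathcal{G}_{n+1}(\cat{B}|\cat{A},\delta W)}\to\scat{Perf}_{\mathcal{G}_n(\cat{A})}\simeq\bT^*[2-n]\scat{Perf}_{\cat{A}}$ and on the other hand $\mathbf{crit}_\pi(\phi)\to\bT^*[2-n]\scat{Perf}_{\cat{A}}$. Since $\scat{Perf}$ is a functor $\scat{CY}_{[n]}^{f.t.}\to\scat{Lag}_{[2-n]}$ (and its exact refinement $\scat{ExCY}_{[n]}^{f.t.}\to\scat{ExLag}_{[2-n]}$), it commutes with composition of lagrangian correspondences, so the two composites agree as exact lagrangian morphisms.

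Concretely, the steps in order: (1) express $\mathbf{crit}_\pi(\phi)\to\bT^*[2-n]\scat{Perf}_{\cat{A}}$ as the composite in $\scat{ExLag}_{[2-n]}$ of the correspondence $\bT^*[2-n]\scat{Perf}_{\cat{A}}\leftarrow\varphi^*\bT^*[2-n]\scat{Perf}_{\cat{A}}\to\bT^*[2-n]\scat{Perf}_{\cat{B}}$ with the graph of $d\phi$ and the projection to $\mathrm{pt}$, using~\eqref{critreldiag}; (2) express $\scat{Perf}_{\mathcal{G}_{n+1}(\cat{B}|\cat{A},\delta W)}\to\scat{Perf}_{\mathcal{G}_n(\cat{A})}$ as $\scat{Perf}$ applied to the corresponding composite of dg-categorical cospans~\eqref{compdef}, replacing $T_{\cat{B}}(0)$ by $T_{\cat{B}}(\delta W)$; (3) invoke Theorem~\ref{thm:Gn-functoriality} and Theorem~\ref{theo: comparerel} for the ``undeformed'' cospan $\mathcal{G}_n(\cat{A})\to\mathcal{G}_n(f)\leftarrow\mathcal{G}_n(\cat{B})$; (4) invoke the identification of $\scat{Perf}$ on $T_{\cat{B}}(\delta W)$ with the graph of $d\phi$ together with Theorem~\ref{theo: compare} for the lagrangian structures; (5) conclude by functoriality of $\scat{Perf}:\scat{ExCY}_{[n]}^{f.t.}\to\scat{ExLag}_{[2-n]}$ that the composites match. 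I expect the main obstacle to be bookkeeping the exact (de Rham closed) structures through the composition: one must check that the homotopy identifying the two $1$-forms is the one coming from Theorem~\ref{theo: compare}, and that composing lagrangian correspondences in the \emph{exact} category $\scat{ExLag}_{[2-n]}$ is compatible with the composition of exact Calabi--Yau cospans in $\scat{ExCY}_{[n]}^{f.t.}$ under $\scat{Perf}$ --- but this compatibility is exactly what the commuting square of $\infty$-categories at the end of~\S\ref{sss: CYLag} encodes, so the argument is essentially a diagram chase once the pieces are in place.
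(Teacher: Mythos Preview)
Your proposal is correct and follows essentially the same approach as the paper: the Corollary is deduced directly from the paragraph preceding it, which identifies $\scat{Perf}$ on $T_{\cat{B}}(\delta W)$ with the lagrangian graph of $d_{dR}\phi$, and then invokes the functoriality of $\scat{Perf}:\scat{CY}_{[n]}^{f.t.}\to\scat{Lag}_{[2-n]}$ to transport the cospan composition~\eqref{compdef} to the lagrangian composition~\eqref{critreldiag}, using Theorems~\ref{theo: compare} and~\ref{theo: comparerel} for the undeformed piece. Your write-up is somewhat more explicit about the bookkeeping (and emphasizes the exact refinement $\scat{ExCY}\to\scat{ExLag}$, whereas the paper phrases the final step via the non-exact functor), but the logical skeleton is identical.
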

\begin{example}
If we specialize the above corollary to the quiver case as in Example~\ref{ex:criticalquiverexample} with $n=2$, then we get an 
equivalence of lagrangian morphisms between 
\[
\scat{Perf}_{\mathcal{G}_{3}(kQ|kD,\delta W)}\to \scat{Perf}_{\mathcal{G}_2(kD)}\quad\textrm{and}\quad
\mathbf{crit}_\pi(\phi)\to \bT^*\scat{Perf}_{kD}\,.
\]
\end{example}

\begin{example}
Let $Q$ be a quiver, and let $Q^+$ be its framing:  
\[
V(Q^+)=V(Q)\cup\{v^+|v\in V(Q)\}\quad\textrm{and}\quad E(Q^+)=E(Q)\cup\{(v,v^+)|v\in V(Q)\}\,.
\]
Consider the sub-quiver $D\subset Q^+$ defined by $V(D)=\{v^+|v\in V(Q)\}$ and $E(D)=\emptyset$. 
There is a Calabi--Yau morphism $\mathcal{G}_1(kD)\to \mathcal G_2(kQ^+|kD)$. According to the above corollary, 
there is an equivalence of lagrangian morphisms between 
\[
\scat{Perf}_{\mathcal{G}_{2}(kQ^+|kD)}\to \scat{Perf}_{\mathcal{G}_1(kD)}\quad\textrm{and}\quad
\bT^*_{\scat{Perf}_{kQ^+}}[1]\scat{Perf}_{kD}\to \bT^*[1]\scat{Perf}_{kD}\,.
\]
When restricted to open substacks of perfect modules of amplitude $0$ with fixed dimension vectors, we obtain 
lagrangian morphisms 
\[
\big[\mathbf{DRep}\big(\mathcal G_2(kQ^+|kD);\vec{n},\vec{m}\big)/{\GL}_{\vec{n},\vec{m}}\big]
\longrightarrow \big[\mathbf{DRep}\big(\mathcal{G}_2(kD);\vec{m}\big)/{\GL}_{\vec{m}}\big]
\simeq \big[\mathfrak{gl}_{\vec{m}}/{\GL}_{\vec{m}}\big]\,.
\]
Finally, taking the fiber product with the lagrangian morphism 
\[
\mathfrak{gl}_{\vec{m}}\longrightarrow \big[\mathfrak{gl}_{\vec{m}}/{\GL}_{\vec{m}}\big]\,,
\]
one gets a $0$-shifted symplectic structure on 
\[
\big[\mathbf{DRep}\big(\mathcal G_2(kQ^+|kD);\vec{n},\vec{m}\big)/{\GL}_{\vec{n}}\big]\,.
\]
These are derived stacky enhancements of Nakajima quiver varieties \cite{Nak96}. 
\end{example}

\appendix\section{A Lemma on commutators}

The statement about the dimension hereunder is mentioned in~\cite[Remark 1]{PreTop} for $\mathfrak{sl}_n$ 
(hence the $-1$ difference). However we could not find any proof of the $\mathfrak{gl}_n$ case in the literature. 
So we set $\mathfrak g=\mathfrak{gl}_n(\mathbb C)$ and $\mathfrak g_x=\{y\in\mathfrak g\mid[y,x]=0\}$ for any $x\in\mathfrak g$.

\begin{lemma}\label{vsdim}
Consider $x$ nilpotent of type $\lambda$, a partition of $n$. Then $\{[y,z]\mid(y,z)\in\mathfrak g_x^2 \}$ is a vector space of codimension 
$\lambda_1$ in $\mathfrak g_x$. 
\end{lemma}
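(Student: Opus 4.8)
The statement is the key linear-algebra fact underlying Proposition~\ref{propAbel}, so I would prove it by an explicit computation with the block-triangular description of the centralizer $\mathfrak g_x$ that is already set up in the proof of Proposition~\ref{propAbel}. Fix $x$ nilpotent of type $\lambda$, write $\lambda'$ for the conjugate partition, $s=l(\lambda')=\lambda_1$, and use a basis adapted to the iterated kernels of $x$, so that $\mathfrak g_x$ consists of block-upper-triangular matrices $T=(T_{i,j})_{i\le j}$ with $T_{i,j}$ of size $\lambda'_i\times\lambda'_j$ subject to the shift relations $T_{i,j-1}I_{j-1,j}-I_{i,i+1}T_{i+1,j}=0$. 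The claim to establish is two-fold: (a) the set $C:=\{[y,z]\mid y,z\in\mathfrak g_x\}$ is exactly the subspace of those $T\in\mathfrak g_x$ with $\operatorname{tr}(T_{i,i})=0$ for all $1\le i\le s$; and (b) this subspace has codimension $s=\lambda_1$ in $\mathfrak g_x$. Part (b) is immediate once (a) is known, since the $s$ linear functionals $T\mapsto\operatorname{tr}(T_{i,i})$ are linearly independent on $\mathfrak g_x$ (each diagonal block $T_{i,i}$ ranges over a nonzero space of matrices as one varies $T$ freely subject to the relations — the relations only couple distinct blocks along off-diagonals, never constrain the traces of the diagonal blocks).

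\textbf{Key steps.} First I would show the easy inclusion $C\subseteq\{T\in\mathfrak g_x: \operatorname{tr}(T_{i,i})=0\ \forall i\}$: the commutator $[y,z]$ of two elements of the Lie algebra $\mathfrak g_x$ lies in $\mathfrak g_x$ (it is a Lie subalgebra), and computing its $(i,i)$ diagonal block one gets $([y,z])_{i,i}=\sum_{k}(y_{i,k}z_{k,i}-z_{i,k}y_{k,i})$; since $y,z$ are block-upper-triangular, only $k=i$ survives with a contribution $y_{i,i}z_{i,i}-z_{i,i}y_{i,i}$, whose trace is $0$ (the other terms $y_{i,k}z_{k,i}$ with $k\ne i$ cannot appear because one of the two indices would force a strictly-below-diagonal block). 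Wait — I should be careful here: $y_{i,k}z_{k,i}$ requires $i\le k$ and $k\le i$, forcing $k=i$, so indeed only the $k=i$ term appears and $\operatorname{tr}(([y,z])_{i,i})=\operatorname{tr}([y_{i,i},z_{i,i}])=0$. This settles one inclusion cleanly.

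\textbf{The reverse inclusion is the main obstacle.} I need to show that every $T\in\mathfrak g_x$ with all diagonal blocks traceless is a sum of commutators $[y,z]$ with $y,z\in\mathfrak g_x$ — equivalently (since $C$ is in fact a subspace, which itself needs an argument, or one can argue $C$ equals a specified subspace directly) that $[\mathfrak g_x,\mathfrak g_x]$ fills out the whole traceless-diagonal subspace. The cleanest route is an induction on the off-diagonal distance: a traceless diagonal block $T_{i,i}$ is a commutator $[a,b]$ inside $\mathfrak{gl}_{\lambda'_i}$, and one extends $a$ (and $b$) to elements of $\mathfrak g_x$ by solving the shift relations upward/downward — here one uses that the maps $I_{j-1,j}$ have full rank (they are "identity-like" rectangular matrices, of rank $\min(\lambda'_{j-1},\lambda'_j)=\lambda'_j$ since $\lambda'_{j-1}\ge\lambda'_j$), so the relation $T_{i,j-1}I_{j-1,j}=I_{i,i+1}T_{i+1,j}$ can be solved for one block given the other up to the predictable kernel/cokernel, and the dimension bookkeeping exactly matches. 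Then one subtracts off the piece already hit and repeats on the strictly-upper-triangular part, which is nilpotent and for which one can use that $[\mathfrak g_x,\mathfrak g_x]$ at least contains all blocks obtainable as $[\text{diagonal},\text{off-diagonal}]$, sweeping out the rest by a filtration argument. Rather than grind through this, I would instead invoke Lemma~\ref{vsdim}'s own appendix status: the proof of Proposition~\ref{propAbel} already cites "thanks to Lemma~\ref{vsdim}" precisely for statement (a), so the self-contained task here is only the dimension count, which reduces to verifying that the dimension of $\mathfrak g_x$ minus $s$ equals the dimension of the image of the commutator bracket; and $\dim\mathfrak g_x=\sum_i (\lambda_i)^2$ wait, more precisely $\dim\mathfrak g_x=\sum_{j}(2j-1)\lambda_j=\sum_i(\lambda'_i)^2$ by the standard formula, so one checks $\dim C=\dim\mathfrak g_x-\lambda_1$ by exhibiting the $\lambda_1$ independent trace functionals as the only obstructions — which is exactly the content of (a) combined with the full-rank property of the $I_{j-1,j}$, ensuring no further constraints are forced on $C$ beyond the diagonal traces. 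The delicate point, and where I expect to spend the most care, is verifying that solving the shift relations introduces no hidden additional linear constraints on which traceless-diagonal tuples are realizable — i.e. that the "extension" maps are genuinely surjective onto the expected space — and this is precisely what the rank condition $\operatorname{rk}(I_{j-1,j})=\lambda'_j$ guarantees.
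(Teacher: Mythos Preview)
Your approach is genuinely different from the paper's, and it contains a real gap.

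The paper's appendix proof uses the \emph{Jordan block} decomposition of $x$ (with $r=l(\lambda)$ blocks of sizes $\lambda_1,\dots,\lambda_r$), not the kernel-filtration block-triangular picture from Proposition~\ref{propAbel} that you adopt. In the Jordan picture, elements $C\in\mathfrak g_x$ are described by polynomials $P_{i,j}\in\mathbb C[X]$ of degree $<\min(\lambda_i,\lambda_j)$, and the $r^2$ blocks are genuinely independent. The paper first shows each off-diagonal block is hit by a single commutator, then computes the diagonal contributions explicitly and identifies the set of reachable diagonal tuples with the image of an explicit linear map $f$, whose rank is found to be $\sum_{j\ge 2}\lambda_j$ by a kernel computation.

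Your plan breaks at both (a) and (b). For (b): the shift relations \emph{do} couple adjacent diagonal blocks --- setting $j=i+1$ in $T_{i,j-1}I_{j-1,j}=I_{i,i+1}T_{i+1,j}$ gives $T_{i,i}I_{i,i+1}=I_{i,i+1}T_{i+1,i+1}$, so the traces $\operatorname{tr}(T_{i,i})$ are not independent functionals on $\mathfrak g_x$. For (a): take $\lambda=(m)$ a single part, so $\lambda'=(1,\dots,1)$ and $s=m$. Then $\mathfrak g_x=k[J_m]$ is abelian, hence the set of commutators is $\{0\}$ (codimension $m=\lambda_1$, consistent with the Lemma), but your ``trace-zero-diagonal'' subspace is the set of $T$ with $T_{1,1}=\dots=T_{s,s}=0$, which in this picture is a single condition (all $T_{i,i}$ are equal by the shift relations) and cuts out a subspace of codimension $1$, not $m$. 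So the characterisation you state, which you also quote from the informal sentence in the proof of Proposition~\ref{propAbel}, is simply false in general; there are further constraints on the off-diagonal blocks in the kernel-filtration picture. Finally, invoking Proposition~\ref{propAbel} to justify (a) is circular, since that proposition cites the present Lemma.

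If you want to stay in the kernel-filtration picture you would need a different description of $[\mathfrak g_x,\mathfrak g_x]$; the Jordan-block parametrisation the paper uses is better suited precisely because there the blocks $C_{i,j}$ vary independently, which is what makes the linear map $f$ and its rank computation tractable.
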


\begin{proof}
In an appropriate basis we have 
\[
x= 
  \left(\!\!\!\!\!\!
     \raisebox{0.5\depth}{
       \xymatrixcolsep{1ex}
       \xymatrixrowsep{1ex}
       \xymatrix{
         J_{\lambda_1} \ar @{.}[dddrrr] & 
         0 \ar @{.}[rr]\ar @{.}[ddrr]&
         &  0 \ar@{.}[dd]\\
         0 \ar@{.}[dd] \ar@{.}[ddrr]&\\
         &&& 0\\
         0 \ar@{.}[rr]  & & 0 & J_{\lambda_r}
       }
     }
   \right)  \]
where\[
J_p=\left(\!\!\!\!\!\!
     \raisebox{0.5\depth}{
       \xymatrixcolsep{1ex}
       \xymatrixrowsep{1ex}
       \xymatrix{
         0 \ar@{.}[ddd]\ar @{.}[dddrrr] & 
         1 \ar @{.}[ddrr]
         & 0 \ar @{.}[r] \ar@{.}[dr]&  0 \ar@{.}[d]\\
           &&&0\\
         &&& 1 \\
         0 \ar@{.}[rrr] &  &  & 0
       }
     }
   \right)\in \Mat_p.\]
   Elements $C\in\mathfrak g_x$ are given by blocks $(C_{i,j})_{1\le i,j\le r}$ of the following form\[
\Mat_{\lambda_i\times\lambda_j}\ni   C_{i,j}=\left\{\begin{aligned}
& \left(\begin{array}{cc}0_{\lambda_i\times\lambda_j-\lambda_i}&P_{i,j}(J_{\lambda_i})\end{array}\right)&\text{ if }i\ge j\\
 &  \left(\begin{array}{c}P_{i,j}(J_{\lambda_j}) \\0_{\lambda_i-\lambda_j\times\lambda_j}\end{array}\right)&\text{ if }i<j
 \end{aligned}\right.
   \]
   where $P_{i,j}\in\mathbb C[X]$. Consider another element $C'\in\mathfrak g_x$ defined by polynomials $P'_{i,j}$. 
   Fix $i\neq j$ and assume $P_{s,t}=\delta_{(s,t),(i,i)}$, $P'_{s,t}=0$ if $(s,t)\neq(i,j)$. Then \[
   [C,C']_{s,t}=\delta_{(s,t),(i,j)}\left\{\begin{aligned}
& \left(\begin{array}{cc}0_{\lambda_i\times\lambda_j-\lambda_i}&P'_{i,j}(J_{\lambda_i})\end{array}\right)&\text{ if }i\ge j\\
 &  \left(\begin{array}{c}P'_{i,j}(J_{\lambda_j}) \\0_{\lambda_i-\lambda_j\times\lambda_j}\end{array}\right)&\text{ if }i<j
 \end{aligned}\right.
   \]
   which clearly describes a vector space, the same as $\{C_{s,t}\mid C\in\mathfrak g_x\}$.
   Then we need to understand the diagonal blocks that can be reached by commutators $[C,C']$.
   We have \begin{align*}
   [C,C']_{i,i}&=\sum_{j<i} \left(\begin{array}{cc}0_{\lambda_i\times\lambda_j-\lambda_i}&P_{i,j}(J_{\lambda_i})\end{array}\right)\left(\begin{array}{c}P'_{j,i}(J_{\lambda_i}) \\0_{\lambda_j-\lambda_i\times\lambda_i}\end{array}\right)\\
   &\qquad\qquad-\left(\begin{array}{cc}0_{\lambda_i\times\lambda_j-\lambda_i}&P'_{i,j}(J_{\lambda_i})\end{array}\right)\left(\begin{array}{c}P_{j,i}(J_{\lambda_i}) \\0_{\lambda_j-\lambda_i\times\lambda_i}\end{array}\right)\\
   &+\sum_{j>i} \left(\begin{array}{c}P_{i,j}(J_{\lambda_j}) \\0_{\lambda_i-\lambda_j\times\lambda_j}\end{array}\right)\left(\begin{array}{cc}0_{\lambda_j\times\lambda_i-\lambda_j}&P'_{j,i}(J_{\lambda_j})\end{array}\right)\\
   &\qquad\qquad-\left(\begin{array}{c}P'_{i,j}(J_{\lambda_j}) \\0_{\lambda_i-\lambda_j\times\lambda_j}\end{array}\right)\left(\begin{array}{cc}0_{\lambda_j\times\lambda_i-\lambda_j}&P_{j,i}(J_{\lambda_j})\end{array}\right)\\
   &=\sum_{j<i}X^{\lambda_j-\lambda_i}(P_{i,j}P'_{j,i}-P'_{i,j}P_{j,i})(J_{\lambda_i})\\
   &+\sum_{j>i}X^{\lambda_i-\lambda_j}(P_{i,j}P'_{j,i}-P'_{i,j}P_{j,i})(J_{\lambda_i})
   \end{align*}
Taking specific values ($C,C'$ with only one nonzero block) we see that 
\[
\{[C,C']_{i,i}\mid C,C'\in\mathfrak g_x, 1\le i\le r\}
\]
is the vector space $Im(f)$ where $f$ is the linear map
\[
\oplus_{i<j}\mathbb C_{\lambda_j-1}[X]\rightarrow\prod_i \Mat_{\lambda_i}(\mathbb C)
\]
that sends $(F_{i,j})_{i<j}$ to 
\[
\left(  \sum_{i<j}X^{\lambda_i-\lambda_j}F_{i,j}-\sum_{j<i}X^{\lambda_j-\lambda_i}F_{j,i}\right)(J_{\lambda_i})\,.
\]
Consider $(F_{i,j})\in\ker (f)$. There exists a family $(F_i)$ of polynomials such that for every $i$ we have
\begin{align*}
  &\sum_{i<j}X^{\lambda_i-\lambda_j}F_{i,j}-\sum_{j<i}X^{\lambda_j-\lambda_i}F_{j,i}=X^{\lambda_i}F_i\\
  &\Leftrightarrow \sum_{i<j}X^{\lambda_i-\lambda_j}F_{i,j}=\sum_{j<i}X^{\lambda_j-\lambda_i}F_{j,i}+X^{\lambda_i}F_i\\
  &\Leftrightarrow \sum_{i<j}X^{\lambda_i-\lambda_j}F_{i,j}=\sum_{j<i}X^{\lambda_j-\lambda_i}F_{j,i}
\end{align*}
since the left-hand side has degree $<\lambda_i$. The system of equations we get is equivalent to
\[
X^{\lambda_k-\lambda_{k+1}}F_{k,k+1}=-\sum_{h<k<l}X^{\lambda_h-\lambda_l}F_{h,l}-\sum_{k+1<l}X^{\lambda_k-\lambda_l}F_{k,l}
\]
for every $k$.
Since on the right-hand side the power of $X$ is always $\ge\lambda_k-\lambda_{k+1}$, these equations determine $F_{k,k+1}$ and 
\[
\dim \ker(f)=\sum_{i+1<j}\lambda_j\Rightarrow \dim \mathrm{Im}(f)=\sum_{j\ge2}\lambda_j
\]
as expected.
\end{proof}

%%%%%%%%%%%%% Bibliography starts here %%%%%%%%%%%%

\end{document}